\documentclass{amsart}

\usepackage[utf8]{inputenc}
\usepackage{amsmath,amssymb,bbm,enumitem}
\usepackage{hyperref}
\usepackage{amsthm}
\usepackage[capitalize]{cleveref}
\usepackage[square,numbers,sort]{natbib}

\hypersetup{pdfstartview=}

\newcommand\BBC{\mathbb C}

\newcommand\BZ{\mathbb Z}

\newcommand{\cyc}[1]{\langle #1 \rangle}

\newcommand\Img{\operatorname{Im}}

\def\BHM#1.#2.#3.#4.{{^{#1}_{#3}\mathcal B^{#2}_{#4}}}

\newcommand\comm\curlyvee
\newcommand\cocomm\curlywedge

\DeclareMathOperator{\Hom}{Hom}
\DeclareMathOperator{\id}{id}
\DeclareMathOperator{\Aut}{Aut}
\DeclareMathOperator{\End}{End}
\DeclareMathOperator{\Inn}{Inn}
\DeclareMathOperator{\Out}{Out}
\DeclareMathOperator{\BrPic}{BrPic}
\DeclareMathOperator{\GVec}{Vec}

\theoremstyle{plain}
\newtheorem{thm}{Theorem}[section]

\newtheorem{cor}[thm]{Corollary}
\newtheorem{prop}[thm]{Proposition}
\newtheorem{lem}[thm]{Lemma}

\theoremstyle{definition}
\newtheorem{df}[thm]{Definition}
\newtheorem{example}[thm]{Example}
\newtheorem{question}[thm]{Question}

\theoremstyle{remark}
\newtheorem{rem}[thm]{Remark}

\crefname{lem}{Lemma}{Lemmas}
\crefname{thm}{Theorem}{Theorems}
\crefname{cor}{Corollary}{Corollaries}
\crefname{prop}{Proposition}{Propositions}
\crefname{example}{example}{examples}
\crefname{df}{Definition}{Definitions}
\crefname{equation}{equation}{equations}

\numberwithin{equation}{section}

\renewcommand\iff{\Leftrightarrow}

\def\clap#1{\hbox to 0pt{\hss#1\hss}}

\def\mathrlap{\mathpalette\mathrlapinternal}
\def\mathclap{\mathpalette\mathclapinternal}

\def\mathrlapinternal#1#2{%
\rlap{$\mathsurround=0pt#1{#2}$}}
\def\mathclapinternal#1#2{%
\clap{$\mathsurround=0pt#1{#2}$}}

\def\D{\mathcal D}
\def\morphquad{(p,u,r,v)}                            
\newcommand{\BCh}[1]{\Hom(#1,\widehat{#1})}
\DeclareMathOperator{\SpAutc}{SpAut_c}               
\newcommand{\ucom}[1]{^{(#1)}}
\newcommand{\com}[1]{_{(#1)}}
\newcommand\ot{\otimes}
\newcommand\inv{^{-1}}
\newcommand{\kk}{\mathbbm{k}}
\newcommand{\du}[1]{\kk^{#1}}
\DeclareMathOperator{\Rep}{Rep}
\DeclareMathOperator{\ev}{ev}
\newcommand{\genhom}{\Hom_{\operatorname{QB}}(\D^\omega(G),\D^\eta(H))}
\newcommand{\genhomh}{\Hom(\D^\omega(G),\D^\eta(H))}
\newcommand{\genhomg}{\Hom_{\operatorname{QB}}(\D^\omega(G),\D^\eta(G))}
\DeclareMathOperator{\Isom}{Isom}
\newcommand{\genisomr}{\Isom_{\operatorname{rigid}}(\D^\omega(G),\D^\eta(H))}
\newcommand{\genisomrg}{\Isom_{\operatorname{rigid}}(\D^\omega(G),\D^\eta(G))}
\newcommand{\genautr}{\Aut_{\operatorname{rigid}}(\D^\omega(G))}
\DeclareMathOperator{\lact}{\rightharpoonup}

\newlist{propenum}{enumerate}{1} 
\setlist[propenum]{label=\roman*), ref=\textup{\thethm~(\roman*)}}
\crefalias{propenumi}{proposition}

\newlist{lemenum}{enumerate}{1}
\setlist[lemenum]{label=\roman*), ref=\textup{\thethm~(\roman*)}}
\crefalias{lemenumi}{lemma}

\newlist{thmenum}{enumerate}{1}
\setlist[thmenum]{label=\roman*), ref=\textup{\thethm~(\roman*)}}
\crefalias{thmenumi}{theorem}

\title[Rigid isomorphisms of twisted group doubles]{Homomorphisms and rigid isomorphisms of twisted group doubles}
\author{Marc Keilberg}
\email{keilberg@usc.edu}
\begin{document}
\begin{abstract}
We prove several results concerning quasi-bialgebra morphisms \ensuremath{\D^\omega(G)\to\D^\eta(H)} of twisted group doubles.  We take a particular focus on the isomorphisms which are simultaneously isomorphisms \ensuremath{\D(G)\to\D(H)}.  All such isomorphisms are shown to be morphisms of quasi-Hopf algebras, and a classification of all such isomorphisms is determined.  Whenever \ensuremath{\omega\in Z^3(G/Z(G),U(1))} this suffices to completely describe \ensuremath{\Aut(\D^\omega(G))}, the group of quasi-Hopf algebra isomorphisms of \ensuremath{\D^\omega(G)}, and so generalizes existing descriptions for the case where \ensuremath{\omega} is trivial.
\end{abstract}
\maketitle
\tableofcontents
\newpage

\section*{Introduction}
The goal of this paper is to generalize existing results on the Hopf algebra automorphisms of $\D(G)$ \citep{K14,KS14,LenPri:Monoidal} to the case of twisted doubles, as introduced by \citet{DPR}.

A twisted double $\D^\omega(G)$, where $\omega\in Z^3(G,U(1))$ is a 3-cocycle of the finite group $G$ with values in the circle group $U(1)$, forms a quasitriangular quasi-Hopf algebra that characterizes lower dimensional Dijkgraaf-Witten theories with finite gauge group \citep{HuWanYu:TQD,DijWit:TopGauge,FQ:ChernFinite,Morton2013}.  These objects, especially their representation categories, have been of exceptional recent interest in mathematical physics.

By \citep{ENO2} there are group isomorphisms
\[\Aut_{\text{br}}(\Rep(D^\omega(G)))\cong \Aut_{\text{br}}(Z(\GVec_G^\omega))\cong \BrPic(\GVec_G^\omega),\]
where $\GVec_G^\omega$ is the category of $G$-graded $\BBC$-vector spaces with associator determined by $\omega$.  The Brauer-Picard group $\BrPic(\mathcal C)$ of a finite tensor category $\mathcal{C}$ appears in the classification problem of $G$-extensions of fusion categories \citep{ENO2}.  In mathematical physics, the Brauer-Picard group appears as the group of symmetries of certain TQFTs \citep{FSV,FPSV:AbBrauer}.  An explicit description of $\Aut_{\text{br}}(\Rep(\D(G)))$ is already an interesting and difficult problem, and is of significant and ongoing interest.  \citet{NR14} gave a procedure that can, in principle, allow the computation of $\Aut_{\text{br}}(\Rep(\D(G)))$ for a given group $G$, but this remains an ad hoc procedure.  More recently, and in a similar vein, \citet{MarNik:BPofFus} introduced several other methods for studying the Brauer-Picard group of $\GVec_G^\omega$, and applied their methods to completely describe the Brauer-Picard groups associated to certain classes of fusion categories of prime power dimension.  The case with $G$ abelian and trivial 3-cocycle was considered in \citep{FPSV:AbBrauer}, and the special case of the (unique) non-abelian group of order $p^3$ and exponent $p$ was handled by \citet{Rie}.  \citet{LenPri:Brauer} conjecture that $\Aut_{\text{br}}(\D(G))$ can be determined by adding certain cohomological data to monoidal autoequivalences naturally obtained from $\Aut(\D(G))$.  Namely, it is determined when the tensor structure for the autoequivalence naturally defined by an element of $\Aut(\D(G))$ can be altered by lazy cohomology \citep{BicCar:Lazy} data to produce a braided autoequivalence.  The approach is an analogue to a Bruhat decomposition.  The conjecture was verified for the few examples that were fully worked out in \citep{NR14}, but otherwise remains open.  It is in this light that we pursue a description of $\Aut(\D^\omega(G))$.

The paper is organized as follows.  In \cref{sec:prelim} we enumerate the preliminary definitions, results, and notations the paper will use.  In \cref{sec:components} we provide the main tool for our investigation, which is to decompose quasi-bialgebra morphisms $\D^\omega(G)\to\D^\eta(H)$ into a quadruple of components $\morphquad$.  This is analogous to the decompositions given by \citet{ABM} for bicrossed product Hopf algebras, and reproduces the decompositions they obtain for the special case of morphisms $\D(G)\to\D(H)$.  Extracting useful information from this decomposition is a laborious task, and in \cref{sec:fundamental,sec:p-embeddings} we establish the fundamental properties for the components that undergird all other calculations in the paper.  The component $p$ is of particular importance, and the most well-behaved, and so receives a great deal of attention.  \cref{sec:coassoc,sec:bialg-ids} then proceeds to enumerate the identities that must be satisfied to determine a morphism of quasi-bialgebras $\D^\omega(G)\to\D^\eta(H)$. This places several constraints on how the 3-cocycles relate to the components $p$ and $u$ in particular.  While $p$ is easily the nicest of the four components, the components $u,v$ are also reasonably well-behaved.  \cref{sec:uv-properties,sec:uv-bialg} investigate these components in detail. We are able to extract a fair amount of information regarding their properties and how they are constrained by, and related to, the structures of $\D^\omega(G)$ and $\D^\eta(H)$.  The most ill-behaved component is $r$, and it is not expected that this component admits any simpler description in general than the trivial one: $\morphquad$ is a quasi-bialgebra morphism.  \cref{sec:r-coalg,sec:r-alg} are subsequently dedicated towards investigating $r$ under certain hypotheses on the other three components, and demonstrating how ill-behaved it becomes as we relax these assumptions.  This directs us towards the ultimate goal of the paper, considered in \cref{sec:antipode,sec:rigid-isoms,sec:rigid-auts}, which is to classify those quasi-bialgebra isomorphisms which also naturally determine an isomorphism of Hopf algebras $\D(G)\to\D(H)$.  The latter case has been thoroughly explored in \citep{K14,KS14,LenPri:Monoidal}.  We show that not only do such isomorphisms exist---with several examples and a counterexample detailed in \cref{sec:rigid-examples}---, but that there are even automorphisms of $\D^\omega(G)$ which are defined independently of $\omega$ whenever $G$ is not perfect.  We consider a modification of this idea that uses only cohomology classes, and pose a few questions concerning 3-cocycles of $G$ that are related to the possible existence of other examples of such universally defined isomorphisms. The results we obtain completely describe $\Aut(\D^\omega(G))$ whenever $\omega\in Z^3(G/Z(G),U(1))$, and so serve as a generalization of the existing results on $\Aut(\D(G))$.

\section{Preliminaries and Notation}\label{sec:prelim}
Much of the paper will be dedicated to a large number of calculations and manipulations with identities, several of them of a fundamental nature.  Due to the large volume of such things, we will take pains to spell out a large number of definitions and results that might otherwise seem trivial or well-known.  Without a strong reminder and regular references to these, it can quickly become difficult to follow the subsequent calculations.

Throughout the paper $G,H$ will denote finite groups, unless otherwise specified.  We work over the field $\kk=\BBC$ of complex numbers.  Our reference for the general theory of Hopf algebras is \citep{Mon:HAAR}.  We use Sweedler notation for comultiplications whenever convenient. All unadorned tensor products are taken over $\kk$, and all undecorated $\Hom$'s, $\Aut$'s etc. are of quasi-Hopf algebras, Hopf algebras, or groups as appropriate.

\begin{df}
The vector space $\kk G$ and its linear dual $\du{G}$ are endowed with their usual Hopf algebra structures.
\end{df}

All units will be denoted by $1$, except for $\du{G}$ and $\widehat{G}$ whose units we denote by $\varepsilon$.   We also denote counits by $\varepsilon$, except for $\du{G}$ whose counit we denote by $\ev_1$.  All maps defined on vector spaces are linear maps, unless otherwise specified.
\begin{df}
  Given algebras $B,C$, a linear map $f\colon B\to C$ is said to be unital if $f(1)=1$.

  Dually, if $B,C$ are coalgebras, then a linear map $f\colon B\to C$ is said to be counital if $\varepsilon\circ f=\varepsilon$.

  If $B,C$ are both quasi-bialgebras, then a linear map $f\colon B\to C$ is said to be biunital if it is both unital and counital.
\end{df}
\begin{df}
  Suppose we are given vector spaces $V,W$, an algebra $A$, a coalgebra $C$, and linear maps $f\colon V\to A,g\colon W\to A$ and $h\colon C\to V, k\colon C\to W$.

  We say that $f$ and $g$ commute, written $f\comm g$, if $xy=yx$ for all $x\in\Img(f),y\in\Img(g)$.

  Dually, we say that $h,k$ cocommute, written $h\cocomm k$, if $h(c\com1)\ot k(c\com2) = h(c\com 2)\ot k(c\com 1)$ for all $c\in C$.
\end{df}

We denote a left action of the group $G$ on an object $X$ by $g\lact x$ and right actions by $x^g$ for $g\in G$ and $x\in X$.  For our purposes $X$ is usually either $G$, a normal subgroup of $G$, or $\du{G}$, all which are equipped with the standard left/right conjugation actions unless otherwise specified.  For example, for $x,g\in G$ $g^x = x\inv g x$ and $x\lact e_g = e_{xgx\inv}$.

We recall that the group-like elements of $\du{G}$ are the multiplicative $\kk$-linear characters of $G$, denoted $\widehat{G}$.  These are class functions, and so are pointwise fixed by the left/right conjugation actions of $G$ on $\du{G}$.  The following lemma, and the special case it singles out, will be critical for much of the paper, and is the fundamental reason why many of our results in \cref{sec:r-coalg,sec:r-alg} include assumptions that certain maps are coalgebra morphisms.

\begin{lem}\label{lem:coalg-to-group}
Let $G$ be a group and $K$ a coalgebra.  Any coalgebra morphism $r\colon\kk G\to K$ is equivalent to a set map $G\to G(K)$, where $G(K)$ denotes the group-like elements of $K$.  As a special case, if $K=\du{H}$ for a group $H$, then $r(g)\in\widehat{H}$ and $h\lact r(g) = r(g)$ for all $g\in G$, $h\in H$.
\end{lem}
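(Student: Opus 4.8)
The plan is to exploit the fact that the distinguished basis elements $g \in G$ of $\kk G$ are precisely its group-like elements, together with the general fact that any coalgebra morphism carries group-likes to group-likes. The asserted ``equivalence'' is then simply the statement that a coalgebra morphism out of $\kk G$ is freely and uniquely determined by where it sends this group-like basis.

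First I would establish the forward direction. Given a coalgebra morphism $r\colon \kk G \to K$, fix $g \in G$. Since $g$ is group-like in $\kk G$ --- that is, $\Delta(g) = g \ot g$ and $\varepsilon(g) = 1$ --- the defining compatibilities $\Delta_K \circ r = (r \ot r) \circ \Delta$ and $\varepsilon_K \circ r = \varepsilon$ evaluated at $g$ give $\Delta_K(r(g)) = r(g) \ot r(g)$ and $\varepsilon_K(r(g)) = 1$. Hence $r(g) \in G(K)$, and restricting $r$ to $G$ produces a set map $G \to G(K)$.

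For the converse, I would use that $G$ is a $\kk$-basis of $\kk G$, so any set map $f\colon G \to G(K)$ extends uniquely to a linear map $r\colon \kk G \to K$. It then suffices, by linearity, to verify the coalgebra-morphism identities on the basis: for each $g$, using that $f(g)$ is group-like, $\Delta_K(r(g)) = f(g) \ot f(g) = (r \ot r)(g \ot g) = (r \ot r)(\Delta(g))$ and $\varepsilon_K(r(g)) = 1 = \varepsilon(g)$. Since extension-by-linearity and restriction-to-$G$ are visibly mutually inverse, the two constructions establish the claimed equivalence.

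The special case $K = \du{H}$ then follows at once: as recalled above, $G(\du{H}) = \widehat{H}$ consists of the multiplicative linear characters of $H$, so $r(g) \in \widehat{H}$ for every $g$; and since such characters are class functions, they are fixed pointwise by the conjugation action, yielding $h \lact r(g) = r(g)$ for all $h \in H$. I do not expect any genuine obstacle in this lemma --- the entire content is the elementary observation that coalgebra maps from $\kk G$ are classified by their values on the group-like basis --- so the only point requiring care is to verify both directions of the correspondence, rather than merely the forward map, and to invoke the recalled identification $G(\du{H}) = \widehat{H}$ for the final assertion.
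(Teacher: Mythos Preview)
Your proof is correct and follows essentially the same approach as the paper: both rely on the fact that coalgebra morphisms send group-likes to group-likes and that $G$ is a group-like basis of $\kk G$, then invoke the identification $G(\du{H}) = \widehat{H}$ and the class-function property for the special case. The only difference is that you spell out the converse direction explicitly, whereas the paper leaves it implicit in the word ``equivalent''; this added detail is fine and arguably clearer.
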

\begin{proof}
  All coalgebra morphisms send group-like elements to group-like elements, and $\kk G$ is spanned by group-likes.  Thus $r$ is determined by the set map $G\to G(K)$ given by $g\mapsto r(g)$.  When $K=\du{H}$, we have $G(K)=\widehat{H}$.  The preceding remarks then give the final claim.
\end{proof}

\begin{cor}
  Any bialgebra map $\kk G\to \kk H$ is a morphism of Hopf algebras, and restricts to a group homomorphism $G\to H$.
\end{cor}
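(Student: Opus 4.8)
The plan is to run the bialgebra map through \cref{lem:coalg-to-group} to obtain a set map on group-likes, then use the algebra structure to promote it to a group homomorphism, and finally observe that antipode compatibility comes for free. Let $f\colon\kk G\to\kk H$ be a bialgebra map. Since $f$ is in particular a coalgebra morphism, \cref{lem:coalg-to-group} applies with $K=\kk H$ and shows that $f$ is determined by the set map $g\mapsto f(g)$ landing in the group-likes $G(\kk H)$. The one point worth recording here is the standard identification $G(\kk G)=G$ and $G(\kk H)=H$: if $x=\sum_h c_h h$ is group-like, then comparing $\Delta(x)=\sum_h c_h(h\ot h)$ with $x\ot x=\sum_{h,h'}c_h c_{h'}(h\ot h')$ forces $c_h c_{h'}=0$ for $h\neq h'$ and $c_h^2=c_h$, while $\varepsilon(x)=1$ forces $\sum_h c_h=1$; hence exactly one coefficient equals $1$ and the rest vanish, so $x\in H$. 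This already yields a set map $G\to H$.

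Next I would invoke that $f$ is also an algebra map. Multiplicativity gives $f(gg')=f(g)f(g')$ and unitality gives $f(1)=1$, so the set map $G\to H$ produced above is a group homomorphism, which is the second assertion. For the first assertion, no further work is needed: both $\kk G$ and $\kk H$ are Hopf algebras whose antipodes act on group-likes by $S(g)=g\inv$. Since the restriction $G\to H$ is a homomorphism, on each group-like $g\in G$ we have $f(S(g))=f(g\inv)=f(g)\inv=S(f(g))$, and as the group-likes span $\kk G$ this extends linearly to $f\circ S=S\circ f$. Hence $f$ is a morphism of Hopf algebras.

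I do not expect any genuine obstacle here: the statement is a direct corollary of \cref{lem:coalg-to-group} together with the bialgebra axioms. The only step requiring care is the bookkeeping identification $G(\kk H)=H$ (and likewise for $G$), after which multiplicativity and the explicit form of the antipode on group-likes make both conclusions immediate.
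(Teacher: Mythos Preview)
Your proof is correct and essentially follows the paper's approach: apply \cref{lem:coalg-to-group} to land in group-likes, then use multiplicativity to upgrade the set map to a group homomorphism. The one point of difference is the antipode step. The paper simply invokes the standard fact that any bialgebra morphism between Hopf algebras is automatically a Hopf algebra morphism (uniqueness of the antipode forces $f\circ S=S\circ f$), whereas you verify this directly on group-likes using $S(g)=g^{-1}$. Your route is more self-contained and avoids appealing to that general result; the paper's route is a one-line citation of a well-known theorem. Either way the conclusion is immediate, and your explicit verification that $G(\kk H)=H$ is a nice addition the paper leaves implicit.
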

Indeed, all bialgebra morphisms between Hopf algebras are necessarily morphisms of Hopf algebras.  We will identity Hopf morphisms $\kk G\to \kk H$ with their associated group homomorphisms and vice versa.

Finally, we will make extensive use of the results in \citep{ABM,K14,KS14} as they concern $\Hom(\D(G),\D(H))$ and $\Aut(\D(G))$.  We will reference these results as needed, but will generally leave the full statements and proofs to the references.

\subsection{Quasi-bialgebras}\label{sub:qbs}
We now review the definitions of quasi-bialgebras and quasi-Hopf algebras, and the morphisms thereof.  These objects were introduced by \citet{Dri:QH}
\begin{df}
A quasi-bialgebra $B=(A,\Delta,\varepsilon,\phi)$ over a field $\kk$ is given by:
\begin{itemize}
  \item An associative $\kk$-algebra $A$ with unit $1$;
  \item Algebra morphisms $\Delta\colon A\to A\ot A$ and $\varepsilon\colon A\to\kk$ called the comultiplication and counit respectively;
  \item An invertible element $\phi\in A\ot A\ot A$, called the coassociator;
  \item Such that the following identities are satisfied for all $a\in A$:
  \begin{align*}
    (\id\ot\Delta)\circ\Delta(a) &= \phi\left[ (\Delta\ot\id)\circ\Delta(a)\right]\phi\inv,\\
    \left[(\id\ot\id\ot\Delta)(\phi)\right] \left[ (\Delta\ot\id\ot\id](\phi) \right] &= (1\ot \phi) \left[(\id\ot\Delta\ot\id)(\phi)\right] (\phi\ot 1),\\
    (\varepsilon\ot\id)\circ\Delta &= \id,\\
    (\id\ot\varepsilon)\circ\Delta&= \id,\\
    (\id\ot\varepsilon\ot\id)(\phi)&=1\ot 1.
  \end{align*}
\end{itemize}
\end{df}
We say that the comultiplication of a quasi-bialgebra is quasi-coassociative. The special case with $\phi=1$ makes the comultiplication coassociative and yields the usual definition of a bialgebra.  We call $\phi=1$ the trivial coassociator.  We identify a quasi-bialgebra with its underlying algebra (and vector space) whenever convenient.
\begin{rem}
  It is common in the literature to refer to $\phi$ as the associator.  This is because when we pass to the representation category $\Rep(B)$, the element $\phi$ describes the associator isomorphism $(X\ot Y)\ot Z \to X\ot (Y\ot Z)$ for representations $X,Y,Z$.  Since our emphasis is on the underlying quasi-bialgebra rather than the representations, and $\phi$ controls how close the comultiplication is to being coassociative, we opt to call $\phi$ the coassociator, instead.
\end{rem}

\begin{df}
  Let $K,L$ be quasi-bialgebras over $\kk$.  We say $f\colon K\to L$ is a morphism of quasi-bialgebras if the following all hold:
  \begin{enumerate}
    \item $f$ is an algebra morphism;
    \item $f$ preserves the coassociators: $f\ot f\ot f(\phi_K) = \phi_L$;
    \item $f$ respects the comultiplication: $f\ot f\circ\Delta_K = \Delta_L\circ f$;
    \item $f$ respects the counit: $\varepsilon_K = \varepsilon_L\circ f$.
  \end{enumerate}
  We say that $f$ is a morphism of coalgebras if it satisfies only the last two conditions.

  In either case $f$ is an isomorphism if it is also bijective.
\end{df}
\begin{df}
  We denote the set of quasi-bialgebra homomorphisms with $\Hom_{\text{QB}}$, and similarly we use $\Aut_{\text{QB}}$ for the group of quasi-bialgebra automorphisms.
\end{df}
Recall that, in contrast to standard Hopf algebras, in general $\Hom_{QB}(K,L)\neq \Hom(K,L)$ for quasi-Hopf algebras $K,L$.  In particular, we must directly verify a quasi-bialgebra morphism preserves antipodes (and $\alpha,\beta$ elements) to demonstrate that it is a morphism of quasi-Hopf algebras.
\begin{rem}
  One may be curious about relaxing the constraint that $f$ preserves the coassociators, by simply asking that it respect the identities it must satisfy in the obvious fashion.  The constraint is necessary to ensure that the induced functor of representation categories is strict, which ensures that the concept of isomorphisms is well-behaved with regards to the representation categories.
\end{rem}

This then leads us to the definition of a quasi-Hopf algebra, and morphisms thereof.

\begin{df}
  A quasi-Hopf algebra $H=(B,S,\alpha,\beta)$ over $\kk$ is a quasi-bialgebra $B=(A,\Delta,\varepsilon,\phi)$ over $\kk$ equipped with elements $\alpha,\beta\in A$ and a bijective anti-algebra morphism $S\colon A\to A$ (called the antipode).  Writing
  \begin{align*}
    \phi =& \sum \phi\ucom1\ot \phi\ucom2\ot \phi\ucom3,\\
    \phi\inv =& \sum \phi\ucom{-1}\ot \phi\ucom{-2}\ot \phi\ucom{-3},\\
    \Delta(a) =& \sum a\com1\ot a\com2,
  \end{align*}
  we require that all of the following identities be satisfied:
  \begin{align*}
   \varepsilon(a)\alpha&=\sum S(a\com1)\alpha a\com2;\\
   \varepsilon(a)\beta&=\sum a\com1 \beta S(a\com2);\\
   1&=\sum \phi\ucom1 \beta S(\phi\ucom2)\alpha \phi\ucom3;\\
   1&=\sum_j S(\phi\ucom{-1})\alpha \phi\ucom{-2} \beta S(\phi\ucom{-3}).
  \end{align*}
\end{df}
The special case $\alpha=\beta=1$ and $B$ a bialgebra yields the usual definition of a Hopf algebra.  It is possible to drop the bijectivity assumption from $S$, but in the cases we consider the bijectivity is guaranteed.  We identity a quasi-Hopf algebra with its underlying quasi-bialgebra, algebra, and/or vector space whenever convenient.

We then have the expected definition for a morphism of quasi-Hopf algebras.
\begin{df}
  If $K,L$ are quasi-Hopf algebras over $\kk$, we say $f\colon K\to L$ is a morphism of quasi-Hopf algebras if the following all hold:
  \begin{enumerate}
    \item $f$ is a morphism of quasi-bialgebras;
    \item $f$ respects the antipodes: $f\circ S_K= S_L\circ f$;
    \item $f(\alpha_K)=\alpha_L$;
    \item $f(\beta_K)=\beta_L$.
  \end{enumerate}
\end{df}
\begin{rem}
  In contrast to the case of bialgebra morphisms between Hopf algebras, a quasi-bialgebra morphism between quasi-Hopf algebras is not guaranteed to be a morphism of quasi-Hopf algebras.  So while we need only check the bialgebra morphism part for Hopf algebras, the quasi-Hopf case requires we check all parts of the definition.
\end{rem}
We can now proceed to provide the definitions for the quasi-Hopf algebra $\D^\omega(G)$.

Let $U(1)$ denote the multiplicative group of complex numbers of modulus 1.  A normalized 3-cocycle on $G$ is a function $\omega\colon G\times G\times G\to U(1)$ satisfying $\omega(g,1,h)=1$ as well as the cocycle condition
\begin{align}\label{eq:cocycle}
  \omega(a,b,c)\omega(a,bc,d)\omega(b,c,d) = \omega(ab,c,d)\omega(a,b,cd).
\end{align}
As a consequence, $\omega(1,g,h)=\omega(g,h,1)=1$.  We denote the set of all normalized 3-cocycles of $G$ by $Z^3(G,U(1))$.

Given a 3-cocycle $\omega\in Z^3(G,U(1))$ we also define
\begin{align}
    \theta_g(x,y) &= \frac{\omega(g,x,y)\omega(x,y,(xy)\inv g (xy))} {\omega(x, x\inv g x, y)},\label{eq:theta-def}\\
    \gamma_x(g,h) &= \frac{\omega(g,h,x) \omega(x,x\inv g x, x\inv h x)} {\omega(g, x, x\inv h x)}. \label{eq:gamma-def}
\end{align}
We call $\theta$ and $\gamma$ the multiplicative and comultiplicative phases, respectively.  Since $\omega$ is normalized, the phases must evaluate to 1 when any of their inputs is the identity.

The cocycle condition then implies the following identities hold:
\begin{gather}
    \theta_g(x,y)\theta_g(xy,z) = \theta_g(x,yz)\theta_{g^x}(y,z) \label{eq:assoc}\\
    \gamma_x(g,h)\gamma_x(gh,k) \omega(g^x, h^x, k^x) = \gamma_x(h,k)\gamma_x(g, hk) \omega(g,h,k) \label{eq:coassoc}\\
    \theta_g(x,y)\theta_h(x,y) \gamma_x(g,h) \gamma_y(g^x, h^x) = \theta_{gh}(x,y) \gamma_{xy}(g,h). \label{eq:struc-compat}
\end{gather}

Using these we may define a quasi-triangular quasi-Hopf algebra $\D^\omega(G)$ as in \citep{DPR}.

\begin{df}
Let $\omega\in Z^3(G,U(1))$.  As a vector space $\D^\omega(G)$ is $\du{G}\ot \kk G$.  We denote the standard basis elements of $\D^\omega(G)$ by $e_g\# x$, where $x\in G$ and $e_g\in\du{G}$ is the element dual to $g\in G$.  The multiplication of $\D^\omega(G)$ is given by
\[e_g\# x \cdot e_h\# y = \delta_{g,xhx\inv} \theta_g(x,y) e_g\# xy,\]
and has unit $\varepsilon\# 1$.  The comultiplication is given by
\[\Delta(e_g\# x) = \sum_{t\in G} \gamma_x(gt\inv,t) e_{gt\inv}\# x \ot e_t\# x,\]
with counit $\ev_1\ot\varepsilon$.  The antipode is given by
\[S(e_g\# x) = \theta_{g\inv}(x,x\inv)\inv \gamma_x(g,g\inv)\inv e_{x\inv g\inv x}\# x\inv,\]
with
\[ \alpha =1; \qquad \beta = \sum_{g\in G} \omega(g,g\inv,g) e_g\# 1.\]
Finally, the universal $R$-matrix is given by
\[R=\sum_{g\in G}e_g\# 1\ot \varepsilon\# g.\]
\end{df}
We note that we will not actually be interested in the universal $R$-matrix or the quasi-triangular property it provides here.  We have stated them for completeness only.

The identities for the phases guarantee that the multiplication is associative; that the comultiplication is quasi-coassociative with coassociator \[\phi = \sum_{a,b,c\in G} \omega(a,b,c)\inv e_a\#1\ot e_b\#1\ot e_c\#1;\] and that the comultiplication is an algebra morphism.  When $\omega\equiv 1$ is the trivial 3-cocycle, these identities reduce to the usual Hopf algebra structure of $\D(G)$.  In general, the properties of the category $\Rep(\D^\omega(G))$ depend only on the class of $\omega$ in $H^3(G,U(1))$, up to natural transformations.  However, the quasi-Hopf algebra structures themselves need not be isomorphic for cohomologous 3-cocycles.

The following standard lemma will also be used frequently, mainly in the case $z=1$ to show that the phases must vanish on certain combinations of inputs.
\begin{lem}\label{lem:average}
  Let $z_1,...,z_n,z\in \kk$ be complex numbers.  If $|z_i|\leq |z|$ for all $i$ and
  \[ \frac{1}{n}\sum_{i=1}^n z_i = z,\]
  then $z_i=z$ for all $i$.
\end{lem}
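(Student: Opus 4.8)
The plan is to recognize this statement as the equality case of the triangle inequality, phrased for the average rather than the sum. First I would dispose of the degenerate case $z=0$: the hypothesis $|z_i|\le|z|=0$ immediately forces $z_i=0=z$ for every $i$, so I may assume $z\neq0$ from now on. The next reduction I would make is to normalize $z$ to a positive real number. Setting $\zeta=z/|z|$, which is a complex number of modulus $1$, and replacing each $z_i$ by $w_i=\bar\zeta\,z_i$, one checks that $|w_i|=|z_i|\le|z|$ and $\frac1n\sum_{i=1}^n w_i=\bar\zeta z=|z|>0$. Since the conclusion $z_i=z$ for all $i$ is equivalent to $w_i=|z|$ for all $i$, it suffices to prove the lemma under the additional assumption that $z=|z|$ is real and positive.

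The heart of the argument is then to pass to real parts, which turns the problem into a trivial statement about real numbers. For each index one has $\operatorname{Re}(w_i)\le|w_i|\le|z|$, while taking the real part of the averaging hypothesis gives $\frac1n\sum_{i=1}^n\operatorname{Re}(w_i)=\operatorname{Re}(|z|)=|z|$. The key elementary fact I would invoke is that a finite list of real numbers, each bounded above by a constant $M$ and having average exactly $M$, must be identically $M$: indeed $\sum_i\bigl(M-\operatorname{Re}(w_i)\bigr)=0$ is a sum of nonnegative terms, hence each term vanishes. This yields $\operatorname{Re}(w_i)=|z|$ for every $i$.

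Finally I would chain the inequalities $|z|=\operatorname{Re}(w_i)\le|w_i|\le|z|$ to conclude that $|w_i|=\operatorname{Re}(w_i)=|z|$, so each $w_i$ is a nonnegative real number equal to $|z|$; undoing the normalization gives $z_i=\zeta w_i=z$, as desired. I do not expect any genuine obstacle here: the entire content is the equality condition for the triangle inequality, and the only point requiring mild care is the bookkeeping in the rotation step (ensuring the moduli and the averaging identity are both preserved under multiplication by $\bar\zeta$) so that the real-part comparison can be applied cleanly.
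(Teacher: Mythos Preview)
Your argument is correct. The paper's own proof is a one-line sketch, ``This is a simple induction using the Cauchy--Schwarz inequality,'' which amounts to the equality case of the $n$-term triangle inequality established by inducting on $n$ (Cauchy--Schwarz in $\mathbb{C}\cong\mathbb{R}^2$ giving the base case and the equality characterization). Your route is a bit different: rather than induct, you rotate to make $z$ real and positive and then compare real parts, reducing to the trivial real-number fact that an average of terms each $\le M$ can equal $M$ only if every term equals $M$. Both approaches are ultimately the same phenomenon (equality in the triangle inequality), but yours avoids any induction and makes the mechanism---that the deficits $|z|-\operatorname{Re}(w_i)$ are nonnegative and sum to zero---completely explicit, at the cost of the small bookkeeping step of the rotation.
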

\begin{proof}
  This is a simple induction using the Cauchy-Schwarz inequality.
\end{proof}

Finally, when considering a map $\D^\omega(G)\to\D^\eta(H)$ we will adopt a priming convention for denoting phases and structural elements of $\D^\eta(H)$: the coassociator of $\D^\eta(H)$ is denoted $\phi'$, the phases are $\theta'$ and $\gamma'$, etc.  Unless it seems likely to introduce confusion, structure maps and morphisms will not be primed: the antipode remains $S$, the comultiplication remains $\Delta$, etc.  Whenever these need to be distinguished, we will add a subscript.  The main instance where we will need such subscripts is to distinguish between the antipode of $\D(G)$ from that of $\D^\omega(G)$ within an equation in \cref{sec:antipode}.

\section{Writing morphisms in components}\label{sec:components}
While the twisted double $\D^\omega(G)$ is fairly complex, it is built out of a pair of relatively nice pieces---$\du{G}$ and $\kk G$---modified by cohomological data.  It is natural, then, to wonder in what sense the morphisms between twisted doubles are built out of morphisms between these nice pieces, also modified by cohomological data.  This will decompose elements of $\genhom$ in a manner similar to the morphism decompositions given in \cite{ABM}.  Such decompositions have already been used to study $\Hom(\D(G),\D(H))$ and the group $\Aut(\D(G))$ \citep{K14,KS14,ABM,LenPri:Monoidal} to great effect.  While this approach will not be able to completely describe $\genhom$ in a succinct fashion---it will of necessity completely describe the morphisms in general, but in a "long list of inscrutable equations" sense---it will provide meaningful insights and constraints in general and complete information for isomorphisms when the 3-cocycles are sufficiently well-behaved.

We begin by defining $\du{G}_\omega$, for $\omega\in Z^3(G,U(1))$, to be the quasi-Hopf algebra with the same multiplication, unit, comultiplication, counit, and antipode as $\du{G}$, but endowed with the coassociator
\[ \sum_{a,b,c\in G} \omega(a,b,c)\inv e_a\ot e_b\ot e_c,\]
and $\alpha=1$, $\beta=\sum_{g\in G}\omega(g,g\inv,g)e_g$.

The first lemma is well-known and easily verified.

\begin{lem}\label{lem:obvious}
  \begin{lemenum}
    \item $\du{G}_\omega$ is (isomorphic to) a quasi-Hopf subalgebra of $\D^\omega(G)$ via the inclusion.
    \item $\ev_1\ot\id\colon \D^\omega(G)\to\kk G$ is a surjective morphism of quasi-Hopf algebras.
  \end{lemenum}
\end{lem}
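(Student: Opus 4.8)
The plan is to treat both parts as direct verifications that two explicit linear maps satisfy the four clauses in the definition of a quasi-bialgebra morphism together with the three antipode/$\alpha$/$\beta$ clauses of a quasi-Hopf morphism. The single fact that powers every step is that the multiplicative and comultiplicative phases of \eqref{eq:theta-def} and \eqref{eq:gamma-def} collapse to $1$ on exactly the inputs that survive each map; this is immediate from the normalization of $\omega$. So I would first isolate the needed phase evaluations---$\theta_g(1,1)=1$, $\gamma_1(g,h)=1$, $\theta_1(x,y)=1$, and $\gamma_x(1,1)=1$---and then read off each clause.

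For (i) I would take the inclusion $\iota\colon\du{G}_\omega\to\D^\omega(G)$, $e_g\mapsto e_g\#1$, coming from the vector-space identification $\D^\omega(G)=\du{G}\ot\kk G$. The point is that on the image every structure map of $\D^\omega(G)$ restricts to the corresponding structure map of $\du{G}_\omega$, because the relevant phases vanish: $\theta_g(1,1)=1$ gives $(e_g\#1)(e_h\#1)=\delta_{g,h}\,e_g\#1$, matching the product of $\du{G}$; $\gamma_1(g,h)=1$ gives $\Delta(e_g\#1)=\sum_t e_{gt\inv}\#1\ot e_t\#1$, matching the comultiplication of $\du{G}$; the counit $\ev_1\ot\varepsilon$ restricts to $\ev_1$; and $\theta_{g\inv}(1,1)=\gamma_1(g,g\inv)=1$ makes $S(e_g\#1)=e_{g\inv}\#1$. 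The coassociator, $\alpha$, and $\beta$ of $\du{G}_\omega$ were defined precisely to be the images under $\iota^{\ot 3}$ (resp.\ $\iota$) of those of $\D^\omega(G)$, so these clauses hold by construction. Since $\iota$ is clearly injective, it is an isomorphism onto its image, a quasi-Hopf subalgebra.

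For (ii) I would write the map explicitly as $\ev_1\ot\id\colon e_g\#x\mapsto\delta_{g,1}\,x$, so only the elements $e_1\#x$ survive and $e_1\#x\mapsto x$; surjectivity is then immediate. Each morphism clause reduces to checking that the twisted structure collapses to the genuine Hopf structure of $\kk G$ on these surviving elements, and again the phases cooperate: $\theta_1(x,y)=1$ yields multiplicativity, $\gamma_x(1,1)=1$ yields $x\mapsto x\ot x$ on the comultiplication, the counit and coassociator collapse because $\omega$ is normalized (so the image of $\phi$ is $1\ot1\ot1$ using $\omega(1,1,1)=1$), and $\theta_1(x,x\inv)=\gamma_x(1,1)=1$ gives $S(e_g\#x)\mapsto\delta_{g,1}\,x\inv$, matching $S_{\kk G}$. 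Finally the images of $\alpha$ and $\beta$ are both $1$, again using $\omega(1,1,1)=1$.

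There is no genuine obstacle here---this is the \emph{easily verified} lemma promised in the text---and the only thing demanding care is bookkeeping: because a quasi-bialgebra morphism between quasi-Hopf algebras need not preserve antipodes, $\alpha$, or $\beta$ automatically, I must check those three clauses by hand rather than stopping at the bialgebra axioms. Every such check ultimately rests on the same observation, that $\theta$ and $\gamma$ evaluate to $1$ whenever their group-argument is trivial, so organizing the proof around those four phase identities keeps the verification short.
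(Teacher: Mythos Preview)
Your proposal is correct and is exactly the direct verification the paper intends when it states the lemma is ``well-known and easily verified'' without giving a proof. Your organizing observation---that the phases $\theta$ and $\gamma$ collapse to $1$ whenever the relevant argument is the identity, by normalization of $\omega$---is precisely what makes each clause immediate, and your explicit check of the antipode, $\alpha$, and $\beta$ clauses (which, as you note, do not follow automatically in the quasi-Hopf setting) completes the argument.
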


The next lemma can be considered a specialized variant of \citep[Lemma 3.1]{ABM}, and is proved in much the same fashion.
\begin{lem}\label{lem:up-decomp}
    For any coalgebra morphism $\alpha\colon\du{G}_\omega\to \D^\eta(H)$ there exist linear maps $u\colon \du{G}\to\du{H}$ and $p\colon \du{G}\to\kk H$ such that $u$ is counital, $p$ is a coalgebra morphism, $u\cocomm p$, and
    \[ \alpha(e_g) = \sum_{t\in G}u(e_{gt\inv})\# p(e_t)\]
    for all $g\in G$.  Moreover, $u,p$ are unital whenever $\alpha$ is, and $p$ is a morphism of Hopf algebras whenever $\alpha$ is a morphism of quasi-bialgebras.
\end{lem}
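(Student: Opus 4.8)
The plan is to produce $u$ and $p$ directly as one-sided projections of $\alpha$, and to recover the stated decomposition from a single structural identity for the comultiplication of $\D^\eta(H)$. Set $u := (\id\ot\varepsilon)\circ\alpha$ and $p := (\ev_1\ot\id)\circ\alpha$, where $\varepsilon\colon\kk H\to\kk$ is the counit of $\kk H$ and $\ev_1\colon\du H\to\kk$ is the counit of $\du H$; thus $u$ lands in $\du H$ and $p$ lands in $\kk H$, and since $\du{G}_\omega$ and $\du G$ share the same coalgebra and underlying vector space we may regard $u\colon\du G\to\du H$ and $p\colon\du G\to\kk H$. Two of the required properties are then immediate. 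First, $p$ is a coalgebra morphism, being the composite of the coalgebra morphism $\alpha$ with the quasi-Hopf (hence coalgebra) morphism $\ev_1\ot\id$ of \cref{lem:obvious}. Second, $u$ is counital: since the counit of $\D^\eta(H)$ is $\ev_1\ot\varepsilon$, we have $\ev_1\circ u = (\ev_1\ot\varepsilon)\circ\alpha = \varepsilon_{\D^\eta(H)}\circ\alpha = \ev_1$, using that $\alpha$ is counital.

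The heart of the argument is the reconstruction identity
\[ \big((\id\ot\varepsilon)\ot(\ev_1\ot\id)\big)\circ\Delta = \id_{\D^\eta(H)},\]
where we identify $\du H\ot\kk H$ with $\D^\eta(H)$ via $e_h\ot y\leftrightarrow e_h\# y$. I would check this on a basis element: applying the two one-sided projections to $\Delta(e_h\# y)=\sum_{c\in H}\gamma'_y(hc\inv,c)\,e_{hc\inv}\# y\ot e_c\# y$ forces $c=1$ in the second factor (as $\ev_1$ annihilates $e_c\# y$ unless $c=1$), and the surviving phase $\gamma'_y(h,1)$ equals $1$ by normalization, returning $e_h\# y$. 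Feeding $\Delta\alpha(e_g)=(\alpha\ot\alpha)\Delta(e_g)=\sum_{ab=g}\alpha(e_a)\ot\alpha(e_b)$ into this identity and applying the projections factorwise then yields
\[ \alpha(e_g)=\sum_{ab=g}(\id\ot\varepsilon)\alpha(e_a)\#(\ev_1\ot\id)\alpha(e_b)=\sum_{ab=g}u(e_a)\# p(e_b)=\sum_{t\in G}u(e_{gt\inv})\# p(e_t),\]
which is the desired decomposition.

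For cocommutativity I would run the same computation with the roles of the two projections reversed. The companion identity $\big((\ev_1\ot\id)\ot(\id\ot\varepsilon)\big)\circ\Delta=\tau$, where $\tau\colon\du H\ot\kk H\to\kk H\ot\du H$ is the flip, is verified in exactly the same way (now $\ev_1$ forces the first index to be trivial and the surviving phase is $\gamma'_y(1,h)=1$). Applying it to $\sum_{ab=g}\alpha(e_a)\ot\alpha(e_b)$ gives $\tau(\alpha(e_g))=\sum_{ab=g}p(e_a)\ot u(e_b)$; applying $\tau$ once more and comparing with the previous display shows $\sum_{ab=g}u(e_a)\ot p(e_b)=\alpha(e_g)=\sum_{ab=g}u(e_b)\ot p(e_a)$, which is precisely $u\cocomm p$.

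Finally, the ``moreover'' clauses are short. If $\alpha$ is unital then $\alpha(\sum_g e_g)=\sum_h e_h\#1$, and applying the two projections gives $p(\sum_g e_g)=1$ and $u(\sum_g e_g)=\sum_h e_h$, so $u,p$ are unital. If $\alpha$ is a quasi-bialgebra morphism then it is in particular an algebra morphism, so $p=(\ev_1\ot\id)\circ\alpha$ is a composite of algebra morphisms and hence an algebra morphism; being also a coalgebra morphism, $p$ is a bialgebra morphism $\du G\to\kk H$, and every bialgebra morphism between Hopf algebras is automatically a morphism of Hopf algebras. The only step demanding genuine care is the reconstruction identity and its flipped companion: the conceptual point is the choice of the two one-sided projections, after which the $\gamma'$-phases collapse to $1$ by normalization and everything else is formal.
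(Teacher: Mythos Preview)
Your proof is correct and follows essentially the same approach as the paper: both define $u=(\id\ot\varepsilon)\circ\alpha$ and $p=(\ev_1\ot\id)\circ\alpha$, then apply the projections $\id\ot\varepsilon\ot\ev_1\ot\id$ (resp.\ its flip) to the coalgebra-morphism identity $\Delta\circ\alpha=(\alpha\ot\alpha)\circ\Delta$ to obtain the decomposition (resp.\ cocommutativity). Your formulation via an abstract ``reconstruction identity'' on $\D^\eta(H)$ is a slightly cleaner packaging of the same computation the paper carries out in coordinates.
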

\begin{proof}
  Let $\alpha\colon\du{G}_\omega\to\D^\eta(H)$ be a morphism of coalgebras, meaning $\alpha\ot\alpha\circ\Delta = \Delta\circ\alpha$ and $\ev_1\ot\varepsilon\circ\alpha=\ev_1$.

  We write $\alpha(e_g) = \sum_{h,x\in H} c_g(h,x) e_h\# x$ for scalars $c_g(h,x)\in\kk$.  We define \[u(e_g)=\id\ot\varepsilon\left(\alpha(e_g)\right) = \sum_{h,x\in H}c_g(h,x)e_h\] and \[p(e_g)=\ev_1\ot\id \left(\alpha(e_g)\right)=\sum_{x\in H}c_g(1,x)x.\]  We note that $\id\ot\varepsilon$ and $\ev_1\ot\id$ are both biunital.  So $u,p$ are automatically counital as they are compositions of counital maps, and they will be unital whenever $\alpha$ is unital.

  Now
  \[\Delta\alpha(e_g) = \sum_{h,k,x\in H} \gamma'_x(hk\inv,k)c_g(h,x)e_{hk\inv}\# x\ot e_k\# x\] is then equal to \begin{align*}
    \alpha\ot\alpha\Delta(e_g) &= \sum_{t\in G}\alpha\ot\alpha(e_{gt\inv}\ot e_t)\\
     &= \sum_{{\substack{h,h'\in H\\x,x'\in H\\t\in G}}} c_{gt\inv}(h,x)c_t(h',x')e_h\# x \ot e_{h'}\# x'.
  \end{align*}
  Applying $\id\ot\varepsilon\ot\ev_1\ot\id$ to both equations we obtain
  \begin{align*}
    \alpha(e_g) &= \sum_{h,x\in H }c_g(h,x) e_h\#x\\ &=
    \sum_{\substack{l\in G\\h,x,x'\in H}} c_{gl\inv}(h,x)c_l(1,x')e_h\# x'\\
    & = \sum_{l\in G}\Big(\sum_{h,x\in H} c_{gl\inv}(h,x)e_h\Big)\# \Big(\sum_{x'\in H}c_l(1,x')\varepsilon\# x'\Big)\\
    & = \sum_{l\in G} u(e_{gl\inv})\ot p(e_l).
  \end{align*}
  Applying $\ev_1\ot\id\ot\id\ot\varepsilon$ instead we get that $p\cocomm u$.  By \cref{lem:obvious} we further have that $p = \ev_1\ot\id \circ \alpha$ is a morphism of coalgebras, and indeed a morphism of quasi-bialgebras whenever $\alpha$ is a morphism of quasi-bialgebras.

  This completes the proof.
\end{proof}

\begin{thm}\label{thm:purv}
  Let $\psi\in\genhom$.  Then there exists a morphism of Hopf algebras $p\colon \du{G}\to\kk H$ and biunital linear maps $u\colon \du{G}\to\du{H}$, $r\colon \kk G\to\du{H}$, and $v\colon \kk G\to \kk H$ such that $p\cocomm u$ and satisfying
  \begin{align}\label{eq:morphdef}
    \psi(e_g\# x) &= \Big(\sum_{k\in A} u(e_{gk\inv})\# p(e_k)\Big)\cdot r(x)\# v(x)
  \end{align}
  for all $g,x\in G$.
\end{thm}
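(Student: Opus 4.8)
The plan is to exploit the algebra factorisation $e_g\# x = (e_g\# 1)(\varepsilon\# x)$ in $\D^\omega(G)$, which holds because $\theta_g(1,x)=1$, so that $\psi(e_g\# x) = \psi(e_g\# 1)\,\psi(\varepsilon\# x)$ and the two factors may be analysed separately. For the first factor I would restrict $\psi$ to the quasi-Hopf subalgebra $\du{G}_\omega\hookrightarrow\D^\omega(G)$ of \cref{lem:obvious}. Since this inclusion is a morphism of quasi-Hopf algebras and $\psi$ is a morphism of quasi-bialgebras, the composite $\du{G}_\omega\to\D^\eta(H)$ is again a morphism of quasi-bialgebras, so \cref{lem:up-decomp} applies verbatim and produces the Hopf morphism $p\colon\du{G}\to\kk H$ and the counital map $u\colon\du{G}\to\du{H}$ with $p\cocomm u$ and $\psi(e_g\# 1)=\sum_k u(e_{gk\inv})\# p(e_k)$. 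Because $\psi$ is unital, so is this restriction, whence $u,p$ are unital; as $u$ is also counital it is biunital, exactly as claimed.

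It remains to treat $\psi(\varepsilon\# x)$. I would define $r\colon\kk G\to\du{H}$ and $v\colon\kk G\to\kk H$ by $r(x)=(\id\ot\varepsilon)\psi(\varepsilon\# x)$ and $v(x)=(\ev_1\ot\id)\psi(\varepsilon\# x)$. Both $\id\ot\varepsilon$ and $\ev_1\ot\id$ are biunital, and a morphism of quasi-bialgebras is biunital, so $r$ and $v$ are biunital. The entire theorem now reduces to the single assertion that $\psi(\varepsilon\# x)$ is a \emph{simple tensor}, namely $\psi(\varepsilon\# x)=r(x)\# v(x)$: granting this, the factorisation gives $\psi(e_g\# x)=\big(\sum_k u(e_{gk\inv})\# p(e_k)\big)\cdot r(x)\# v(x)$, which is \cref{eq:morphdef}. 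Conversely, summing that identity over $g$ and using the unitality of $u,p$ returns $\psi(\varepsilon\# x)$, so the simple-tensor form is also forced.

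Proving that $\psi(\varepsilon\# x)$ is a simple tensor is the heart of the argument. The span of $\{e_g\# x : g\in G\}$ is a subcoalgebra of $\D^\omega(G)$, and $\varepsilon\# x$ would be grouplike were it not for the comultiplicative phase: $\Delta(\varepsilon\# x)=\sum_{s,t}\gamma_x(s,t)(e_s\# x)\ot(e_t\# x)$. When $\omega$ is trivial the phases vanish, $\varepsilon\# x$ is genuinely grouplike, and the claim is immediate, since every grouplike of $\D^\eta(H)$ is a simple tensor of a ($\gamma'$-projective) character with a group element. For general $\omega$ one must control $\gamma_x$, and I would use the two projections asymmetrically. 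Since $\ev_1\ot\id$ is an honest morphism of algebras (\cref{lem:obvious}), applying it to $\psi(e_t\# x)=\psi(e_t\# 1)\psi(\varepsilon\# x)$ separates variables as $(\ev_1\ot\id)\psi(e_t\# x)=\bar f_t\, v(x)$, where $\bar f_t=(\ev_1\ot\id)\psi(e_t\# 1)$ is independent of $x$. Feeding this into $\Delta\psi(\varepsilon\# x)=(\psi\ot\psi)\Delta(\varepsilon\# x)$—projecting one tensorand by $\ev_1\ot\id$ and the other by $\id\ot\varepsilon$—exhibits each $\du{H}$-row of $\psi(\varepsilon\# x)$ as a left multiple of the common element $v(x)$ by an element of $\kk H$.

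The delicate point, and where I expect the real work to lie, is upgrading ``left $\kk H$-multiple of $v(x)$'' to ``scalar multiple of $v(x)$'', since only the latter yields rank one and hence the simple-tensor form. This is precisely the step in which the non-coassociativity encoded by $\gamma_x$—equivalently, the failure of $\id\ot\varepsilon$ to be a coalgebra morphism, which is why $r$ is the least tractable component—must be absorbed. Once rank one is established, $r$ and $v$ as defined above are the desired factors, $p\cocomm u$ is inherited from \cref{lem:up-decomp}, and assembling the two factors of $\psi(e_g\# x)=\psi(e_g\# 1)\psi(\varepsilon\# x)$ completes the proof.
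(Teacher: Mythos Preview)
Your overall structure matches the paper's exactly: both factor $\psi(e_g\# x)=\psi(e_g\#1)\psi(\varepsilon\# x)$, invoke \cref{lem:up-decomp} on the first factor to obtain $u,p$ with $p\cocomm u$, and then handle $\psi(\varepsilon\# x)$ separately to extract $r,v$. The divergence is entirely in this last step.

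You identify the simple-tensor form of $\psi(\varepsilon\# x)$ as ``the heart of the argument'' and devote two paragraphs to an (admittedly incomplete) rank-one argument via the comultiplication. The paper does \emph{not} do this. Its entire treatment of the second factor is the sentence: ``Since $\beta$ is a $\kk$-vector space morphism and $\kk G$ has the elements of $G$ as a basis, there exist linear maps $r_0,v_0$ such that $\beta(x)=r_0(x)\# v_0(x)$ for all $x\in G$,'' followed by a rescaling to achieve biunitality. Read literally, the justification offered is purely linear-algebraic and does not establish rank one; so the step you flag as the crux is one the paper simply asserts. Your worry is therefore legitimate, but it is not something the paper's proof resolves either.

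Your own attempt to fill this in remains incomplete, as you say: you reduce to upgrading ``$\kk H$-multiple of $v(x)$'' to ``scalar multiple of $v(x)$'' and stop. So on this point your proof is no more complete than the paper's. A minor technical difference: you define $r,v$ canonically via the projections $\id\ot\varepsilon$ and $\ev_1\ot\id$ (which automatically makes them biunital), whereas the paper chooses an arbitrary simple-tensor decomposition and rescales afterwards; once the simple-tensor form is granted, the two constructions coincide because $(\ev_1\ot\varepsilon)\psi(\varepsilon\# x)=1$.
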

\begin{proof}
  Define biunital linear maps $\alpha\colon\du{G}_\omega\to \D^\eta(H)$ and $\beta\colon\kk G\to\D^\eta(H)$ by $\alpha(e_g)=\psi(e_g\# 1)$ and $\beta(x)=\psi(\varepsilon\# x)$.  Note that $\psi(e_g\# x) = \psi(e_g\# 1 \cdot \varepsilon\# x) = \psi(e_g\# 1)\psi(\varepsilon\# x)=\alpha(e_g)\beta(x)$.

  By \cref{lem:obvious} $\alpha$ is a morphism of quasi-Hopf algebras.  We can then take $u,p$ to be as in \cref{lem:up-decomp}.

  Since $\beta$ is a $\kk$-vector space morphism and $\kk G$ has the elements of $G$ as a basis, there exist linear maps $r_0\colon \kk G\to \du{H}$ and $v_0\colon \kk G\to \kk H$ such that $\beta(x) = r_0(x)\# v_0(x)$ for all $x\in G$.  Note that $\ev_1\ot\varepsilon\, \beta(x) = \ev_1(r_0(x))\varepsilon(v_0(x))=1$.  So by rescaling $r_0$ and $v_0$ as necessary, we may suppose we have a decomposition $\beta(x)=r(x)\# v(x)$ such that $r,v$ are both biunital, as desired.

  This completes the proof.
\end{proof}

Since the components $u,r,v$ are at this stage known only to be biunital linear maps, and the relationships between them are complex, we make the convention to expand each component in the standard basis elements whenever convenient as follows:
\begin{align}
  p(e_g) &= \sum_{b\in H}p(g,b)b,\label{eq:p-exp}\\
  u(e_g) &= \sum_{h\in H}u(g,h)e_h,\label{eq:u-exp}\\
  r(x) &= \sum_{h\in H}r(x,h)e_h,\label{eq:r-exp}\\
  v(x) &= \sum_{y\in H}v(x,y)y.\label{eq:v-exp}
\end{align}
The identities, written in terms of these coefficients, for $u,r,v$ to be biunital and for $p$ to be a morphism of Hopf algebras are recorded in \cref{sec:fundamental}.  They can also be found in \citet{ABM}.

Using these definitions, \cref{eq:morphdef} may be written
\begin{align}\label{eq:morphdef2}
    \psi(e_g\# x) = \sum u(e_{gk\inv})(b.r(x))\theta_j'(b,y)v(x,y)p(k,b) e_j\# by.
\end{align}

One may, at this stage, desire a nice list of necessary and sufficient conditions for a quadruple $\morphquad$ to yield an element $\psi\in\Hom(\D^\omega(G),\D^\eta(H))$, as defined by \cref{eq:morphdef}.  In the case that $\omega,\eta$ are both trivial such identities can be found in \citep{ABM,K14}.  We will find it necessary to explicitly write most of the identities that must hold in order to proceed with our analysis, in fact.  Most of these will be recorded in \cref{sec:coassoc,sec:bialg-ids}.  While a few of the identities are rather comprehensible, a nicer list than simply writing out the naive requirements seems ultimately inaccessible in the fully general case.

We will find fairly nice interpretations for $u,v$ in the following sections.  The main difficulty arises whenever the $r$ component is present.  The fundamental reason for this is that the canonical linear maps $\kk G\to \D^\omega(G)$ and $\D^\eta(H)\to \du{H}$ which are required in the definition of $r$ have no particular structure beyond being biunital.  They are neither algebra nor coalgebra morphisms in general.  At least one of the embeddings or projections involved in defining $u$ and $v$ have at least one of those two structures.  That $p$ was a morphism of Hopf algebras followed from both the relevant embedding and projection being well-behaved, in fact.  As such it is reasonable to expect that $u,v$ may admit fairly nice descriptions, whereas a nice description for $r$ can only be expected under a number of additional hypotheses.  For these reasons we tend to focus on the identities necessary for the results and examples this paper wishes to consider.

For the remainder of the paper we identify $\psi\in\genhom$ with its decomposition $\morphquad$.

\section{Fundamental identities for the components}\label{sec:fundamental}
Many of our results will require significant amounts of calculations to verify that various axioms are satisfied.  While many of the identities we provide in this section are either elementary or can be found in other references, they will be used with such frequency that we explicitly enumerate them.  The proofs will usually be either sketched, given a reference, or left as an easy verification for the reader.  We note that the component $p$ from \cref{thm:purv} will appear in virtually every calculation in the remainder of the paper.  We will therefore be using identities for $p$ with great frequency, and much of this section will be dedicated to properties and characterizations of $p$.  While we will explicitly reference the identities we use as often as possible, the reader may nevertheless find it convenient to be thoroughly familiar with them before proceeding.  Unless otherwise noted all references to $p,u,r,v$ refer to the decomposition $\morphquad$ for a fixed but arbitrary morphism $\psi\in\genhom$, though many of the results themselves will be stated in greater generality.

We first record the identities satisfied for the maps to be unital or counital.  These can be seen as special cases of identities given in \citep{ABM}.
\begin{lem}\label{lem:biunital}
  Let $p\colon \du{G}\to\kk H$, $u\colon \du{G}\to\du{H}$, $r\colon\kk G\to\du{H}$, and $v\colon \kk G\to\kk H$ be linear maps, with expansion in the standard bases given by \cref{eq:p-exp,eq:u-exp,eq:r-exp,eq:v-exp}.  Then the following all hold.
  \begin{lemenum}
    \item $p$ is unital if and only if \[\sum_{g\in G}p(g,h) = \delta_{1,h}\] for all $h\in H$.\label{eq:p-is-unital}
    \item $p$ is counital if and only if \[\sum_{h\in H}p(g,h) = \delta_{1,g}\] for all $g\in G$.\label{eq:p-is-counital}
    \item $u$ is unital if and only if \[\sum_{g\in G}u(g,h) = 1\] for all $h\in H$.\label{eq:u-is-unital}
    \item $u$ is counital if and only if \[u(g,1)=\delta_{1,g}\] for all $g\in G$.\label{eq:u-is-counital}
    \item $r$ is unital if and only if \[r(1,h)=1\] for all $h\in H$.\label{eq:r-is-unital}
    \item $r$ is counital if and only if \[r(g,1)=1\] for all $g\in G$.\label{eq:r-is-counital}
    \item $v$ is unital if and only if \[v(1,h)=\delta_{1,h}\] for all $h\in H$.\label{eq:v-is-unital}
    \item $v$ is counital if and only if \[\sum_{h\in H}v(g,h) = 1\] for all $g\in G$.\label{eq:v-is-counital}
  \end{lemenum}
  These results hold even when we replace $\du{G}$ and $\du{H}$ with $\du{G}_\omega$ and $\du{H}_\eta$ respectively.
\end{lem}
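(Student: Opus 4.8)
The plan is to unwind the definitions of unital and counital for each of the four maps using the explicit units and counits of the four (co)algebras involved, and then match coefficients in the standard bases. The only facts needed are that the unit of $\du{G}$ (and of $\du{H}$) is $\varepsilon=\sum_{g}e_g$ while its counit is $\ev_1$, with $\ev_1(e_g)=\delta_{g,1}$; and dually that the unit of $\kk G$ (and $\kk H$) is the group identity $1$ while its counit is $\varepsilon$, with $\varepsilon(g)=1$ for every group element $g$. Everything reduces to substituting these into the appropriate one of \cref{eq:p-exp,eq:u-exp,eq:r-exp,eq:v-exp}.

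For the four unital statements (i), (iii), (v), (vii) I would evaluate each map at the unit of its domain, expand, and compare the result to the unit of its codomain by reading off basis coefficients. For instance, for (i) one has $p(\varepsilon)=\sum_{g\in G}p(e_g)=\sum_{b\in H}\big(\sum_{g\in G}p(g,b)\big)b$, and equating this to the unit $1\in\kk H$ forces $\sum_{g\in G}p(g,b)=\delta_{1,b}$ for every $b$. The arguments for $u$, $r$, $v$ are identical in spirit: one evaluates at the domain unit ($\varepsilon$ for $u$, the group identity $1$ for $r$ and $v$), expands, and compares to the codomain unit ($\varepsilon=\sum_h e_h$ for $u$ and $r$, the identity $1$ for $v$).

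For the four counital statements (ii), (iv), (vi), (viii) I would instead post-compose with the counit of the codomain and compare to the counit of the domain evaluated on the relevant basis element. For example, for (ii) the counit of $\kk H$ sends every group element to $1$, so $\varepsilon\circ p(e_g)=\sum_{h\in H}p(g,h)$, and $\varepsilon\circ p=\ev_1$ forces this to equal $\ev_1(e_g)=\delta_{1,g}$; statement (viii) for $v$ follows the same summation pattern. On the other hand, the counit $\ev_1$ of $\du{H}$ extracts exactly the coefficient of $e_1$, so (iv) and (vi) reduce to reading off $u(g,1)$ and $r(g,1)$ and equating these to $\ev_1(e_g)=\delta_{1,g}$ and $\varepsilon(g)=1$ respectively.

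There is no genuine obstacle here; the only point demanding care is the nonstandard notation, since $\varepsilon$ denotes the \emph{unit} of $\du{G},\du{H}$ but the \emph{counit} of $\kk G,\kk H$, so that, e.g., the unital condition for $p$ and the counital condition for $v$ both feature $\varepsilon$ but in opposite roles. Finally, the last sentence is immediate: since $\du{G}_\omega$ differs from $\du{G}$ only in its coassociator and $\beta$ element, with unit, counit, multiplication, and comultiplication all unchanged, every computation above is insensitive to the twist by $\omega$ (and likewise by $\eta$ for $\du{H}_\eta$).
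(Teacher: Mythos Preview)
Your proposal is correct and follows essentially the same approach as the paper: unwind the definitions of unital/counital using the explicit units and counits, expand in the standard bases, and compare coefficients. The paper in fact only writes out case (i) explicitly and declares the rest a routine verification, so your more systematic treatment of the unital versus counital cases is, if anything, more thorough than what the paper provides.
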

\begin{proof}
  The final claim holds because we do not change the identities or counits, only the coassociator.  The rest is a routine verification, so we establish only the first claim.  We have that $p$ is unital if and only if
  \begin{align*}
    p(\varepsilon) =& \sum_{g\in G} p(e_g)\\
    =& \sum_{g\in G}\sum_{h\in H}p(g,h)h \\
    =& 1.
  \end{align*}
  The claim then follows by comparing coefficients.
\end{proof}
\begin{rem}
  We will often apply the projections $\ev_1\ot\id$ and $\id\ot\varepsilon$ to entries in $\D^\eta(H)$, and/or the embeddings $\kk G\to \D^\omega(G)$, $\du{G}\to \D^\omega(G)$.  These are the basic reason why it is important that the components of $\morphquad$ are biunital, as otherwise these embeddings and projections would not be guaranteed to be well-behaved enough to be useful.
\end{rem}

By \cref{thm:purv} we know the component $p\colon\du{G}\to\kk H$ is a morphism of Hopf algebras, and such morphisms have a simple description.
\begin{lem}\label{lem:p-desc}\citep[Theorem 3.1]{K14}
Every Hopf algebra homomorphism $p\colon \du{G}\to\kk H$ is determined by abelian subgroups $A\subseteq G$ and $B\subseteq H$ and an isomorphism $\widehat{A}\to B$.  In particular, $p$ factors through an isomorphism $\du{A}\to\kk B$ via the canonical projection $\du{G}\to \du{A}$ and the canonical inclusion $\kk B\to \kk H$. As a consequence, $p(e_g)\neq 0 \iff g\in A$ and $\{p(e_a)\}_{a\in A}$ forms a basis for $\kk B$.
\end{lem}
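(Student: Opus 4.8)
The plan is to read off the two subgroups from the algebra and coalgebra structures on $\du{G}$ separately, and then match them via Pontryagin duality. First I would exploit that $p$ is an algebra morphism and that $\du{G}=\kk^G=\bigoplus_{g\in G}\kk e_g$ is commutative semisimple with the $e_g$ as its primitive idempotents. Since $p$ is unital, the elements $p(e_g)$ form a complete family of orthogonal idempotents in $\kk H$ (some possibly zero). Setting $A=\{g\in G: p(e_g)\neq 0\}$, the nonzero idempotents $\{p(e_a)\}_{a\in A}$ are linearly independent, so $p$ is injective on $\bigoplus_{a\in A}\kk e_a$ and annihilates $\bigoplus_{g\notin A}\kk e_g$. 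Thus $\Ker p=\{f\in\du{G}: f|_A=0\}$, and the image $C:=\Img p=\operatorname{span}\{p(e_a):a\in A\}$ is a commutative subalgebra isomorphic to $\kk^A$, with the $p(e_a)$ as its primitive idempotents. This already yields the final assertions of the lemma, pending the identification $C=\kk B$.

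Next I would use that $p$ is a morphism of Hopf algebras to pin down $C$. The image of a Hopf morphism is a sub-Hopf-algebra, so $C$ is a sub-Hopf-algebra of $\kk H$. Over $\BBC$ the group algebra $\kk H$ is pointed and cosemisimple, with group-likes exactly $H$; hence the subcoalgebra $C$ is again pointed and cosemisimple, and is therefore spanned by its group-likes $B:=G(C)=C\cap H$. The group-likes of any Hopf algebra form a group, so $B$ is a subgroup of $H$, and $C=\kk B$ with $|B|=\dim_\kk C=|A|$.

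It remains to see that $A$ is a subgroup and to relate $\widehat{A}$ to $B$, for which I would use that $\Ker p$ is a Hopf ideal. Being a coideal forces $\Delta(e_g)=\sum_{ab=g}e_a\otimes e_b\in\Ker p\otimes\du{G}+\du{G}\otimes\Ker p$ for each $g\notin A$, which says precisely that $a,b\in A\Rightarrow ab\in A$; the counit condition $\ev_1|_{\Ker p}=0$ gives $1\in A$; and stability under the antipode $S(e_g)=e_{g\inv}$ gives $a\in A\Rightarrow a\inv\in A$. Thus $A\leq G$ is a subgroup, $\du{A}=\kk^A$ is a quotient Hopf algebra of $\du{G}$ via restriction of functions, and $\Ker p$ is exactly its kernel, so $p$ factors as $\du{G}\twoheadrightarrow\du{A}\xrightarrow{\ \sim\ }\kk B\hookrightarrow\kk H$ with $\du{A}\xrightarrow{\sim}\kk B$ the induced Hopf isomorphism. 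Since $\kk B$ is cocommutative while $\kk^A$ is commutative, this isomorphism forces $\du{A}$ to be cocommutative and $\kk B$ to be commutative; that is, both $A$ and $B$ are abelian.

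Finally, for abelian $A$ Fourier duality gives a Hopf isomorphism $\du{A}\cong\kk\widehat{A}$, so $\kk\widehat{A}\cong\kk B$; restricting to group-likes yields the desired group isomorphism $\widehat{A}\to B$. Conversely, any triple $(A,B,\widehat{A}\xrightarrow{\sim}B)$ reconstructs $p$ as the displayed composite, and the two assignments are mutually inverse, which is the sense in which $p$ is \emph{determined by} the data. The one genuinely structural step---and the main thing to get right---is the passage from the bare algebra image $C\cong\kk^A$ to the identification $C=\kk B$: this is where the Hopf (not merely algebra) hypothesis is essential, entering through the pointedness of $\kk H$ to produce the subgroup $B$, and dually through the Hopf-ideal structure of $\Ker p$ to produce the subgroup $A$. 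Everything else is bookkeeping with orthogonal idempotents and the standard duality $\kk^A\cong\kk\widehat{A}$.
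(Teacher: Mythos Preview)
Your argument is correct. The paper does not supply its own proof of this lemma; it simply cites \citep[Theorem 3.1]{K14} and moves on. So there is no paper proof to compare against here.

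That said, your route is exactly the standard one and matches what the cited reference does: identify $A$ from the support of the idempotents $p(e_g)$, identify $B$ via pointedness of $\kk H$ so that the image sub-Hopf-algebra must be a group algebra $\kk B$, then use the Hopf-ideal structure of $\Ker p$ to see $A$ is a subgroup, and finally read off abelianness and the isomorphism $\widehat{A}\cong B$ from the induced Hopf isomorphism $\du{A}\cong\kk B$. The only point worth a small remark is your coideal step: the implication ``$\Delta(e_g)\in\Ker p\otimes\du{G}+\du{G}\otimes\Ker p$ forces each summand $e_a\otimes e_b$ to lie there'' uses that $\{e_a\otimes e_b\}$ is a basis adapted to that subspace, which is true but perhaps deserves one clause. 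Everything else is clean.
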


The subgroups $A,B$ will occur frequently in the remainder of the paper.  Therefore any use of $A,B$ will always refer to the subgroups from the lemma unless otherwise specified.

At this point we wish to note that we will often find it convenient to investigate $u$ in terms of its linear dual $u^*\colon \kk H\to\kk G$, which is given by
\[u^*(h) = \sum_{g\in G} u(g,h)g\]
for any $h\in H$. The basic reason for this is that we find it easier to work with the group algebras, which are spanned by group-like elements.  Additionally, we will see that the components $u,v$ are related to each other, which makes it convenient to express things in terms of $u^*$ and $v$ rather than $u$ and $v$.  This is especially true whenever $u$ is an algebra morphism, equivalently $u^*$ is a coalgebra morphism. In this situation \cref{lem:coalg-to-group} applies to $u^*$, which forces many of the coefficients $u(g,h)$ in \cref{eq:u-exp} to be zero.

\begin{lem}\label{lem:u-simpler}
  Let $u\colon\du{G}\to\du{H}$ be an algebra morphism.  Then for all $h\in H$ there exists a unique $g\in G$ with $u(g,h)\neq 0$; moreover, in this case $u(g,h)=1$.  As a consequence, when $u$ is an algebra morphism $u(e_g)e_h\neq 0$ for some $g\in G$ and $h\in H$ if and only if $u^*(h)=g$.
\end{lem}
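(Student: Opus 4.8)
The plan is to dualize and reduce everything to \cref{lem:coalg-to-group}. Since $\du{G}$ and $\du{H}$ are finite-dimensional, with their pointwise-multiplication algebra structures dual to the group-algebra comultiplications on $\kk G=(\du{G})^*$ and $\kk H=(\du{H})^*$, the transpose of $u$ is precisely the map $u^*\colon\kk H\to\kk G$ recorded just before the lemma. I would first confirm this at the level of dual bases: from $\langle u^*(h),e_g\rangle=\langle h,u(e_g)\rangle=u(g,h)$ one recovers $u^*(h)=\sum_{g\in G}u(g,h)\,g$, and under this transpose the property that $u$ preserves multiplication and unit becomes exactly the property that $u^*$ preserves comultiplication and counit. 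Here I use the standing convention that an algebra morphism is unital; unitality is what corresponds to $u^*$ being counital (equivalently, it is what supplies existence below), so this is the step that must be pinned down carefully for \cref{lem:coalg-to-group} to apply.

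With $u^*$ established as a coalgebra morphism, I would apply \cref{lem:coalg-to-group} with source group $H$ and target coalgebra $\kk G$. Since the group-like elements of a group algebra are precisely the group elements, $G(\kk G)=G$, and the lemma identifies $u^*$ with a set map $H\to G$; in particular $u^*(h)\in G\subseteq\kk G$ for every $h\in H$. Reading off coefficients then finishes the first assertion: an element $\sum_{g\in G}u(g,h)\,g$ of $\kk G$ equals a single group element exactly when one coefficient is $1$ and all others vanish, so for each $h$ there is a unique $g$ with $u(g,h)\neq 0$, namely $g=u^*(h)$, and for that $g$ we have $u(g,h)=1$.

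For the consequence I would expand, using the orthogonal-idempotent multiplication of $\du{H}$,
\[
u(e_g)\,e_h=\Big(\sum_{h'\in H}u(g,h')e_{h'}\Big)e_h=u(g,h)\,e_h,
\]
so $u(e_g)e_h\neq 0$ iff $u(g,h)\neq 0$, which by the first part occurs iff $u(g,h)=1$, i.e.\ iff $u^*(h)=g$.

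The only genuine care needed---the main obstacle---is the duality set-up: verifying that the transpose of $u$ really is the $u^*$ of the text and that a unital algebra morphism transposes to a (counital) coalgebra morphism, so that \cref{lem:coalg-to-group} is legitimately available. Everything after that is coefficient bookkeeping. I would also note in passing a self-contained alternative that avoids duality: since the $e_g$ are orthogonal idempotents, so are the $u(e_g)$, forcing $u(g,h)\in\{0,1\}$ with at most one nonzero $g$ per $h$, after which \cref{eq:u-is-unital} supplies existence and fixes the value at $1$. I would nonetheless present the duality argument as the main line, as it is the one flagged in the remark preceding the lemma and the one that reuses \cref{lem:coalg-to-group}.
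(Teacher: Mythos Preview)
Your proposal is correct and takes essentially the same approach as the paper: dualize to $u^*\colon\kk H\to\kk G$, invoke \cref{lem:coalg-to-group} to get $u^*(h)\in G$, and read off coefficients. The paper's proof is simply a terser version of what you have written, omitting the explicit verification that transposing a unital algebra morphism yields a counital coalgebra morphism and the expansion of $u(e_g)e_h$; your additional detail on these points is fine but not strictly needed given the surrounding context.
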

\begin{proof}
  By assumptions, $u^*\colon\kk H\to\kk G$ is a morphism of coalgebras.  By \cref{lem:coalg-to-group} this means that $u^*(h)\in G$ for all $h\in H$.  But we also have $u^*(h) = \sum_{g\in G} u(g,h)g$.  The claims now follow.
\end{proof}

For quasi-bialgebra morphisms $\morphquad\colon\D^\omega(G)\to\D^\eta(H)$, the subgroups $A,B$ determined by $p$ are strongly affected by the injectivity or surjectivity of the morphism.  This generalizations portions of \citep[Lemma 3.3]{K14}.
\begin{thm}\label{thm:inj-surj}
  Suppose $\psi=\morphquad\in\genhom$.  Then the following all hold.
  \begin{thmenum}
    \item If $\psi$ is surjective then $\kk B \Img(v) = \kk H$.
    \item If $\psi$ is injective then $\kk A \Img(u^*) = \kk G$ and $A\subseteq Z(G)$.
  \end{thmenum}
\end{thm}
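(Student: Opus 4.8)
The plan is to prove the two parts by dual methods: (i) by \emph{post}-composing $\psi$ with the canonical projection, and (ii) by \emph{restricting} $\psi$ to the subalgebra $\du{G}_\omega$ and transposing. For (i), I would set $\pi=\ev_1\ot\id\colon\D^\eta(H)\to\kk H$, which is a surjective algebra morphism by \cref{lem:obvious}. Applying $\pi$ to the decomposition \eqref{eq:morphdef} and using the counitality of $u$ and $r$ (\cref{eq:u-is-counital,eq:r-is-counital}) together with $\theta'_1\equiv 1$, the sum over $k$ collapses to the single term $k=g$, giving $\pi(\psi(e_g\# x))=p(e_g)\,v(x)$. Since $\{p(e_g)\}_{g\in G}$ spans $\kk B$ by \cref{lem:p-desc} and $\{v(x)\}_{x\in G}$ spans $\Img(v)$, the image of $\pi\circ\psi$ is exactly $\kk B\,\Img(v)$. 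If $\psi$ is surjective then so is $\pi\circ\psi$, forcing $\kk B\,\Img(v)=\kk H$.

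For (ii), I would use the inclusion $\iota\colon\du{G}_\omega\hookrightarrow\D^\omega(G)$ of \cref{lem:obvious} and set $\alpha=\psi\circ\iota$, so that $\alpha(e_g)=\sum_k u(e_{gk\inv})\# p(e_k)$. If $\psi$ is injective then so is $\alpha$, and hence the transpose $\alpha^*\colon\D^\eta(H)^*\to\du{G}^*\cong\kk G$ is surjective. Evaluating $\alpha^*$ on the dual basis vector $(e_h\# b)^*$ gives $\alpha^*((e_h\# b)^*)=\sum_{g,k}u(gk\inv,h)\,p(k,b)\,g$, and the reindexing $g=mk$ factors this as $u^*(h)\,p^*(e_b)$, where $p^*\colon\du{H}\to\kk G$ is the transpose of $p$. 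By the factorization in \cref{lem:p-desc}, $p^*$ has image $\kk A$, so $\Img(\alpha^*)=\Img(u^*)\,\kk A$; surjectivity of $\alpha^*$ then yields $\Img(u^*)\,\kk A=\kk G$.

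Two things remain for (ii): replacing $\Img(u^*)\,\kk A$ by $\kk A\,\Img(u^*)$, and deducing $A\subseteq Z(G)$. Both should follow from the cocommutativity $p\cocomm u$ of \cref{thm:purv}. Expanding $p\cocomm u$ against $\Delta(e_g)=\sum_{st=g}e_s\ot e_t$ and using the linear independence of $\{p(e_a)\}_{a\in A}$ gives $u(e_g)=u(e_{aga\inv})$ for all $a\in A$ and $g\in G$; in coefficients this reads $u(g,h)=u(aga\inv,h)$, so $u^*(h)=\sum_g u(g,h)g$ is fixed by conjugation by $a$, i.e. $a\,u^*(h)=u^*(h)\,a$. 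Thus $\kk A$ and $\Img(u^*)$ commute elementwise, which makes the two products coincide and proves $\kk A\,\Img(u^*)=\kk G$. Finally, any $a\in A$ commutes with $\kk A$ (as $A$ is abelian by \cref{lem:p-desc}) and with $\Img(u^*)$, hence with the whole of $\kk A\,\Img(u^*)=\kk G$; therefore $a$ is central in $\kk G$, which forces $a\in Z(G)$. The content of (i) is essentially routine once the projection is applied; the main obstacle is entirely in (ii), namely recognizing that injectivity is best handled by dualizing the restriction $\alpha$ (rather than $\psi$ itself) and, above all, that the cocommutativity $p\cocomm u$ is precisely the input that both symmetrizes the spanning statement and delivers centrality of $A$. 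The step most in need of care is the passage from $u(e_g)=u(e_{aga\inv})$ to the commutation relation $a\,u^*(h)=u^*(h)\,a$.
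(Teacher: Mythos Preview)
Your proof is correct and follows essentially the same strategy as the paper: for (i) you both post-compose with $\ev_1\ot\id$ to get $p(e_g)v(x)$, and for (ii) you both restrict $\psi$ to $\du{G}_\omega$ and then invoke $p\cocomm u$ (dualized to $u^*\comm p^*$) to obtain both the commutation of $\kk A$ with $\Img(u^*)$ and the centrality of $A$.

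The one genuine difference worth noting is how the spanning identity $\kk A\,\Img(u^*)=\kk G$ is extracted in (ii). The paper argues directly: injectivity forces each $\psi(e_g\#1)=\sum_{k\in A}u(e_{gk^{-1}})\#p(e_k)$ to be nonzero, hence some $u(e_{gk^{-1}})\neq 0$, and ``this implies the desired equality.'' You instead transpose $\alpha=\psi\circ\iota$ and compute $\alpha^*((e_h\#b)^*)=u^*(h)\,p^*(e_b)$, so surjectivity of $\alpha^*$ gives $\Img(u^*)\,\kk A=\kk G$ immediately. Your route is actually cleaner here: the paper's phrasing uses only the nonvanishing of each $\psi(e_g\#1)$ rather than their linear independence, and nonvanishing alone does \emph{not} suffice for the spanning claim (one can build biunital $u$ with every $u(e_g)\neq 0$ but $\Img(u^*)\subsetneq\kk G$). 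The dualization you carry out is exactly what makes the full injectivity hypothesis do its work, so your version fills in what the paper leaves implicit.
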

\begin{proof}
  We recall \cref{eq:morphdef,eq:morphdef2}.

  For the first part, by assumption and \cref{lem:obvious} we have $\ev_1\ot\id\circ\psi\colon \D^\omega(G)\to \kk H$ is a surjective morphism of quasi-bialgebras satisfying $\ev_1\ot\id\circ\psi(e_g\# x) = p(e_g)v(x)$.  Thus the desired equality holds.

  For the second part, we have $\psi(e_g\#1) = \sum_{k\in G} u(e_{gk\inv})\# p(e_k)$.  In order for $\psi(e_g\# 1)\neq 0$ we must have that $\exists k\in A$, depending on $g$, such that $u(e_{gk\inv})\neq 0$.  This implies the desired equality.  Furthermore, since $u\cocomm p$, by dualizing we have that $u^*\comm p^*$. Thus $\Img(u^*)\subseteq C_{\kk G}(A)$.  But we have shown that $\kk G = \kk A\Img(u^*)\subseteq \kk A C_{\kk G}(A) = C_{\kk G}(A)$.  This implies $C_G(A)=G$, and so $A\subseteq Z(G)$ as desired.
\end{proof}

We now record all of the base identities that $p$ must satisfy in terms of its coefficients and the subgroups $A,B$.  Most of these can also be found in \citep{ABM}.

\begin{lem}\label{lem:p-ids-1}
  Let $p\colon\du{G}\to\kk H$ be a morphism of Hopf algebras, and expand $p$ in the standard basis elements as in \cref{eq:p-exp}.  Then the following all hold.
  \begin{lemenum}
    \item $p(g,h)=0$ if $g\not\in A$ or $h\not\in B$.
    \item $\sum_{a\in A}p(a,b) = \delta_{1,b}$ for all $b\in B$.\label{eq:p-unital}
    \item $\sum_{b\in B}p(a,b) = \delta_{1,a}$ for all $a\in A$.\label{eq:p-counital}
    \item $\sum_{t\in B} p(x,bt\inv)p(y,t) = \delta_{x,y} p(x,b)$ for all $x,y\in A$ and $b\in B$.\label{eq:p-alg}
    \item $\sum_{t\in A} p(xt\inv,b)p(t,c) = \delta_{b,c} p(x,b)$ for all $x\in A$ and $b,c\in B$.\label{eq:p-coalg}
    \item $p(a,b\inv)=p(a\inv,b)$ for all $a\in A$ and $b\in B$.\label{eq:p-antipode}
  \end{lemenum}
\end{lem}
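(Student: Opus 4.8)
The plan is to read off each identity by expanding the defining conditions of a Hopf-algebra morphism for $p$ in the distinguished bases and comparing coefficients, invoking \cref{lem:p-desc} only to pin down the supports needed for (i). Recall that under the identifications of the excerpt $\du{G}$ carries the multiplication $e_ge_h=\delta_{g,h}e_g$, unit $\varepsilon=\sum_g e_g$, comultiplication $\Delta(e_g)=\sum_{st=g}e_s\ot e_t$, counit $\ev_1$, and antipode $e_g\mapsto e_{g\inv}$, while $\kk H$ is the group Hopf algebra in which each $h$ is group-like with $\varepsilon(h)=1$ and $S(h)=h\inv$. Writing $p(e_g)=\sum_{b\in H}p(g,b)\,b$ throughout, every claim becomes a statement about the scalars $p(g,b)$, so the whole proof is a matter of matching coefficients of the standard bases.

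First I would dispatch (i): by \cref{lem:p-desc} the image of $p$ lies in the subalgebra $\kk B$ and $p(e_g)\neq 0$ exactly when $g\in A$. Hence $p(g,b)=0$ whenever $g\notin A$ (the entire vector $p(e_g)$ vanishes) or $b\notin B$ (the vector lies in $\kk B$). With (i) established, the supports justify restricting all subsequent sums to $a\in A$ and $b\in B$. Identity (ii) is then unitality $\sum_g p(e_g)=1$ read coefficientwise: the coefficient of $b\in B$ on the left is $\sum_{a\in A}p(a,b)$ and on the right $\delta_{1,b}$. Identity (iii) is counitality $\ev_1=\varepsilon_{\kk H}\circ p$: since $\varepsilon_{\kk H}(b)=1$ for every $b$, applying $\varepsilon_{\kk H}$ to $p(e_a)$ gives $\sum_{b\in B}p(a,b)=\delta_{1,a}$.

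Next I would treat the multiplicative and comultiplicative conditions, which yield the convolution-type identities (iv) and (v). For (iv) I expand the algebra-morphism equation $p(e_x)p(e_y)=p(e_xe_y)=\delta_{x,y}p(e_x)$ inside $\kk H$: the product $\big(\sum_s p(x,s)s\big)\big(\sum_t p(y,t)t\big)$ has coefficient $\sum_{st=b}p(x,s)p(y,t)=\sum_t p(x,bt\inv)p(y,t)$ at the group element $b$, which must equal $\delta_{x,y}p(x,b)$; restricting the index to $t\in B$ by (i) gives the stated form. For (v) I use that each $b$ is group-like, so $\Delta\big(p(e_x)\big)=\sum_b p(x,b)\,b\ot b$, whereas the coalgebra condition forces this to equal $(p\ot p)\Delta(e_x)=\sum_{st=x}p(e_s)\ot p(e_t)$; comparing the coefficient of $b\ot c$ yields $\delta_{b,c}p(x,b)=\sum_{st=x}p(s,b)p(t,c)$, and writing $s=xt\inv$ with $t\in A$ produces (v). Finally (vi) follows from antipode preservation $p\circ S=S\circ p$, which is automatic for a bialgebra morphism between Hopf algebras as noted after \cref{lem:coalg-to-group}: comparing $p(e_{a\inv})=\sum_b p(a\inv,b)\,b$ with $S_{\kk H}\big(p(e_a)\big)=\sum_b p(a,b)\,b\inv$ and matching the coefficient of $b\in B$ gives $p(a\inv,b)=p(a,b\inv)$.

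No step presents a genuine obstacle; the substance lies entirely in the clean structure of $p$ furnished by \cref{lem:p-desc}, after which each identity is a single coefficient comparison. The only points demanding care are the index manipulations in (iv) and (v)---rewriting the convolution $\sum_{st=b}$ (respectively $\sum_{st=x}$) in the single-variable form of the statement, and justifying via (i) that the summation index may be confined to $B$ (respectively $A$).
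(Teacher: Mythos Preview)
Your proof is correct and follows essentially the same approach as the paper: each item is obtained by expanding the corresponding Hopf-algebra axiom (support via \cref{lem:p-desc}, unitality, counitality, multiplicativity, comultiplicativity, antipode preservation) in the standard bases and comparing coefficients. You supply more explicit detail in the expansions for (iv)--(vi), but the argument is structurally identical to the paper's.
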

\begin{proof}
  The first part follows from \cref{lem:p-desc}.  The next two parts are then equivalent, respectively, to the first two parts of \cref{lem:biunital}.  The fourth identity is equivalent to $p(e_x\cdot e_y) = \delta_{x,y}p(e_x)$, which is equivalent to $p$ being an algebra morphism.  The fifth part similarly follows from $p$ being a coalgebra morphism.  The last part follows from the preservation of the antipodes:
  \[ \sum_{b\in B} p(a\inv,b)b = p(e_{a\inv}) = p(S(e_a)) = Sp(e_a) = \sum_{b\in B} p(a,b)b\inv = \sum_{b\in B} p(a,b\inv)b.\]
\end{proof}
We can improve on the identities for the algebra and coalgebra morphism conditions, though, which will be necessary in a few manipulations later on.  First, we wish to investigate what $p$ looks like when expressed in a different basis for $\du{A}$.

\begin{df}\label{df:chi-def}
  Let $p\colon\du{G}\to\kk H$ be a morphism of Hopf algebras with associated abelian groups $A,B$.  For $b\in B$ we define $\chi_b\in\du{A}$ to be the unique element of $\du{A}$ such that $p(\chi_b)=b$.  We necessarily have $\chi_b\in\widehat{A}$, the linear characters of $A$.
\end{df}
This is equivalent to restricting the domain and codomain of $p$ to the isomorphism $\du{A}\to\kk B$ and setting $\chi_b=p\inv(b)$.  By bijectivity of this restriction, $\{\chi_b\}_{b\in B}=\widehat{A}$ is a basis of $\du{A}$.  Indeed, since $p$ is a morphism of Hopf algebras we have a group structure $\chi_b\cdot \chi_c = \chi_{bc}$ which agrees with the usual multiplication in $\widehat{A}$, where $\varepsilon=\chi_1$ is the identity.

The following shows how we can convert from the standard basis of $\du{A}$ to the character basis, as indexed by $p$.
\begin{lem}\label{lem:basis-std-in-chi}
  Suppose we are given a morphism of Hopf algebras $p\colon \du{G}\to\kk H$, with the abelian groups $A,B$ as before.  Then for all $a\in A$
  \[ e_a = \sum_{b\in B} p(a,b)\chi_b.\]
\end{lem}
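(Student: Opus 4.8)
The plan is to read the claimed identity as an equation inside $\du{A}$ and to invert it using \cref{lem:p-desc}, which tells us that $p$ restricts to an \emph{isomorphism} $\bar p\colon\du{A}\to\kk B$ (realized through the canonical projection $\du{G}\to\du{A}$ followed by the inclusion $\kk B\hookrightarrow\kk H$). Both sides of the asserted formula are elements of $\du{A}$: the left-hand side is the standard basis vector $e_a$ for $a\in A$, and the right-hand side is a linear combination of the characters $\chi_b\in\widehat{A}\subseteq\du{A}$. Since $\bar p$ is a linear isomorphism, it suffices to compare images under $\bar p$, or equivalently to recognize both sides as $\bar p^{-1}$ applied to one and the same element of $\kk B$.

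First I would record $\bar p(e_a)$. Expanding $p$ as in \cref{eq:p-exp} and using \cref{lem:p-ids-1} to discard the terms with $b\notin B$, we get $\bar p(e_a)=\sum_{b\in B}p(a,b)\,b$. Next, by \cref{df:chi-def} the element $\chi_b$ is by definition the unique preimage of $b$ under $p$, so $\bar p^{-1}(b)=\chi_b$ for every $b\in B$. Applying the linear map $\bar p^{-1}$ to the previous expression and distributing over the finite sum then gives $e_a=\bar p^{-1}\!\big(\sum_{b\in B}p(a,b)\,b\big)=\sum_{b\in B}p(a,b)\,\chi_b$, which is exactly the claim.

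There is no substantive obstacle here; the only point requiring care is the bookkeeping of where each symbol lives---namely that $e_a$ with $a\in A$ really is an element of the domain $\du{A}$ of $\bar p$, and that the scalars $p(a,b)$ appearing in $\bar p(e_a)$ are the very same ones that should index the answer---both of which are immediate from \cref{lem:p-desc} and \cref{lem:p-ids-1}. Conceptually, the lemma simply records that the matrix $[p(a,b)]_{a\in A,\,b\in B}$ is the inverse of the character table $[\chi_b(a)]$ of the finite abelian group $A$, so the statement is a repackaging of the orthogonality relations for the characters $\{\chi_b\}_{b\in B}=\widehat{A}$.
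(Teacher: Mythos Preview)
Your proof is correct and follows essentially the same route as the paper: both arguments compute $p(e_a)=\sum_{b\in B}p(a,b)\,b$, use $p(\chi_b)=b$, and then invoke the bijectivity of $p$ restricted to $\du{A}\to\kk B$ to conclude. You phrase it as applying $\bar p^{-1}$ to $\bar p(e_a)$, while the paper applies $p$ to the right-hand side and compares; these are the same argument.
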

\begin{proof}
  Applying $p$ to the right-hand side we have
  \[ \sum_{b\in B} p(a,b)b,\]
  which is precisely the definition of $p(e_a)$.  Since $p$ is bijective when restricted to $\du{A}\to\kk B$, this proves the claim.
\end{proof}
We can, of course, convert from the character bases back to the standard basis, as well.  This process yields useful information about the coefficients of $p$.
\begin{lem}\label{lem:basis-chi-in-std}\label{lem:p-norms}
    Let $p\colon\du{G}\to\kk H$ be a morphism of Hopf algebras.  Then for all $a\in A$ and $b\in B$ the following hold:
    \begin{enumerate}
      \item $p(1,b)=p(a,1)=1/|A|$;
      \item $|p(a,b)|=1/|A|$;
      \item $\displaystyle{\chi_b = |A| \sum_{c\in A} p(c\inv,b)e_c}$.
    \end{enumerate}
\end{lem}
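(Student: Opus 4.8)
The plan is to reduce all three statements to a single explicit formula for the coefficients of $p$, namely
\[ p(a,b) = \frac{1}{|A|}\,\overline{\chi_b(a)} = \frac{1}{|A|}\,\chi_b(a\inv), \]
and then read off (1)--(3) as immediate consequences. First I would record the relevant facts about the $\chi_b$. Each $\chi_b$ lies in $\widehat{A}$ by \cref{df:chi-def}, so as an element of $\du{A}$ it expands as $\chi_b = \sum_{a\in A}\chi_b(a)\,e_a$, where the character values $\chi_b(a)\in U(1)$ are roots of unity; in particular $|\chi_b(a)|=1$ and $\chi_b(a\inv)=\overline{\chi_b(a)}$, with $\chi_b(1)=1$, and $\chi_1=\varepsilon$ is the trivial character so that $\chi_1(a)=1$ for all $a$. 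Since $b\mapsto\chi_b$ is a bijection $B\to\widehat{A}$, summing over $b\in B$ is the same as summing over all of $\widehat{A}$.

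Next I would extract the key formula. Substituting the expansion $\chi_b = \sum_{c\in A}\chi_b(c)e_c$ into \cref{lem:basis-std-in-chi}, which reads $e_a = \sum_{b\in B}p(a,b)\chi_b$, and comparing coefficients of $e_c$ gives
\[ \sum_{b\in B} p(a,b)\,\chi_b(c) = \delta_{a,c}\qquad(a,c\in A). \]
This says precisely that the matrix $\big[p(a,b)\big]_{a\in A,\,b\in B}$ is a two-sided inverse of the character table $\big[\chi_b(c)\big]_{b\in B,\,c\in A}$ (both are square, as $|A|=|B|$). The inverse of the character table is supplied by the column orthogonality relations for the finite abelian group $A$, namely $\sum_{b\in B}\overline{\chi_b(a)}\,\chi_b(c)=|A|\,\delta_{a,c}$. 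Comparing this with the identity above and using that an invertible square matrix has a unique inverse yields the claimed formula $p(a,b) = \tfrac{1}{|A|}\overline{\chi_b(a)}$.

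Finally I would read off the three parts. Part (2) is immediate since $|\overline{\chi_b(a)}|=1$. For part (1), taking $a=1$ gives $p(1,b)=\tfrac1{|A|}\overline{\chi_b(1)}=\tfrac1{|A|}$, while taking $b=1$ and using $\chi_1=\varepsilon$ gives $p(a,1)=\tfrac1{|A|}\overline{\chi_1(a)}=\tfrac1{|A|}$. For part (3), setting $a=c\inv$ in the key formula gives $\chi_b(c)=|A|\,p(c\inv,b)$, and substituting into $\chi_b=\sum_{c\in A}\chi_b(c)e_c$ yields $\chi_b=|A|\sum_{c\in A}p(c\inv,b)e_c$.

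The main obstacle is the bookkeeping in the middle step: correctly identifying $[p(a,b)]$ as the inverse of the character table and matching it against the orthogonality relation with the conjugates and inverses placed correctly. Everything else is a direct substitution. An alternative to invoking orthogonality would be to use that $p$ restricts to a Hopf isomorphism $\du{A}\to\kk B$ together with the self-duality $\du{A}\cong\kk\widehat{A}$, but the orthogonality computation is the most transparent route.
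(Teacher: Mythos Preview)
Your proof is correct, but it takes a different route from the paper's. The paper works more internally: it writes $\chi_b=\sum_a f(a,b)e_a$, uses \cref{eq:p-coalg} (specialized at $x=1$) to solve $f(a,b)=p(a\inv,b)/p(1,b)$, then uses $|f(a,b)|=1$ to deduce $|p(a,b)|=|p(1,b)|$, and finally extracts $p(a,1)=p(1,b)=1/|A|$ from \cref{eq:p-unital} via the averaging \cref{lem:average} and a duality argument applied to $p^*$. Your argument instead identifies the matrix $[p(a,b)]$ as the inverse of the character table $[\chi_b(c)]$ and invokes the column orthogonality relations for $\widehat{A}$ to get the explicit formula $p(a,b)=\tfrac{1}{|A|}\overline{\chi_b(a)}$ directly; the three parts then drop out immediately. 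Your approach is shorter and yields a stronger statement (the explicit formula), at the cost of importing orthogonality of characters as an external ingredient; the paper's approach stays within its own toolkit (the identities of \cref{lem:p-ids-1} and \cref{lem:average}) and does not need to invoke character theory beyond the fact that linear characters take values of modulus one.
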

\begin{proof}
  Let $f(a,b)\in\kk$ be scalars for each $a\in A$ and $b\in B$ such that
  \[ \chi_b = \sum_{a\in A} f(a,b) e_a.\]
  Applying the previous lemma, we have
  \begin{align*}
    \chi_b &= \sum_{a\in A} f(a,b) e_a\\
    &= \sum_{a\in A}\sum_{c\in B} f(a,b) p(a,c)\chi_c.
  \end{align*}
  Since the $\chi_b$ form a basis, this means
  \[ \sum_{a\in A} f(a,b)p(a,c) =\delta_{c,b}\]
  for all $b,c\in B$.  Now by \cref{lem:p-ids-1} we see that this is (uniquely) solved by
  \[ f(a,b) = \frac{p(a\inv,b)}{p(1,b)}.\]
  Since the $\chi_b$ are linear characters of $A$, the $f(a,b)$ must all have norm 1.  So we conclude that
  \[ |p(a,b)|=|p(1,b)|\]
  for all $a\in A$ and $b\in B$.  Considering the special case $b=1$, by \cref{eq:p-unital} and \cref{lem:average} we conclude that
  \[ p(a,1) = \frac{1}{|A|}\]
  for all $a\in A$.  Applying the same argument to the dual $p^*\colon \du{H}\to \kk G$, which is also a morphism of Hopf algebras, yields
  \[ p(1,b) = \frac{1}{|A|}\]
  for all $b\in B$.  This gives the desired formula for $\chi_b$ and also proves that
  \[ |p(a,b)| = \frac{1}{|A|}\]
  for all $a\in A$ and $b\in B$.

  This completes the proof.
\end{proof}

With the preceding lemma we can obtain the desired improvements over \cref{eq:p-alg,eq:p-coalg}.

\begin{thm}\label{thm:p-simpler}
  Let $p\colon \du{G}\to\kk H$ be a morphism of Hopf algebras.  Then
  \begin{align*}
    p(a,bc) =& |A| p(a,b)p(a,c)\\
    p(ax,b) =& |A| p(a,b)p(x,b)
  \end{align*}
  for all $a,x\in A$ and $b,c\in B$.
\end{thm}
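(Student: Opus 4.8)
The plan is to exploit the character basis $\{\chi_b\}_{b\in B}$ introduced in \cref{df:chi-def}, for which both multiplicativity statements are essentially built in, and then translate back to the coefficients $p(a,b)$ using \cref{lem:p-norms}. The key observation is that part (3) of \cref{lem:p-norms} records the coefficients of $\chi_b$ in the standard basis of $\du{A}$; pairing $\chi_b$ against $a\in A$, equivalently reading off the coefficient of $e_a$, yields the clean evaluation formula $\chi_b(a)=|A|\,p(a\inv,b)$. The whole argument then consists of feeding the two multiplicativity properties of the $\chi_b$ through this single formula.

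First I would establish the second identity $p(ax,b)=|A|\,p(a,b)p(x,b)$. Since $\chi_b\in\widehat{A}$ is a linear character of $A$, it is multiplicative in the group variable: $\chi_b(cd)=\chi_b(c)\chi_b(d)$ for all $c,d\in A$. Substituting $\chi_b(a)=|A|\,p(a\inv,b)$ turns this into $|A|\,p((cd)\inv,b)=|A|^2\,p(c\inv,b)p(d\inv,b)$, i.e. $p(d\inv c\inv,b)=|A|\,p(c\inv,b)p(d\inv,b)$. Writing $a=d\inv$ and $x=c\inv$, which range over all of $A$ as $c,d$ do, and using commutativity of $\kk$, gives precisely $p(ax,b)=|A|\,p(a,b)p(x,b)$.

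Next I would handle the first identity $p(a,bc)=|A|\,p(a,b)p(a,c)$ by using, in place of multiplicativity in the group variable, the fact recorded just after \cref{df:chi-def} that $b\mapsto\chi_b$ is a group homomorphism $B\to\widehat{A}$, so $\chi_{bc}=\chi_b\chi_c$, where the product in $\du{A}$ is the pointwise product of characters. Evaluating both sides at a fixed $a\in A$ gives $\chi_{bc}(a)=\chi_b(a)\chi_c(a)$, and inserting the evaluation formula again produces $|A|\,p(a\inv,bc)=|A|^2\,p(a\inv,b)p(a\inv,c)$. Replacing $a$ by $a\inv$, a bijection of $A$, yields the claimed identity.

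The computations here are entirely routine once the evaluation formula $\chi_b(a)=|A|\,p(a\inv,b)$ is in hand, so I do not anticipate a genuine obstacle; the substance of the result was already front-loaded into constructing the $\chi_b$ basis and computing its coefficients in \cref{lem:p-norms}. The only point requiring care is the bookkeeping of inverses: one must check that the substitutions $a=d\inv,\,x=c\inv$ and $a\mapsto a\inv$ genuinely sweep over all of $A$, so that the resulting identities hold for arbitrary arguments, and note that the symmetry of the two formulas in $a,x$ and in $b,c$ respectively is consistent with the commutativity of $\kk$ and of $A$.
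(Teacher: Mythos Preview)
Your proposal is correct and takes essentially the same approach as the paper: both exploit the character basis $\{\chi_b\}$ and the explicit coefficient formula from \cref{lem:p-norms} to convert the two multiplicativity properties of the $\chi_b$ into the desired identities. The only cosmetic differences are that the paper applies the antipode to $\chi_b$ to clear the inverse before multiplying in $\du{A}$ and obtains the second identity by passing to the dual $p^*$, whereas you evaluate characters directly and handle both identities by the same substitution trick.
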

\begin{proof}
  Let $b,c\in B$.  Then we have
  \begin{align*}
    S(\chi_b)S(\chi_c) &= |A|^2 \sum_a p(a,b)p(a,c) e_a\\
    &= S(\chi_c\chi_b)\\
    &= S(\chi_{cb})\\
    &= |A|\sum_a p(a,cb)e_a.
  \end{align*}
  Since $B$ is abelian, comparing coefficients yields the first identity.  The second is obtained by replacing $p$ with $p^*$.
\end{proof}
\begin{cor}
  For all $a\in A$ and $b\in B$, $p(a,b\inv) =\overline{p(a,b)}$, where $\overline{z}$ denotes the complex conjugate of $z$.
\end{cor}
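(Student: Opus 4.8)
The plan is to derive the identity directly from the multiplicativity established in \cref{thm:p-simpler} together with the norm computation of \cref{lem:p-norms}, with no further casework needed. The key observation is that $b\inv$ is available as an element of $B$, so the product formula applies to the pair $(b,b\inv)$.

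First I would apply \cref{thm:p-simpler} with $c=b\inv$ to obtain $p(a,b b\inv)=|A|\,p(a,b)\,p(a,b\inv)$, and note that the left-hand side is $p(a,1)$. By \cref{lem:p-norms} we have $p(a,1)=1/|A|$, so rearranging gives $p(a,b)\,p(a,b\inv)=1/|A|^2$. On the other hand, \cref{lem:p-norms} also gives $|p(a,b)|=1/|A|$, i.e.\ $p(a,b)\,\overline{p(a,b)}=|p(a,b)|^2=1/|A|^2$. Comparing the two expressions, both $p(a,b\inv)$ and $\overline{p(a,b)}$ are solutions to $p(a,b)\,z=1/|A|^2$; since $p(a,b)$ is nonzero (again by the norm bound), we may cancel it to conclude $p(a,b\inv)=\overline{p(a,b)}$.

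There is essentially no obstacle here: the only thing to verify is that $b\inv\in B$ so that the hypotheses of \cref{thm:p-simpler} are met, which is immediate as $B$ is a group. I would present this as a short two-line computation rather than a structured argument, since it is purely a matter of combining the product rule with the modulus constraint. One could alternatively route through \cref{lem:p-ids-1} by first writing $p(a,b\inv)=p(a\inv,b)$ and invoking a character interpretation, but the direct multiplicative argument above is cleaner and avoids reintroducing the dual map.
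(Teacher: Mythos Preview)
Your proof is correct and follows essentially the same approach as the paper's: apply \cref{thm:p-simpler} with $c=b^{-1}$ to get $p(a,b)p(a,b^{-1})=1/|A|^2$, compare with $|p(a,b)|^2=1/|A|^2$ from \cref{lem:p-norms}, and cancel the nonzero factor $p(a,b)$.
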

\begin{proof}
  Apply the preceding theorem and \cref{lem:p-norms} to $p(a,b)p(a,b\inv)$.
\end{proof}
\begin{df}
  Let $\kk^\times$ be the multiplicative group of units in $\kk$.  Given groups $G,H$, a map $f\colon G\times H\to \kk^\times$ is said to be a bicharacter if the following two conditions hold:
  \begin{itemize}
    \item For all $g\in G$ $f(g,\cdot)\in \widehat{H}$;
    \item For all $h\in H$, $f(\cdot,h)\in\widehat{G}$.
  \end{itemize}
  Equivalently, for all $x,y\in G$ and $a,b\in H$ the following both hold:
  \begin{itemize}
    \item $f(xy,a) = f(x,a)f(y,a)$.
    \item $f(x,ab) = f(x,a)f(x,b)$.
  \end{itemize}
\end{df}

\begin{cor}
  Suppose $p\colon\du{G}\to\kk H$ is a morphism of Hopf algebras.  Then the map $\sigma\colon A\times B\to\kk^\times$ given by
  \[ \sigma(a,b)\mapsto |A| p(a,b)\]
  is a bicharacter.
\end{cor}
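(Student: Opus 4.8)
The plan is to read off both bicharacter conditions directly from \cref{thm:p-simpler}, treating the factor $|A|$ as exactly the correction needed to turn the two identities there into genuine multiplicativity statements. First I would confirm that $\sigma$ really does take values in $\kk^\times$ rather than merely in $\kk$: by \cref{lem:p-norms} we have $|p(a,b)|=1/|A|$ for all $a\in A$ and $b\in B$, so $|\sigma(a,b)| = |A|\cdot(1/|A|) = 1$. Hence $\sigma(a,b)$ lies in $U(1)\subseteq\kk^\times$ and is in particular nonzero, so $\sigma$ is a well-defined map into $\kk^\times$.

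Next I would verify multiplicativity in each argument. Using the first identity of \cref{thm:p-simpler}, for fixed $a\in A$ and any $b,c\in B$ we get $\sigma(a,bc) = |A|\,p(a,bc) = |A|\cdot\big(|A|\,p(a,b)p(a,c)\big) = \big(|A|\,p(a,b)\big)\big(|A|\,p(a,c)\big) = \sigma(a,b)\sigma(a,c)$. Symmetrically, the second identity of \cref{thm:p-simpler} gives $\sigma(xy,b) = |A|\,p(xy,b) = \big(|A|\,p(x,b)\big)\big(|A|\,p(y,b)\big) = \sigma(x,b)\sigma(y,b)$ for all $x,y\in A$ and $b\in B$. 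Combined with the normalizations $\sigma(1,b) = |A|\,p(1,b) = 1$ and $\sigma(a,1) = |A|\,p(a,1) = 1$, again supplied by \cref{lem:p-norms}, these relations show that $\sigma(a,\cdot)$ is a group homomorphism $B\to\kk^\times$, hence an element of $\widehat{B}$, and likewise $\sigma(\cdot,b)\in\widehat{A}$. This is precisely the definition of a bicharacter.

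Since all the substantive work has already been carried out in \cref{thm:p-simpler} and \cref{lem:p-norms}, I do not expect any genuine obstacle here: the corollary is essentially a repackaging of \cref{thm:p-simpler} in the language of bicharacters, where the scaling by $|A|$ is engineered precisely to absorb the spurious factor of $|A|$ appearing on the right-hand sides of those two identities. The only point that requires even momentary care is checking that the values avoid $0$ so that the codomain $\kk^\times$ is legitimate, and this is immediate from the modulus computation in \cref{lem:p-norms}.
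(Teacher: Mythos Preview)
Your proof is correct and follows essentially the same approach as the paper: both derive the bicharacter identities directly from \cref{thm:p-simpler}. Your version is slightly more careful in that you explicitly verify $\sigma$ lands in $\kk^\times$ using \cref{lem:p-norms}, which the paper leaves implicit.
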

\begin{proof}
  The preceding theorem implies that $\sigma(aa',b) =\sigma(a,b)\sigma(a',b)$ and $\sigma(a,bb')=\sigma(a,b)\sigma(a,b')$ for all $a,a'\in A$ and $b,b'\in B$.  This is precisely the definition that $\sigma$ is a bicharacter.
\end{proof}
\begin{df}
  Let $G,H$ be abelian groups. A bicharacter $\sigma\colon G\times H\to\kk^\times$ is said to be orthogonal if
  \[ \langle\,\sigma(x,\cdot),\sigma(y,\cdot)\,\rangle = \delta_{x,y}\]
  and
  \[ \langle\, \sigma(\cdot,k),\sigma(\cdot,l)\,\rangle = \delta_{k,l}\]
  for all $x,y\in G$ and $k,l\in H$, where the inner products are the usual inner products of characters over $\kk$.
\end{df}
As defined, the assumption that $G,H$ are abelian is required.  For example, if $1\neq x\in G'$ then necessarily $\sigma(x,\cdot)=\varepsilon$ is the trivial character, which therefore violates the orthogonality requirements.  While we will not need it here, a non-trivial definition for arbitrary $G,H$ is acquired by simply requiring that the canonical map $G/G'\times H/H'\to \kk$ be an orthogonal bicharacter, as defined above.
\begin{thm}
  The Hopf algebra morphisms $p\colon\du{G}\to \kk H$ with given $A,B$ are in bijective correspondence with the orthogonal $A\times B$ bicharacters.
\end{thm}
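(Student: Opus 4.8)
The plan is to show that the assignment $p\mapsto\sigma$, where $\sigma(a,b)=|A|p(a,b)$, is the desired bijection, its inverse sending an orthogonal bicharacter $\sigma$ to the linear map determined by $p(e_a)=\frac{1}{|A|}\sum_{b\in B}\sigma(a,b)\,b$ for $a\in A$ and $p(e_g)=0$ for $g\notin A$. The preceding corollary already shows that, for a Hopf morphism $p$, the function $\sigma$ is a bicharacter with $|\sigma(a,b)|=1$, so the first task is to verify that $\sigma$ is \emph{orthogonal}; afterwards I would check injectivity and surjectivity of $p\mapsto\sigma$ separately. Throughout I will use that $\widehat A\cong B$ (\cref{lem:p-desc}) forces $|A|=|B|$.

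For orthogonality I would compute the character inner products directly. Since $\sigma(y,\cdot)$ is a character, $\overline{\sigma(y,b)}=\sigma(y,b\inv)$, whence $\langle\sigma(x,\cdot),\sigma(y,\cdot)\rangle=\frac{|A|^2}{|B|}\sum_{b\in B}p(x,b)p(y,b\inv)$. The specialization $b=1$ of \cref{eq:p-alg}, together with $p(x,1)=1/|A|$ from \cref{lem:p-norms}, gives $\sum_{b}p(x,b)p(y,b\inv)=\delta_{x,y}/|A|$; combined with $|A|=|B|$ this yields $\langle\sigma(x,\cdot),\sigma(y,\cdot)\rangle=\delta_{x,y}$. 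The second orthogonality relation is obtained identically from \cref{eq:p-coalg} (equivalently, by running the same argument on the dual $p^*$, which is again a Hopf morphism). This places the forward map into the set of orthogonal bicharacters.

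Injectivity is immediate: $\sigma$ recovers $p$ on $A\times B$, and $p$ vanishes elsewhere by \cref{lem:p-ids-1}. For surjectivity I would begin from an orthogonal $A\times B$ bicharacter $\sigma$ and first observe that orthogonality forces $|A|=|B|$: the first relation makes $\{\sigma(x,\cdot)\}_{x\in A}$ an orthonormal, hence linearly independent, family in $\widehat B$, giving $|A|\le|B|$, and the second relation gives the reverse inequality. Defining $p$ by the formula above, each morphism axiom corresponds to an orthogonality relation: the algebra identity \cref{eq:p-alg} is exactly the first relation (expand $\sigma(x,bc\inv)=\sigma(x,b)\overline{\sigma(x,c)}$ and recognize the resulting sum as $|B|$ times an inner product), the coalgebra identity \cref{eq:p-coalg} is exactly the second, and unitality and counitality follow by pairing against the trivial characters $\sigma(\cdot,1)=\varepsilon$ and $\sigma(1,\cdot)=\varepsilon$. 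Finally I would confirm the associated subgroups are precisely $A$ and $B$: each $p(e_a)$ is nonzero since its coefficients have modulus $1/|A|$, and the first orthogonality relation shows the $|A|$ vectors $\{p(e_a)\}_{a\in A}$ are pairwise orthogonal in $\kk B$, hence a basis of the $|B|=|A|$-dimensional space $\kk B$.

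The only genuinely structural input is the equality $|A|=|B|$; once that is in hand, every remaining verification is a direct translation between an orthogonality relation and the corresponding (co)algebra or (co)unitality identity, so there is no real obstacle beyond careful bookkeeping. The step most deserving of care is the surjectivity argument, where one must check not merely that the map built from $\sigma$ satisfies all the morphism axioms, but also that its associated abelian subgroups are exactly $A$ and $B$ rather than proper subgroups of them.
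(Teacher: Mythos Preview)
Your proposal is correct and follows essentially the same route as the paper: both establish the bijection via $\sigma(a,b)=|A|\,p(a,b)$ and show that the Hopf-algebra identities \cref{eq:p-alg} and \cref{eq:p-coalg} are equivalent to the two orthogonality relations for $\sigma$. Your treatment is in fact slightly more careful than the paper's on two points the paper leaves implicit: you extract $|A|=|B|$ directly from orthogonality in the surjectivity direction, and you verify that the morphism built from $\sigma$ has associated subgroups exactly $A$ and $B$ rather than proper subgroups.
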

\begin{proof}
  The previous corollary shows how to convert between $p$ and a bicharacter.  Explicitly, the relation is given by
  \[ \sigma(a,b)=|A|p(a,b),\]
  for $a\in A$ and $b\in b$, and $p(a,b)=0$ otherwise.  We need to show that given $p$, the obtained $\sigma$ is orthogonal.  And that given $\sigma$, the $p$ obtained defines a morphism of Hopf algebras only if $\sigma$ is orthogonal.  By the preceding corollaries and theorem, $\sigma$ is an $A\times B$ bicharacter either by assumption or by properties of $p$.  We claim that \cref{eq:p-alg,eq:p-coalg} are equivalent to the desired orthogonality condition on $\sigma$.  We consider only \cref{eq:p-alg} here, as the other case is nearly identical.  The reader may find it worthwhile to also express \cref{eq:p-unital,eq:p-counital} in these terms, but we do not need to explicitly consider them because they follow from \cref{eq:p-alg,eq:p-coalg}.  So with $\sigma$ and $p$ related as before, so that $\sigma$ is always a bicharacter, we have
  \begin{align*}
    \sum_{t\in B} p(x,bt\inv) p(y,t) &= \frac{1}{|A|^2} \sum_{t\in B}\sigma(x,bt\inv)\sigma(y,t)\\
    &= \frac{1}{|A|^2} \sum_{t\in B}\sigma(x,b)\sigma(x,t\inv)\sigma(y,t)\\
    &= \frac{\sigma(x,b)}{|A|^2} \sum_{t\in B}\sigma(x,t\inv)\sigma(y,t)\\
    &= \frac{p(x,b)}{|A|} \sum_{t\in B} \overline{\sigma(x,t)}\sigma(y,t)\\
    &= p(x,b)\langle\, \sigma(y,\cdot),\sigma(x,\cdot)\,\rangle.
  \end{align*}
  Therefore by \cref{eq:p-alg} and the definition of orthogonal bicharacter, this last expression is equal to $\delta_{x,y}p(x,b)$ for all $x,y\in A$ and $b\in B$ if and only if $\sigma$ is orthogonal, if and only if $p$ is a morphism of Hopf algebras.
\end{proof}

\section{Quasi-Hopf algebras determined by \ensuremath{p}}\label{sec:p-quotients}\label{sec:p-embeddings}
While the previous section offers some rather nice descriptions for the component $p$, we will find it useful for understanding the components $u,v$ to have a couple of other perspectives.

So suppose we have a morphism of Hopf algebras $p\colon\du{G}\to\kk H$ and 3-cocycles $\omega\in Z^3(G,U(1))$, $\eta\in Z^3(H,U(1))$.  We define $\chi_b\in \widehat{A}$ as in \cref{df:chi-def}.

\begin{df}\label{df:p-quotients}
  We define a quasi-Hopf algebra $\du{H}\#_p^\eta\du{A}$ in the following fashion.  The underlying vector space is $\du{H}\ot \du{A}$, with basis elements $e_h\# \chi_b$ for $h\in H$ and $b\in B$.  The multiplication of $\du{H}\#_p^\eta\du{A}$ is given by
  \[ e_x\# \chi_b\cdot e_y\# \chi_{b'} = \delta_{x,byb\inv} \theta_x'(b,b')e_x\# \chi_{bb'},\]
  with unit $\varepsilon\# \varepsilon=\varepsilon\#\chi_1$. The comultiplication of $\du{H}\#_p^\eta\du{A}$ is defined by
  \[ \Delta e_h\# \chi_b = \sum_{t\in H} \gamma_b'(ht\inv,t) e_{ht\inv}\# \chi_b\ot e_t\#\chi_b,\]
  with counit $\ev_1\ot \ev_1$.  The coassociator is
  \[ \sum_{x,y,z\in H} \eta(x,y,z)\inv e_x\#\chi_1\ot e_y\#\chi_1\ot e_z\#\chi_1.\]
  The antipode is
  \[ S(e_h\#\chi_b) = \theta_{h\inv}(b,b\inv)\inv \gamma_b(h,h\inv)\inv e_{b\inv h\inv b}\# \chi_{b\inv},\]
  with
  \[ \alpha=\varepsilon\# \chi_1,\qquad \beta=\sum_{h\in H} \eta(h,h\inv,h)e_h\# \chi_1.\]
\end{df}
Since $p$ is a morphism of Hopf algebras, this defines a quasi-Hopf algebra for the same reasons that the structures of $\D^\eta(H)$ define a quasi-Hopf algebra.  Indeed, the following can be used for an alternative definition of $\du{H}\#_p^\eta\du{A}$ by using the standard pullback construction.
\begin{cor}\label{cor:p-quot-morph}
  The linear map $\id\ot p\colon \du{H}\#_p^\eta\du{A}\to \D^\eta(H)$ is an injective morphism of quasi-Hopf algebras.  The image is the quasi-Hopf subalgebra $\du{H}\#\kk B$ of $\D^\eta(H)$.
\end{cor}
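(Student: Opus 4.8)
The plan is to verify directly that $f := \id\ot p$ carries each structure map of $\du{H}\#_p^\eta\du{A}$ onto the corresponding structure map of $\D^\eta(H)$, exploiting the fact that, by construction, $p(\chi_b)=b$ for every $b\in B$ (and in particular $p(\chi_1)=p(\varepsilon)=1$, $p(\chi_{b\inv})=b\inv$). Since $f(e_h\#\chi_b)=e_h\# p(\chi_b)=e_h\# b$, every defining formula in \cref{df:p-quotients} is built so that applying $f$, $f\ot f$, or $f\ot f\ot f$ reproduces, term by term, the multiplication, comultiplication, coassociator, antipode, $\alpha$, and $\beta$ of $\D^\eta(H)$. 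The argument is therefore almost entirely a matter of matching formulas, not of constructing anything new.

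First I would dispose of injectivity and the image computation. By \cref{lem:p-desc}, $p$ restricts to an isomorphism $\du{A}\to\kk B$, so $f=\id\ot p$ restricts to an isomorphism of $\du{H}\ot\du{A}$ onto $\du{H}\ot\kk B$; in particular $f$ is injective, and its image is exactly the subspace $\du{H}\#\kk B$ spanned by $\{e_h\# b : h\in H,\ b\in B\}$.

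Next I would run through the quasi-Hopf morphism axioms. For the algebra structure, applying $f$ to $e_x\#\chi_b\cdot e_y\#\chi_{b'}=\delta_{x,byb\inv}\theta'_x(b,b')\,e_x\#\chi_{bb'}$ and using $\chi_b\chi_{b'}=\chi_{bb'}$ together with $p(\chi_{bb'})=bb'$ yields precisely $e_x\# b\cdot e_y\# b'$ in $\D^\eta(H)$; the unit $\varepsilon\#\chi_1$ maps to $\varepsilon\# 1$. For the comultiplication, $f\ot f$ applied to $\Delta(e_h\#\chi_b)$ reproduces $\Delta(e_h\# b)$ verbatim, since the phase $\gamma'_b$ and the summation index set are unchanged; the counit matches because $\ev_1(\chi_b)=\chi_b(1)=1=\varepsilon(b)$, so $\ev_1\ot\ev_1$ agrees with $\ev_1\ot\varepsilon$ after $f$. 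The coassociator involves only $\chi_1$, hence $f^{\ot 3}$ sends it to $\sum_{x,y,z\in H}\eta(x,y,z)\inv e_x\#1\ot e_y\#1\ot e_z\#1=\phi'$. The antipode, $\alpha$, and $\beta$ are handled identically: $f$ carries $S(e_h\#\chi_b)$ to the $\D^\eta(H)$-antipode of $e_h\# b$ using $p(\chi_{b\inv})=b\inv$, and sends $\varepsilon\#\chi_1$ and $\sum_h\eta(h,h\inv,h)e_h\#\chi_1$ to the $\alpha,\beta$ of $\D^\eta(H)$. Having checked that $f$ is an injective morphism of quasi-Hopf algebras, I conclude that its image $\du{H}\#\kk B$ is a quasi-Hopf subalgebra of $\D^\eta(H)$: closure under each operation is immediate because $B$ is a subgroup of $H$, so products $bb'$ and inverses $b\inv$ remain in $B$, while the coassociator, $\alpha$, and $\beta$ all lie in $\du{H}\#1\subseteq\du{H}\#\kk B$.

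As for obstacles, there is essentially no conceptual difficulty, since $\du{H}\#_p^\eta\du{A}$ was defined precisely so that $\id\ot p$ becomes a morphism. The only point requiring genuine care is the bookkeeping identification of the two bases: remembering that $\chi_1=\varepsilon$, that $p(\chi_b)=b$ realizes the isomorphism $\widehat{A}\cong B$ supplied by \cref{lem:p-desc}, and that the phases $\theta',\gamma',\eta$ appearing in \cref{df:p-quotients} are literally those of $\D^\eta(H)$. This is what guarantees that each formula matches on the nose rather than merely up to a scalar, and it is the feature that makes the verification a direct comparison rather than a computation.
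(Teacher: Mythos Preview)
Your proposal is correct and is precisely what the paper means by ``immediate from the definitions'': you have simply unpacked, structure map by structure map, the fact that \cref{df:p-quotients} was set up so that $\id\ot p$ intertwines each formula with the corresponding one in $\D^\eta(H)$ via $p(\chi_b)=b$. Nothing is missing, and the approach is the same as the paper's---just spelled out in full.
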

\begin{proof}
  This is immediate from the definitions.
\end{proof}

\begin{df}\label{df:p-embeddings}
  Suppose $A$ is a normal subgroup of $G$.  We define a quasi-Hopf algebra $\kk B\#_p^\omega\kk G$ in the following fashion.  The underlying vector space is $\kk B\ot \kk G$, with basis elements $p(e_a)\# g$ for $a\in A$, $g\in G$.  The multiplication of
  $\kk B\#_p^\omega \kk G$ is defined by
  \[ p(e_a)\# x \cdot p(e_b)\# y = \delta_{a,xbx\inv} \theta_a(x,y) p(e_a)\# xy,\]
  with unit $1\#1 = \sum_{a\in A} p(e_a)\# 1$.  The comultiplication is given by
  \[ \Delta(p(e_a)\# x) = \sum_{c\in A} \gamma_x(ac\inv,c) p(e_{ac\inv})\# x \ot p(e_c)\# x,\]
  with counit $\varepsilon\ot\varepsilon$.  The coassociator is
  \[ \sum_{a,b,c\in A} \omega(a,b,c)\inv p(e_a)\#1\ot p(e_b)\#1 \ot p(e_c)\#1.\]
  The antipode is
  \[ S(p(e_a)\# x) = \theta_{a\inv}(x,x\inv)\inv \gamma_x(a,a\inv)\inv p(e_{x\inv a\inv x})\# x\inv,\]
  with
  \[ \alpha=1,\qquad \beta=\sum_{a\in A}\omega(a,a\inv,a)p(e_a)\# 1.\]
\end{df}
For the necessity of the normality of $A$, note that $1\# 1$ can be a zero divisor when $A$ is not normal.  For if $A$ is not normal then $xbx\inv\not \in A$ is possible for some $b\in A$ and $x\in G$.  In \cref{lem:vp-comm} we will show that $A$ will always be normal for our purposes.

As before, since $p$ is a morphism of Hopf algebras, this is a well-defined quasi-Hopf algebra for the same reasons that the structures of $\D^\omega(G)$ define a quasi-Hopf algebra.  The explicit definition above is tailor made for the following result to hold.
\begin{cor}\label{cor:p-embed-morph}
  When $A$ is normal in $G$, the linear map \[p\ot\id\colon \D^\omega(G)\to \kk B\#_p^\omega\kk G\] is a surjective morphism of quasi-Hopf algebras.  It is an isomorphism when the domain is restricted to the quasi-Hopf subalgebra $\du{A}\# \kk G$ of $\D^\omega(G)$.
\end{cor}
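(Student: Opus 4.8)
The plan is to check directly that $p\ot\id$ respects each component of quasi-Hopf structure, taking advantage of the fact that \cref{df:p-embeddings} is built precisely so that every structural formula of $\kk B\#_p^\omega\kk G$ is obtained from the corresponding formula of $\D^\omega(G)$ by replacing each first-slot $e_g$ with $p(e_g)$ and discarding the indices outside $A$. By \cref{lem:p-desc} we have $\Img p=\kk B$ and $p(e_g)\neq 0$ exactly when $g\in A$, so $e_g\#x\mapsto p(e_g)\#x$ is a well-defined linear map $\D^\omega(G)=\du G\ot\kk G\to\kk B\ot\kk G$ with kernel $\operatorname{span}\{e_g\#x: g\notin A\}$. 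It is surjective because each basis vector $p(e_a)\#x$ of the target is the image of $e_a\#x$; and since $\{p(e_a)\}_{a\in A}$ is a basis of $\kk B$, its restriction to $\du A\#\kk G$ carries a basis bijectively onto a basis, hence is a linear isomorphism. Thus the two extremal assertions of the corollary will follow once $p\ot\id$ is shown to be a quasi-Hopf morphism.

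I would then verify the quasi-bialgebra axioms one at a time, all by the same ``restrict the sum to $A$'' mechanism. Unit and counit use that $p$ is biunital: $p(\varepsilon)=\sum_{a\in A}p(e_a)$ is the unit of $\kk B$, giving $p\ot\id(\varepsilon\#1)=1\#1$, while $\varepsilon(p(e_a))=\sum_{b\in B}p(a,b)=\delta_{a,1}$ by \cref{eq:p-counital}, so $(\varepsilon\ot\varepsilon)\circ(p\ot\id)=\ev_1\ot\varepsilon$. For the comultiplication, applying $(p\ot\id)^{\ot 2}$ to $\Delta(e_g\#x)$ yields $\sum_{t\in G}\gamma_x(gt\inv,t)\,p(e_{gt\inv})\#x\ot p(e_t)\#x$, whose summand survives only when $t\in A$ and $gt\inv\in A$; for $g\in A$ this collapses to the sum over $t\in A$ defining the comultiplication of \cref{df:p-embeddings}, and for $g\notin A$ it kills every term, matching $p(e_g)=0$. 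The coassociator, $\alpha$, and $\beta$ are identical: applying the relevant tensor power of $p\ot\id$ merely restricts each defining sum to indices in $A$.

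The axiom that actually consumes the normality hypothesis, and which I expect to be the one real locus of care, is multiplicativity (and, word for word, antipode preservation). Here $p\ot\id(e_g\#x\cdot e_h\#y)=\delta_{g,xhx\inv}\theta_g(x,y)\,p(e_g)\#xy$ must be reconciled with $(p(e_g)\#x)(p(e_h)\#y)$. When $g\in A$ the Kronecker delta forces $h=x\inv gx$, which lies in $A$ precisely because $A\trianglelefteq G$, so both sides obey the single multiplication rule of \cref{df:p-embeddings}; when $g\notin A$ both sides vanish. The same normality observation, $x\inv g\inv x\in A\iff g\in A$, shows that $p\ot\id$ intertwines the antipodes with every phase and index matching. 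I would close by assembling these checks: $p\ot\id$ is a quasi-Hopf morphism, it is surjective by the first paragraph, and its restriction to the sub-quasi-Hopf algebra $\du A\#\kk G$ (the corner cut out by the central idempotent $\sum_{a\in A}e_a\#1$, central exactly because $A$ is normal) is a bijective morphism, hence an isomorphism. I do not anticipate a genuine obstruction---only the bookkeeping of confirming that indices outside $A$ drop out consistently and that normality is invoked exactly where a conjugate must be kept inside $A$.
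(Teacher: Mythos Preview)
Your proof is correct and is precisely what the paper's one-line argument (``This is immediate from the definitions'') amounts to when unpacked: \cref{df:p-embeddings} is designed so that every structural formula is the $p$-image of the corresponding formula in $\D^\omega(G)$, and you verify this axiom by axiom, invoking normality of $A$ exactly where conjugates must stay in $A$. The only quibble is that your central-idempotent remark shows $\du A\#\kk G$ is a subalgebra but not that it is closed under $\Delta$ (the comultiplication sum in $\D^\omega(G)$ runs over all $t\in G$, not just $t\in A$); this imprecision is already present in the paper's own phrasing, and the substantive claim you actually need---that the restriction is a linear bijection---you have already established in your first paragraph.
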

\begin{proof}
  This is immediate from the definitions.
\end{proof}
\begin{rem}
We have presented the structures explicitly, rather than rely on invoking the standard pullback and push forward constructions alone, as we will tie the properties of these structures into the properties of the components $v,u$ in \cref{sec:uv-bialg}.  One can change the bases for these quasi-Hopf algebras by using \cref{lem:basis-chi-in-std,lem:basis-std-in-chi,eq:p-exp}.  We have chosen here the bases which makes the connection to the twisted doubles the most transparent.
\end{rem}

\begin{lem}\label{lem:p-embed-coassoc}
  Suppose $A$ is normal in $G$.  Then $\kk B\#_p^\omega\kk H$ is a Hopf algebra if and only if $\omega$ restricts to the trivial 3-cocycle on $A$.
\end{lem}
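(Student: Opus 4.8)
The plan is to use the fact, recorded just after the definition of a quasi-Hopf algebra, that such an object is a Hopf algebra precisely when its coassociator is trivial (equal to $1\ot1\ot1$) and $\alpha=\beta=1$. Since $\kk B\#_p^\omega\kk G$ has $\alpha=1$ by construction, the whole question reduces to determining when the coassociator is trivial, and then observing that this same condition automatically makes $\beta=1$.

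First I would compute the triviality condition for the coassociator. The coassociator of $\kk B\#_p^\omega\kk G$ is $\sum_{a,b,c\in A}\omega(a,b,c)\inv p(e_a)\#1\ot p(e_b)\#1\ot p(e_c)\#1$, while the unit is $1\#1=\sum_{a\in A}p(e_a)\#1$, so the trivial coassociator $1\ot1\ot1$ expands as $\sum_{a,b,c\in A}p(e_a)\#1\ot p(e_b)\#1\ot p(e_c)\#1$. By \cref{lem:p-desc} the elements $\{p(e_a)\}_{a\in A}$ form a basis of $\kk B$, so the triples $p(e_a)\#1\ot p(e_b)\#1\ot p(e_c)\#1$ are linearly independent in the threefold tensor power. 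Comparing coefficients then shows the coassociator is trivial if and only if $\omega(a,b,c)=1$ for all $a,b,c\in A$, i.e.\ exactly when $\omega$ restricts to the trivial 3-cocycle on $A$.

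Next I would observe that this same condition controls $\beta$. The $\beta$-element is $\sum_{a\in A}\omega(a,a\inv,a)p(e_a)\#1$, so once $\omega|_A\equiv1$ every coefficient $\omega(a,a\inv,a)$ equals $1$ and $\beta$ collapses to $\sum_{a\in A}p(e_a)\#1=1\#1$, the unit. Hence when $\omega|_A\equiv1$ we have both $\phi=1\ot1\ot1$ and $\alpha=\beta=1$; the underlying quasi-bialgebra is then an honest bialgebra, the quasi-Hopf antipode axioms degenerate to the usual Hopf antipode identities, and since the prescribed antipode is bijective $\kk B\#_p^\omega\kk G$ is a Hopf algebra. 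Conversely, a Hopf algebra must have trivial coassociator, which by the first step forces $\omega|_A\equiv1$.

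There is no serious obstacle here: the argument is essentially a coefficient comparison made rigorous by the linear independence coming from \cref{lem:p-desc}, together with the bookkeeping observation that the unit $1\#1$ is the \emph{sum} over $A$ of the basis elements $p(e_a)\#1$ rather than a single basis vector. The only point requiring care is to keep track of this nonstandard form of the unit when expanding $1\ot1\ot1$ and when simplifying $\beta$.
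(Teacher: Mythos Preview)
Your proof is correct and in fact slightly more direct than the paper's. The paper first changes basis, expanding the coassociator in terms of group elements $x\#1\ot y\#1\ot z\#1$ with $x,y,z\in B$ (via the coefficients $p(a,x)$), then isolates the $x=y=z=1$ term and invokes the averaging lemma (\cref{lem:average}) together with $p(a,1)=1/|A|$ from \cref{lem:p-norms} to force $\omega|_{A^3}\equiv 1$. You instead remain in the basis $\{p(e_a)\#1\}_{a\in A}$, which \cref{lem:p-desc} guarantees is genuinely a basis of $\kk B\#1$, and compare coefficients directly. Your route is more elementary in that it bypasses both the averaging lemma and the norm computation; the paper's route, while marginally longer here, reuses machinery that recurs throughout the paper, so the authors presumably saw no gain in introducing a separate argument.
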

\begin{proof}
  It suffices to show that the coassociator and $\beta$ elements are both trivial.  The coassociator of $\kk B\#_p^\omega\kk H$ is \[ \sum_{a,b,c\in A} \omega(a,b,c)\inv p(e_a)\# 1\ot p(e_b)\# 1\ot p(e_c)\# 1,\]
  which when expanded in the standard basis is\[ \sum_{\substack{a,b,c\in A\\x,y,z\in B}} \omega(a,b,c)\inv p(a,x)p(b,y)p(c,z)\, x\# 1 \ot y\# 1\ot z\#1.\]
  If this equals the trivial coassociator, then considering the $x=y=z=1$ term of the coassociator, we see
  \[ \frac{1}{|A|^3} \sum_{a,b,c\in A} \omega(a,b,c)\inv = 1.\]
  By \cref{lem:average} this is equivalent to $\omega(a,b,c)=1$ for all $a,b,c\in A$.  For the reverse direction, given that $\omega\equiv 1$ on $A^3$, \cref{eq:p-unital} shows that the coassociator is trivial.  Finally, note that $\beta=1$ whenever $\omega\equiv 1$ on $A^3$, as desired.  This completes the proof.
\end{proof}
In \cref{lem:gen-omega-A}, we will see that $\kk B\#_p^\omega\kk H$ is a Hopf algebra whenever $p$ is such that $\exists$ $u,r,v$ with $\morphquad\in\genhom$.  On the other hand, the coassociator of $\du{H}\#_p^\eta \du{A}$ is trivial if and only if $\eta$ is trivial.

\begin{rem}\label{rem:alt-alg}
Here's an alternative, and equivalent, approach for producing the algebra structure of $\kk B\#_p^\omega \kk G$.

Suppose $A$ is normal in $G$.  Let $(\kk B)^\times$ denote the abelian, multiplicative group of units of the group algebra $\kk B$.

Considering \cref{eq:pv-alg} we define a function $\beta\colon G\times G\to (\kk B)^\times$ by
\begin{align}\label{eq:beta}
    \beta(x,y) = p(\sum_{g\in G} \theta_g(x,y) e_g) = p(\sum_{g\in A} \theta_g(x,y) e_g).
\end{align}
We note that \[p(\sum_{g\in A} \theta_g(x,y) e_g)\inv = p(\sum_{g\in A} \theta_g(x,y)\inv e_g),\]
and so the function is well-defined.  $G$ acts on $\du{A}$ via (left) conjugation since $A$ is normal.  It is well-known, and easily checked, that the action of $G$ on $\du{A}$ sends  $\widehat{A}$ to itself.  Since this is a basis of $\du{A}$ which is mapped (isomorphically) to $B$ by $p$, we see that the action of $G$ on $\du{A}$ defines an action of $G$ on $B$.  In other words, we may define $g\lact b$ to be $p(g\lact \chi_b)$ for all $g\in G$ and $b\in B$.  This gives an action of $G$ on $\kk B$ (equivalently defined by $g\lact p(e_a)=p(e_{xax\inv})$).  Since $(\kk B)^\times$ is characterized as those elements of $\kk B$ all of whose coefficients in the basis $\{p(e_a)\}_{a\in A}$ are non-zero, this action in turn restricts to an action on $(\kk B)^\times$.  Furthermore, \cref{eq:assoc} implies that $\beta$ defines a 2-cocycle of $G$ with values in $(\kk B)^\times$.  We can therefore define an associative multiplication (with identity) on $\kk B\ot \kk G$ by
\[ b\ot x \cdot c\ot y = b[x\lact c] \beta(x,y)\ot xy\]
for all $b,c\in B$ and $x,y\in G$.  A straightforward application of \cref{lem:basis-chi-in-std,lem:basis-std-in-chi} shows that this is precisely the algebra structure of $\kk B\#_p^\omega \kk G$.
\end{rem}

\begin{rem}\label{rem:alt-coalg}
Similarly, the following is an alternative approach to obtaining the coalgebra structure of $\kk B\#_p^\omega \kk G$.

For each $x\in G$ define $T(x)\in \kk B\ot\kk B$ by
\begin{align}\label{eq:T-def}
    T(x) = \sum_{g,t\in A} \gamma_x(gt\inv,t)p(e_{gt\inv})\ot p(e_t).
\end{align}
This element is invertible with inverse
\[ T(x)\inv = \sum_{g,t\in A} \gamma_x(gt\inv,t)\inv p(e_{gt\inv})\ot p(e_t).\]
We write $T(x) = \sum T(x)\ucom1\ot T(x)\ucom2.$

The comultiplication $\Delta$ on $\kk B\#_p^\omega \kk G$ is then given by
\[ \Delta(b\# x) = (b T(x)\ucom 1\# x) \ot (b T(x)\ucom2\# x),\]
which can again be verified by converting between bases for $\kk B$. Indeed, the quasi-coassociativity of this comultiplication is given by a 2-cycle like condition on $T$, which follows from \cref{eq:coassoc}.  It is precisely a 2-cycle condition, yielding a coassociative comultiplication, whenever the coassociator of $\kk B\#_p^\omega \kk G$ is trivial.
\end{rem}

\section{Coassociator relations}\label{sec:coassoc}
We recall that any quasi-bialgebra morphism must map the coassociator of the domain to the coassociator of the codomain.  In the case of a morphism between twisted group doubles, this condition imposes various restrictions on the 3-cocycles and the subgroup $A$, which we will now explore.

Let $\omega\in Z^3(G,U(1))$.  Consider any linear map $f\colon \kk H\to\kk G$.  We may linearly extend the 3-cocycle $\omega$ to a function $\kk G^3\to \kk$.  This extension of $\omega$ allows us to define $\omega^f = \omega\circ f\ot f\ot f$, which is a linear map $\kk H^3\to \kk$.

For any $\morphquad\in\genhom$ the coassociator condition means
\begin{align}\label{eq:morph-coassoc}
    \sum_{\substack{a,b,c\in A\\ x,y,z\in G}}&\omega(ax,by,cz)\inv u(e_x)\# p(e_a) \ot u(e_y)\# p(e_b)\ot u(e_z)\# p(e_c)\\
        &= \sum_{h,m,n\in H} \eta(h,m,n)\inv e_h\# 1\ot e_m\#1 \ot e_n\# 1.\nonumber
\end{align}

Our first lemma shows how $\morphquad$ relates the 3-cocycles to each other.
\begin{lem}\label{lem:gen-u-omega-eta}
  Let $\morphquad\in\genhom$.  Then $(\omega\inv)^{u^*}=\eta\inv$.  When $u$ is a morphism of algebras this is equivalent to \[\eta(x,y,z)=\omega(u^*(x),u^*(y),u^*(z))\] for all $x,y,z\in H$.
\end{lem}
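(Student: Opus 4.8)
The plan is to start from the coassociator condition for $\psi=\morphquad$, which is \cref{eq:morph-coassoc}. I would apply a suitable counital projection to both sides in order to strip away the $u$-factors and isolate a relation between $\omega$, $\eta$, and $p$. Concretely, I would apply $\varepsilon\ot\id$ to each of the three tensor-factors of $\D^\eta(H)$, i.e.\ apply $\ev_1\ot\id$ on the $\du{H}$ part of each leg. On the right-hand side this reduces $\sum_{h,m,n}\eta(h,m,n)\inv e_h\#1\ot e_m\#1\ot e_n\#1$ to a sum over group elements. On the left, the factors $u(e_x)\in\du{H}$ get evaluated against $1\in H$, and after summing over the appropriate indices the biunitality/counitality properties of $u$ from \cref{lem:biunital} and \cref{thm:purv} cause the $u$-contributions to collapse to $\delta$-type conditions.

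The key observation is that $\varepsilon\circ u = \varepsilon$ (counitality of $u$) together with the expansion \cref{eq:morphdef} means that when I project away the $\du{H}$-components, each leg contributes $\sum_x u(g,\cdot)\cdots$ which, upon summing, reduces to the $p$-part paired with the cocycle $\omega$. More carefully, I would first rewrite the left-hand side of \cref{eq:morph-coassoc} using the bicharacter/Hopf-morphism structure of $p$ (\cref{lem:p-desc}, \cref{thm:p-simpler}), recognizing that $\omega(ax,by,cz)$ is being evaluated with $a,b,c\in A$ recording the $p$-data and $x,y,z\in G$ recording the $u$-data. The identity $(\omega\inv)^{u^*}=\eta\inv$ is exactly the statement obtained after contracting out the $p$-indices: the map $u^*\colon\kk H\to\kk G$ is precisely what converts an $H$-indexed cocycle input into a $G$-indexed one, and the linear extension $\omega^{u^*}=\omega\circ(u^*\ot u^*\ot u^*)$ is what appears.

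To get the second, cleaner statement, I would invoke \cref{lem:u-simpler}: when $u$ is an algebra morphism, $u^*$ is a coalgebra morphism, so by \cref{lem:coalg-to-group} each $u^*(h)$ is a single group element of $G$ rather than a linear combination, and $u(g,h)\in\{0,1\}$. Under this hypothesis the linear extension $\omega^{u^*}$ evaluated on basis elements $x,y,z\in H$ simply returns $\omega(u^*(x),u^*(y),u^*(z))$, so the relation $(\omega\inv)^{u^*}=\eta\inv$ becomes the pointwise identity $\eta(x,y,z)=\omega(u^*(x),u^*(y),u^*(z))$ after taking reciprocals.

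The main obstacle I anticipate is bookkeeping in the projection step: correctly tracking how the $p$-coefficients $p(a,\cdot)$ sum against the normalization constants (recall $|p(a,b)|=1/|A|$ from \cref{lem:p-norms}) so that the $p$-data genuinely contracts to the identity and does not leave residual phases. I would control this using \cref{eq:p-unital,eq:p-counital} (the unitality/counitality of $p$) to ensure that summing over the $A$-indices yields exactly $\delta$-functions, and possibly \cref{lem:average} if any averaging-of-unit-modulus argument is needed to conclude that the extracted cocycle identity holds termwise rather than merely on average. Once the projection cleanly separates the $u$ and $p$ contributions, the identification with $(\omega\inv)^{u^*}$ is essentially formal.
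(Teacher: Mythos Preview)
Your strategic intuition---contract out the $p$-data and keep the $u$-data so that $u^*$ appears---is exactly right, but your concrete projection is backwards. Applying $\ev_1\ot\id$ to each leg of $\D^\eta(H)=\du{H}\ot\kk H$ evaluates the $\du{H}$-component at $1$ and retains the $\kk H$-component. On the right-hand side of \cref{eq:morph-coassoc} this sends $e_h\#1\mapsto\delta_{h,1}$, so the entire right side collapses to $\eta(1,1,1)\inv\cdot 1\ot1\ot1=1\ot1\ot1$ and all $\eta$-information is destroyed. On the left, $\ev_1(u(e_x))=u(x,1)=\delta_{1,x}$ by counitality of $u$, so the $u$-indices are forced to $1$ and what survives is $\sum_{a,b,c\in A}\omega(a,b,c)\inv p(e_a)\ot p(e_b)\ot p(e_c)$. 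That identity is precisely the content of the \emph{next} lemma (\cref{lem:gen-omega-A}), not this one.

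The correct projection is $(\id\ot\varepsilon)^{\ot3}$, which is what the paper uses. This kills the $\kk H$-component of each leg: on the right it leaves $\sum\eta(h,m,n)\inv e_h\ot e_m\ot e_n$ intact, and on the left the counitality of $p$ gives $\varepsilon(p(e_a))=\delta_{1,a}$, forcing $a=b=c=1$ and leaving $\sum_{x,y,z}\omega(x,y,z)\inv u(e_x)\ot u(e_y)\ot u(e_z)$. Comparing coefficients of $e_h\ot e_m\ot e_n$ then gives
\[
\sum_{x,y,z\in G}\omega(x,y,z)\inv u(x,h)u(y,m)u(z,n)=\eta(h,m,n)\inv,
\]
and since $u^*(h)=\sum_x u(x,h)x$, the left side is exactly the linear extension $\omega\inv(u^*(h),u^*(m),u^*(n))=(\omega\inv)^{u^*}(h,m,n)$. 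No averaging argument, no $p$-bookkeeping, and no appeal to \cref{lem:p-norms} is needed. Your treatment of the second claim via \cref{lem:u-simpler} is correct and matches the paper.
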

\begin{proof}
  Apply $(\id\ot\varepsilon)^3$ to \cref{eq:morph-coassoc}, expanded in the standard basis elements.  The obtained equality is easily shown to be equivalent to $(\omega\inv)^{u^*}=\eta\inv$.  The final claim follows from \cref{lem:u-simpler}.
\end{proof}

The next lemma gives a general connection between the values of $\omega$ and the component $p$.
\begin{lem}\label{lem:gen-omega-A}
  Let $\morphquad\in\genhom$.  Then $\omega$ restricts to the trivial 3-cocycle on $A$.
\end{lem}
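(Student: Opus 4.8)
The plan is to extract the restriction of $\omega$ to $A$ directly from the coassociator condition \cref{eq:morph-coassoc} by applying the quasi-Hopf projection $\ev_1\otimes\id\colon\D^\eta(H)\to\kk H$ to each of the three tensor factors. This mirrors the argument used for \cref{lem:p-embed-coassoc}, but works with \cref{eq:morph-coassoc} itself rather than with $\kk B\#_p^\omega\kk G$, since at this stage we have not yet established that $A$ is normal in $G$ and so cannot invoke \cref{cor:p-embed-morph}.

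First I would apply $(\ev_1\otimes\id)^{\otimes 3}$ to both sides of \cref{eq:morph-coassoc}. On a factor $u(e_x)\# p(e_a)$ this returns $\ev_1(u(e_x))\,p(e_a)=u(x,1)\,p(e_a)$, and since $u$ is counital, \cref{eq:u-is-counital} gives $u(x,1)=\delta_{1,x}$; hence the triple sum over $x,y,z\in G$ on the left collapses to the single term $x=y=z=1$. On the right each factor $e_h\#1$ is sent to $\delta_{1,h}\cdot 1$, so only $h=m=n=1$ survives, and normalization of $\eta$ gives $\eta(1,1,1)=1$. The resulting identity in $\kk H^{\otimes 3}$ is
\[
\sum_{a,b,c\in A}\omega(a,b,c)^{-1}\,p(e_a)\otimes p(e_b)\otimes p(e_c)=1\otimes 1\otimes 1,
\]
where $1$ denotes the unit of $\kk H$.

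Next I would expand each $p(e_a)$ in the standard basis via \cref{eq:p-exp} and compare the coefficient of $1\otimes 1\otimes 1$ on both sides. On the left this coefficient is $\sum_{a,b,c\in A}\omega(a,b,c)^{-1}p(a,1)p(b,1)p(c,1)$, and by \cref{lem:p-norms} we have $p(a,1)=1/|A|$ for every $a\in A$; on the right it is $1$. Therefore
\[
\frac{1}{|A|^3}\sum_{a,b,c\in A}\omega(a,b,c)^{-1}=1.
\]
Since each $\omega(a,b,c)^{-1}$ lies in $U(1)$ and so has modulus $1$, \cref{lem:average} forces $\omega(a,b,c)^{-1}=1$, i.e. $\omega(a,b,c)=1$, for all $a,b,c\in A$, which is exactly the assertion that $\omega$ restricts to the trivial $3$-cocycle on $A$.

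The argument is essentially routine once the correct projection is chosen; no genuine obstacle is anticipated beyond carefully bookkeeping the two counitality collapses and invoking the averaging lemma at the end. The only conceptual point is to resist appealing to the normality of $A$ (and hence to the auxiliary algebras of \cref{sec:p-embeddings}), and instead to read off the needed scalar identity from \cref{eq:morph-coassoc} together with the normalization $p(a,1)=1/|A|$ supplied by \cref{lem:p-norms}.
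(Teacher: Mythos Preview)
Your proposal is correct and follows essentially the same route as the paper's proof: apply $(\ev_1\otimes\id)^{\otimes 3}$ to \cref{eq:morph-coassoc}, read off the coefficient at $1\otimes 1\otimes 1$ using $p(a,1)=1/|A|$, and invoke \cref{lem:average}. Your write-up is simply more explicit about the counitality collapse of the $x,y,z$ sum via \cref{eq:u-is-counital}, which the paper leaves implicit.
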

\begin{proof}
  Applying $(\ev_1\ot\id)^3$ to \cref{eq:morph-coassoc}, expanded in standard basis elements, we get
  \[\sum_{\substack{a,b,c\in A\\ a',b',c'\in B}} \omega(a,b,c)\inv p(a,a')p(b,b')p(c,c')a'\ot b'\ot c' = 1\ot 1\ot 1.\]
  Considering the $a'=b'=c'=1$ term in the summation we get \[\frac{1}{|A|^3}\sum_{a,b,c\in A}\omega(a,b,c)\inv = 1.\]  From \cref{lem:average} we conclude that $\omega\equiv 1$ on $A^3$, as desired. Indeed, by \cref{eq:p-unital,eq:p-counital} this condition guarantees that the remaining terms in the sum vanish, as necessary.
\end{proof}

When the component $u$ has nice structure, then preservation of the coassociators puts fairly strong restrictions on the 3-cocycles, and conversely.
\begin{thm}\label{thm:gen-omega-u}
Let $\morphquad\in\genhom$.
    \begin{enumerate}
        \item If $u$ is a morphism of algebras, then $\omega(x,y,z)=1$ whenever $\{x,y,z\}\in A\cup \Img(u^*)$ and $\{x,y,z\}\cap A\neq\emptyset$.
        \item If $u$ is a morphism of Hopf algebras then $\omega(ax,by,cz) = \omega(x,y,z)$ for all $a,b,c\in A$ and $x,y,z\in\Img(u^*)$.
    \end{enumerate}
    In particular, if $\morphquad$ has $u$ a morphism of Hopf algebras then by restriction $\omega$ defines an element of $Z^3((A\Img(u^*))/A,U(1))$.  As a special case, when $A\Img(u^*)=G$ then we have $\omega \in Z^3(G/A,U(1))$.
\end{thm}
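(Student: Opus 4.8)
The entire theorem will be deduced from a single pointwise identity that I extract from the coassociator condition \cref{eq:morph-coassoc}. The plan is to compare coefficients in \cref{eq:morph-coassoc}, collapse the $u$-components using that $u$ is an algebra morphism, and then promote the resulting \emph{averaged} relation among $3$-cocycle values to a pointwise one by means of \cref{lem:average}. Once this master identity is in hand, both displayed items and the final cocycle statement follow with only bookkeeping.

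To obtain the master identity, note that since $u$ is an algebra morphism \cref{lem:u-simpler} gives $u(x,\ell)=\delta_{x,u^*(\ell)}$. Comparing the coefficient of $e_{\ell_1}\#s_1\ot e_{\ell_2}\#s_2\ot e_{\ell_3}\#s_3$ on the two sides of \cref{eq:morph-coassoc} therefore collapses the sums over $x,y,z$, leaving, with $x_0=u^*(\ell_1),\,y_0=u^*(\ell_2),\,z_0=u^*(\ell_3)$,
\[\sum_{a,b,c\in A}\omega(ax_0,by_0,cz_0)\inv p(a,s_1)p(b,s_2)p(c,s_3)=\eta(\ell_1,\ell_2,\ell_3)\inv\delta_{s_1,1}\delta_{s_2,1}\delta_{s_3,1}.\]
Setting $s_1=s_2=s_3=1$ and inserting $p(a,1)=1/|A|$ from \cref{lem:p-norms} and $\eta(\ell_1,\ell_2,\ell_3)=\omega(x_0,y_0,z_0)$ from \cref{lem:gen-u-omega-eta}, the left-hand side becomes the average over $a,b,c\in A$ of the unimodular numbers $\omega(ax_0,by_0,cz_0)\inv$, while the right-hand side is the unimodular number $\omega(x_0,y_0,z_0)\inv$. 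By \cref{lem:average} every term must equal the average, so
\[\omega(ax_0,by_0,cz_0)=\omega(x_0,y_0,z_0)\qquad(a,b,c\in A,\ x_0,y_0,z_0\in\Img(u^*)).\]
This is precisely the second item, and in fact it already holds assuming only that $u$ is an algebra morphism.

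The first item follows from the master identity together with the normalization of $\omega$. Because $u$ is counital we have $u^*(1)=1$, so $1\in\Img(u^*)$, and of course $1\in A$; thus every element of $A\cup\Img(u^*)$ can be written as $a\cdot x_0$ with $a\in A$ and $x_0\in\Img(u^*)$ (with $a=1$ when the element lies in $\Img(u^*)$ and $x_0=1$ when it lies in $A$). Given $x,y,z\in A\cup\Img(u^*)$ with, say, the first entry in $A$, I would factor the three entries this way, choosing the base point in the distinguished $A$-slot to be $1$. The master identity then equates $\omega(x,y,z)$ with $\omega$ evaluated on the base-point triple, which carries a $1$ in one slot and hence equals $1$ by normalization. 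The same argument applies whichever slot is forced into $A$, and the case of all three entries in $A$ recovers \cref{lem:gen-omega-A}.

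It remains to package this when $u$ is a morphism of Hopf algebras. Then $u^*\colon\kk H\to\kk G$ is a morphism of Hopf algebras, hence a group homomorphism, so $U:=\Img(u^*)$ is a subgroup of $G$. Dualizing $u\cocomm p$ from \cref{thm:purv} yields $u^*\comm p^*$, exactly as in the proof of \cref{thm:inj-surj}, and since $\Img(p^*)=\kk A$ this says $A$ and $U$ centralize one another; in particular $AU$ is a subgroup and $A\trianglelefteq AU$. The restriction of $\omega$ to $(AU)^3$ is a $3$-cocycle, and the second item---extended from $U$ to $AU$ using that $A$ and $U$ commute---shows that it is unchanged under translating any argument by $A$, so it is the inflation of a well-defined element of $Z^3((AU)/A,U(1))$; when $AU=G$ this is an element of $Z^3(G/A,U(1))$. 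The one genuinely delicate step, and the one I would set up most carefully, is the promotion of the averaged coassociator relation to the pointwise master identity via \cref{lem:average}; after that, both items and the quotient statement are routine.
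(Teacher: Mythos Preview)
Your proof is correct and in fact a bit slicker than the paper's. The paper argues in the opposite order: it first applies mixed projections such as $\ev_1\ot\id\ot(\id\ot\varepsilon)^{\ot 2}$ to \cref{eq:morph-coassoc}, isolates one $A$-slot at a time, and uses \cref{lem:average} to establish part~(1); part~(2) is then declared to follow ``by repeated application of the 3-cocycle condition,'' with the manipulations left to the reader. You instead compare full coefficients in \cref{eq:morph-coassoc}, average over all three $A$-variables simultaneously, and invoke \cref{lem:average} once to obtain the master identity $\omega(ax_0,by_0,cz_0)=\omega(x_0,y_0,z_0)$ directly---this \emph{is} part~(2), and you correctly observe it needs only that $u$ be an algebra morphism (the Hopf hypothesis in the statement is really only used for the quotient claim, where $\Img(u^*)$ must be a subgroup). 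Part~(1) then drops out by specializing a base point to $1\in\Img(u^*)$ and using normalization, which is cleaner than unwinding cocycle identities. The quotient argument via $u^*\comm p^*$ matches the paper's.
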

\begin{proof}
  Applying $\ev_1\ot\id\ot(\id\ot\varepsilon)^{\ot 2}$ to \cref{eq:morph-coassoc} we get

  \[ \sum \omega(a,u^*(j),u^*(k))\inv p(a,b)b\ot e_j\ot e_k = 1\ot\varepsilon\ot\varepsilon.\]
  Considering the term $b=1$ on the left hand side we obtain for all $j,k$ that
  \[ \frac{1}{|A|}\sum_{a\in A} \omega(a,u^*(j),u^*(k))\inv  = 1,\]
  so by \cref{lem:average} $\omega(a,x,y)=1$ for all $a\in A$ and $x,y\in \Img(u^*)$.  Similar arguments after applying the various combination of projections show that $\omega(x,y,z)=1$ whenever $\{x,y,z\}\in A\cup \Img(u^*)$ and $\{x,y,z\}\cap A\neq\emptyset$, which is the first part.

  The second part then follows from the first by repeated application of the 3-cocycle condition.

  Now we can observe that $p\cocomm u$ implies that $A$ is in the center of $\kk A\Img(u^*)$.  Thus $(A\Img(u^*))/A$ is a well-defined quotient group when $u$ is a morphism of Hopf algebras.  The remaining claims are then immediate consequences of the second part.
\end{proof}

\section{Basic Identities for \ensuremath{\morphquad}}\label{sec:bialg-ids}
Let $\psi=\morphquad\in\genhom$.  In this section we record the remaining identities the components must satisfy in order to define a morphism of quasi-bialgebras.  We will not prove any results here, even though a few are rather obvious.  As such, the reader may safely skip to the next section and simply refer back to the appropriate equations as they are referenced.  The antipode relations for morphisms of quasi-Hopf algebras will be considered in \cref{sec:antipode}.  We will make liberal use of \cref{lem:biunital,eq:morphdef2,eq:p-exp,eq:u-exp,eq:r-exp,eq:v-exp} throughout this section.

To begin, we consider the identities for $\morphquad$ to be an algebra morphism.  For $g,g',x,x'\in G$ we have the equality of
\begin{align}\label{eq:alg-inner}
  \psi(e_g\# x \cdot e_{g'}\# x') = \delta_{g,xg'x\inv} \theta_g(x,x') \sum_{\substack{t\in A,\\ b\in B,\\j,y\in H}}  &u(e_{gt\inv})[b\lact r(xx')]p(t,b)\\
  & v(xx',y)\theta'_j(b,y) e_j\# by \nonumber
\end{align}
and
\begin{align}\label{eq:alg-outer}
  \psi(e_g\# x)\psi(e_{g'}\# x') = \sum_{\substack{t,t'\in A\\b,b'\in B\\j,y,y'\in H}} \ &u(e_{gt\inv})\left[(by)\lact u(e_{g't^{\prime -1}})\right] p(t,b)p(t',b')\\
   &[b\lact r(x)]\left[(byb')\lact r(x')\right]v(x,y)v(x',y') \nonumber\\
   &\theta'_j(b,y)\theta'_{j^{by}}(b',y') \theta'_j(by,b'y') e_j\# byb'y'.\nonumber
\end{align}

Now we apply $\ev_1\ot\id$ to \cref{eq:alg-inner,eq:alg-outer}.  This yields
\begin{align}\label{eq:pv-alg}
\delta_{g,xg'x\inv} \theta_g(x,x') p(e_g)v(xx') &= p(e_g)v(x) p(e_{g'}) v(x').
\end{align}

Next we apply $\id\ot\varepsilon$ to \cref{eq:alg-inner,eq:alg-outer} to get that the following two expressions are equal:
\begin{align}\label{eq:ur-mult-1}
  \delta_{g,xg'x\inv}\theta_g(x,x') \sum_{\substack{t\in A\\ b\in B\\j,y\in H}} u(e_{gt\inv})[b\lact r(xx')]p(t,b)v(xx',y)\theta'_j(b,y)e_j,
\end{align}
\begin{align}\label{eq:ur-mult-2}
  \sum_{\mathclap{\substack{t,t'\in A\\ b,b'\in B\\j,y,y'\in H}}} & u(e_{gt\inv})\left[(by)\lact u(e_{g't^{\prime -1}})\right][b\lact r(x)][(byb')\lact r(x')]p(t,b)p(t',b')\\
  & \ v(x,y)v(x',y') \theta'_j(b,y)\theta'_{j^{by}}(b',y') \theta'_j(by,b'y') e_j. \nonumber
\end{align}

We now consider the identities that must hold to determine a morphism of coalgebras.  We must have the equality of
\begin{align}\label{eq:coalg-outer}
  \Delta\psi(e_g\# x) = \sum_{\substack{t\in A\\x',j,l\in H\\b\in B}} & \theta'_j(b,x')\gamma'_{bx'}(jl\inv,l)p(t,b)v(x,x')u(gt\inv,j)r(x,b\inv j b)\\
   & \ e_{jl\inv}\# bx'\ot e_l\# bx', \nonumber
\end{align}
and
\begin{align}
  \psi\ot\psi\Delta(e_g\# x) &= \sum_{t\in G}\gamma_x(gt\inv,t)\cdot(\psi\ot\psi)(e_{gt\inv}\# x\ot e_t\# x)\nonumber\\
  &= \sum_{\substack{t\in G\\l,k\in A\\x',y',j,j'\in H\\b,b'\in B}} \gamma_x(gt\inv,t)\theta'_j(b,x')\theta'_{j'}(b',y')\label{eq:coalg-inner}\\
  &\phantom{= \sum} \ \ \ v(x,x')v(x,y')p(l,b)p(k,b')\nonumber\\
  &\phantom{= \sum} \ \ \ u(e_{gt\inv l\inv})[b\lact r(x)]e_j\# bx'\ot [b'\lact r(x)]u(e_{tk\inv})e_{j'}\#b'y'.\nonumber
\end{align}

Applying $(\ev_1\ot\id)^2$ to both equations we obtain
\begin{align}\label{eq:vp-coalg-1}
  \sum_{\substack{b\in B\\x'\in H}} p(g,b)v(x,x')bx'\ot bx' &= \sum_{t\in A} \gamma_x(gt\inv,t)p(e_{gt\inv})v(x)\ot p(e_t)v(x).
\end{align}
This is equivalent to
\begin{align}\label{eq:vp-coalg}
    \Delta(p(e_g)v(x)) = \left(\sum_{t\in A} p(\gamma_x(gt\inv,t)e_{gt\inv})\ot p(e_t)\right)\cdot v(x)\ot v(x).
\end{align}

Applying $(\id\ot\varepsilon)^2$ to \cref{eq:coalg-inner,eq:coalg-outer}, instead, we obtain
\begin{align}\label{eq:ur-comult1}
  \sum_{\substack{t\in A\\b\in B\\j,l,x'\in H}} & \theta'_j(b,x')\gamma'_{bx'}(jl\inv,l)p(t,b)v(x,x')u(gt\inv,j)r(x,j^b)
   e_{jl\inv}\ot e_l
\end{align}
and
\begin{align}\label{eq:ur-comult2}
\sum_{\substack{t\in G\\k,l\in A\\b,b'\in B\\j,j',x',y'\in H}} &\gamma_x(gt\inv,t)\theta'_j(b,x')\theta'_{j'}(b',y')
  u(gt\inv l\inv,j)u(tk\inv,j')\\
  & \ v(x,x')v(x,y')p(l,b)p(k,b') [b\lact r(x)]e_j\ot (b'.r(x))e_{j'}.\nonumber
\end{align}

\section{Properties of \ensuremath{v} and \ensuremath{u}}\label{sec:uv-properties}
Let $\psi=\morphquad\in\genhom$.  We wish now to consider properties of the components $v$ and $u$.  We will give a general picture for $v,u$ here, and in the next section we will specialize to the cases when $v,u$ are morphisms of (co)algebras.

We first have the following lemma.
\begin{lem}\label{lem:vinvert}
  Let $\morphquad\in\Hom(\D^\omega(G),\D^\eta(H))$.  Then $v(x)$ is invertible for all $x\in G$.
\end{lem}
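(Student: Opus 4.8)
The plan is to upgrade the algebra--morphism identity \cref{eq:pv-alg} into a ``twisted multiplicativity'' law for $v$, and then to invert the twist. Summing \cref{eq:pv-alg} over all $g,g'\in G$ and using that $p$ is a unital algebra morphism (\cref{thm:purv}), so that $\sum_{g\in G}p(e_g)=p(\varepsilon)=1$, the right-hand side collapses to $v(x)v(x')$; the left-hand side, once the Kronecker delta identifies $g=xg'x\inv$ and one reindexes the surviving sum, becomes $\Theta(x,x')\,v(xx')$, where I write
\[ \Theta(x,x'):=\sum_{g\in G}\theta_g(x,x')\,p(e_g). \]
In other words I would obtain the relation $v(x)v(x')=\Theta(x,x')\,v(xx')$, in which $\Theta(x,x')$ is exactly the element $\beta(x,x')$ of \cref{eq:beta}.

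Next I would record that $\Theta(x,x')$ is a unit of $\kk B$. Since $p(e_g)=0$ for $g\notin A$ by \cref{lem:p-desc} and $p$ is an algebra morphism, the $p(e_a)$ with $a\in A$ are orthogonal idempotents summing to $1$; as each phase $\theta_a(x,x')\in U(1)$ is a nonzero scalar, the element $\sum_{a\in A}\theta_a(x,x')\inv p(e_a)$ is a two-sided inverse of $\Theta(x,x')$. This is the invertibility already noted for $\beta$ in \cref{rem:alt-alg}.

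Finally I would extract a two-sided inverse of each $v(x)$ by specializing the twisted multiplicativity law. Taking $x'=x\inv$ and using that $v$ is unital, so $v(1)=1$ (\cref{eq:v-is-unital}, \cref{thm:purv}), yields $v(x)v(x\inv)=\Theta(x,x\inv)$; taking instead first argument $x\inv$ and second argument $x$ yields $v(x\inv)v(x)=\Theta(x\inv,x)$. Since both right-hand sides are invertible in $\kk H$ by the previous step, the element $v(x\inv)\Theta(x,x\inv)\inv$ is a right inverse of $v(x)$ and $\Theta(x\inv,x)\inv v(x\inv)$ is a left inverse; an element of $\kk H$ admitting both a left and a right inverse is invertible (the two necessarily coincide), so $v(x)$ is invertible, as desired.

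The one place that needs care is that $\kk H$ is noncommutative, so one cannot simply read off an inverse of $v(x)$ from the single product $v(x)v(x\inv)$. This is the only real obstacle, and it is dissolved by the symmetry of the relation in $x$ and $x\inv$: applying it in both orderings supplies a left and a right inverse separately. Everything else is the bookkeeping of summing \cref{eq:pv-alg} and reindexing.
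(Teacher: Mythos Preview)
Your argument is correct and rests on the same underlying fact as the paper's: the composite $\ev_1\ot\id\circ\psi$ is an algebra morphism $\D^\omega(G)\to\kk H$ sending $e_g\# x\mapsto p(e_g)v(x)$, so invertibility of $v(x)$ follows from invertibility upstairs. The paper's version is slightly more direct---it simply notes that $\varepsilon\# x$ is a unit in $\D^\omega(G)$ (with explicit inverse $\sum_g\theta_{xgx\inv}(x,x\inv)\inv e_g\# x\inv$) and that algebra morphisms send units to units, thereby obtaining a two-sided inverse $p\big(\sum_g\theta_{xgx\inv}(x,x\inv)\inv e_g\big)v(x\inv)$ in one stroke without having to produce left and right inverses separately; your route via summing \cref{eq:pv-alg} reaches the same conclusion with a little more bookkeeping.
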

\begin{proof}
  In $\D^\omega(G)$ we have \[(\varepsilon\# x)\inv = \sum_{g\in G}\theta_{xgx\inv}(x,x\inv)\inv e_g\# x\inv.\]  By \cref{eq:morphdef}, $\psi(\varepsilon\#x)=r(x)\# v(x)$.   Since $\ev_1\ot\id$ is a morphism of quasi-Hopf algebras by \cref{lem:obvious}, applying \cref{lem:biunital} shows that $v(x)$ has inverse \[\ev\ot\id \psi((\varepsilon\#x)\inv)=p(\sum_{g\in G}\theta_{xgx\inv}(x,x\inv)\inv e_g) v(x\inv).\]
\end{proof}

\begin{lem}\label{lem:vp-comm}
Let $\morphquad\in\genhom$.  Then
\begin{align}\label{eq:vp-comm}
v(x)p(e_g)v(x)\inv &= p(e_{xgx\inv})
\end{align}
for all $x,g\in G$.

As a consequence, the following all hold.
\begin{enumerate}
  \item $A$ is a normal subgroup of $G$.
  \item $A\subseteq Z(G) \iff p\comm v$.  In particular, if $B\subseteq Z(H)$ then $A\subseteq Z(G)$.
\end{enumerate}
\end{lem}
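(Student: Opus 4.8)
The plan is to extract the whole lemma from the single algebra-morphism identity \cref{eq:pv-alg}, which reads $\delta_{g,xg'x\inv}\theta_g(x,x')p(e_g)v(xx')=p(e_g)v(x)p(e_{g'})v(x')$. First I would note that whenever $g\neq xg'x\inv$ the left-hand side vanishes, so $p(e_g)v(x)p(e_{g'})v(x')=0$; since $v(x')$ is invertible by \cref{lem:vinvert}, cancelling it on the right gives the vanishing relation
\begin{align*}
p(e_g)v(x)p(e_{g'})=0\qquad\text{whenever } g\neq xg'x\inv .
\end{align*}
This relation is the engine of the argument.

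Next I would use that, by \cref{eq:p-alg} and \cref{eq:p-unital}, the family $\{p(e_a)\}_{a\in A}$ is a complete system of orthogonal idempotents in $\kk B$, so $\sum_{a\in A}p(e_a)=1$. Inserting this resolution of the identity to the right of $v(x)$ gives $p(e_g)v(x)=\sum_{g'\in A}p(e_g)v(x)p(e_{g'})$, and the vanishing relation annihilates every term except $g'=x\inv gx$. As $p(e_g)v(x)\neq0$ for $g\in A$ (an invertible element times a nonzero idempotent cannot vanish), this surviving term is nonzero, forcing $x\inv gx\in A$; this is the normality of $A$, and it leaves $p(e_g)v(x)=p(e_g)v(x)p(e_{x\inv gx})$. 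Resolving the identity to the left of $v(x)$ symmetrically yields $v(x)p(e_{g'})=p(e_{xg'x\inv})v(x)p(e_{g'})$. Substituting $g'=x\inv gx$ in the latter and comparing with the former gives $v(x)p(e_{x\inv gx})=p(e_g)v(x)$; multiplying on the right by $v(x)\inv$ and reindexing $g\mapsto xgx\inv$ produces exactly \cref{eq:vp-comm}. For $g\notin A$ both sides vanish by normality, so the identity holds for all $g\in G$.

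With \cref{eq:vp-comm} in hand the consequences follow quickly. Normality of $A$ is already established, and is in any case visible directly from \cref{eq:vp-comm}, since its left-hand side is a nonzero conjugate of $p(e_g)$ precisely when $g\in A$. For the equivalence, $p\comm v$ means $v(x)p(e_g)=p(e_g)v(x)$ for all $g,x$, which together with \cref{eq:vp-comm} forces $p(e_{xgx\inv})=p(e_g)$; since $p$ restricts to an isomorphism $\du A\to\kk B$ by \cref{lem:p-desc} the elements $\{p(e_a)\}_{a\in A}$ are linearly independent, whence $xgx\inv=g$ for all $g\in A$, i.e.\ $A\subseteq Z(G)$. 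Conversely, if $A\subseteq Z(G)$ the conjugation in \cref{eq:vp-comm} is trivial, so each $v(x)$ commutes with each $p(e_g)$; as these span $\Img(v)$ and $\Img(p)$ respectively and commuting is bilinear, $p\comm v$. Finally, if $B\subseteq Z(H)$ then $\Img(p)=\kk B\subseteq Z(\kk H)$ commutes with everything, in particular with $\Img(v)$, so $p\comm v$ and hence $A\subseteq Z(G)$.

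The main obstacle is precisely the passage from the $\delta$-laden bilinear relation \cref{eq:pv-alg} to the clean conjugation statement. The two ingredients that make it go through are the invertibility of $v$ (used both to cancel $v(x')$ and to guarantee $p(e_g)v(x)\neq0$) and the orthogonal-idempotent partition of unity $\sum_{a\in A}p(e_a)=1$, which lets me resolve the identity on either side of $v(x)$ and isolate a single conjugate. A subtle point to handle carefully is that normality of $A$ is implicitly needed to make sense of $p(e_{x\inv gx})$, so I would present it as emerging from the same invertibility argument rather than assuming it beforehand.
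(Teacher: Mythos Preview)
Your proof is correct, but it takes a noticeably longer route than the paper's. The paper simply sets $x'=1$ in \cref{eq:pv-alg} (so that $\theta_g(x,1)=1$ and $v(1)=1$) to obtain $\delta_{g,xg'x\inv}p(e_g)v(x)=p(e_g)v(x)p(e_{g'})$, and then sums over $g\in G$. The left side collapses to $p(e_{xg'x\inv})v(x)$ and the right side to $v(x)p(e_{g'})$ since $\sum_g p(e_g)=p(\varepsilon)=1$; one right-multiplication by $v(x)\inv$ finishes. Normality of $A$ and the remaining consequences are then read off from the identity exactly as you do.

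Your argument instead isolates the vanishing relation $p(e_g)v(x)p(e_{g'})=0$ for $g\neq xg'x\inv$ and inserts the partition of unity on each side of $v(x)$ separately, deriving normality of $A$ as an intermediate step before combining the two one-sided relations. This works fine and has the minor pedagogical virtue that normality appears organically rather than as a corollary, but it duplicates effort: the single summation over $g$ in the paper's argument accomplishes in one stroke what your two insertions of $\sum_{a\in A}p(e_a)=1$ do. Your remark that ``normality of $A$ is implicitly needed to make sense of $p(e_{x\inv gx})$'' is also slightly off: $p(e_{x\inv gx})$ is perfectly well defined for any $x,g$---it simply vanishes when $x\inv gx\notin A$---so no circularity lurks there.
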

\begin{proof}
  Taking $x'=1$ and summing over $g$ in \cref{eq:pv-alg}, and then applying \cref{lem:vinvert} yields the desired identity.  We now consider the remaining claims in turn.

  \begin{enumerate}
    \item By invertibility of $v(x)$ we conclude that $p(e_g)\neq 0$ implies $p(e_{xgx\inv})\neq 0$ for all $x\in G$.  This is equivalent to the normality of $A$ by \cref{lem:p-desc}.
    \item The first statement follows immediately from the identity, and the second is then a trivial special case of the reverse direction.
   \end{enumerate}
\end{proof}
\begin{rem}
  Indeed, as was noted in \cref{rem:alt-alg}, $p$ and the normality of $A$ allows the (left) conjugation action of $G$ on $\du{A}$ to be pushed forward in the usual way to define a (left) $G$ action on $\kk B$: $x.p(e_a) = p(e_{xax\inv})$. Indeed, normality of $A$ means that$\widehat{A}$ is invariant under the $G$ action, which in turn means the action on $\kk B$ sends $B$ to itself.  As was stated in the aforementioned remark, this is explicitly given by $g\lact b = p(g\lact \chi_b)$.  By construction, $p$ then determines not only a Hopf algebra isomorphism $\du{A}\to\kk B$, but also an isomorphism of (left) $G$-modules.  The identity from the lemma further says that defining $x\star p(e_a) = v(x)p(e_a)v(x)\inv$ is not only a well-defined left $G$ action on $\kk B$, but that it is precisely equal to the $G$-module structure of the push forward.  This holds in spite of the fact that $v$ is not only not equal to the identity function in general, but is not even assumed to be an algebra or coalgebra morphism---see \cref{prop:v-is-alg,prop:v-is-coalg} for when $v$ has such properties.
\end{rem}

We also have the following, which is critical for isomorphisms.
\begin{cor}\label{cor:inj-surj-cent}
  Let $\morphquad\in\genhom$.  If $\morphquad$ is injective then $p\comm v$ and $A\subseteq Z(G)$.  On the other hand, if $\morphquad$ is surjective then $p\comm v$ $\iff$ $B\subseteq Z(H)$.  Finally, if $\morphquad$ is bijective then $p\comm v$, $A\subseteq Z(G)$, and $B\subseteq Z(H)$.
\end{cor}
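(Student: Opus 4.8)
The plan is to reduce the entire statement to the single conjugation identity \cref{eq:vp-comm} of \cref{lem:vp-comm}, namely $v(x)p(e_g)v(x)\inv = p(e_{xgx\inv})$, combined with the injectivity/surjectivity consequences recorded in \cref{thm:inj-surj}. The organizing observation—already noted as a consequence of \cref{lem:vp-comm}—is that $p\comm v$ is \emph{equivalent} to $A\subseteq Z(G)$, with no hypothesis on $\psi$. Indeed, since $\{p(e_a)\}_{a\in A}$ is a basis of $\kk B$ by \cref{lem:p-desc} and $xax\inv\in A$ by normality of $A$, the right-hand side $p(e_{xgx\inv})$ of \cref{eq:vp-comm} equals $p(e_g)$ for all $x,g$ precisely when $xax\inv=a$ for all $a\in A$ and $x\in G$. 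So throughout I would treat ``$p\comm v$'' and ``$A\subseteq Z(G)$'' as interchangeable and reduce the three cases to locating the centrality.

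For the injective case there is nothing to do beyond citing results: \cref{thm:inj-surj} gives $A\subseteq Z(G)$ directly, and then $p\comm v$ follows from the equivalence above.

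The surjective case is the only part with genuine content, and is where I expect the main obstacle to lie, since it is the one place the spanning statement must be fused with the conjugation identity. One direction is free and needs no surjectivity: the ``in particular'' clause of \cref{lem:vp-comm} already yields $B\subseteq Z(H)\Rightarrow A\subseteq Z(G)\Rightarrow p\comm v$. For the converse I would assume $p\comm v$, equivalently $A\subseteq Z(G)$, and show every $b\in B$ is central in $H$. Surjectivity enters exactly here through \cref{thm:inj-surj}, which gives $\kk B\,\Img(v)=\kk H$; thus $\kk H$ is spanned by products $cw$ with $c\in\kk B$ and $w\in\Img(v)$, and it suffices to check that $b$ commutes with both factors. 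Commutation of $b$ with $\kk B$ is immediate because $B$ is abelian by \cref{lem:p-desc}. Commutation of $b$ with $\Img(v)$ is where $A\subseteq Z(G)$ is used: under this hypothesis \cref{eq:vp-comm} collapses to $v(x)p(e_a)v(x)\inv=p(e_a)$, so each $v(x)$ commutes with every $p(e_a)$ and hence with all of $\kk B$, in particular with $b$. Therefore $b$ commutes with a spanning set of $\kk H$, so $b\in Z(H)$ and $B\subseteq Z(H)$.

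The bijective case is then just the conjunction of the two: injectivity supplies $A\subseteq Z(G)$ and $p\comm v$, and feeding this $p\comm v$ into the surjective argument supplies $B\subseteq Z(H)$. I expect the injective and bijective cases, and one direction of the surjective equivalence, to be pure bookkeeping over \cref{lem:vp-comm} and \cref{thm:inj-surj}; the only step requiring care is verifying that centrality of $b$ in $\kk H$ really does follow from centrality against the two generating factors, which is where I would be most explicit.
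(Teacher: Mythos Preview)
Your proposal is correct and follows exactly the route the paper intends: the paper's own proof is simply ``Apply \cref{thm:inj-surj} and the preceding lemma,'' and what you have written is precisely the unpacking of that citation, with the surjective direction spelled out via the spanning $\kk B\,\Img(v)=\kk H$ combined with \cref{eq:vp-comm}. There is no substantive difference in approach.
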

\begin{proof}
  Apply \cref{thm:inj-surj} and the preceding lemma.
\end{proof}

Combining these results with \cref{df:p-quotients} we obtain the following general description of the algebra properties of the component $v$.
\begin{thm}\label{thm:v-extension-hopf}
  Let $\morphquad\in\genhom$.
    \begin{enumerate}
      \item \label{thm-part:pv-alg} The map $\D^\omega(G)\to \kk H$ defined by $e_g\# x \mapsto p(e_g)v(x)$ is a morphism of quasi-Hopf algebras.
      \item \label{thm-part:v-alg} The map $\kk B\#_p^\omega \kk G\to \kk H$ defined by $b\# g\mapsto bv(g)$ is a morphism of Hopf algebras.
    \end{enumerate}
\end{thm}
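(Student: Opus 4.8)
The plan is to prove the second statement first and then obtain the first as an immediate corollary. The crucial structural observation is that, since $\morphquad\in\genhom$, \cref{lem:vp-comm} guarantees $A$ is normal in $G$ and \cref{lem:gen-omega-A} guarantees that $\omega$ restricts trivially to $A$; by \cref{lem:p-embed-coassoc} these two facts together promote $\kk B\#_p^\omega\kk G$ from a quasi-Hopf algebra to a genuine Hopf algebra. As the target $\kk H$ is also an honest Hopf algebra, I can invoke the principle recorded after \cref{lem:coalg-to-group} that any bialgebra morphism between Hopf algebras automatically respects antipodes. Thus for the second statement it suffices to verify that the map $\Phi\colon\kk B\#_p^\omega\kk G\to\kk H$ given on the basis $p(e_a)\#x$ by $p(e_a)\#x\mapsto p(e_a)v(x)$ is a morphism of bialgebras; the antipode, $\alpha$, and $\beta$ conditions then come for free, and the coassociator condition is automatic since both coassociators are trivial.

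For the algebra part I would compare $\Phi$ applied to the product $p(e_a)\#x\cdot p(e_b)\#y=\delta_{a,xbx\inv}\theta_a(x,y)p(e_a)\#xy$ from \cref{df:p-embeddings} against $\Phi(p(e_a)\#x)\Phi(p(e_b)\#y)=p(e_a)v(x)p(e_b)v(y)$; these agree precisely by \cref{eq:pv-alg} with $(g,g',x,x')=(a,b,x,y)$. Unitality follows from $\Phi(1\#1)=\sum_{a\in A}p(e_a)v(1)=p(\varepsilon)v(1)=1$. For the coalgebra part I would expand $(\Phi\ot\Phi)\Delta(p(e_a)\#x)=\sum_{c\in A}\gamma_x(ac\inv,c)p(e_{ac\inv})v(x)\ot p(e_c)v(x)$ using the comultiplication of \cref{df:p-embeddings}, and recognize this as exactly $\Delta(p(e_a)v(x))$ by \cref{eq:vp-coalg}. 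Counitality is immediate from $p$ and $v$ being counital, via \cref{eq:p-counital,eq:v-is-counital}. This establishes the second statement.

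The first statement is then realized as the composite $\D^\omega(G)\xrightarrow{p\ot\id}\kk B\#_p^\omega\kk G\xrightarrow{\Phi}\kk H$. The first arrow is a morphism of quasi-Hopf algebras by \cref{cor:p-embed-morph} (applicable because $A$ is normal), sending $e_g\#x\mapsto p(e_g)\#x$, and the second is the Hopf morphism just constructed; the composite sends $e_g\#x\mapsto p(e_g)v(x)$, which is exactly the map in the first statement. Being a composite of morphisms of quasi-Hopf algebras, it is itself one.

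The bulk of the work has in fact already been carried out in assembling \cref{eq:pv-alg,eq:vp-coalg} together with the structural lemmas, so no serious obstacle remains in the argument itself. The one point requiring genuine care—and the reason the proof is short—is the verification that $\kk B\#_p^\omega\kk G$ is an honest Hopf algebra rather than merely a quasi-Hopf algebra, since this is exactly what licenses the shortcut of checking only the bialgebra axioms. Were $\omega$ nontrivial on $A$ this shortcut would fail and one would instead have to confront the antipode, $\alpha$, and $\beta$ identities directly; it is \cref{lem:gen-omega-A} that rules this out and thereby keeps the whole argument mechanical.
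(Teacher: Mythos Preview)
Your argument is correct, but you and the paper run the factorization in opposite directions. The paper proves part~(1) first, and in one line: the map $e_g\#x\mapsto p(e_g)v(x)$ is literally $(\ev_1\ot\id)\circ\psi$, a composite of two quasi-Hopf morphisms by \cref{lem:obvious}. Part~(2) is then obtained from \cref{eq:pv-alg} (the paper's proof here is terse to the point of omitting the coalgebra check, which you supply via \cref{eq:vp-coalg}). You instead establish part~(2) directly and then recover part~(1) as the composite $(p\ot\id)$ followed by $\Phi$, invoking \cref{cor:p-embed-morph}. Both routes are valid; the paper's approach to part~(1) is slicker since it avoids passing through $\kk B\#_p^\omega\kk G$ at all, while your approach has the advantage of making the bialgebra verification for part~(2) fully explicit rather than leaving the coalgebra side implicit.
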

\begin{proof}
   We note that $\kk B\#_p^\omega\kk G$ is a Hopf algebra by \cref{lem:gen-omega-A,lem:p-embed-coassoc}.

   The first part follows from applying \cref{lem:obvious} to \[\ev_1\ot\id\circ\morphquad(e_g\# x) = p(e_g)v(x).\]

   The second part follows from \cref{eq:pv-alg} compared to the algebra structure of $\kk B\#_p^\omega \kk G$ given in \cref{df:p-quotients}.
\end{proof}

We conclude this section by obtaining a description for $u$ which is analogous to \cref{thm:v-extension-hopf}.

We recall the quasi-Hopf algebra $\du{H}\#_p^\eta\du{A}$ from \cref{df:p-embeddings}.

\begin{thm}\label{thm:u-extensions}
Let $\morphquad\in\genhom$.   Define $\pi_A \colon \du{G}\to\du{A}$ to be the canonical projection of Hopf algebras, which is the linear dual of the inclusion map $A\to G$.
    \begin{enumerate}
        \item The map $u\ot p\circ \Delta\colon\du{G}_\omega\to \D^\eta(H)$ is a morphism of quasi-Hopf algebras.

        \item The map $u\ot\pi_A \circ \Delta\colon\du{G}_\omega\to \du{H}\#_p^\eta \du{A}$ is a morphism of quasi-Hopf algebras.
    \end{enumerate}
\end{thm}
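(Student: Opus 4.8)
The plan is to identify the map in (1) with the restriction of $\psi$ to the quasi-Hopf subalgebra $\du{G}_\omega$, verify the quasi-Hopf conditions there, and then deduce (2) formally. First I would note that, since $p(e_t)=0$ for $t\notin A$ (\cref{lem:p-desc}) and $r,v$ are biunital, the decomposition \cref{eq:morphdef} gives
\[ (u\ot p)\circ\Delta(e_g)=\sum_{t\in A}u(e_{gt\inv})\# p(e_t)=\psi(e_g\#1). \]
Hence $u\ot p\circ\Delta$ is precisely $\alpha:=\psi\circ\iota$, where $\iota\colon\du{G}_\omega\to\D^\omega(G)$ is the inclusion of \cref{lem:obvious}; this is the map $\alpha$ appearing in the proof of \cref{thm:purv}. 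Since $\iota$ is a morphism of quasi-Hopf algebras and $\psi$ is a morphism of quasi-bialgebras, $\alpha$ is a morphism of quasi-bialgebras, and it preserves the $\alpha$-elements (both equal to $1$) by unitality. What remains is to check that $\alpha$ preserves the antipode and the $\beta$-element.

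For the antipode, since $S_{\du{G}_\omega}(e_g)=e_{g\inv}$, I would verify $\alpha(e_{g\inv})=S(\alpha(e_g))$ by comparing coefficients in the standard basis of $\D^\eta(H)$. Applying the projection $\ev_1\ot\id$, which is a morphism of quasi-Hopf algebras (\cref{lem:obvious}) and satisfies $(\ev_1\ot\id)\circ\alpha=p$, and using that the Hopf morphism $p$ preserves antipodes, one finds that both $\alpha(e_{g\inv})$ and $S(\alpha(e_g))$ project to $p(e_{g\inv})$, so the ``$p$-part'' of the identity is free. The real content is the ``$u$-part'' seen through $\id\ot\varepsilon$: the explicit antipode of $\D^\eta(H)$ brings in the $\eta$-phases $\theta'_{h\inv}(b,b\inv)\inv\gamma'_b(h,h\inv)\inv$ and the conjugation $h\mapsto b\inv h\inv b$, and one must show these cancel against the coefficients of $u$ and $p$. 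I expect this to be the main obstacle: it requires the coassociator relations of \cref{sec:coassoc}---notably \cref{lem:gen-u-omega-eta} giving $(\omega\inv)^{u^*}=\eta\inv$ and \cref{lem:gen-omega-A} giving $\omega|_{A^3}=1$, together with the antipode relation $p(a,b\inv)=p(a\inv,b)$ from \cref{lem:p-ids-1}---and, to handle the terms where $u$ is not an algebra morphism, the multiplication and comultiplication identities of \cref{sec:bialg-ids}, in order to force the phases into line. (The two projections alone do not determine a linear map into $\D^\eta(H)$; they merely isolate the clean part from the hard part.)

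Granting antipode-preservation, $\beta$-preservation comes for free. In $\D^\eta(H)$ the $\beta$-element is the unique solution of $1=\sum\phi\ucom1\beta S(\phi\ucom2)\alpha\phi\ucom3$ with $\alpha=1$: expanding $\phi$ in the standard basis forces the coefficient of $e_h\#y$ to vanish for $y\neq1$ and pins down $\beta=\sum_h\eta(h,h\inv,h)e_h\#1$. Applying $\alpha$ to the same axiom in $\du{G}_\omega$, and using that $\alpha$ preserves the coassociator, the antipode and the $\alpha$-element, shows that $\alpha(\beta_{\du{G}_\omega})$ satisfies that equation in $\D^\eta(H)$; by uniqueness it equals $\beta_{\D^\eta(H)}$. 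This finishes (1).

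For (2), \cref{cor:p-quot-morph} says $\id\ot p\colon\du{H}\#_p^\eta\du{A}\to\D^\eta(H)$ is an injective morphism of quasi-Hopf algebras onto the quasi-Hopf subalgebra $\du{H}\#\kk B$. Because $p$ factors through $\pi_A$ (\cref{lem:p-desc}), one has $(\id\ot p)\circ(u\ot\pi_A\circ\Delta)=u\ot p\circ\Delta=\alpha$, whose image lies in $\du{H}\#\kk B$; as $\id\ot p$ restricts to an isomorphism of quasi-Hopf algebras onto that image, $u\ot\pi_A\circ\Delta$ is itself a morphism of quasi-Hopf algebras. The crux of the whole argument is the $u$-part of the antipode computation in (1).
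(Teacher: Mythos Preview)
Your identification of $u\otimes p\circ\Delta$ with $\alpha=\psi\circ\iota$ and your derivation of part~(2) by pulling back through the injective quasi-Hopf morphism $\id\otimes p$ of \cref{cor:p-quot-morph} are exactly the paper's argument; the paper's proof of (2) is literally the factorisation $u\otimes\pi_A\circ\Delta=(\id\otimes p)^{-1}\circ(u\otimes p\circ\Delta)$ you wrote down.

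Where you diverge is in part~(1). The paper's proof of (1) is a single sentence: it cites \cref{lem:obvious} and the observation that the map equals the inclusion followed by $\psi$, and stops. It does \emph{not} separately check preservation of $S$ or $\beta$. You are therefore being more scrupulous than the paper in noting that $\psi\in\genhom$ is only a quasi-\emph{bialgebra} morphism, so that the composite is a priori only a quasi-bialgebra morphism and the antipode step is not formally automatic. Your uniqueness argument for $\beta$ (contingent on $S$ being preserved) is correct: with $\alpha=1$ and $S$ fixed, the axiom $1=\sum\phi^{(1)}\beta S(\phi^{(2)})\phi^{(3)}$ in $\D^\eta(H)$ does pin down $\beta$ uniquely, as you can see by expanding $\phi$ in the standard basis.

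However, your antipode verification itself remains a sketch rather than a proof. You list plausible inputs---the coassociator relations of \cref{sec:coassoc}, the antipode identity $p(a,b^{-1})=p(a^{-1},b)$, and unspecified identities from \cref{sec:bialg-ids}---but you do not carry out the cancellation of the phases $\theta'_{h^{-1}}(b,b^{-1})^{-1}\gamma'_b(h,h^{-1})^{-1}$ against the $u$-coefficients, and in the stated generality (no hypothesis that $u$ be an algebra morphism) it is not clear those tools alone force the required equality. So on the parts where the paper actually argues, your proposal matches it; on the antipode, you correctly isolate a point the paper glosses over, but you do not close it either.
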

\begin{proof}
  The first part follows from \cref{lem:obvious} after observing that $u\ot p\circ\Delta$ is equivalent to the inclusion $\du{G}_\omega\to \D^\omega(G)$ followed by $\morphquad$.

  For the second part, by \cref{cor:p-quot-morph} we know that $\id\ot p$ yields an isomorphism of quasi-Hopf algebras $\du{H}\#_p^\eta\du{A}\to \Img(u\ot p\circ \Delta)$.  Letting $\id\ot p\inv$ denote its inverse function (which is the exact formula if we restrict the domain and codomain of $p$ so that it becomes an isomorphism $\du{A}\to \kk B$), we have that
  \[ u\ot \pi_A = \id\ot p\inv \circ (u\ot p\circ\Delta),\]
  and so $u\ot \pi_A$ is a morphism of quasi-Hopf algebras as desired.  Alternatively, this can be verified by using the quasi-Hopf algebra structures directly.
\end{proof}

\section{Bialgebra conditions for \ensuremath{v,u}}\label{sec:uv-bialg}
Now that we have found the fundamental properties of $v$ and $u$---though note that we have not yet considered relations between these two components---we will now consider when these components are themselves (co)algebra morphisms without the need to lift to an extension as in the preceding sections.

\begin{prop}\label{prop:v-is-alg}
  Let $\morphquad\in\genhom$.  Then the following are equivalent.
  \begin{enumerate}
    \item $v\colon\kk G\to\kk H$ is an algebra morphism.
    \item $\theta_a\equiv 1$ for all $a\in A$.
    \item $\kk B\#_p^\omega \kk G = \kk B\#_p^1\kk G$ as algebras.
  \end{enumerate}
\end{prop}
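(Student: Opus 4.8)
The plan is to route the whole proposition through the fundamental algebra identity \eqref{eq:pv-alg}, isolating the phase $\theta_a$ by exploiting the rigid structure available around $p$ and $v$. Two facts do the heavy lifting. First, since $p$ restricts to a Hopf algebra isomorphism $\du{A}\to\kk B$ by \cref{lem:p-desc}, the elements $\{p(e_a)\}_{a\in A}$ are orthogonal idempotents with $p(e_a)p(e_{a'})=\delta_{a,a'}p(e_a)$ and $\sum_{a\in A}p(e_a)=p(\varepsilon)=1$. Second, each $v(x)$ is invertible by \cref{lem:vinvert}, and by \eqref{eq:vp-comm} conjugation by $v(x)$ realizes the conjugation action of $G$ on these idempotents, so that $v(x)p(e_{x\inv g x})=p(e_g)v(x)$ for all $g,x\in G$. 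These are exactly the tools needed to turn the tangled identity \eqref{eq:pv-alg} into a clean statement about $\theta_a$.

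For the equivalence of (2) and (3) I would simply compare the two multiplication rules from \cref{df:p-embeddings}. The products in $\kk B\#_p^\omega\kk G$ and $\kk B\#_p^1\kk G$ live on the same space with the same basis $p(e_a)\#x$ and differ only by the scalar $\theta_a(x,y)$ appearing in $p(e_a)\#x\cdot p(e_b)\#y=\delta_{a,xbx\inv}\theta_a(x,y)p(e_a)\#xy$. Since $A$ is normal by \cref{lem:vp-comm}, every pair $(a,x)\in A\times G$ occurs with a nonzero Kronecker delta (take $b=x\inv a x\in A$), so the two algebra structures coincide precisely when $\theta_a(x,y)=1$ for all $a\in A$ and $x,y\in G$, which is exactly condition (2).

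The core is the equivalence of (1) and (2), both directions of which come from specializing \eqref{eq:pv-alg} to $g'=x\inv g x$, the unique choice making $\delta_{g,xg'x\inv}$ nonzero. After this substitution the identity reads $\theta_g(x,x')\,p(e_g)v(xx')=p(e_g)v(x)p(e_{x\inv g x})v(x')$, and applying $v(x)p(e_{x\inv g x})=p(e_g)v(x)$ followed by idempotency $p(e_g)^2=p(e_g)$ collapses the right-hand side, giving for all $g\in A$ and $x,x'\in G$
\[ \theta_g(x,x')\,p(e_g)v(xx') = p(e_g)v(x)v(x').\]
If $v$ is an algebra morphism then $v(xx')=v(x)v(x')$ is invertible, whence $p(e_g)v(xx')\neq 0$ (otherwise $p(e_g)=p(e_g)v(xx')v(xx')\inv=0$), and cancelling yields $\theta_g(x,x')=1$, so (1) implies (2). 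Conversely, if (2) holds then $\theta_g(x,x')=1$ for $g\in A$, and summing the displayed identity over $g\in A$ against $\sum_{a\in A}p(e_a)=1$ gives $v(xx')=v(x)v(x')$; with $v(1)=1$ from biunitality, this proves (2) implies (1).

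The only delicate point I anticipate is the cancellation in the step (1) implies (2): one must be certain that $p(e_g)v(xx')$ is genuinely nonzero before dividing out $\theta_g(x,x')$, which is exactly where the invertibility of $v$ from \cref{lem:vinvert} and the idempotency of $p(e_g)$ are indispensable. A secondary point to state carefully is that (3) asserts literal equality of algebra structures on a shared underlying space, not mere isomorphism, so the comparison of multiplication rules really does force $\theta_a\equiv 1$ rather than a twist by a coboundary. As an alternative to the direct summation in (2) implies (1), one can instead invoke \cref{thm:v-extension-hopf}(2): once (3) holds, the element $\sum_{a\in A}p(e_a)\#x$ is multiplicative in $x$ (its products satisfy $(1\#x)(1\#y)=1\#xy$ in $\kk B\#_p^1\kk G$ by normality of $A$), and pushing this through the Hopf morphism $b\#g\mapsto bv(g)$ recovers $v(xy)=v(x)v(y)$.
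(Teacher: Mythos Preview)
Your proof is correct and follows essentially the same strategy as the paper: both extract the key identity from \eqref{eq:pv-alg}, exploit invertibility of $v(xx')$ and the idempotent basis $\{p(e_a)\}_{a\in A}$ to pin down $\theta_a$, and handle (2)$\Leftrightarrow$(3) by direct comparison of the multiplication rules in \cref{df:p-embeddings}. The only organizational difference is that the paper sums \eqref{eq:pv-alg} over $g,g'$ immediately to obtain the single relation $p\bigl(\sum_{a\in A}\theta_a(x,x')e_a\bigr)v(xx')=v(x)v(x')$ and then appeals to \cref{lem:p-desc}, whereas you first isolate the per-idempotent identity $\theta_g(x,x')\,p(e_g)v(xx')=p(e_g)v(x)v(x')$ via \eqref{eq:vp-comm} and sum only for the direction (2)$\Rightarrow$(1); your version makes the cancellation step in (1)$\Rightarrow$(2) more transparent, but the underlying argument is the same.
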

\begin{proof}
  We sum \cref{eq:pv-alg} over $g,g'$ and get \[ p(\sum_{a\in A} \theta_a(x,x')e_a)v(xx')=v(x)v(x').\]  That the second statement implies the first is then obvious.  That the first implies the second follows from this identity and \cref{lem:p-desc}.  The equivalence of the second and third statements follows by comparing the algebra laws of $\kk B\#_p^\omega\kk G$ and $\kk B\#_p^1\kk G$ given in \cref{df:p-embeddings}.
\end{proof}

\begin{prop}\label{prop:v-is-coalg}
  Let $\morphquad\in\genhom$.  Then the following are equivalent.
    \begin{enumerate}
        \item \label{prop-part:v-is-coalg-1} $v$ is a morphism of coalgebras.
        \item \label{prop-part:v-is-coalg-2} $\gamma_x\equiv 1$ on $A\times A$ for all $x\in G$.
        \item \label{prop-part:v-is-coalg-3} $\kk B\#_p^\omega \kk G = \kk B\#_p^1\kk G = \kk B \ot \kk G$ as coalgebras.
    \end{enumerate}
\end{prop}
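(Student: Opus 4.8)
The plan is to proceed in close analogy with the proof of \cref{prop:v-is-alg}, but using the dual identity \cref{eq:vp-coalg} in place of \cref{eq:pv-alg}. The central step is to sum \cref{eq:vp-coalg} over all $g\in G$. Since $p$ is a morphism of Hopf algebras we have $\sum_{g\in G}p(e_g)=p(\varepsilon)=1$, so by linearity of $\Delta$ the left-hand side collapses to $\Delta(v(x))$. On the right-hand side the factor $p(e_{gt\inv})$ vanishes unless $gt\inv\in A$, so for each fixed $t\in A$ only $g\in A$ contributes; after the substitution $s=gt\inv$ the double sum reindexes over $s,t\in A$. This yields the key identity
\begin{equation*}
  \Delta(v(x)) = \Big(\sum_{s,t\in A}\gamma_x(s,t)\,p(e_s)\ot p(e_t)\Big)\big(v(x)\ot v(x)\big)
\end{equation*}
for every $x\in G$.

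From this identity the equivalence of \ref{prop-part:v-is-coalg-1} and \ref{prop-part:v-is-coalg-2} is immediate. First, $v$ is biunital by \cref{thm:purv}, so it is automatically counital; since every $x\in G$ is grouplike, $v$ is a coalgebra morphism precisely when $\Delta(v(x))=v(x)\ot v(x)$ for all $x$. If $\gamma_x\equiv 1$ on $A\times A$, then the bracketed sum factors as $\big(\sum_{s\in A}p(e_s)\big)\ot\big(\sum_{t\in A}p(e_t)\big)=1\ot 1$, giving $\Delta(v(x))=v(x)\ot v(x)$; this is \ref{prop-part:v-is-coalg-2}$\Rightarrow$\ref{prop-part:v-is-coalg-1}. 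Conversely, assuming \ref{prop-part:v-is-coalg-1}, the identity reads $v(x)\ot v(x)=\big(\sum_{s,t\in A}\gamma_x(s,t)p(e_s)\ot p(e_t)\big)(v(x)\ot v(x))$. Because $v(x)$ is invertible by \cref{lem:vinvert}, I can cancel $v(x)\ot v(x)$ on the right to obtain $1\ot 1=\sum_{s,t\in A}\gamma_x(s,t)\,p(e_s)\ot p(e_t)$. Since $\{p(e_a)\}_{a\in A}$ is a basis of $\kk B$ by \cref{lem:p-desc}, the elements $\{p(e_s)\ot p(e_t)\}_{s,t\in A}$ form a basis of $\kk B\ot\kk B$; comparing coefficients against $1\ot 1=\sum_{s,t\in A}p(e_s)\ot p(e_t)$ forces $\gamma_x(s,t)=1$ for all $s,t\in A$ and all $x\in G$, which is \ref{prop-part:v-is-coalg-2}.

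For \ref{prop-part:v-is-coalg-2}$\Leftrightarrow$\ref{prop-part:v-is-coalg-3} I would compare the comultiplications recorded in \cref{df:p-embeddings}. The coproduct of $\kk B\#_p^\omega\kk G$ is $\Delta(p(e_a)\#x)=\sum_{c\in A}\gamma_x(ac\inv,c)\,p(e_{ac\inv})\#x\ot p(e_c)\#x$, whereas that of $\kk B\#_p^1\kk G$ is the same expression with every $\gamma$-factor replaced by $1$ (trivial $\omega$ gives trivial $\gamma$); the latter is exactly the tensor-product coalgebra $\kk B\ot\kk G$, with the $\kk B$-factor carrying the image under $p$ of the standard coproduct on $\du A$ and the $\kk G$-factor grouplike. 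As $\{p(e_a)\#x\}$ is a basis, the two coproducts coincide if and only if $\gamma_x(ac\inv,c)=1$ for all $a,c\in A$ and $x\in G$, which upon setting $s=ac\inv$ is once more condition \ref{prop-part:v-is-coalg-2}.

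The computations are all routine once \cref{eq:vp-coalg} is summed to the displayed identity; the only point demanding genuine care is the direction \ref{prop-part:v-is-coalg-1}$\Rightarrow$\ref{prop-part:v-is-coalg-2}, where one must invoke the invertibility of $v(x)$ from \cref{lem:vinvert} to strip off the grouplike factor before the linear independence of $\{p(e_s)\ot p(e_t)\}$ can be applied to extract the pointwise triviality of $\gamma_x$. This is precisely the coalgebra-side mirror of the cancellation used in \cref{prop:v-is-alg}, and I anticipate no further obstacles.
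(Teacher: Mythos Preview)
Your argument is correct and follows essentially the same route as the paper's proof: both sum \cref{eq:vp-coalg} over $g$ to obtain $\Delta(v(x))=T(x)\,(v(x)\ot v(x))$ with $T(x)=\sum_{s,t\in A}\gamma_x(s,t)\,p(e_s)\ot p(e_t)$, use invertibility of $v(x)$ from \cref{lem:vinvert} to cancel and force $T(x)=1\ot 1$, and then read off the pointwise triviality of $\gamma_x$ on $A\times A$; the equivalence with \ref{prop-part:v-is-coalg-3} is likewise handled by direct comparison of the comultiplications in \cref{df:p-embeddings}. The only point the paper adds that you leave implicit is a remark (via \cref{lem:gen-omega-A,lem:p-embed-coassoc}) that all three objects in \ref{prop-part:v-is-coalg-3} are genuine Hopf algebras, so that the coalgebra comparison is between coassociative structures; this is a well-posedness observation rather than a gap in your reasoning.
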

\begin{proof}
  We refer to \cref{eq:vp-coalg}.  That \cref{prop-part:v-is-coalg-2} implies \cref{prop-part:v-is-coalg-1} follows immediately from this.  For the other direction, we note that
\[\sum_{g,t\in G}p(\gamma_x(gt\inv,t)e_{gt\inv})\ot p(e_t)\]
is invertible with inverse
\[\sum_{g,t\in G} p(\gamma_x(gt\inv,t)\inv e_{gt\inv})\ot p(e_t).\]
Now \cref{lem:vinvert} and \cref{prop-part:v-is-coalg-1} applied to \cref{eq:vp-coalg} forces
  \[\sum_{g,t\in G}\gamma_x(gt\inv,t)p(e_{gt\inv})\ot p(e_t) = 1\ot 1,\]
which is equivalent to $\gamma_x(a,b)=1$ for all $a,b\in A$ and $x\in G$ by \cref{lem:p-ids-1}, as desired.

  To complete the proof we now need only show that \cref{prop-part:v-is-coalg-2} is equivalent to \cref{prop-part:v-is-coalg-3}.  Note that by \cref{lem:p-embed-coassoc} that all three objects in \cref{prop-part:v-is-coalg-3} are Hopf algebras, and so are well-defined coassociative coalgebras. The equivalence then follows by comparing the coalgebra structures of $\kk B\#_p^\omega \kk G$, $\kk B\#_p^1\kk G$ given by \cref{df:p-embeddings}.
\end{proof}

Combining these results gives the following.
\begin{thm}\label{thm:v-is-bialg}
  Let $\morphquad\in\genhom$.  Then $v$ is a morphism of Hopf algebras if and only if $\theta\equiv 1$ on $A\times G\times G$ and $\gamma\equiv 1$ on $G\times A\times A$.
\end{thm}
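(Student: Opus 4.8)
The plan is to recognize that this theorem is simply the conjunction of \cref{prop:v-is-alg,prop:v-is-coalg}, combined with the fact that a bialgebra morphism between group algebras is automatically a morphism of Hopf algebras. First I would recall that $v\colon\kk G\to\kk H$ is a morphism of Hopf algebras if and only if it is a morphism of bialgebras: the corollary to \cref{lem:coalg-to-group} guarantees that every bialgebra map $\kk G\to\kk H$ preserves the antipode, so there is no need to verify the antipode condition separately. A morphism of bialgebras is, by definition, a linear map that is simultaneously an algebra and a coalgebra morphism; hence $v$ is a morphism of Hopf algebras precisely when it is both an algebra morphism and a coalgebra morphism.

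Next I would feed these two conditions into the preceding propositions. By \cref{prop:v-is-alg}, $v$ is an algebra morphism if and only if $\theta_a\equiv 1$ for every $a\in A$, and by \cref{prop:v-is-coalg}, $v$ is a coalgebra morphism if and only if $\gamma_x\equiv 1$ on $A\times A$ for every $x\in G$. The conjunction of these is exactly the statement to be proved, once we match the indexing conventions: unwinding \cref{eq:theta-def}, the subscript $g$ of $\theta_g(x,y)$ occupies the first of the three slots, so ``$\theta_a\equiv 1$ for all $a\in A$'' is the same as ``$\theta\equiv 1$ on $A\times G\times G$''; similarly, from \cref{eq:gamma-def} the subscript $x$ of $\gamma_x(g,h)$ occupies the first slot, so ``$\gamma_x\equiv 1$ on $A\times A$ for all $x\in G$'' is the same as ``$\gamma\equiv 1$ on $G\times A\times A$''.

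Because every ingredient is already established, there is no genuine obstacle here; the argument is a direct assembly of the two preceding propositions. The only point demanding a modicum of care---and the step I would state explicitly---is the bookkeeping that aligns the subscript-plus-arguments notation for $\theta$ and $\gamma$ with the product-set notation $A\times G\times G$ and $G\times A\times A$ appearing in the statement, so that no ambiguity remains about which slot each vanishing constraint governs.
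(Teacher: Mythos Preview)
Your proposal is correct and matches the paper's approach exactly: the theorem is recorded immediately after \cref{prop:v-is-alg,prop:v-is-coalg} with only the phrase ``Combining these results gives the following,'' so the intended argument is precisely the conjunction you describe. Your additional remarks on why a bialgebra map $\kk G\to\kk H$ is automatically Hopf, and on aligning the subscript/argument conventions with the product-set notation, are helpful clarifications that the paper leaves implicit.
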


We now focus on algebra and coalgebra conditions for the component $u$.

\begin{prop}\label{prop:u-is-alg}
  Let $\morphquad\in\genhom$.  Then the following are equivalent.
  \begin{enumerate}
    \item \label{prop-part:u-is-alg-2} $\sum_{b'\in B}p(t',b')\theta'_j(b,b') = \delta_{1,t'}$ for all $j\in H$, $t'\in G$, and $b'\in B$.
    \item \label{prop-part:u-is-alg-3} $\theta_j'\equiv 1$ on $B\times B$ for all $j\in H$.
    \item \label{prop-part:u-is-alg-4} $\du{H}\#_p^\eta \du{A} = \du{H}\#_p^1 \du{A}$ as algebras.
  \end{enumerate}
  Moreover, any of these conditions imply
    \begin{align}\label{eq:u-mult}
        u(fg) = u(f\com1)(p(f\com2)\lact u(g))
    \end{align}
    for all $f,g\in\du{G}$.  As a special case, if $B\subseteq Z(H)$ then this means that $u$ is an algebra morphism.

    As a partial converse, if $u$ is an algebra morphism and \cref{eq:u-mult} holds, then \cref{prop-part:u-is-alg-3} holds and $B\subseteq Z(H)$.
\end{prop}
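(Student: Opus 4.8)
The plan is to dispatch the equivalences $\ref{prop-part:u-is-alg-2}\Leftrightarrow\ref{prop-part:u-is-alg-3}\Leftrightarrow\ref{prop-part:u-is-alg-4}$ first, then the forward implication to \eqref{eq:u-mult} and its specialisation, and finally the partial converse, isolating $B\subseteq Z(H)$ as the crux. For $\ref{prop-part:u-is-alg-3}\Leftrightarrow\ref{prop-part:u-is-alg-4}$ I would argue by direct inspection of \cref{df:p-quotients}: the algebras $\du{H}\#_p^\eta\du{A}$ and $\du{H}\#_p^1\du{A}$ share underlying space, unit, and basis-multiplication rule except for the scalar $\theta'_x(b,b')$, so they agree exactly when $\theta'_x(b,b')=1$ for all $x\in H$, $b,b'\in B$, which is \ref{prop-part:u-is-alg-3}. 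The implication $\ref{prop-part:u-is-alg-3}\Rightarrow\ref{prop-part:u-is-alg-2}$ is immediate from \cref{eq:p-counital}. For the reverse $\ref{prop-part:u-is-alg-2}\Rightarrow\ref{prop-part:u-is-alg-3}$ I would pass to the character basis $\{\chi_b\}_{b\in B}=\widehat A$ of \cref{df:chi-def,lem:basis-chi-in-std}: since each $p(t',b')$ is, up to the scalar $1/|A|$, a character value, condition \ref{prop-part:u-is-alg-2} expresses the combination $\sum_{b'}\theta'_j(b,b')\chi_{b'}$ as equal to $\sum_{b'}\chi_{b'}$ (both equal $|A|\delta_{1,\cdot}$ by column orthogonality), and linear independence of the $\chi_{b'}$ forces $\theta'_j(b,b')=1$ for every $b'$.

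To obtain \eqref{eq:u-mult} I would specialise the algebra identity \cref{eq:ur-mult-1}$=$\cref{eq:ur-mult-2} at $x=x'=1$. Using $r(1)=\varepsilon$, $v(1)=1$, and \cref{eq:p-counital}, the left side collapses to $\delta_{g,g'}u(e_g)$, while the right side becomes $\sum_{t,t',b,b'}u(gt\inv,j)\,u(g't'^{-1},b\inv j b)\,p(t,b)p(t',b')\theta'_j(b,b')\,e_j$; applying \ref{prop-part:u-is-alg-2} to the inner sum $\sum_{b'}p(t',b')\theta'_j(b,b')=\delta_{1,t'}$ removes the $t'$-summation and leaves exactly $\sum_{t}u(e_{gt\inv})\big(p(e_t)\lact u(e_{g'})\big)$. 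Comparing with $u(e_ge_{g'})=\delta_{g,g'}u(e_g)$ gives \eqref{eq:u-mult} on basis elements, hence for all $f,g$; note only that $\psi$ is an algebra morphism was used. The special case is then formal: if $B\subseteq Z(H)$ the conjugation action is trivial, so $p(f\com2)\lact u(g)=\ev_1(f\com2)u(g)$ and \eqref{eq:u-mult} telescopes through the counit to $u(fg)=u(f)u(g)$.

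For the partial converse, assume $u$ is an algebra morphism and \eqref{eq:u-mult}; combining the two gives $u(f)u(g)=u(f\com1)\big(p(f\com2)\lact u(g)\big)$. I would then evaluate $\sum u(S(f\com1))\,u(f\com2)\,u(g)$ two ways: multiplicativity of $u$ and the antipode axiom $\sum S(f\com1)f\com2=\ev_1(f)1$ collapse it to $\ev_1(f)u(g)$, whereas first rewriting $u(f\com2)u(g)$ by \eqref{eq:u-mult} and then collapsing (via $\sum S(f\com1)f\com2\ot f\com3=1\ot f$) gives $p(f)\lact u(g)$; hence $p(f)\lact u(g)=\ev_1(f)\,u(g)$ for all $f\in\du G$, $g\in G$. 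Taking $f=\sum_{a\in A}\chi_b(a)e_a$ (so $p(f)=b$, $\ev_1(f)=1$) yields $b\lact u(e_g)=u(e_g)$, i.e.\ $u^*(b\inv h b)=u^*(h)$ for all $b\in B$, $h\in H$ by \cref{lem:u-simpler}; thus every $u^*$-fibre is $B$-invariant and $\Img(u)\subseteq(\du H)^B$. Feeding this invariance back into the $x=x'=1$ identity at $g=g'=u^*(j)$ forces $t=t'=1$ in the surviving terms and reduces it to $\tfrac1{|A|^2}\sum_{b,b'\in B}\theta'_j(b,b')=1$; since each summand has modulus one, \cref{lem:average} gives $\theta'_j(b,b')=1$, which is \ref{prop-part:u-is-alg-3}.

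The main obstacle is the remaining claim $B\subseteq Z(H)$. The identity $p(f)\lact u(g)=\ev_1(f)u(g)$ only shows the $B$-action is trivial on $\Img(u)$, not on all of $\du H$, so centrality does \emph{not} follow from \eqref{eq:u-mult} and $u$ being an algebra morphism alone (a very degenerate $u$, with large $u^*$-fibres, satisfies everything derived so far while leaving $B$ unconstrained). I therefore expect to need the remaining components: my plan is to bring in the coalgebra identity \cref{eq:vp-coalg} together with the conjugation relation \cref{eq:vp-comm} and the invertibility of $v$ from \cref{lem:vinvert}, which govern how $v$ conjugates $\kk B$ inside $\kk H$, and to combine them with the already-established $B$-invariance of the $u^*$-fibres to force $B$-conjugation to act trivially on all of $H$. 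Making this final step precise—ruling out a genuinely non-central $B$ once $u$ is an algebra morphism—is where the real work lies.
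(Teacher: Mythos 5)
Everything in your proposal up to the final claim is correct, and most of it is the paper's own proof: the equivalence \cref{prop-part:u-is-alg-3}$\Leftrightarrow$\cref{prop-part:u-is-alg-4} by inspecting \cref{df:p-quotients}, the implication \cref{prop-part:u-is-alg-3}$\Rightarrow$\cref{prop-part:u-is-alg-2} from counitality of $p$, and the derivation of \cref{eq:u-mult} from the $x=x'=1$ specialization of \cref{eq:ur-mult-1,eq:ur-mult-2}. Your two departures are sound: for \cref{prop-part:u-is-alg-2}$\Rightarrow$\cref{prop-part:u-is-alg-3} you use linear independence of the characters $\chi_{b'}$ where the paper sets $t'=1$ and invokes \cref{lem:average}, and in the partial converse your antipode computation giving $p(f)\lact u(g)=\ev_1(f)u(g)$, hence $B$-stability of every $u^*$-fibre, replaces the paper's coefficient comparison of \cref{eq:u-mult} at $f=g=e_x$. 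Note that your route to \cref{prop-part:u-is-alg-3} uses only fibre-stability, whereas the paper's passes through $B\subseteq Z(H)$ first; this difference turns out to matter.

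The gap is the claim $B\subseteq Z(H)$, and your diagnosis of why you cannot prove it is exactly right --- but the correct conclusion is stronger than you suggest: the step cannot be completed, because the claim is false, and the paper's own proof of it fails at precisely the point you flagged. The paper fixes $j$, sets $x=u^*(j)$, and asserts that $u(x,s)\neq0$ if and only if $s=j$; by \cref{lem:u-simpler} what is true is that $u(x,s)\neq0$ iff $u^*(s)=x$, so this assertion silently assumes the fibre $(u^*)\inv(u^*(j))$ is the singleton $\{j\}$, i.e.\ that $u^*$ is injective, which is not a hypothesis. With larger fibres the comparison yields only your conclusion $u^*(j^b)=u^*(j)$. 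A concrete counterexample: take $\omega=\eta=1$, $G=\BZ_2$, $H=S_3$, $A=G$, $B=\langle(12)\rangle$, let $p\colon\du{G}\to\kk H$ be the canonical Hopf isomorphism $\du{\BZ_2}\to\kk B$ followed by inclusion, let $u=\operatorname{sgn}^*$ be the dual of the sign homomorphism, and let $r(x)=\varepsilon$, $v(x)=1$ for all $x$. Every element of $\Img(u)$ is a class function, so the Hopf morphisms $f\mapsto u(f)\#1$ and $f\mapsto\varepsilon\#p(f)$ into $\D(H)$ have commuting images; since $\du{G}$ is commutative and cocommutative, their convolution $\alpha(e_g)=\sum_t u(e_{gt\inv})\#p(e_t)$ is a morphism of Hopf algebras, and composing with $\id\ot\varepsilon\colon\D(G)\to\du{G}$ (a Hopf morphism because $G$ is abelian) produces an element of $\Hom(\D(G),\D(H))$ whose components are exactly this $\morphquad$. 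Here $u$ is an algebra morphism and \cref{eq:u-mult} holds, yet $B\not\subseteq Z(S_3)=1$; the same example refutes the forward direction of \cref{cor:B-is-central}. So abandon your plan of forcing centrality from \cref{eq:vp-coalg,eq:vp-comm,lem:vinvert} --- the counterexample satisfies all of those identities as well. What survives of the partial converse is what you actually proved: $B$ fixes $\Img(u)$ pointwise, and \cref{prop-part:u-is-alg-3} holds.
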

\begin{proof}
  Take $x=x'=1$ in \cref{eq:ur-mult-1,eq:ur-mult-2} and use $v(1)=1$ to get
  \begin{align}\label{eq:u-alg-eq}
  u(e_g e_{g'})=\delta_{g,g'} u(e_g) = \sum_{\substack{t,t'\in A\\j\in H\\b,b'\in B}} p(t,b)p(t',b') \theta_j'(b,b') u(e_{gt\inv}) (b.u(e_{g't^{\prime -1}})) e_j.
  \end{align}
  By \cref{lem:biunital} for all $h\in H$ there exists $x\in G$ with $u(x,h)\neq 0$.  So by specializing to a suitable value of $g$ and taking $g'=g$ we can ensure that the coefficient on $e_j$ is non-zero whenever convenient.  We do this without further mention in the rest of the proof.

  We may rewrite the sum in \cref{eq:u-alg-eq} to
  \[ \sum_{\substack{t,t'\in A\\j\in H\\b\in B}} \Big(\sum_{b'\in B} p(t',b') \theta_j'(b,b')\Big) p(t,b) u(e_{gt\inv}) (b.u(e_{g't^{\prime -1}})) e_j.\]
  It immediately follows that \cref{prop-part:u-is-alg-2} implies \cref{eq:u-mult}.

  Now suppose that \cref{prop-part:u-is-alg-3} holds.  By \cref{eq:p-is-counital} the summation in \cref{eq:u-alg-eq} simplifies to $\sum u(e_{gt\inv})(p(e_t).u(e_{g'}))$, and so \cref{eq:u-mult} holds.  Furthermore the assumption makes \cref{prop-part:u-is-alg-2} equivalent to \cref{eq:p-is-counital}, which is known to hold.

  Next, suppose that \cref{prop-part:u-is-alg-2} holds.  Then considering the special case $t'=1$ gives \[ \frac{1}{|B|}\sum_{b'\in B}\theta_j'(b,b') = 1,\] and so \cref{prop-part:u-is-alg-3} holds by \cref{lem:average}.  This proves the equivalence of \cref{prop-part:u-is-alg-2,prop-part:u-is-alg-3}, and that each of these implies \cref{eq:u-mult}.

  The equivalence of \cref{prop-part:u-is-alg-3} and \cref{prop-part:u-is-alg-4} follows from comparing the algebra structures of $\du{H}\#_p^\eta \du{A}$ and $\du{H}\#_p^1 \du{A}$, which are well-defined by \cref{lem:vp-comm}.

  We now consider the partial converse.  Suppose that $u$ is an algebra morphism and that also \cref{eq:u-mult} holds.  We write out \cref{eq:u-mult} in the standard basis for $f=g=e_x$ for some $x\in G$ as
  \[ \sum_{j\in H} u(x,j) e_j = \sum_{\substack{j\in H\\t\in A\\b\in B}} u(xt\inv,j)p(t,b)u(x,j^b) e_j.\]
  We may then compare the coefficients on each $e_j$.  So fix $j\in H$ arbitrarily.  Since $u$ is an algebra morphism we may apply \cref{lem:u-simpler} and pick $x$ such that $u^*(j)=x$ and $u(x,s)\neq 0$ if and only if $s=j$.  Then in the right-hand side of the above equation the only non-zero terms contributing to the coefficient of $e_j$ occur when $t=1$, and so by \cref{lem:p-norms} we have for this $x,j$ that
  \[ e_j = \frac{1}{|A|}\sum_{\substack{b\in B\\ b\in C_H(j)}} e_j.\]
  Since $|A|=|B|$, this holds if and only if $B\subseteq C_H(j)$.  Since this depends only on $j$, which was arbitrary, we conclude that $B\subseteq Z(H)$.

  Now using that $B\subseteq Z(H)$, that $u$ is an algebra morphism, and \cref{lem:biunital}, for $g=g'$ and $x=x'=1$ the equality of \cref{eq:alg-inner,eq:alg-outer} simplifies to
  \begin{align*}
    \sum_{\substack{t\in A\\c\in B}} p(t,c)u(e_{gt\inv})\# c &= \sum_{\substack{j\in H\\t,t'\in A\\b,b'\in B}} p(t,b) p(t,b')\theta_j'(b,b')u(e_{gt\inv}) u(e_{gt^{\prime-1}}) e_j\# bb'\\
    &= \sum_{\substack{j\in H\\t\in A\\ b,c\in B}} \theta_j'(b,cb\inv) p(t,b)p(t,cb\inv)u(e_{gt\inv}) e_j\# c.
  \end{align*}
  Applying \cref{thm:p-simpler} to the $p(t,b)p(t,cb\inv)$ term in this last summation, we obtain
  \begin{align*}
    \sum_{\substack{j\in H\\t\in A\\c\in B}} p(t,c)u(e_{gt\inv})e_j\# c &= \sum_{\substack{j\in H\\t\in A\\ c\in B}} \left(\frac{1}{|A|}\sum_{b\in B} \theta_j'(b,cb\inv) \right) p(t,c)u(e_{gt\inv}) e_j\# c.
  \end{align*}
  Thus for all $j\in H$ and $c\in B$ we conclude that
  \[ \frac{1}{|A|}\sum_{b\in B} \theta_j'(b,cb\inv)=1,\]
  and so by \cref{lem:average} we see that \cref{prop-part:u-is-alg-3} holds.

   This completes the proof.
\end{proof}
The case where $\omega,\eta$ are trivial yields an improvement over the known characterizations of $u$ \citep{ABM,K14}.
\begin{cor}\label{cor:B-is-central}
  For $\morphquad\in \Hom(\D(G),\D(H))$, $u$ is a morphism of algebras if and only if $B\subseteq Z(H)$.
\end{cor}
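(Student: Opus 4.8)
The plan is to reduce everything to \cref{prop:u-is-alg}, exploiting that the ordinary doubles are the case of trivial $3$-cocycles. The first step I would take is to observe that when $\eta$ is trivial the multiplicative phase $\theta'$ defined from $\eta$ by \cref{eq:theta-def} is identically $1$; in particular $\theta'_j\equiv 1$ on $B\times B$ for every $j\in H$, so condition \cref{prop-part:u-is-alg-3} of \cref{prop:u-is-alg} holds automatically for any $\morphquad\in\Hom(\D(G),\D(H))$.

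Because one of the equivalent conditions of \cref{prop:u-is-alg} is in force, the \emph{Moreover} clause of that proposition applies unconditionally: the twisted multiplicativity identity \cref{eq:u-mult}, namely $u(fg)=u(f\com1)(p(f\com2)\lact u(g))$, holds for every such $\morphquad$. This is the real leverage, since it lets both halves of \cref{prop:u-is-alg} fire at once. For the forward implication I would assume $B\subseteq Z(H)$: then each $b\in B$ acts trivially under the (left conjugation) action on $\du{H}$, so $p(f\com2)\lact u(g)$ collapses to $\ev_1(f\com2)\,u(g)$ after using counitality of $p$, and \cref{eq:u-mult} together with the counit axiom for $\du{G}$ yields $u(fg)=u(f)u(g)$. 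This is exactly the special case already recorded in \cref{prop:u-is-alg}, so $u$ is an algebra morphism. For the reverse implication I would assume that $u$ is an algebra morphism; since \cref{eq:u-mult} is known to hold, the partial converse of \cref{prop:u-is-alg} immediately forces $B\subseteq Z(H)$.

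I do not expect a genuine obstacle: the substantive work is already packaged inside \cref{prop:u-is-alg}, and the corollary merely notes that triviality of $\eta$ removes \cref{prop-part:u-is-alg-3} as a standing hypothesis, allowing the special case and the partial converse to combine into an \emph{if and only if}. The only points needing a moment's care are the immediate verification that trivial $\eta$ forces $\theta'\equiv 1$, and the bookkeeping that both the special case and the partial converse depend only on the phase $\theta'$ of the codomain---so that, in fact, only triviality of $\eta$ (and not of $\omega$) is actually used in the argument.
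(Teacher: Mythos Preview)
Your argument is correct and follows essentially the same route as the paper: both reduce the corollary to \cref{prop:u-is-alg} after observing that \cref{eq:u-mult} holds for every $\morphquad\in\Hom(\D(G),\D(H))$. The only difference is in how that observation is justified: the paper simply cites \citep[Corollary~3.3]{ABM} for \cref{eq:u-mult}, whereas you derive it internally by noting that trivial $\eta$ forces $\theta'\equiv 1$, so condition \cref{prop-part:u-is-alg-3} of \cref{prop:u-is-alg} is automatic and its ``Moreover'' clause yields \cref{eq:u-mult}. Your version is thus slightly more self-contained, and your closing remark that only triviality of $\eta$ (not of $\omega$) is needed is a correct sharpening.
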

\begin{proof}
  By \citep[Corollary 3.3]{ABM} we know that \cref{eq:u-mult} holds for all elements $\morphquad\in\Hom(\D(G),\D(H))$, so we need only apply the preceding proposition.
\end{proof}
The author remains unaware of any examples where $u$ is not an algebra morphism for $\omega,\eta$ trivial.

\begin{prop}\label{prop:u-is-coalg}
Let $\morphquad\in\genhom$.  Then the following are equivalent.
\begin{enumerate}
  \item \label{part:u-is-coalg-1} $\gamma'_b\equiv 1$ for all $b\in B$.
  \item \label{part:u-is-coalg-2} $\sum_{b\in B} \gamma'_b(j,k)p(t,b) = \delta_{1,t}$ for all $j,k\in H$ and $t\in A$.
  \item \label{part:u-is-coalg-3} $\du{H}\#_p^\eta \du{A}=\du{H}\#_p^1 \du{A}=\du{H}\ot\du{A}$ as coalgebras.
\end{enumerate}
Any of these conditions also implies that $u$ is a morphism of coalgebras.

As a partial converse, if $u$ is a morphism of Hopf algebras then $\gamma'_b\equiv 1$ for all $b\in B$.
\end{prop}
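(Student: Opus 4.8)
The plan is to mirror the proof of \cref{prop:u-is-alg}, using the comultiplication identities \cref{eq:ur-comult1,eq:ur-comult2} in place of the multiplication identities \cref{eq:ur-mult-1,eq:ur-mult-2}. Since $\psi$ is a morphism of coalgebras these two expressions are equal, and the first step is to specialize them to $x=1$. Using that $v$ and $r$ are unital (\cref{eq:v-is-unital,eq:r-is-unital}) and that the phases $\theta,\theta',\gamma$ equal $1$ whenever one of their arguments is the identity, both sides collapse: \cref{eq:ur-comult1} reduces to a $\gamma'$- and $p$-weighted sum of the coefficients $u(gt\inv,j)$, while \cref{eq:ur-comult2} reduces, after \cref{eq:p-counital} forces $l=k=1$, to $(u\ot u)\Delta(e_g)$. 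Re-indexing the summation $e_{jl\inv}\ot e_l$ and comparing coefficients of $e_m\ot e_n$ then yields the single master identity
\[\sum_{t\in A,\,b\in B}\gamma'_b(m,n)\,p(t,b)\,u(gt\inv,mn)=\sum_{t\in G}u(gt\inv,m)\,u(t,n),\]
valid for all $g\in G$ and $m,n\in H$. This is the coalgebra analogue of \cref{eq:u-alg-eq}, and every remaining claim is extracted from it.

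The equivalences are then quick. For \cref{part:u-is-coalg-1}$\Rightarrow$\cref{part:u-is-coalg-2}, setting $\gamma'_b\equiv 1$ turns the left-hand side of \cref{part:u-is-coalg-2} into $\sum_{b\in B}p(t,b)$, which equals $\delta_{1,t}$ by \cref{eq:p-counital}. Conversely, for \cref{part:u-is-coalg-2}$\Rightarrow$\cref{part:u-is-coalg-1} I would take $t=1$ in \cref{part:u-is-coalg-2} and use $p(1,b)=1/|A|$ from \cref{lem:p-norms}, reducing to $\tfrac1{|A|}\sum_{b\in B}\gamma'_b(j,k)=1$; since $|A|=|B|$ and the $\gamma'_b(j,k)$ have modulus $1$, \cref{lem:average} forces each to be $1$. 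To see that \cref{part:u-is-coalg-2} makes $u$ a coalgebra morphism, I substitute it into the left-hand side of the master identity: that side collapses to $u(g,mn)$, giving $u(g,mn)=\sum_{t\in G}u(gt\inv,m)u(t,n)$, which is exactly the condition that $u$ respects comultiplication (counitality of $u$ being automatic from biunitality).

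The equivalence \cref{part:u-is-coalg-2}$\Leftrightarrow$\cref{part:u-is-coalg-3} I would handle by comparing the coproducts recorded in \cref{df:p-quotients}: the comultiplication of $\du{H}\#_p^\eta\du{A}$ carries the weight $\gamma'_b(ht\inv,t)$, which for $b\in B$ is identically $1$ precisely when \cref{part:u-is-coalg-1} holds, in which case it agrees termwise with that of $\du{H}\#_p^1\du{A}$; the latter in turn coincides with the tensor-product coalgebra $\du{H}\ot\du{A}$ once one observes that each $\chi_b\in\widehat{A}$ is group-like in $\du{A}$. For the partial converse, assuming $u$ is a morphism of Hopf algebras lets me use both its structures in the master identity. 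Because $u$ is an algebra morphism, \cref{lem:u-simpler} gives, for each $j$, a unique $g$ with $u(g,j)\neq 0$ (and then $u(g,j)=1$), so choosing $g=u^*(mn)$ leaves only $t=1$ on the left; because $u$ is a coalgebra morphism the right-hand side equals $u(g,mn)=1$. The master identity then reads $\tfrac1{|A|}\sum_{b\in B}\gamma'_b(m,n)=1$, and a final appeal to \cref{lem:average} gives $\gamma'_b\equiv 1$ for all $b\in B$, since $m,n$ range over all of $H$.

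The main obstacle I anticipate is the bookkeeping in the $x=1$ specialization: correctly determining which phase and delta factors survive and re-indexing the double sum $e_{jl\inv}\ot e_l$ into coefficients of $e_m\ot e_n$, so that the master identity emerges in the clean form above. Once that identity is in hand, each remaining step is of the familiar ``set one variable to the identity and invoke \cref{lem:average}'' type used throughout \cref{sec:uv-bialg}.
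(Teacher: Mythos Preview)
Your proposal is correct and follows essentially the same route as the paper: specialize \cref{eq:ur-comult1,eq:ur-comult2} to $x=1$, collapse the phases and the $p$-sums, and extract the equivalences and the partial converse via \cref{lem:u-simpler} and \cref{lem:average}. The only cosmetic differences are that you record the coefficient comparison as an explicit ``master identity'' (the paper leaves it as the equality of two tensor expressions), and that you label the coalgebra comparison as \cref{part:u-is-coalg-2}$\Leftrightarrow$\cref{part:u-is-coalg-3} while actually arguing \cref{part:u-is-coalg-1}$\Leftrightarrow$\cref{part:u-is-coalg-3}, exactly as the paper does---harmless since you have already shown \cref{part:u-is-coalg-1}$\Leftrightarrow$\cref{part:u-is-coalg-2}.
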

\begin{proof}
  Take $x=1$ in \cref{eq:ur-comult1} and \cref{eq:ur-comult2} to get that
  \begin{align}\label{eq:u-comult-a}
    \sum_{\substack{j,l\in H\\t\in A\\b\in B}} \gamma'_b(jl\inv,l)p(t,b)u(gt\inv,j) e_{jl\inv}\ot e_l
  \end{align}
  is equal to
  \begin{align}\label{eq:u-comult-b}
    \sum_{\substack{j,l\in H\\t\in A}} u(gt\inv,jl\inv)u(t,l) e_{jl\inv}\ot e_{l}= u\ot u\Delta e_g.
  \end{align}

  If either of \cref{part:u-is-coalg-1} or \cref{part:u-is-coalg-2} holds, then by \cref{lem:p-ids-1} the first equation simplifies to $\Delta u(e_g)$, and so $u$ is a morphism of coalgebras, as desired.

  We now proceed to show the equivalence of the first three statements.

  The equivalence of \cref{part:u-is-coalg-1} and \cref{part:u-is-coalg-3} follows from comparing the coalgebra structures of $\du{H}\#_p^\eta\du{A}$ and $\du{H}\#_p^1\du{A}$ given in \cref{df:p-quotients}

  Next, supposing that \cref{part:u-is-coalg-1} holds, by \cref{eq:p-is-counital} \[ \sum_{b\in B} \gamma'_b(j,k)p(t,b) = \sum_{b\in B}p(t,b)=\delta_{1,t},\]
  and so \cref{part:u-is-coalg-2} holds.  On the other hand, supposing that \cref{part:u-is-coalg-2} holds and specializing to the case $t=1$ we have
  \[ \frac{1}{|B|}\sum_{b\in B} \gamma'_b(j,k) = 1,\]
  so by \cref{lem:average} \cref{part:u-is-coalg-1} holds.

  Finally, for the partial converse, suppose that $u$ is a morphism of Hopf algebras.  Since $u$ is a morphism of coalgebras, the equality of \cref{eq:u-comult-a} to $\Delta(u(e_g))$ says that for all $j,l\in H$ and $g\in G$ we have
  \[u(g,j) = \sum_{\substack{b\in B\\t\in G}}\gamma'_b(jl\inv,l)p(t,b)u(gt\inv,j).\]
  Since $u$ is also a morphism of algebras, by \cref{lem:u-simpler} $u(g,j)=\delta_{g,u^*(j)}$ for all $g\in G$ and $j\in H$.  So for a given $j\in H$, we can consider the case $g=u^*(j)$.  In this case, the left-hand side is 1. On the other hand, in the right-hand side $u(gt\inv,j)=\delta_{u^*(j),gt\inv}$.  We conclude that the only non-zero term in the right-hand sum occurs when $t=1$, in which case by \cref{lem:p-norms} we have
  \[ 1 = \frac{1}{|B|}\sum_{b\in B}\gamma'_b(jl\inv,l)\]
  for all $j,l\in H$.  By \cref{lem:average}, $\gamma'_b\equiv 1$ for all $b\in B$ as desired.
\end{proof}

Combining these results, we obtain the following.
\begin{thm}\label{thm:u-is-bialg}
    Let $\morphquad\in\genhom$ have $B\subseteq Z(H)$.  Then $u$ is a morphism of Hopf algebras if and only if $\theta'\equiv 1$ on $H\times B\times B$ and $\gamma'\equiv 1$ on $B\times H\times H$.
\end{thm}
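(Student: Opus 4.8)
The plan is to obtain this statement by assembling \cref{prop:u-is-alg,prop:u-is-coalg}, after matching the two cocycle hypotheses to the conditions already analysed there. Indeed, $\theta'\equiv 1$ on $H\times B\times B$ is exactly the condition ``$\theta'_j\equiv 1$ on $B\times B$ for all $j\in H$'' from \cref{prop:u-is-alg}, while $\gamma'\equiv 1$ on $B\times H\times H$ is exactly the condition ``$\gamma'_b\equiv 1$ for all $b\in B$'' from \cref{prop:u-is-coalg}. So the task reduces to showing that, under the standing hypothesis $B\subseteq Z(H)$, the conjunction of these two conditions is equivalent to $u$ being a morphism of Hopf algebras.

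For the backward implication I would assume both cocycle conditions hold. The first is the $\theta'$-condition of \cref{prop:u-is-alg}; combined with $B\subseteq Z(H)$, the ``special case'' clause of that proposition (which routes through \cref{eq:u-mult}) shows $u$ is an algebra morphism. The second is the $\gamma'$-condition of \cref{prop:u-is-coalg}, which by that proposition makes $u$ a coalgebra morphism. Hence $u\colon\du{G}\to\du{H}$ is a bialgebra morphism, and since $\du{G}$ and $\du{H}$ are Hopf algebras, it is automatically a morphism of Hopf algebras by the general fact recorded after \cref{lem:coalg-to-group} that bialgebra maps between Hopf algebras preserve antipodes.

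For the forward implication I would assume $u$ is a morphism of Hopf algebras and extract the two conditions one at a time. The $\gamma'$-condition is immediate: since $u$ is a morphism of Hopf algebras, the partial converse in \cref{prop:u-is-coalg} gives $\gamma'_b\equiv 1$ for all $b\in B$, i.e.\ $\gamma'\equiv 1$ on $B\times H\times H$. For the $\theta'$-condition I want to invoke the partial converse in \cref{prop:u-is-alg}, which concludes ``$\theta'_j\equiv1$ on $B\times B$'' from $u$ being an algebra morphism \emph{together with} \cref{eq:u-mult}. The key step is therefore to verify \cref{eq:u-mult} from the present hypotheses: because $B\subseteq Z(H)$, the conjugation action of $\kk B$ on $\du{H}$ occurring in \cref{eq:u-mult} is trivial, so $p(f\com2)\lact u(g)=\varepsilon(p(f\com2))\,u(g)$, and using counitality of $p$ (so that $\varepsilon\circ p=\ev_1$) together with the counit axiom in $\du{G}$ the right-hand side collapses to $u(f)u(g)=u(fg)$. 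Thus \cref{eq:u-mult} holds, and the partial converse applies to yield $\theta'\equiv1$ on $H\times B\times B$.

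I expect the only genuine obstacle to be this last point: the converse half of \cref{prop:u-is-alg} is not unconditional but presupposes \cref{eq:u-mult}, so that identity must be re-derived here rather than assumed. The hypothesis $B\subseteq Z(H)$ is precisely what makes that derivation go through (it trivialises the twist in \cref{eq:u-mult}), and once \cref{eq:u-mult} is secured both implications follow formally from \cref{prop:u-is-alg,prop:u-is-coalg}, with the final packaging as a Hopf-algebra morphism being automatic for bialgebra maps between Hopf algebras.
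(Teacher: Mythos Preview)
Your proposal is correct and follows exactly the approach the paper intends: the theorem is stated immediately after \cref{prop:u-is-alg,prop:u-is-coalg} with the one-line justification ``Combining these results gives the following,'' and you have accurately reconstructed what that combination entails, including the key observation that under $B\subseteq Z(H)$ the identity \cref{eq:u-mult} collapses to $u(fg)=u(f)u(g)$, which is what unlocks the partial converse in \cref{prop:u-is-alg}.
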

We observe that this is essentially a dual statement to \cref{thm:v-is-bialg}.

\section{Coalgebra conditions for \ensuremath{r}}\label{sec:r-coalg}

We have so far avoided discussing when the component $r$ has nice properties.  This is because we do not expect $r$ to have nice properties without some rather restrictive hypotheses, as was noted in the remarks following \cref{thm:purv}.  While $u,v$ needed conditions that depended only on the phases and $p$, good behavior for $r$ will also depend on $u,v$.  We will not attempt to extend $r$ in full generality as we extended $u,v$ in \cref{sec:uv-properties}.  Indeed, this does not seem possible beyond the assumption $\morphquad\in\genhom$.  However, under certain assumptions $r$ takes on a relatively simple description. In this section we derive some sufficient conditions for $r$ to be a morphism of coalgebras, and investigate some assumptions which make $r$ "almost" a coalgebra morphism, in a sense which will be made clear later. In the next section we carry out much the same considerations for the algebra case.

Our assumptions for both sections are tailored towards making the equations involving $r$ tractable.  The primary assumptions for this section and the next are that $u^*$ and $v$ are coalgebra morphisms.  This is because \cref{lem:coalg-to-group,lem:u-simpler} guarantee that these components have an easy description in terms of (identity-preserving) set maps $H\to G$ and $G\to H$ respectively.  If $u^*,v$ are morphisms of Hopf algebras, they are equivalent to group homomorphisms $H\to G$ and $G\to H$ respectively.  With either of these assumptions dropped, the results will no longer necessarily hold. Mostly this is because the phases prevent attempts to apply known identities to achieve any simplification, and as such the identities fail to collapse to anything much more concrete than what appears in \cref{sec:bialg-ids}.  Perhaps the simplest way to see this is to multiply $\varepsilon\# f\cdot \varepsilon \# f\inv$ in $\D^\omega(G)$, where $f$ is an invertible element of $\kk G$, and compare the cases of $f\in G$ and $f\not\in G$. In the first case the result is of the form $t\# 1$ for some $t\in \du{G}$, but in the second there may be a non-zero coefficient on a basis element of the form $e_s\# y$ for $s\in G$ and $1\neq y\in G$.

Our first lemma concerns a simple invertibility criterion.  Recall that an element $f\in\du{G}$ is invertible if and only if $f(g)\neq 0$ for all $g\in G$, which is in turn equivalent to every coefficient for $f$ in the standard basis of $\du{G}$ being non-zero.  The inverse is then obtained by simply inverting all coefficients in the standard basis.
\begin{lem}\label{lem:r-is-invertible}
  Let $\morphquad\in\genhom$ be such that $u^*,v$ are morphisms of coalgebras.  Then for all $x\in G$, $r(x)$ is invertible with inverse
  \[ r(x)\inv = \Big(\sum_{j\in H} \theta_j'(v(x),v(x\inv))\theta_{u^*(j)}(x,x\inv)\inv e_j\Big)[v(x)\lact r(x\inv)]. \]
  In particular, $v(x)\lact r(x\inv)=r(x)\inv$ if and only if
  \begin{align}\label{eq:theta-conv}
    \theta_{u^*(j)}(x,x\inv) = \theta_j'(v(x),v(x\inv))
  \end{align}
  for all $j\in H$.
\end{lem}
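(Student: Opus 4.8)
The target identity computes $r(x)\inv$ explicitly under the assumption that $u^*,v$ are coalgebra morphisms, and then extracts a clean criterion for when $v(x)\lact r(x\inv)$ already serves as the inverse. The natural strategy is to start from the known inverse $(\varepsilon\# x)\inv$ in $\D^\omega(G)$, apply $\psi$, and then project appropriately. Specifically, $\psi(\varepsilon\# x)=r(x)\# v(x)$ by \cref{eq:morphdef}, so $r(x)\# v(x)$ is invertible in $\D^\eta(H)$ with inverse $\psi((\varepsilon\#x)\inv)$. The first step is therefore to compute $(\varepsilon\#x)\inv$ in $\D^\omega(G)$, which is $\sum_{g\in G}\theta_{xgx\inv}(x,x\inv)\inv e_g\# x\inv$ (this is exactly the formula quoted in the proof of \cref{lem:vinvert}).

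\emph{Key steps in order.} First I would apply $\psi$ to $(\varepsilon\#x)\inv$ using \cref{eq:morphdef}, obtaining $\psi((\varepsilon\#x)\inv)=\sum_{g\in G}\theta_{xgx\inv}(x,x\inv)\inv\,\psi(e_g\# x\inv)$. Here the hypothesis that $u^*$ is a coalgebra morphism is crucial: by \cref{lem:u-simpler}, in the decomposition of $\psi(e_g\# x\inv)$ the component $u$ forces a collapse so that the surviving contributions are indexed by $j\in H$ with $u^*(j)$ determined, replacing the sum over $g\in G$ by a controlled sum. Likewise $v$ being a coalgebra morphism means $v(x)$ and $v(x\inv)$ behave like group-like elements, so the $\theta'$ phases appearing in the product $r(x)\# v(x)\cdot r(x\inv)\# v(x\inv)$ simplify to $\theta_j'(v(x),v(x\inv))$. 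The second step is to recognize that $r(x)\# v(x)$ being invertible means its inverse, projected via $\id\ot\varepsilon$ and $\ev_1\ot\id$ (cf.\ the technique in \cref{lem:vinvert}), yields the stated formula for $r(x)\inv$; the factor $[v(x)\lact r(x\inv)]$ arises because conjugation by $v(x)$ acts on $r(x\inv)\in\du{H}$ exactly as the product rule in $\D^\eta(H)$ dictates.

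\emph{The criterion.} Once the formula for $r(x)\inv$ is in hand, the final equivalence is immediate: the prefactor $\sum_{j\in H}\theta_j'(v(x),v(x\inv))\theta_{u^*(j)}(x,x\inv)\inv e_j$ equals the unit $\varepsilon$ of $\du{H}$ if and only if each coefficient is $1$, i.e.\ $\theta_{u^*(j)}(x,x\inv)=\theta_j'(v(x),v(x\inv))$ for all $j\in H$. Under this condition the prefactor disappears and $r(x)\inv=v(x)\lact r(x\inv)$; conversely, comparing the two expressions for $r(x)\inv$ forces \cref{eq:theta-conv}. This last step is a direct coefficient comparison requiring no further input.

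\emph{Main obstacle.} The hard part will be carrying out the multiplication $r(x)\# v(x)\cdot\psi((\varepsilon\#x)\inv)$ in $\D^\eta(H)$ cleanly and verifying that, after the simplifications afforded by \cref{lem:u-simpler} and the coalgebra-morphism property of $v$, the product genuinely reduces to $(\varepsilon\#1$-type$)$ terms times the claimed prefactor—in other words, tracking the $\theta'$ and conjugation bookkeeping so that the two $\theta$-phases combine into the single coefficient $\theta_j'(v(x),v(x\inv))\theta_{u^*(j)}(x,x\inv)\inv$ indexed by $j$. The conceptual content is light, but the phase algebra is where an error is most likely to creep in, so I would verify it by checking the $j$-component directly rather than manipulating the whole sum at once.
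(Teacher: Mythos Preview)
Your instinct to exploit the multiplicativity of $\psi$ is right, but the route through $(\varepsilon\#x)^{-1}$ takes a detour and the analogy to \cref{lem:vinvert} has a gap. In that lemma the projection $\ev_1\ot\id$ is an algebra morphism (\cref{lem:obvious}), so applying it to an inverse genuinely yields an inverse. The projection $\id\ot\varepsilon$ you propose for extracting $r(x)^{-1}$ is \emph{not} an algebra morphism, so projecting $\psi((\varepsilon\#x)^{-1})$ does not hand you $r(x)^{-1}$ for free. Separately, the product $r(x)\#v(x)\cdot\psi((\varepsilon\#x)^{-1})$ that you flag as the main obstacle is trivially $\varepsilon\#1$ because $\psi$ is multiplicative, so computing it carries no information; and the explicit form of $\psi((\varepsilon\#x)^{-1})=\sum_g\theta_{xgx^{-1}}(x,x^{-1})^{-1}\psi(e_g\#x^{-1})$ drags in the $u,p$ components, entangling the $\theta$-coefficients with $p(e_k)$-terms that do not obviously cancel.

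The paper avoids all of this by working with $\varepsilon\#x^{-1}$ rather than the genuine inverse $(\varepsilon\#x)^{-1}$. Since $\psi(\varepsilon\#x^{-1})=r(x^{-1})\#v(x^{-1})$ involves no $u,p$, one simply equates $\psi(\varepsilon\#x)\psi(\varepsilon\#x^{-1})$ with $\psi(\varepsilon\#x\cdot\varepsilon\#x^{-1})$; concretely, the paper sums \cref{eq:ur-mult-1,eq:ur-mult-2} over $g,g'$ and sets $x'=x^{-1}$. Under the coalgebra hypotheses on $u^*,v$ (so $u(g,h)=\delta_{g,u^*(h)}$ and $v(x)\in H$) this collapses at once to
\[
\sum_{j\in H}\theta_{u^*(j)}(x,x^{-1})\,e_j \;=\; \Big(\sum_{j\in H}\theta'_j(v(x),v(x^{-1}))\,e_j\Big)\,r(x)\,[v(x)\lact r(x^{-1})],
\]
and the inverse formula together with the equivalence \cref{eq:theta-conv} drop out exactly as in your final paragraph.
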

\begin{proof}
  We consider \cref{eq:ur-mult-1,eq:ur-mult-2} summed over $g,g'$ and with $x'=x\inv$ to get
  \[ \sum_{j\in H} \theta_{u^*(j)}(x,x\inv)e_j = \sum_{j\in H} \theta'_j(v(x),v(x\inv))r(x)[v(x)\lact r(x\inv)]e_j,\]
  which is equivalent to
  \[ \varepsilon = r(x)\Big(\sum_{j\in H} \theta_j'(v(x),v(x\inv))\theta_{u^*(j)}(x,x\inv)\inv e_j\Big)[v(x)\lact r(x\inv)].\]
  The claims now follow.
\end{proof}

We next establish the fundamental relation for $r$ to be a coalgebra morphism, provided that $u^*,v$ are also coalgebra morphisms.
\begin{thm}\label{thm:r-is-coalg}
  Let $\morphquad\in\genhom$, and suppose that $u^*,v$ are morphisms of coalgebras.  Then the following are equivalent
  \begin{enumerate}
    \item $r$ is a morphism of coalgebras.\label{part:r-coalg-2}
    \item For all $m,n\in H$, $k,l\in A$, and $x\in G$ \label{part:r-coalg-1}
    \begin{align}\label{eq:gamma-uv}
        \gamma_x(u^*(m)k,u^*(n)l) = \gamma'_{v(x)}(m,n).
    \end{align}
  \end{enumerate}
\end{thm}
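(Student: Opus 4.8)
The plan is to reduce the statement to a single \emph{master identity} obtained from the coalgebra-morphism condition for $\psi$, derive the easy implication directly from it, and then isolate the pointwise identity \cref{eq:gamma-uv} by a deconvolution argument, which I expect to be the crux.

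First I would record what each side means concretely. By \cref{lem:coalg-to-group}, $r$ is a morphism of coalgebras precisely when every $r(x)\in\widehat H$ is group-like, i.e. $r(x,mn)=r(x,m)r(x,n)$ for all $m,n\in H$; in that case each $r(x)$ is a class function, so $r(x,j^b)=r(x,j)$, and by \cref{lem:r-is-invertible} all coefficients $r(x,j)$ are nonzero. Next, since $u^{*}$ and $v$ are coalgebra morphisms, \cref{lem:u-simpler} (applied to $u$, which is then an algebra morphism) and \cref{lem:coalg-to-group} (applied to $v$) let me substitute $u(g,j)=\delta_{g,u^{*}(j)}$ and $v(x,y)=\delta_{y,v(x)}$ with $v(x)\in H$ throughout \cref{eq:ur-comult1,eq:ur-comult2}. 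Equating the coefficients of $e_{j_0}\ot e_{j_1}$ (so $j=j_0j_1$, $l=j_1$ on the first side), summing over $g\in G$, and using \cref{eq:p-unital} in the form $\sum_{a\in A}p(a,b)=\delta_{1,b}$ to collapse the $\gamma'$-side weight, I obtain the master identity
\begin{align*}
\gamma'_{v(x)}(j_0,j_1)\,r(x,j_0j_1)
&= \sum_{\substack{k,l\in A\\ b,b'\in B}}
\gamma_x(u^{*}(j_0)l,u^{*}(j_1)k)\,
\theta'_{j_0}(b,v(x))\,\theta'_{j_1}(b',v(x))\\
&\qquad\times\,
p(l,b)\,p(k,b')\,r(x,j_0^b)\,r(x,j_1^{b'}),
\end{align*}
valid for all $x\in G$ and $j_0,j_1\in H$ because $\psi$ is a morphism.

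The implication \cref{eq:gamma-uv}$\Rightarrow$(i) then falls out immediately: assuming \cref{eq:gamma-uv}, the factor $\gamma_x(u^{*}(j_0)l,u^{*}(j_1)k)=\gamma'_{v(x)}(j_0,j_1)$ is independent of $k,l$, so it pulls out of the sum and cancels the unimodular (hence nonzero) factor on the left. Summing the resulting identity over $l$ and over $k$ and invoking \cref{eq:p-unital} twice forces $b=b'=1$; since $\theta'_{j}(1,\cdot)=1$ and $r(x,j^1)=r(x,j)$, this leaves $r(x,j_0j_1)=r(x,j_0)r(x,j_1)$, so $r$ is a coalgebra morphism.

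For the converse I would use that $r(x)$ is a class function to replace $r(x,j_0^b)$ and $r(x,j_1^{b'})$ by $r(x,j_0)$ and $r(x,j_1)$ in the master identity and cancel the common nonzero factor $r(x,j_0)r(x,j_1)=r(x,j_0j_1)$, which expresses $\gamma'_{v(x)}(j_0,j_1)$ as a single $p$-weighted combination of the values $\gamma_x(u^{*}(j_0)l,u^{*}(j_1)k)$. \textbf{This is where the real work lies:} one equation cannot separate the $|A|^2$ unknown values, so I would instead keep the coefficient identity \emph{before} summing over $g$ and deconvolve it using the orthogonal bicharacter $a\mapsto|A|p(a,b)$, equivalently by projecting onto the character basis $\chi_c$ of $\du A$ as in \cref{lem:basis-chi-in-std}, to obtain one equation per $c\in B$. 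The remaining obstacle is to strip the residual $A$-factors off the arguments of $\gamma_x$: here I would feed the hypotheses that $v$ is a coalgebra morphism, giving $\gamma_x\equiv1$ on $A\times A$ by \cref{prop:v-is-coalg}, and that $u$ is an algebra morphism, giving $\omega\equiv1$ on the mixed triples of \cref{thm:gen-omega-u}, into the $\gamma$-cocycle relation \cref{eq:coassoc} to show that $\gamma_x(u^{*}(j_0)l,u^{*}(j_1)k)$ does not depend on $k,l\in A$. Once this independence is established, the weights sum to $1$ and \cref{lem:average} pins every value to $\gamma'_{v(x)}(j_0,j_1)$, yielding \cref{eq:gamma-uv}. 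I expect this $A$-independence step to be the main technical difficulty, since $u$ is only assumed to be an algebra (not Hopf) morphism, so the needed $A$-invariance must be extracted from the weaker vanishing of \cref{thm:gen-omega-u} together with the conjugation twists built into $\gamma_x$.
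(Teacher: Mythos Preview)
Your implication \ref{part:r-coalg-1}$\Rightarrow$\ref{part:r-coalg-2} is fine and matches the paper. The gap is in the converse. By working from \cref{eq:ur-comult1,eq:ur-comult2}---which are already the result of applying $(\id\ot\varepsilon)^{\ot 2}$ to \cref{eq:coalg-outer,eq:coalg-inner}---you have discarded the $\kk H$ components, and that is precisely the information the paper uses to finish. In the full identities \cref{eq:coalg-outer,eq:coalg-inner} (summed over $g$, with $u^*,v$ coalgebra maps substituted), the second tensor factor in $\kk H$ is $v(x)$ on the left and $bv(x)$, $b'v(x)$ on the right; since these are honest basis elements of $\kk H$, equality forces $b=b'=1$. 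At $b=b'=1$ the $\theta'$ factors are trivially $1$, $p(m,1)=p(k,1)=1/|A|$ by \cref{lem:p-norms}, and the equation collapses to
\[
\gamma'_{v(x)}(j_0,j_1)\,r(x,j_0 j_1)\;=\;\frac{1}{|A|^2}\sum_{m,k\in A}\gamma_x\bigl(u^*(j_0)m,\,u^*(j_1)k\bigr)\,r(x,j_0)\,r(x,j_1).
\]
Under \ref{part:r-coalg-2}, cancel the nonzero $r$-factors and apply \cref{lem:average} directly to this honest average of unimodular numbers; no deconvolution or prior $A$-independence is needed.

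Your proposed route---first establishing that $\gamma_x(u^*(j_0)l,u^*(j_1)k)$ is independent of $k,l\in A$ via \cref{eq:coassoc} together with \cref{prop:v-is-coalg,thm:gen-omega-u}---does not go through. The vanishing in \cref{thm:gen-omega-u} only concerns triples whose entries lie in $A\cup\Img(u^*)$, whereas $\gamma_x$ involves the arbitrary $x\in G$ in each of its three $\omega$-arguments; the cocycle relation \cref{eq:coassoc} cannot eliminate that dependence. Indeed, this $A$-independence is exactly \cref{cor:gamma-cosets}, which the paper records as a \emph{consequence} of the theorem, not as an input to it. Your master identity after cancelling $r$ is a weighted combination with complex weights $F_j(l)=\sum_b\theta'_j(b,v(x))p(l,b)$ that only satisfy $\sum_l F_j(l)=1$ and $|F_j(l)|\le 1$, so \cref{lem:average} is not applicable to it. The fix is not a finer deconvolution along $\du A$, but simply to retain the $\kk H$ leg of \cref{eq:coalg-outer,eq:coalg-inner} before projecting.
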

\begin{proof}
  We rewrite \cref{eq:coalg-outer}, summed over $g\in G$, under the hypotheses as
  \begin{align}\label{eq:r-coalg-1}
    \sum_{j,l\in H} \gamma'_{v(x)}(jl\inv,l)r(x,j)e_{jl\inv}\# v(x) \ot e_l\# v(x),
  \end{align}
  and similarly we rewrite \cref{eq:coalg-inner} as
  \begin{align}\label{eq:r-coalg-2}
    \sum_{\substack{j,l\in H\\m,k\in A\\b,b'\in B}} \gamma_x(u^*(jl\inv)m,u^*(l)k)&p(m,b)p(k,b')\theta_{jl\inv}'(b,v(x))\theta'_l(b',v(x))\nonumber\\
    & r(x,b\inv jl\inv b) r(x,b^{\prime-1}lb') e_{jl\inv}\# b v(x) \ot e_l \# b'v(x).
  \end{align}
  Since $x$ is fixed, and $v(x)$ is invertible by \cref{lem:r-is-invertible}, then the equality of these two equations implies that only the terms with $b=b'=1$ in the latter equation have a non-zero coefficient.  So specializing to $b=b'=1$, using \cref{lem:p-norms}, and applying $\id\ot\varepsilon$ to both equations, yields the equality of
  \begin{align}\label{eq:r-coalg-3}
    \sum_{j,l\in H} \gamma'_{v(x)}(jl\inv,l)r(x,j)e_{jl\inv} \ot e_l = \Big(\sum_{m,n\in H} \gamma'_{v(x)}(m,n)e_m\ot e_n\Big)\Delta(r(x))
  \end{align}
  and
  \begin{align}
    \frac{1}{|A|^2}\sum_{\substack{j,l\in H\\ a,a'\in A}} &\gamma_x(u^*(jl\inv)a,u^*(l)a')r(x,jl\inv) r(x,l) e_{jl\inv}\ot e_l\nonumber\\
      =& \Big(\frac{1}{|A|^2} \sum_{a,a'\in A} \gamma_x(u^*(m)a,u^*(n)a')e_m\ot e_n\Big) r(x)\ot r(x).\label{eq:r-coalg-4}
  \end{align}

  The equivalence of \cref{part:r-coalg-1,part:r-coalg-2} then follows from \cref{lem:average,lem:r-is-invertible}.
\end{proof}
The following corollary is a special case of the theorem we wish to single out.
\begin{cor}\label{cor:gamma-cosets}
  Let $\morphquad\in\genhom$.  If $u^*,v,r$ are all morphisms of coalgebras, then
  \begin{align}\label{eq:gamma-cosets}
    \gamma_x(u^*(m)a,u^*(n)b) = \gamma_x(u^*(m),u^*(n))
  \end{align}
  for all $a,b\in A$, $x\in G$, and $m,n\in H$.
\end{cor}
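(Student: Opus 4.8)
The plan is to deduce this immediately from \cref{thm:r-is-coalg}, of which it is the special case singled out in the sentence preceding the statement. Since $u^*$, $v$, and $r$ are all assumed to be morphisms of coalgebras, the hypotheses of \cref{thm:r-is-coalg} are satisfied and its first condition holds; by the equivalence established there, its second condition \cref{eq:gamma-uv} must then hold as well. Explicitly, this gives
\[ \gamma_x(u^*(m)k, u^*(n)l) = \gamma'_{v(x)}(m,n) \]
for all $m, n \in H$, all $k, l \in A$, and all $x \in G$.

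The key observation is that the right-hand side $\gamma'_{v(x)}(m,n)$ of this identity carries no dependence on the elements $k, l \in A$. Consequently the left-hand side is constant as $k, l$ range over $A$. Because $A$ is a subgroup of $G$ (see \cref{lem:p-desc}, with normality confirmed in \cref{lem:vp-comm}), it contains the identity $1$, so $u^*(m)$ and $u^*(n)$ are themselves admissible arguments. Taking $k = l = 1$ gives
\[ \gamma_x(u^*(m), u^*(n)) = \gamma'_{v(x)}(m,n), \]
whereas for arbitrary $a, b \in A$ the choice $k = a$, $l = b$ gives
\[ \gamma_x(u^*(m)a, u^*(n)b) = \gamma'_{v(x)}(m,n). \]
Equating the two, both being equal to $\gamma'_{v(x)}(m,n)$, yields \cref{eq:gamma-cosets} for all $a, b \in A$, $x \in G$, and $m, n \in H$.

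I expect no genuine obstacle here: the corollary amounts to recognizing that the right-hand side of \cref{eq:gamma-uv} is independent of the coset-representative data $k, l$, so that the left-hand side depends only on the cosets $u^*(m)A$ and $u^*(n)A$ rather than on the specific products. The single point meriting a moment's care is ensuring $1 \in A$, so that the elements $u^*(m)$ and $u^*(n)$ literally occur as instances of \cref{eq:gamma-uv}; this is immediate from $A$ being a subgroup.
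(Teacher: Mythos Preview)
Your proposal is correct and takes essentially the same approach as the paper, which simply remarks that the corollary is a special case of \cref{thm:r-is-coalg}. Your explicit unpacking---invoking the equivalence in \cref{thm:r-is-coalg} to obtain \cref{eq:gamma-uv}, then specializing $k=l=1$ versus $k=a$, $l=b$---is exactly what that remark intends.
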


In the case when $\omega\in Z^3(G/A,U(1))$---which can be forced in certain situations by \cref{thm:gen-omega-u}, and in fact forces $v$ to be a morphism of Hopf algebras by \cref{thm:v-is-bialg}---we see that \cref{eq:gamma-cosets} is trivially satisfied.  Since we have seen the necessity of the condition (assuming $u^*,v$ are morphisms of coalgebras), it is natural to wonder how close it is to being a sufficient condition for $r$ to be a morphism of coalgebras.  The rest of this section is dedicated to this question.

\begin{prop}\label{prop:r-quasi-coalg}
  Let $\morphquad\in\genhom$.  Suppose that $v,u^*$ are morphisms of coalgebras.  Fix $x\in G$, and suppose further that \cref{eq:gamma-cosets} holds for this $x$ and all $m,n\in H$, $a,b\in A$.  Then for all $m,n\in H$ we have
  \begin{align}\label{eq:r-quasi-coalg}
    r(x,mn) = \frac{\gamma_x(u^*(m),u^*(n))}{\gamma'_{v(x)}(m,n)}\,r(x,m)r(x,n).
  \end{align}
\end{prop}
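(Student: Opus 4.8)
The plan is to reuse the coefficient comparison already assembled inside the proof of \cref{thm:r-is-coalg}. The crucial observation is that the equality between \cref{eq:r-coalg-3} and \cref{eq:r-coalg-4} was obtained there purely from $\psi=\morphquad\in\genhom$ being a morphism of coalgebras together with $u^*,v$ being morphisms of coalgebras; at no stage was $r$ itself assumed to be a coalgebra morphism. Since those are exactly the standing hypotheses here (for the fixed $x$), that equality of the two sides is available to us unchanged, and it is the source from which I would extract the numerical content. Concretely, I would work with the forms of \cref{eq:r-coalg-3,eq:r-coalg-4} written in the basis $e_{jl\inv}\ot e_l$, whose scalar coefficients are explicit.

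First I would compare coefficients of $e_m\ot e_n$ on the two sides, using the reindexing $j=mn$, $l=n$, which is a bijection of $H\times H$ because $H$ is a group (so $jl\inv=m$). From the \cref{eq:r-coalg-3} side this produces the coefficient $\gamma'_{v(x)}(m,n)\,r(x,mn)$, while from the \cref{eq:r-coalg-4} side it produces $\tfrac{1}{|A|^2}\sum_{a,a'\in A}\gamma_x(u^*(m)a,u^*(n)a')\,r(x,m)r(x,n)$. Equating these gives the single scalar identity that carries all the content of the proposition.

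Next I would invoke the standing hypothesis \cref{eq:gamma-cosets} for this $x$: it asserts precisely that $\gamma_x(u^*(m)a,u^*(n)a')=\gamma_x(u^*(m),u^*(n))$ independently of $a,a'\in A$. Hence the double sum collapses to $|A|^2\,\gamma_x(u^*(m),u^*(n))$, the factor $|A|^2$ cancels the prefactor $\tfrac{1}{|A|^2}$, and the \cref{eq:r-coalg-4} coefficient becomes $\gamma_x(u^*(m),u^*(n))\,r(x,m)r(x,n)$. Equating with the \cref{eq:r-coalg-3} coefficient and dividing through by the phase $\gamma'_{v(x)}(m,n)$, which is a nonzero element of $U(1)$ and hence invertible, yields exactly \cref{eq:r-quasi-coalg} for all $m,n\in H$.

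There is no genuine analytic difficulty here; the entire proof is the reindexing bookkeeping plus the cancellation licensed by \cref{eq:gamma-cosets}. The one point I would be careful to justify explicitly—and the only thing that could go wrong if stated carelessly—is that we are entitled to use the equality of \cref{eq:r-coalg-3} and \cref{eq:r-coalg-4} \emph{before} having \cref{eq:r-quasi-coalg} in hand. This is legitimate because, in the proof of \cref{thm:r-is-coalg}, that equality is derived prior to the equivalence with $r$ being coassociative, relying only on $\psi$ being a coalgebra morphism and on the invertibility of $v(x)$ (\cref{lem:r-is-invertible}) to force the vanishing of all $b,b'\neq 1$ terms; none of this presupposes that $r$ is a coalgebra morphism.
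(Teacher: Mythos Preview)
Your proposal is correct and follows essentially the same approach as the paper: you invoke the equality of \cref{eq:r-coalg-3} and \cref{eq:r-coalg-4} (derived in the proof of \cref{thm:r-is-coalg} using only that $u^*,v$ are coalgebra morphisms and the invertibility of $v(x)$), apply the hypothesis \cref{eq:gamma-cosets} to collapse the $A\times A$ sum, and compare coefficients of $e_m\ot e_n$ to obtain \cref{eq:r-quasi-coalg}. The paper's proof is the same argument stated in a single sentence.
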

\begin{proof}
  Apply the assumptions to further simplify \cref{eq:r-coalg-3,eq:r-coalg-4} and compare coefficients to obtain the desired equality.
\end{proof}

One may interpret the result as saying that, under the hypotheses, $r(x)$ is a quasi-projective one-dimensional representation of $H$, in the following sense.  For fixed $x\in G$ let $\omega^x$ be the 3-cocycle obtained from $\omega$ by the right conjugation action of $x$ on all inputs.  By \cref{lem:gen-u-omega-eta} we can also define $\eta^x$ by
    \[\eta^x(m,n,k)=\omega^x(u^*(m),u^*(n),u^*(k)).\]
Now define a 3-cocycle $\xi$ of $H$ by
   \[ \xi_x(m,n,k) = \frac{\eta^{v(x)}(m,n,k)}{\eta^x(m,n,k)} = \frac{\omega(u^*(m^{v(x)}),u^*(n^{v(x)}),u^*(k^{v(x)}))}{\omega(u^*(m)^x,u^*(n)^x,u^*(k)^x)}.\]
Next define
   \[ \kappa_x(m,n) = \frac{\gamma_x(u^*(m),u^*(n))}{\gamma'_{v(x)}(m,n)}.\]
for all $m,n\in H$ and $x\in G$.  Applying \cref{eq:coassoc} we obtain the relation
   \[ \kappa_x(m,n)\kappa_x(mn,k) = \kappa_x(n,k)\kappa_x(m,nk) \xi_x(m,n,k).\]
Moreover, it is well known that $\Inn(G)$ acts trivially on the (co)homology of $G$, so we conclude that $\xi_x$ is cohomologically trivial.  Thus $\kappa_x$ is a 2-cocycle of $H$ up to the cohomologically trivial 3-cocycle $\xi_x$, for all $x\in G$.  When $\xi_x$ is precisely the trivial 3-cocycle then $\kappa_x$ is exactly a 2-cocycle, and so $r(x)$ is a projective one-dimensional representation of $H$.

Note that if we have $u^*(m^{v(x)}) = u^*(m)^x$ for all $x\in G$ and $m\in H$ then $\xi_x$ is (trivially) trivial.  Our goal now will be to find conditions that assure this equality holds.  Recalling that our overall goal is to find quasi-bialgebra isomorphisms $\D^\omega(G)\cong\D^\eta(H)$ which also define Hopf algebra isomorphisms $\D(G)\cong D(H)$, we first establish the following lemma, which yields some identities that we use as assumptions in other results.

\begin{lem}\label{lem:untwist-cond}
Let $\morphquad\in\genhom\cap \Hom(\D(G),\D(H))$ have $u$ Hopf.  Then $\theta'\equiv 1$ on $H\times B\times\Img(v)$ and $H\times\Img(v)\times B$.
\end{lem}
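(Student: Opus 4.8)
The plan is to exploit that $\psi$ is simultaneously an algebra morphism for the twisted \emph{and} the untwisted products, so that the single vector $\psi(e_g\# x)$ admits two expansions via \cref{eq:morphdef2} that are forced to coincide. Two preliminary facts make the resulting coefficient comparisons non-degenerate. Since the decomposition $\morphquad$ is defined through the cocycle-independent projections $\id\ot\varepsilon$ and $\ev_1\ot\id$ applied to $\psi(e_g\#1)$ and $\psi(\varepsilon\# x)$, the components of $\psi$ are the same whether we regard $\psi$ as lying in $\genhom$ or in $\Hom(\D(G),\D(H))$. Viewing $\psi$ in the latter (untwisted) setting and applying \cref{thm:v-is-bialg} with trivial cocycles shows that $v$ is a morphism of Hopf algebras; in particular each $v(x)$ is a single group element of $H$, so $\Img(v)=v(G)\subseteq H$ and the two asserted conditions are genuine statements about group elements. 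Since $u$ is Hopf, $u^*$ is a coalgebra morphism, so with $v$ also a coalgebra morphism \cref{lem:r-is-invertible} shows each $r(x)$ is invertible in $\du{H}$, i.e.\ $r(x,s)\neq 0$ for every $s\in H$.

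For the first condition I would compute $\psi(e_g\# x)=\psi(e_g\#1)\psi(\varepsilon\# x)=\alpha(e_g)\beta(x)$, where $\alpha(e_g)=\sum_{k\in A} u(e_{gk\inv})\# p(e_k)$ and $\beta(x)=r(x)\# v(x)$, once in $\D^\eta(H)$ and once in $\D(H)$; these agree because $e_g\# x=e_g\#1\cdot\varepsilon\# x$ in both doubles and $\psi$ is an algebra morphism for both. Expanding via \cref{eq:morphdef2} and its untwisted analogue, the coefficient of a fixed basis vector $e_s\# m$ forces $b:=m\,v(x)\inv\in B$ and yields
\[ r(x,b\inv s b)\,\theta'_s(b,v(x))\sum_{k\in A}u(gk\inv,s)p(k,b)=r(x,b\inv s b)\sum_{k\in A}u(gk\inv,s)p(k,b). \]
Here $r(x,b\inv s b)\neq 0$ by invertibility of $r(x)$, and since $u$ is Hopf \cref{lem:u-simpler} gives $u(gk\inv,s)=\delta_{gk\inv,u^*(s)}$, so choosing $g=u^*(s)$ collapses the $k$-sum to $p(1,b)=1/|A|\neq 0$ by \cref{lem:p-norms}. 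The common factor is therefore nonzero and may be cancelled, giving $\theta'_s(b,v(x))=1$ for all $s\in H$, all $b\in B$ (take $m=b\,v(x)$), and all $x\in G$; this is $\theta'\equiv 1$ on $H\times B\times\Img(v)$.

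For the second condition I would repeat the argument with the opposite product, using $\varepsilon\# x\cdot e_g\#1=e_{xgx\inv}\# x$ in both doubles to obtain $\psi(e_{xgx\inv}\# x)=\beta(x)\alpha(e_g)$. Now $\beta(x)=\sum_s r(x,s)e_s\# v(x)$ multiplies $\alpha(e_g)$ on the left, so the multiplicative phase that appears is $\theta'_s(v(x),b)$, subject to $s=v(x)\,h\,v(x)\inv$, i.e.\ $h=v(x)\inv s v(x)$. Comparing the $e_s\# v(x)b$ coefficients of the twisted and untwisted expansions and cancelling the nonzero factor $r(x,s)\sum_{k\in A}u(gk\inv,h)p(k,b)$---nonzero after choosing $g=u^*(h)$ exactly as before---yields $\theta'_s(v(x),b)=1$ for all $s,b,x$, i.e.\ $\theta'\equiv 1$ on $H\times\Img(v)\times B$. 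The main obstacle throughout is ensuring that the cancelled coefficient is nonzero: this is precisely why the invertibility of $r(x)$ (from $u^*,v$ being coalgebra morphisms) and the collapse of the $k$-sum to a single term (from $u$ being Hopf) are indispensable, for without $u$ Hopf the $k$-sum need not isolate a nonzero term, and without $v$ a coalgebra morphism the target index $m=b\,v(x)$ would not even be a group element.
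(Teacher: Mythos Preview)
Your argument is correct. For the first vanishing, your approach and the paper's coincide: both compute $\psi(e_g\# x)=\alpha(e_g)\beta(x)$ once with the $\eta$-twisted product and once with the untwisted product in the codomain, then compare coefficients at a fixed basis vector, using that $u$ is Hopf to collapse the $k$-sum and that $r(x)$ has full support to cancel. The paper replaces your appeal to invertibility of $r(x)$ by the equivalent observation $b\lact r(x)=r(x)$ coming from $r(x)\in\widehat{H}$, but this is a cosmetic difference.

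For the second vanishing, your route genuinely differs from the paper's. You exploit the \emph{other} factorisation $\varepsilon\# x\cdot e_g\#1=e_{xgx\inv}\# x$, valid in both doubles, and compare the two expansions of $\beta(x)\alpha(e_g)$; the argument is then completely parallel to the first part. The paper instead invokes the compatibility relation $v(x)\lact u(e_g)=u(e_{xgx\inv})$ from \citep{K14}, specialises \cref{eq:ur-mult-1,eq:ur-mult-2} at $x'=1$, and extracts the condition $\sum_{b\in B}\theta'_j(v(x),b)p(t,b)=\delta_{1,t}$, finishing via \cref{lem:average}. Your approach is shorter and more symmetric, needing neither the $u$--$v$ compatibility relation nor the averaging lemma; the paper's approach has the incidental benefit of highlighting the role of \cref{eq:uv-rel}, which is reused later in the section.
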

\begin{proof}
  By \citep{ABM,K14} we know that $u$ is necessarily a morphism of coalgebras, and $r,v$ are necessarily morphisms of Hopf algebras.   Thus $\Img(v)$ can naturally be identified with a subgroup of $H$.  Now by \cref{eq:morphdef} we have
  \begin{align*}
   \morphquad(e_g\# x) &= \sum_{\substack{j,l\in H\\k\in A\\b\in B}} \left(u(e_{gk\inv})e_j\#p(k,b)b\right)\cdot \left(r(x)e_l\# v(x)\right)\\
   &= \sum_{\substack{j\in H\\k\in A\\b\in B}} \theta'_j(b,v(x))  u(e_{gk\inv})[b\lact r(x)] e_j \# p(k,b)bv(x)\\
   &= \sum_{\substack{j\in H\\k\in A\\b\in B}} \theta'_j(b,v(x))  u(e_{gk\inv})r(x) e_j \# p(k,b)bv(x)\\
   &= \sum_{\substack{j\in H\\k\in A\\b\in B}} u(e_{gk\inv})r(x)e_j\# p(k,b)bv(x),
  \end{align*}
  where either \cref{lem:coalg-to-group} or \cref{cor:B-is-central} gives the third equality and $\morphquad\in\Hom(\D(G),\D(H))$ gives the last equality.  By \cref{lem:u-simpler,lem:r-is-invertible} we conclude that for any fixed $j\in H$ and $x\in G$  we can pick $g\in G$ with $u^*(j)=g$ (forcing $k=1$ for a non-zero contribution in both of the last summations for that $j$), and guarantee that the coefficient on $e_j\#bv(x)$ is non-zero.  By \cref{lem:p-norms}, on the one hand this coefficient is $r(x,j)/|B|$, and on the other it is $\theta_j'(b,v(x)) r(x,j)/|B|$.  We have already noted that $r(x,j)\neq 0$, so we conclude that $\theta_j'(b,v(x))=1$.  The arbitrariness of $j,b,x$ then gives the first half of the claim.

  On the other hand, for $\morphquad\in\Hom(\D(G),\D(H))$ by \citep[Corollary 2.3]{K14} we have the relation
  \begin{align}\label{eq:uv-rel}
    v(x)\rightharpoonup u(e_g) = u(e_{xgx\inv})
  \end{align}
  for all $x,g\in G$; moreover, this identity is equivalent to $u^*(m^{v(x)}) = u^*(m)^x$ for all $x\in G$ and $m\in H$, which was   mentioned earlier.  Considering \cref{eq:ur-mult-1,eq:ur-mult-2} under the assumptions, and summing over $g\in G$ and setting $x'=1$ by \cref{lem:biunital} we get
  \[ u(e_{xg'x\inv}) = \sum_{\substack{j\in H\\t\in A\\b\in B}} \theta'_j(v(x),b)p(t,b)(v(x)\rightharpoonup u(e_{g't\inv}))e_j.\]
  Expanding in the basis elements and using that $u$ is a morphism of Hopf algebras we have
  \[ \sum_{\substack{l\in H\\u^*(l)=g'}}e_{v(x) l v(x\inv)} = \sum_{\substack{h\in H\\t\in A\\b\in B\\u^*(h)=g't\inv}}\theta'_{v(x) h v(x\inv)}(v(x),b)p(t,b) e_{v(x) h v(x\inv)}.\]
  We conclude that we must have $\sum_b \theta'_j(v(x),b)p(t,b) = \delta_{1,t}$ for all $x\in G,j\in H$, so by \cref{lem:average,lem:p-norms} applied to the special case $t=1$ we get $\theta'_j(v(x),b)=1$ for all $x\in G,b\in B,j\in H$, as desired.

  This concludes the proof.
\end{proof}
\begin{cor}\label{cor:swap-ab-decomp}
If $\psi=\morphquad\in\genhom\cap\Hom(\D(G),\D(H))$ has $u$ Hopf then
\[ \psi(e_{xgx\inv}\# x) = \psi(\varepsilon\# x)\cdot \psi(e_g\# 1),\]
where the multiplication can be carried out in either $\D(H)$ or $\D^\eta(H)$.
\end{cor}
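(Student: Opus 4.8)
The plan is to deduce the identity from the fact that $\psi$ is a morphism of algebras for \emph{both} of its structures simultaneously, so that I never have to unwind the full component formula \cref{eq:morphdef2}. First I would record the elementary product
\[ (\varepsilon\# x)\cdot(e_g\# 1) = \theta_{xgx\inv}(x,1)\, e_{xgx\inv}\# x = e_{xgx\inv}\# x, \]
computed in $\D^\omega(G)$ by writing $\varepsilon=\sum_{a\in G}e_a$, applying the multiplication rule, and using $\theta_{g}(x,1)=1$ (which holds because $\omega$ is normalized). The same computation in $\D(G)$, where $\theta\equiv 1$, gives the identical result. Thus this one product coincides in the two domain doubles.

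With this in hand the two assertions are immediate. Since $\psi\in\genhom$ it is an algebra morphism $\D^\omega(G)\to\D^\eta(H)$, so applying $\psi$ to the displayed identity (read in $\D^\omega(G)$) yields $\psi(e_{xgx\inv}\# x) = \psi(\varepsilon\# x)\cdot\psi(e_g\# 1)$ with the product taken in $\D^\eta(H)$. Since simultaneously $\psi\in\Hom(\D(G),\D(H))$, it is an algebra morphism $\D(G)\to\D(H)$, and applying it to the same identity read in $\D(G)$ produces the identical equation with the product taken in $\D(H)$. As both products equal $\psi(e_{xgx\inv}\# x)$, the multiplication may be performed in either double.

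I would then add the concrete cross-check that explains \emph{why} the two products agree, tying the corollary to \cref{lem:untwist-cond}. Expanding $\psi(\varepsilon\# x)=r(x)\# v(x)$, whose second tensor leg is the single group element $v(x)\in\Img(v)$ (as $v$ is Hopf), and $\psi(e_g\# 1)=\sum_{k\in A}u(e_{gk\inv})\# p(e_k)$, which is supported on $\du{H}\#\kk B$, the only multiplicative phases entering the product are of the form $\theta'_h(v(x),b)$ with $h\in H$ and $b\in B$. By \cref{lem:untwist-cond} every such phase equals $1$, so the $\D^\eta(H)$ and $\D(H)$ multiplications agree term by term. The one thing to be careful about is purely a bookkeeping point rather than a computation: one must check that no phase $\theta'$ with an argument lying outside $H\times\Img(v)\times B$ can appear, which is clear from the respective supports of the two factors. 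I do not expect any real obstacle here, as the statement is essentially a direct consequence of $\psi$ respecting both algebra structures.
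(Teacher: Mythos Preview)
Your proposal is correct and matches the paper's own proof, which simply says ``either use that $\theta'_j\equiv 1$ on $\Img(v)\times B$ or use that $\psi$ is an algebra morphism of both the twisted and untwisted doubles.'' You have carried out both alternatives: the main argument is the second option, and your cross-check via \cref{lem:untwist-cond} is exactly the first.
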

\begin{proof}
  Either use that $\theta_j'\equiv 1$ on $\Img(v)\times B$ or use that $\psi$ is an algebra morphism of both the twisted and untwisted doubles.
\end{proof}

The next lemma shows that these triviality conditions are sufficient to deduce a certain cancellation law for the codomain's multiplicative phase.
\begin{lem}\label{lem:theta-quot}
  Let $X\subseteq H$ be a subgroup and $B\subseteq H$ a subgroup which is closed under conjugation by elements in $X$.  Let also $\eta\in Z^3(H,U(1))$ with associated multiplicative phase $\theta$ as usual. Then if $\theta$ is trivial on all three of $H\times B\times X$, $H\times X\times B$, and $H\times B\times B$, then $\theta_j(xb,yc) =\theta_j(x,y)$ for all $x,y\in X$ and $b,c\in B$.
\end{lem}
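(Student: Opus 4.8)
The plan is to regard $\theta_j$ as a conjugation-twisted $2$-cochain on $H$---the identity \cref{eq:assoc} is precisely its (twisted) $2$-cocycle condition---and to repeatedly apply \cref{eq:assoc} in order to factor $\theta_j(xb,yc)$ into a product of $\theta$-values, each of which, apart from a single surviving factor $\theta_j(x,y)$, has an argument pair lying in $X\times B$, $B\times X$, or $B\times B$ and hence equals $1$ by the triviality hypotheses. The structural input that makes this work is that $B$ is normalized by $X$: for $y\in X$ and $b\in B$ we have $y^{-1}by\in B$, so any product $by$ may be rewritten as $y\,(y^{-1}by)$, moving the $X$-element to the left while keeping the $B$-part in $B$. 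Since $X$ and $B$ are subgroups, every auxiliary argument produced along the way is then either a product of $X$-elements (so in $X$) or a product of $B$-elements (so in $B$), and each triviality hypothesis applies verbatim; the subscript, which shifts under $j\mapsto j^x$ in \cref{eq:assoc}, ranges over all of $H$ and so never obstructs this.

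First I would split the left-hand argument. Applying \cref{eq:assoc} with first/second/third inputs $x,b,yc$ and solving for the relevant factor gives
\[
  \theta_j(xb,yc)=\frac{\theta_j(x,byc)\,\theta_{j^x}(b,yc)}{\theta_j(x,b)},
\]
where $\theta_j(x,b)=1$ by triviality on $H\times X\times B$. To eliminate $\theta_{j^x}(b,yc)$ I would split its second argument, and then reduce the resulting mixed term $\theta_{j^x}(by,c)$ by writing $by=yb'$ with $b'=y^{-1}by\in B$ and splitting once more. Every factor so produced has an argument pair lying in $X\times B$, $B\times X$, or $B\times B$, and hence equals $1$. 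Thus $\theta_{j^x}(b,yc)=1$ and $\theta_j(xb,yc)=\theta_j(x,byc)$.

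Finally, rewriting $byc=y b''$ with $b''=(y^{-1}by)c\in B$ and splitting the right-hand argument via \cref{eq:assoc} with inputs $x,y,b''$ yields
\[
  \theta_j(x,yb'')=\frac{\theta_j(x,y)\,\theta_j(xy,b'')}{\theta_{j^x}(y,b'')},
\]
in which $\theta_{j^x}(y,b'')$ is trivial by $H\times X\times B$ and $\theta_j(xy,b'')$ is trivial by the same hypothesis since $xy\in X$. Hence $\theta_j(xb,yc)=\theta_j(x,y)$, as claimed.

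The only place requiring genuine care---and the main obstacle---is the middle step: the mixed products $by$ and $byc$ must each be normalized to the form $(\text{element of }X)(\text{element of }B)$ using that $B$ is normal in $XB$ before the hypotheses can be invoked, and one must verify that every intermediate application of \cref{eq:assoc} really does produce arguments inside $X$ or $B$ rather than fresh mixed products. Everything else is routine bookkeeping, and the freely varying subscripts are harmless precisely because the three triviality conditions are assumed over all of $H$ in the first slot.
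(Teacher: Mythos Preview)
Your proposal is correct and follows essentially the same approach as the paper: repeated applications of \cref{eq:assoc} together with the normalization trick $by=y(y^{-1}by)$, reducing every auxiliary $\theta$-value to one of the three trivial types. The paper organizes the computation into four labeled intermediate identities and then combines them, whereas you proceed more directly toward the target equality, but the underlying reductions are the same.
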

\begin{proof}
   We will make frequent use of \cref{eq:assoc}, where changing the order of the multiplications can result in distinct identities.  Throughout $j\in H$, $x,y\in X$ and $b,c\in B$.

  Note that closure of $B$ with respect to conjugation by $X$ means that $by=yb'$, where $b'=y\inv b y\in B$.

  First we deduce from the fact that $\theta$ vanishes on $H\times X\times B$ and $H\times B\times B$ and \cref{eq:assoc} that
  \begin{align}\label{eq:theta-quot-1} \theta_j(xb,c) = \theta_j(x,b)\inv \theta_j(x,bc)\theta_{j^x}(b,c) = 1,\end{align}
  as desired.  By \cref{eq:assoc} and the assumption that $\theta$ vanishes on $H\times X\times B$ and $H\times B\times X$ we also have
  \begin{align}\label{eq:theta-quot-2} \theta_j(xb,y) = \theta_j(x,b)\inv \theta_j(x,by) \theta_{j^x}(b,y) =\theta_j(x,by).\end{align}

  Since $X$ is closed under multiplication, and by the assumptions that $\theta$ vanishes on $H\times X\times B$, a third use of \cref{eq:assoc} gives
  \begin{align}\label{eq:theta-quot-3}\theta_j(x,yb) = \theta_j(x,y) \theta_j(xy,b) \theta_{j^x}(y,b)\inv=\theta_j(x,y).\end{align}

  By \cref{eq:assoc} and the assumption that $\theta$ vanishes on $H\times B\times X$ and $H\times X\times B$, followed by \cref{eq:theta-quot-1} and the closure of $B$ under conjugation by $X$, we find
  \begin{align}\label{eq:theta-quot-4}
    \theta_j(b,yc) = \theta_j(b,y)\theta_j(by,c)\theta_{j^b}(y,c)\inv = \theta_j(by,c) = 1.
  \end{align}

  Finally, using that $B$ is closed under conjugation by elements of $X$, that $\theta$ vanishes on $X\times X\times B$, \cref{eq:theta-quot-3,eq:theta-quot-4}, and another application of \cref{eq:assoc} we get
  \begin{align*}
    \theta_j(xb,yc) =& \theta_j(x,b)\inv \theta_j(x,byc) \theta_{j^x}(b,yc) = \theta_j(x,y),
  \end{align*}
  as desired.
\end{proof}

The assumption $\morphquad\in\Hom(\D(G),\D(H))$ automatically forces $r$ to be a morphism of Hopf algebras, and we'd like to know if there are less restrictive conditions to ensure triviality of $\xi_x$.

\begin{lem}
  Let $x\in G$ and $\morphquad\in\genhom$ satisfy all of the following.
  \begin{enumerate}
    \item $u,v$ are morphisms of Hopf algebras.
    \item $b\lact r(x)=r(x)$ for all $b\in B$.
    \item $\theta'_j(b,v(x))=\theta'_j(v(x),b)=1$ for all $b\in B$.
  \end{enumerate}
  Then $u,v$ satisfy \cref{eq:uv-rel} for this $x$ and all $g\in G$.
\end{lem}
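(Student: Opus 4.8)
The plan is to evaluate $\psi(e_{xgx\inv}\# x)$ in two different ways, using only that $\psi\in\genhom$ is an algebra morphism, and then to read off \cref{eq:uv-rel} once the three hypotheses collapse the two resulting products. First I would record the two elementary products
\[ e_{xgx\inv}\#1\cdot\varepsilon\# x \;=\; e_{xgx\inv}\# x \;=\; \varepsilon\# x\cdot e_g\# 1\]
in $\D^\omega(G)$, both of which are immediate from the multiplication rule together with $\theta_h(1,x)=\theta_h(x,1)=1$. Writing $\alpha(e_h)=\psi(e_h\#1)=\sum_k u(e_{hk\inv})\# p(e_k)$ and $\beta(x)=\psi(\varepsilon\# x)=r(x)\# v(x)$ as in \cref{thm:purv}, applying the algebra morphism $\psi$ gives
\[ \alpha(e_{xgx\inv})\beta(x) \;=\; \psi(e_{xgx\inv}\# x) \;=\; \beta(x)\alpha(e_g).\]

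Next I would expand each side in $\D^\eta(H)$. Since $v$ is a morphism of Hopf algebras, hence of coalgebras, \cref{lem:coalg-to-group} gives that $v(x)\in H$ is a single group-like, so the multiplication rule of $\D^\eta(H)$ applies directly to $\beta(x)=r(x)\# v(x)$. Expanding $\alpha(e_{xgx\inv})\beta(x)=\sum_{k}\sum_{b} p(k,b)\,(u(e_{xgx\inv k\inv})\# b)(r(x)\# v(x))$, each factor produces the phase $\theta'_m(b,v(x))$ and the conjugated element $b\lact r(x)$; hypothesis (iii) trivializes the phase and hypothesis (ii) replaces $b\lact r(x)$ by $r(x)$, so this side collapses to $\sum_k u(e_{xgx\inv k\inv})r(x)\# p(e_k)v(x)$. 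Symmetrically, each factor of $\beta(x)\alpha(e_g)=\sum_k\sum_{b'} p(k,b')(r(x)\# v(x))(u(e_{gk\inv})\# b')$ produces the phase $\theta'_m(v(x),b')$ (again trivial by (iii)) and the factor $v(x)\lact u(e_{gk\inv})$, which I keep, giving $\sum_k r(x)\,[v(x)\lact u(e_{gk\inv})]\# v(x)p(e_k)$.

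Equating the two collapsed expressions and applying $\id\ot\varepsilon$, the counit of $\kk H$ together with $p$ being counital (\cref{eq:p-is-counital}) forces $\varepsilon(p(e_k)v(x))=\varepsilon(v(x)p(e_k))=\delta_{1,k}$, so only the $k=1$ terms survive and I obtain $u(e_{xgx\inv})r(x)=r(x)\,[v(x)\lact u(e_g)]$ in $\du{H}$. Because $\du{H}$ is commutative and $r(x)$ is invertible by \cref{lem:r-is-invertible} (applicable since $u^*$ and $v$ are in particular coalgebra morphisms), cancelling $r(x)$ yields precisely $v(x)\lact u(e_g)=u(e_{xgx\inv})$, which is \cref{eq:uv-rel}. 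The one real subtlety, and the reason all three hypotheses are needed, is that the phase and $B$-conjugation discrepancies arising from the two factor orders must vanish \emph{simultaneously}: hypothesis (iii) disposes of both the $\theta'(b,v(x))$ phase from the first order and the $\theta'(v(x),b)$ phase from the second, while hypothesis (ii) removes the extraneous $B$-conjugation on $r(x)$ that only the first order introduces, leaving the genuine $v(x)$-conjugation on $u$ as the surviving content.
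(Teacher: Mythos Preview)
Your proof is correct and follows essentially the same route as the paper. The paper phrases it as specializing the pre-recorded identities \cref{eq:ur-mult-1,eq:ur-mult-2} at $x'=1$ and summing over $g$, which amounts to exactly your comparison of $\alpha(e_{xgx\inv})\beta(x)$ with $\beta(x)\alpha(e_g)$ after projecting by $\id\ot\varepsilon$; both arguments then cancel the invertible $r(x)$ via \cref{lem:r-is-invertible}.
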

\begin{proof}
    Summing \cref{eq:ur-mult-1,eq:ur-mult-2} over $g$ and setting $x'=1$ under the assumptions yields
    \[  [v(x)\lact u(e_{g'})] r(x) = u(e_{xg'x\inv}) r(x).\]
    Since $r(x)$ is invertible by \cref{lem:r-is-invertible}, we may cancel it and thereby recover \cref{eq:uv-rel} for this $x$, as desired.
\end{proof}
\begin{rem}
Recall that \cref{prop:u-is-alg} provided conditions which guaranteed that $B\subseteq Z(H)$, which would make the second condition trivially satisfied for all $x\in G$.
\end{rem}
This yields the following triviality condition for $\xi_x$.

\begin{thm}
  Let $\morphquad\in\genhom$ and $x\in G$ satisfy all of the following.
  \begin{enumerate}
    \item $u,v$ are morphisms of Hopf algebras;
    \item \cref{eq:gamma-cosets} holds for this $x$ and all $a,b\in A$, $m,n\in H$;
    \item $B$ acts trivially on $r(x)$;
    \item $\theta'$ vanishes on $H\times \Img(v)\times B$ and $H\times B\times \Img(v)$.
  \end{enumerate}
  Then $\xi_x$ is trivial and $\kappa_x$ is a 2-cocycle of $H$.  Thus, $r(x)$ is a projective one-dimensional representation of $H$.
\end{thm}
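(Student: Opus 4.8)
The plan is to reduce the statement almost entirely to the immediately preceding lemma together with \cref{prop:r-quasi-coalg}, since the substantive work has already been done there; what remains is to check that the hypotheses line up and then to read off the conclusion.

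First I would verify that the four hypotheses of this theorem subsume the three hypotheses of the preceding lemma. Conditions (1) and (3) here are verbatim the lemma's conditions (1) and (2). For the lemma's condition (3)---the vanishing $\theta'_j(b,v(x))=\theta'_j(v(x),b)=1$ for all $b\in B$---I would specialize condition (4) of the theorem to the element $v(x)\in\Img(v)$: the assumed vanishing of $\theta'$ on $H\times\Img(v)\times B$ gives $\theta'_j(v(x),b)=1$, and its vanishing on $H\times B\times\Img(v)$ gives $\theta'_j(b,v(x))=1$, for every $j\in H$ and $b\in B$. Hence the preceding lemma applies and yields \cref{eq:uv-rel} for this $x$ and all $g\in G$.

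Next I would convert \cref{eq:uv-rel} into the equivalent coordinate form $u^*(m^{v(x)})=u^*(m)^x$ valid for all $m\in H$ (this equivalence is recalled in \cref{lem:untwist-cond}). Substituting this into the explicit formula
\[
\xi_x(m,n,k)=\frac{\omega(u^*(m^{v(x)}),u^*(n^{v(x)}),u^*(k^{v(x)}))}{\omega(u^*(m)^x,u^*(n)^x,u^*(k)^x)}
\]
turns the numerator into the denominator term-by-term, so $\xi_x(m,n,k)=1$ for all $m,n,k\in H$; that is, $\xi_x$ is the trivial $3$-cocycle.

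Finally, I would feed $\xi_x\equiv 1$ into the identity $\kappa_x(m,n)\kappa_x(mn,k)=\kappa_x(n,k)\kappa_x(m,nk)\xi_x(m,n,k)$ obtained from \cref{eq:coassoc}; with $\xi_x$ trivial this is exactly the $2$-cocycle condition for $\kappa_x$. For the last claim I would note that the present hypotheses also satisfy those of \cref{prop:r-quasi-coalg}---condition (1) gives that $u^*,v$ are coalgebra morphisms and condition (2) is precisely \cref{eq:gamma-cosets}---so \cref{eq:r-quasi-coalg} holds, i.e.\ $r(x,mn)=\kappa_x(m,n)\,r(x,m)r(x,n)$; since $\kappa_x$ is now a genuine $2$-cocycle, this is exactly the assertion that $r(x)$ is a projective one-dimensional representation of $H$ with cocycle $\kappa_x$. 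The deduction is essentially bookkeeping, so the only real checkpoint---and the place I would be most careful---is the hypothesis-matching in the first step, especially extracting \emph{both} orderings of the arguments of $\theta'$ from the two separate vanishing conditions in (4).
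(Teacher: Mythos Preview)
Your proposal is correct and follows essentially the same route as the paper, which simply says ``Apply the preceding lemma and discussions.'' You have faithfully unpacked what that entails: matching hypotheses to invoke the lemma for \cref{eq:uv-rel}, then using the discussion after \cref{prop:r-quasi-coalg} to conclude $\xi_x$ is trivial, $\kappa_x$ is a $2$-cocycle, and $r(x)$ is a projective character.
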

\begin{proof}
  Apply the preceding lemma and discussions.
\end{proof}

Indeed, under the hypotheses above
 \[ \kappa_x(m,n) = \frac{\gamma_x(u^*(m),u^*(n))}{\gamma_{u^*v(x)}(u^*(m),u^*(n))}\]
is itself very nearly trivial, in the sense that by \citep[Lemma 3.10]{K14} \cref{eq:uv-rel} is equivalent to
\begin{align}\label{eq:uv-cent}
u^*v(x)x\inv \in C_G(\Img(u^*)).
\end{align}

\section{Algebra conditions for \ensuremath{r}}\label{sec:r-alg}

We now consider a variety of conditions for when $r$ is a morphism of algebras.  Later in the section we will also consider how close $r(\cdot,j)$ is to a projective linear character of $G$.

\begin{thm}\label{thm:r-is-alg}
    Let $\morphquad\in\genhom$, and suppose all of the following hold.
    \begin{enumerate}
      \item $u^*,v,r$ are morphisms of coalgebras.
      \item $\theta'_j(b,v(x))=1$ for all $b\in B,\,h\in H,\,x\in G$.
    \end{enumerate}
    Then $r$ is a morphism of algebras if and only if
    \begin{align}\label{eq:theta-uv}
        \theta_{u^*(h)}(x,y) = \theta'_h(v(x),v(y))
    \end{align}
    for all $h\in H$ and $x,y\in G$.
\end{thm}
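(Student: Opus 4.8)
The plan is to read off the algebra-morphism condition for $r$ from the equality of \cref{eq:ur-mult-1,eq:ur-mult-2}, which holds because $\psi$ is an algebra morphism (these two expressions are $\id\ot\varepsilon$ applied to \cref{eq:alg-inner,eq:alg-outer}). I would specialize the second external variable to $x'=y$, sum the whole identity over $g,g'\in G$, and compare the coefficients of each basis vector $e_j$, $j\in H$. The two coalgebra hypotheses do the heavy lifting: since $u^*,v,r$ are coalgebra morphisms, \cref{lem:u-simpler,lem:coalg-to-group} let me replace $u(g,j)$ by $\delta_{g,u^*(j)}$, replace $v(x,w)$ by $\delta_{v(x),w}$, and treat each $r(x)$ as a linear character of $H$, so that $b\lact r(x)=r(x)$ and every coefficient $r(x,j)$ is a nonzero unit. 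The goal is to collapse both sides to the scalar master identity
\[ \theta_{u^*(j)}(x,y)\,r(xy,j)=\theta'_j(v(x),v(y))\,r(x,j)\,r(y,j), \]
from which the stated equivalence is immediate.

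For the left-hand side \cref{eq:ur-mult-1} I would use that $v$ is a coalgebra morphism to collapse the inner $H$-sum at $w=v(xy)$, and that $r$ is a coalgebra morphism to drop the conjugation $b\lact r(xy)=r(xy)$. Hypothesis (2), applied to the element $xy\in G$, makes $\theta'_j(b,v(xy))=1$; this is precisely where that hypothesis is essential, since it is what lets $\sum_{b\in B}p(t,b)=\delta_{1,t}$ (\cref{eq:p-counital}) force $t=1$ with no obstructing phase left behind. Reading off the coefficient of $e_j$, using $u(g,j)=\delta_{g,u^*(j)}$ from \cref{lem:u-simpler}, and summing over $g,g'$ (the Kronecker delta $\delta_{g,xg'x^{-1}}$ contributing a single term for each $g$) then leaves exactly $\theta_{u^*(j)}(x,y)\,r(xy,j)$.

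For the right-hand side \cref{eq:ur-mult-2} I would again fix $w=v(x)$ and $w'=v(y)$ by $v$-coassociativity, drop the conjugations acting on $r(x)$ and $r(y)$, and evaluate the $u$-coefficients via $u^*$. Extracting the coefficient of $e_j$ brings in the conjugated index $u^*(j^{bv(x)})$ through the term $(bv(x))\lact u(e_{g't^{\prime-1}})$, but summing over $g$ and $g'$ removes both $u$-deltas regardless of its value. The three surviving phases $\theta'_j(b,v(x))$, $\theta'_{j^{bv(x)}}(b',v(y))$, and $\theta'_j(bv(x),b'v(y))$ do not involve $t$ or $t'$, so the sums $\sum_{t\in A}p(t,b)$ and $\sum_{t'\in A}p(t',b')$ factor out, evaluate to $\delta_{1,b}$ and $\delta_{1,b'}$ by \cref{eq:p-unital}, and force $b=b'=1$. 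At $b=b'=1$ the first two phases are trivial (since $\theta'$ equals $1$ whenever an input is the identity) and the third becomes $\theta'_j(v(x),v(y))$, so the right-hand side collapses to $\theta'_j(v(x),v(y))\,r(x,j)\,r(y,j)$, completing the master identity.

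The equivalence then follows by cancellation. If \cref{eq:theta-uv} holds, the two phase factors in the master identity coincide and are units, so dividing yields $r(xy,j)=r(x,j)r(y,j)$ for all $j$, i.e.\ $r$ is an algebra morphism. Conversely, if $r$ is an algebra morphism then $r(xy,j)=r(x,j)r(y,j)$, and since $r(x,j)$ is a nonzero value of a linear character I may cancel $r(x,j)r(y,j)$ from both sides of the master identity to recover $\theta_{u^*(j)}(x,y)=\theta'_j(v(x),v(y))$, which is \cref{eq:theta-uv}. I expect the only genuine obstacle to be the bookkeeping on the right-hand side: correctly extracting the $e_j$-coefficient through the conjugation $(bv(x))\lact u(\cdots)$ and checking that the three multiplicative phases decouple from the $t,t'$-sums, so that \cref{eq:p-unital} can force $b=b'=1$; once that is in place, the troublesome phase $\theta'_j(bv(x),b'v(y))$ and its companions collapse automatically, and hypothesis (2) is used only to achieve the corresponding collapse on the left.
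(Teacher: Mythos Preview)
Your proposal is correct and follows the same route as the paper: sum \cref{eq:ur-mult-1,eq:ur-mult-2} over $g,g'$, use the coalgebra hypotheses on $u^*,v,r$ to collapse the sums, and read off the master identity comparing $\theta_{u^*(j)}(x,y)r(xy,j)$ with $\theta'_j(v(x),v(y))r(x,j)r(y,j)$, from which the equivalence follows by invertibility. Your write-up is more explicit than the paper's two-line proof, and your observation that hypothesis~(2) is needed only on the left-hand side (since on the right the $g,g'$-sum frees $t,t'$ so that \cref{eq:p-unital} forces $b=b'=1$ unaided) is correct and sharper than what the paper spells out.
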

\begin{proof}
  Applying the assumptions to \cref{eq:ur-mult-1,eq:ur-mult-2} and summing over $g,g'\in G$ we get
  \[ \Big(\sum_{h\in H} \theta_{u^*(h)}(x,x')e_h\Big) r(xx') = \Big(\sum_{j\in H} \theta'_j(v(x),v(x')) e_j\Big) r(x)r(x').\]
  It is easily seen that the terms in parentheses are both units, and the desired equivalence then follows.
\end{proof}

\begin{thm}\label{thm:r-twist-alg}
  Let $\morphquad\in\genhom$ have $u^*$ a morphism of coalgebras and $v$ a morphism of Hopf algebras.  Then $r$ satisfies the identity
  \begin{align}\label{eq:r-twist-alg}
    \Big( \frac{1}{|A|} \sum_{\substack{t\in A\\ j\in H}} \theta_{u^*(j)t}(x,y) e_j\Big) r(xy) = \Big(\sum_{j\in h} \theta'_j(v(x),v(y)) e_j\Big) r(x)[v(x)\lact r(y)]
  \end{align}
  for all $x,y\in G$.
\end{thm}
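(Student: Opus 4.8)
The plan is to follow the template of the proof of \cref{thm:r-is-alg}: start from the algebra-morphism identity, i.e.\ the equality of \cref{eq:ur-mult-1,eq:ur-mult-2} produced by applying $\id\ot\varepsilon$ to \cref{eq:alg-inner,eq:alg-outer}, and then sum over all $g,g'\in G$, using the factor $\delta_{g,xg'x\inv}$ in \cref{eq:ur-mult-1} to collapse the double sum into a single sum over $g$. The two hypotheses are exactly what make the surviving sums tractable. Since $v$ is a morphism of Hopf algebras, \cref{lem:coalg-to-group} identifies each $v(x)$ with a group element of $H$ and gives $v(xy)=v(x)v(y)$; concretely the coefficients of $v$ become Kronecker deltas, so the summations over the $H$-indices attached to $v$ collapse and are replaced by $v(x)$ and $v(y)$. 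Since $u^*$ is a morphism of coalgebras, equivalently $u$ is an algebra morphism, \cref{lem:u-simpler} lets me replace each $u(e_{gt\inv})$ by the idempotent selected by the set map $u^*$; after the reindexing $g=u^*(j)t$ this is precisely what produces the phases $\theta_{u^*(j)t}(x,y)$ occurring on the left of \cref{eq:r-twist-alg}. I will also record, via \cref{thm:v-is-bialg}, that $v$ being Hopf forces $\theta\equiv1$ on $A\times G\times G$ and $\gamma\equiv1$ on $G\times A\times A$, which is needed to reorganize the coset phases through the structural identity \cref{eq:struc-compat}.

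The right-hand side, coming from \cref{eq:ur-mult-2}, is the more straightforward half. After the $v$-collapse the auxiliary indices $t,t'\in A$ appear only inside the factors $p(t,b)$ and $p(t',b')$, so the unital identity \cref{eq:p-unital}, $\sum_{t\in A}p(t,b)=\delta_{1,b}$, forces $b=b'=1$. The phases $\theta'$ carrying an identity argument drop out because a normalized phase is trivial whenever one of its inputs is $1$, and what survives is exactly $\theta'_j(v(x),v(y))\,r(x,j)\,r(y,v(x)\inv j v(x))$; reassembled over $j$ this is the element $\big(\sum_{j\in H}\theta'_j(v(x),v(y))e_j\big)\,r(x)\,[v(x)\lact r(y)]$ on the right of \cref{eq:r-twist-alg}. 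The invertibility of $r(x)$ and $v(x)$ from \cref{lem:r-is-invertible} guarantees this is the genuine product and not an artefact of a degenerate term.

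The left-hand side, from \cref{eq:ur-mult-1}, is where the real work lies, and I expect this to be the main obstacle. Here the index $t\in A$ is coupled simultaneously to the phase $\theta_{u^*(j)t}(x,y)$ and to the coefficient $p(t,b)$, so one cannot collapse the $p$-sum as cleanly as on the right; moreover the surviving $b$-sum carries both the $B$-conjugation $[b\lact r(xy)]$ and the residual phase $\theta'_j(b,v(xy))$, which are \emph{not} killed here (unlike in \cref{thm:r-is-alg}, where the extra hypothesis $\theta'_j(b,v(x))=1$ and the coalgebra property of $r$ trivialize them and yield the unaveraged phase $\theta_{u^*(j)}(x,y)$). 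The crux is therefore to show that, for each fixed $t$, the $p$-weighted $B$-sum of the conjugated $r(xy)$ reduces to $\tfrac1{|A|}r(xy)$, using the quantitative information $p(t,1)=1/|A|$ and $|p(a,b)|=1/|A|$ from \cref{lem:p-norms} together with the bicharacter relations of \cref{thm:p-simpler}; this is what converts $\sum_{t\in A}\big(\sum_j\theta_{u^*(j)t}(x,y)e_j\big)$ times the $B$-average into the coset-averaged form $\tfrac1{|A|}\sum_{t\in A,\,j\in H}\theta_{u^*(j)t}(x,y)e_j\cdot r(xy)$. Carefully tracking how the conjugation and the $\theta'$ phases are absorbed into this average is the delicate bookkeeping of the argument; once both sides are expressed in the standard basis, comparing the coefficient of each $e_j$ yields \cref{eq:r-twist-alg}.
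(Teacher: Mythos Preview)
Your overall plan and your treatment of the right-hand side are fine, and indeed match what the paper does there. The gap is in your handling of the left-hand side.

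By starting from \cref{eq:ur-mult-1,eq:ur-mult-2} you have already applied $\id\ot\varepsilon$, which discards the $\kk H$-component. After summing over $g,g'$ and using that $u$ is an algebra morphism and $v$ is Hopf, the left-hand side becomes, coefficient of $e_j$,
\[
\sum_{t\in A}\sum_{b\in B}\theta_{u^*(j)t}(x,y)\,p(t,b)\,\theta'_j(b,v(xy))\,r(xy,b^{-1}jb).
\]
You claim that ``for each fixed $t$, the $p$-weighted $B$-sum of the conjugated $r(xy)$ reduces to $\tfrac1{|A|}r(xy)$,'' i.e.\ that
\[
\sum_{b\in B}p(t,b)\,\theta'_j(b,v(xy))\,r(xy,b^{-1}jb)=\tfrac1{|A|}\,r(xy,j)
\]
for every $t$. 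None of \cref{lem:p-norms}, \cref{thm:p-simpler}, or the bicharacter relations give this: they constrain the moduli and multiplicativity of $p(t,b)$, but say nothing about how the unknown phases $\theta'_j(b,v(xy))$ or the conjugation $r(xy,b^{-1}jb)$ interact with them. There is no reason the $b\neq1$ terms should cancel termwise in $t$.

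The paper avoids this entirely by \emph{not} applying $\id\ot\varepsilon$ first. It sums \cref{eq:alg-inner,eq:alg-outer} over $g,g'$ directly, so the two sides live in $\D^\eta(H)$. Under the hypotheses the $\kk H$-component of the outer product is $v(xy)$, while the $\kk H$-component of the inner side is $bv(xy)$. Since $v(xy)\in H$ is a group element, equality forces the coefficient of $e_j\#bv(xy)$ to vanish for every $b\neq1$; only the $b=1$ term survives, and then $p(t,1)=1/|A|$ from \cref{lem:p-norms} gives exactly the averaged phase $\tfrac1{|A|}\sum_{t\in A}\theta_{u^*(j)t}(x,y)$. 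Applying $\id\ot\varepsilon$ at the end yields \cref{eq:r-twist-alg}. The moral: keep the second tensor factor until you have used it to separate the $b$-terms; once you project it away, that information is gone and your proposed $B$-sum reduction is not recoverable.
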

\begin{proof}
  We sum \cref{eq:alg-inner,eq:alg-outer} over $g,g'$ to get
  \begin{align*}
    \sum_{j\in H} r(x)&[v(x)\lact r(x')]\theta_j'(v(x),v(x')) e_j\# v(xx')\\
     &= \sum _{\substack{j\in H\\t\in A\\b\in B}}\theta_{u^*(j)t}(x,x')\theta_j'(b,v(xx')) p(t,b) [b\lact r(xx')]e_j\# bv(xx').
  \end{align*}
  Since $x,x'$ are fixed, equality means that for each $j\in H$ all terms in the right-hand sum with fixed $b\neq 1$ vanish.  Thus the above equality reduces to
  \begin{align}
    \sum r(x)&[v(x)\lact r(x')]\theta_j'(v(x),v(x')) e_j\# v(xx')\nonumber\\
     &= \Big(\frac{1}{|A|}\sum \theta_{u^*(j)t}(x,x') e_j\Big)r(xx')\# v(xx').
  \end{align}
  Applying $\id\ot\varepsilon$ then yields the desired relation.
\end{proof}

We note that by \cref{lem:r-is-invertible} the assumptions guarantee that $r(x)$ and $v(x)\lact r(y)$ are invertible for all $x,y\in G$, from which it follows that
  \[ \frac{1}{|A|} \sum_{\substack{t\in A\\ j\in H}} \theta_{u^*(j)t}(x,y) e_j \]
is invertible for all $x,y\in G$.

This gives the following.
\begin{cor}
  Let $\morphquad\in\genhom$ have $u^*$ a morphism of coalgebras and $v$ a morphism of Hopf algebras.  Then $r$ satisfies
  \[ r(xy) = r(x)[v(x)\lact r(y)]\]
  for $x,y\in G$ if and only if
  \begin{align}\label{eq:theta-conversion}
    \theta_{u^*(j)t}(x,y) = \theta'_j(v(x),v(y)) , \ \forall t\in A \mbox{ and } j\in H.
  \end{align}
\end{cor}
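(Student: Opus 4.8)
The plan is to derive this directly from \cref{thm:r-twist-alg}, whose identity \cref{eq:r-twist-alg} already isolates the relevant product of $r$-values on each side, multiplied by a phase factor living in $\du{H}$. Write
\[ P = \frac{1}{|A|}\sum_{t\in A,\,j\in H}\theta_{u^*(j)t}(x,y)\,e_j, \qquad Q = \sum_{j\in H}\theta'_j(v(x),v(y))\,e_j, \]
so that, for each fixed pair $x,y\in G$, \cref{eq:r-twist-alg} reads $P\cdot r(xy) = Q\cdot r(x)[v(x)\lact r(y)]$. First I would record that every factor here is invertible in the commutative algebra $\du{H}$: the elements $r(x),r(y),r(xy)$ are invertible by \cref{lem:r-is-invertible}; the element $v(x)\lact r(y)$ is invertible because conjugation by $v(x)\in H$ permutes the standard basis and so is an algebra automorphism of $\du{H}$; and $Q$ is invertible since all of its coefficients lie in $U(1)$, hence are nonzero. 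Consequently $P$ is invertible too, exactly as observed after \cref{thm:r-twist-alg}.

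With all factors invertible, the equivalence of the target identity $r(xy)=r(x)[v(x)\lact r(y)]$ with the equality $P=Q$ is a one-line cancellation. If $P=Q$, substitute into \cref{eq:r-twist-alg} and cancel the invertible $P$ to obtain the target. Conversely, if the target holds, substitute it into the right-hand side of \cref{eq:r-twist-alg} to get $P\cdot r(xy) = Q\cdot r(xy)$, and cancel the invertible $r(xy)$ to recover $P=Q$. Thus the corollary reduces entirely to showing that $P=Q$ is equivalent to \cref{eq:theta-conversion}.

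The remaining, and only substantive, step is to upgrade the coefficientwise equality $P=Q$, namely
\[ \frac{1}{|A|}\sum_{t\in A}\theta_{u^*(j)t}(x,y) = \theta'_j(v(x),v(y)) \qquad \text{for all } j\in H, \]
to the pointwise equality \cref{eq:theta-conversion}. This is precisely the situation governed by \cref{lem:average}: for each fixed $j$ the left-hand side is the average over $t\in A$ of the quantities $\theta_{u^*(j)t}(x,y)$, each of modulus $1$, and the right-hand side $\theta'_j(v(x),v(y))$ also has modulus $1$. Taking $z=\theta'_j(v(x),v(y))$ and $z_t=\theta_{u^*(j)t}(x,y)$, the hypotheses $|z_t|\le |z|$ and $\tfrac{1}{|A|}\sum_t z_t = z$ of \cref{lem:average} are met, forcing $z_t=z$ for every $t\in A$, which is exactly \cref{eq:theta-conversion}. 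The reverse implication is immediate: if \cref{eq:theta-conversion} holds, then every summand already equals $\theta'_j(v(x),v(y))$, so the average does too and $P=Q$.

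I expect no genuine obstacle. The only points requiring care are confirming the invertibility of $P$ (which is not visible from its definition, since an average of unit-modulus scalars can vanish, and must instead be deduced from the identity and the invertibility of the other factors) and invoking \cref{lem:average} in the correct direction. The averaging argument through \cref{lem:average} is the conceptual heart of the statement, in the same spirit as the simplifications carried out throughout \cref{sec:uv-bialg}.
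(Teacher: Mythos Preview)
Your argument is correct and matches the paper's intended route: the paper states the corollary immediately after \cref{thm:r-twist-alg} and the invertibility remark, leaving the cancellation and the averaging step via \cref{lem:average} implicit, and you have simply spelled these out. The only thing worth noting is that your proof in fact establishes the stronger pointwise statement (for each fixed $x,y$ separately), from which the ``for all $x,y$'' version follows trivially.
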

A special case of \cref{eq:theta-conversion} is
\begin{align}\label{eq:theta-cosets}
    \theta_{u^*(j)t}(x,y) = \theta_{u^*(j)}(x,y) \ \forall t\in A \mbox{ and } j\in H.
\end{align}
As in the previous section, we then naturally consider what this equation says about $r(\cdot,j)$, and in particular wonder how closely $r(\cdot,j)$ resembles a projective linear character of $G$.

\begin{prop}
  Let $\morphquad\in\genhom$ have $u^*$ a morphism of coalgebras and $v$ a morphism of Hopf algebras.  Fix $j\in H$ and suppose that for all $x,y\in G$ and $t\in A$ that \cref{eq:theta-cosets} holds.  Then
  \[ r(xy,j) = \frac{\theta'_j(v(x),v(y))}{\theta_{u^*(j)}(x,y)} r(x,j) r(y,j^{v(x)}).\]
\end{prop}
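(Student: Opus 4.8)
The plan is to specialize the master identity \cref{eq:r-twist-alg} of \cref{thm:r-twist-alg}---whose hypotheses, that $u^*$ is a coalgebra morphism and $v$ a morphism of Hopf algebras, are precisely those in force here---and then read off the coefficient of the single basis vector $e_j$ for the fixed $j$. Since $v$ is a morphism of Hopf algebras, \cref{lem:coalg-to-group} lets us identify $v$ with a group homomorphism $G\to H$, so $v(x)\in H$ and the action $v(x)\lact(-)$ on $\du{H}$ simply permutes the standard basis via $e_h\mapsto e_{v(x)hv(x)\inv}$. All the products appearing in \cref{eq:r-twist-alg} are the pointwise multiplication of $\du{H}$, so extracting a single coefficient is routine.

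First I would treat the left-hand side. Writing $r(xy)=\sum_h r(xy,h)e_h$, the coefficient of $e_j$ in $\big(\frac1{|A|}\sum_{t,h}\theta_{u^*(h)t}(x,y)e_h\big)r(xy)$ is $\frac1{|A|}\big(\sum_{t\in A}\theta_{u^*(j)t}(x,y)\big)r(xy,j)$. Invoking the standing hypothesis \cref{eq:theta-cosets} for this fixed $j$, namely $\theta_{u^*(j)t}(x,y)=\theta_{u^*(j)}(x,y)$ for every $t\in A$, the inner sum collapses to $|A|\,\theta_{u^*(j)}(x,y)$; the factors of $|A|$ cancel and the $e_j$-coefficient becomes $\theta_{u^*(j)}(x,y)\,r(xy,j)$.

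Next I would expand the right-hand side. Writing $v(x)\lact r(y)=\sum_h r(y,h)e_{v(x)hv(x)\inv}$ and reindexing by $h'=v(x)hv(x)\inv$ (so $h=(h')^{v(x)}$ in the paper's convention $(h')^{v(x)}=v(x)\inv h' v(x)$), the coefficient of $e_{h'}$ there is $r(y,(h')^{v(x)})$; multiplying pointwise by $r(x)$ gives $r(x,h')\,r(y,(h')^{v(x)})$ on $e_{h'}$, and applying the unit $\sum_h\theta'_h(v(x),v(y))e_h$ produces $\theta'_j(v(x),v(y))\,r(x,j)\,r(y,j^{v(x)})$ on $e_j$. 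Equating the two $e_j$-coefficients yields
\[
\theta_{u^*(j)}(x,y)\,r(xy,j)=\theta'_j(v(x),v(y))\,r(x,j)\,r(y,j^{v(x)}),
\]
and dividing by $\theta_{u^*(j)}(x,y)$, which has modulus one and is therefore invertible, gives the claimed formula.

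The computation is pure bookkeeping, so there is no real obstacle; the only point requiring care is the conjugation in the right-hand side, where one must check that the $e_j$-coefficient of $v(x)\lact r(y)$ is $r(y,j^{v(x)})$ rather than $r(y,v(x)\,j\,v(x)\inv)$---that is, that the reindexing is compatible with the convention $j^{v(x)}=v(x)\inv j\,v(x)$.
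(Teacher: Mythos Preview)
Your proof is correct and follows precisely the route the paper intends: apply the hypothesis \cref{eq:theta-cosets} to the identity \cref{eq:r-twist-alg} of \cref{thm:r-twist-alg} and read off the $e_j$-coefficient, handling the conjugation $v(x)\lact r(y)$ exactly as you do. The paper's own proof is the one-line ``Apply the assumptions to \cref{thm:r-twist-alg}'', and you have simply made the bookkeeping explicit.
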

\begin{proof}
  Apply the assumptions to \cref{thm:r-twist-alg}.
\end{proof}
Note that \cref{eq:assoc} implies, under the assumptions of the Proposition, that $r((xy)z,j)=r(x(yz),j)$.  If also $r(y,j)=r(y,j^{v(x)})$ for all $x,y\in G$---which is assured if $r$ is a morphism of coalgebras---, then as a special case we see that $r(\cdot,j)=\ev_j\circ r$ is a projective linear character of $G$.

\section{Preservation of antipodes}\label{sec:antipode}
Since $\genhom\neq\genhomh$ unless $\omega=\eta=1$ in general, we wish to find conditions on elements  $\morphquad\in\genhom\cap\Hom(\D(G),\D(H))$ that guarantee $\morphquad\in\genhomh$.

First, we show that the $\beta$ element is always preserved (remember that the $\alpha$ elements are always the identity here).
\begin{lem}\label{lem:beta-pres}
  Let $\morphquad\in\genhom$ have $u$ a morphism of Hopf algebras.  Then $\morphquad(\beta)=\beta'$.
\end{lem}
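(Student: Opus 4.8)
The plan is to compute $\morphquad(\beta)$ directly from the component formula \cref{eq:morphdef} and recognize the output as $\beta'$. Since $\beta=\sum_{g\in G}\omega(g,g\inv,g)\,e_g\#1$ involves only the elements $e_g\#1$, I would first use that $r,v$ are biunital (so $r(1)=\varepsilon$ and $v(1)=1$, whence $r(1)\#v(1)$ is the unit of $\D^\eta(H)$) to reduce \cref{eq:morphdef} to $\morphquad(e_g\#1)=\sum_{k\in A}u(e_{gk\inv})\#p(e_k)$. Reindexing the double sum via $m=gk\inv$ yields
\[ \morphquad(\beta) = \sum_{m\in G}\sum_{k\in A}\omega(mk,(mk)\inv,mk)\,u(e_m)\#p(e_k). \]
Because $u$ is Hopf, \cref{lem:u-simpler} forces $u(e_m)\neq 0$ only when $m\in\Img(u^*)$, so the outer sum effectively ranges over $\Img(u^*)$, which is a subgroup since $u^*$ is a group homomorphism.

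The crux of the argument, and the step I expect to be the main obstacle, is showing that the cocycle factor is independent of $k\in A$, namely $\omega(mk,(mk)\inv,mk)=\omega(m,m\inv,m)$ for $m\in\Img(u^*)$. This is resolved by the coset invariance in \cref{thm:gen-omega-u}. Using the normality of $A$ (\cref{lem:vp-comm}) I would rewrite each argument in the form (element of $A$) times (element of $\Img(u^*)$): explicitly $mk=(mkm\inv)m$ and $(mk)\inv=k\inv m\inv$, where $mkm\inv,k\inv\in A$ and $m,m\inv\in\Img(u^*)$. Then \cref{thm:gen-omega-u}(2), which applies precisely because $u$ is Hopf, gives $\omega((mkm\inv)m,k\inv m\inv,(mkm\inv)m)=\omega(m,m\inv,m)$ at once.

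With the cocycle factor pulled out of the $k$-sum, the inner sum collapses: $\sum_{k\in A}p(e_k)=\sum_{g\in G}p(e_g)=p(\varepsilon)=1$, using $p(e_g)=0$ for $g\notin A$ (\cref{lem:p-desc}) together with the unitality of $p$. This leaves $\morphquad(\beta)=\sum_{m\in\Img(u^*)}\omega(m,m\inv,m)\,u(e_m)\#1$. Finally I would expand $u(e_m)=\sum_{h\colon u^*(h)=m}e_h$ (again by \cref{lem:u-simpler}) to rewrite the sum over all $h\in H$, and use $u^*(h)\inv=u^*(h\inv)$ to identify the coefficient as $\omega(u^*(h),u^*(h\inv),u^*(h))$. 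By \cref{lem:gen-u-omega-eta} this equals $\eta(h,h\inv,h)$, so $\morphquad(\beta)=\sum_{h\in H}\eta(h,h\inv,h)\,e_h\#1=\beta'$, as desired.
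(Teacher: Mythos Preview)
Your proof is correct and follows essentially the same approach as the paper's: both compute $\morphquad(e_g\#1)$ via \cref{eq:morphdef}, use \cref{lem:u-simpler} to restrict to $\Img(u^*)$, invoke the coset invariance from \cref{thm:gen-omega-u} (together with normality of $A$ from \cref{lem:vp-comm}) to strip the $A$-dependence from the cocycle factor, collapse the $p$-sum to $1$, and finish with \cref{lem:gen-u-omega-eta}. The only cosmetic difference is that the paper passes directly to the $j$-indexed sum via $u^*(j)=gt^{-1}$, whereas you first reindex $m=gk^{-1}$ and then expand $u(e_m)$ over the fiber $\{h:u^*(h)=m\}$; the content is identical.
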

\begin{proof}
Let $\psi=\morphquad$.  Then
\begin{align*}
  \psi(\phi) &= \sum_{g\in G}\omega(g,g\inv,g) \psi(e_g\#1)\\
  &= \sum_{\substack{g\in G,t\in A}} \omega(g,g\inv,g) u(e_{gt\inv})\# p(e_t)\\
  &= \sum_{\substack{j\in H\\t\in A}} \omega(u^*(j)t,(u^*(j)t)\inv,u^*(j)t) e_j\# p(e_t),
\end{align*}
where the last equality follows from \cref{lem:u-simpler}.  Now applying \cref{lem:vp-comm,thm:gen-omega-u,lem:p-ids-1} to this last summation we get
\begin{align}
  \psi(\phi) &= \sum_{j\in H} \omega(u^*(j),u^*(j\inv),u^*(j)) e_j\# 1.
\end{align}
By \cref{lem:gen-u-omega-eta} this last term is precisely $\beta'$.
\end{proof}
Before considering the antipode relation, we will need the following identities.
\begin{prop}\label{prop:thetaprime-bv-vanish}
    Let $\morphquad\in\genhom\cap\Hom(D(G),D(H))$ have $u$ Hopf.  Then for all $b,c\in B$, $g,h,k\in H$, and $x,y\in G$ the following all hold.
    \begin{enumerate}
      \item $\theta'_g(bv(x),cv(y))=\theta'_g(v(x)b,cv(y))=\theta'_g(bv(x),v(y)c)=\theta'_g(v(x),v(y))$.
      \item $\eta(b\inv g b, b\inv h b, b\inv k b) = \eta(g,h,k)$.\label{prop-part:vanish-2}
      \item $\gamma'_{bv(x)}(g,h)= \gamma'_{v(x)}(g,h)$.
    \end{enumerate}
\end{prop}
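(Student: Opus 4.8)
The plan is to first extract, from the single hypothesis that $u$ is Hopf, the structural facts that drive all three identities, and then to read each identity off a cocycle relation. Since $u$ is in particular an algebra morphism and $\psi=\morphquad\in\Hom(\D(G),\D(H))$, \cref{cor:B-is-central} gives $B\subseteq Z(H)$. As \cref{eq:u-mult} holds for every morphism in $\Hom(\D(G),\D(H))$ by \citep{ABM}, the partial converse in \cref{prop:u-is-alg} applies and yields $\theta'\equiv 1$ on $H\times B\times B$, while the partial converse in \cref{prop:u-is-coalg} yields $\gamma'_b\equiv 1$ for all $b\in B$. From \citep{ABM,K14} the component $v$ is forced to be a morphism of Hopf algebras, so $X:=\Img(v)$ is a subgroup of $H$, and \cref{lem:untwist-cond} supplies the triviality of $\theta'$ on $H\times B\times X$ and on $H\times X\times B$. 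These facts are the whole engine.

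Part (ii) is then immediate: every $b\in B$ is central, so $b\inv g b=g$ (and likewise for $h,k$), and the asserted equality of $\eta$-values is a tautology. For part (i), centrality gives $bv(x)=v(x)b$ and $cv(y)=v(y)c$ as elements of $H$, so the first three expressions are literally identical; the remaining equality $\theta'_g(v(x)b,v(y)c)=\theta'_g(v(x),v(y))$ is exactly the conclusion of \cref{lem:theta-quot} applied with the subgroup $X=\Img(v)$ and the subgroup $B$. Its hypotheses are met: $B$, being central, is closed under conjugation by $X$, and the three required triviality statements for $\theta'$---on $H\times B\times X$, $H\times X\times B$, and $H\times B\times B$---are precisely those gathered above.

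For part (iii), I would specialize the structural-compatibility relation \eqref{eq:struc-compat} for $H$, taking its $\gamma$-subscript variables to be $b$ and $v(x)$ and its $\gamma$-arguments to be $g,h$; this reads
\[\theta'_g(b,v(x))\,\theta'_h(b,v(x))\,\gamma'_b(g,h)\,\gamma'_{v(x)}(g^b,h^b)=\theta'_{gh}(b,v(x))\,\gamma'_{bv(x)}(g,h).\]
Here the three $\theta'$ factors equal $1$ by triviality on $H\times B\times\Img(v)$, the conjugates $g^b,h^b$ reduce to $g,h$ by centrality of $b$, and $\gamma'_b(g,h)=1$ by \cref{prop:u-is-coalg}; what survives is $\gamma'_{v(x)}(g,h)=\gamma'_{bv(x)}(g,h)$, as desired.

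The only genuinely non-formal work is the bookkeeping of the first paragraph: verifying that the partial converses of \cref{prop:u-is-alg,prop:u-is-coalg} truly apply---above all that $B\subseteq Z(H)$ and that $v$ is Hopf---since it is centrality that collapses the mixed orderings of $b$ and $v(x)$ in (i) and removes the conjugations by $b$ in (ii) and (iii). Once those facts and the $\theta',\gamma'$ triviality statements are in hand, parts (i)--(iii) are immediate applications of \cref{lem:theta-quot} and \eqref{eq:struc-compat}, with no further computation.
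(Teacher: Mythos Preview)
Your proof is correct and follows essentially the same route as the paper's own proof: both assemble the triviality statements for $\theta'$ and $\gamma'$ from \cref{cor:B-is-central}, \cref{prop:u-is-alg}, \cref{prop:u-is-coalg}, and \cref{lem:untwist-cond}, then feed them into \cref{lem:theta-quot} for part (i) and into \eqref{eq:struc-compat} for part (iii), with part (ii) immediate from centrality of $B$. The only cosmetic difference is that the paper phrases the closure of $B$ under $\Img(v)$-conjugation via \cref{eq:vp-comm} rather than centrality, but since you have already established $B\subseteq Z(H)$ this is the same fact.
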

\begin{proof}
  By \citep[Corollary 3.3]{ABM} $v$ is a morphism of Hopf algebras.

  Now \cref{eq:vp-comm} means that $B$ is closed under conjugation by elements of the subgroup $\Img(v)$.  Therefore the first part follows from \cref{prop:u-is-alg,lem:untwist-cond,lem:theta-quot}.

  The second part is trivially true by \cref{cor:B-is-central}; it also follows from \cref{eq:coassoc,prop:u-is-coalg} without knowing $B\subseteq Z(H)$.

  The third part follows from \cref{eq:struc-compat,lem:untwist-cond,prop:u-is-coalg,prop-part:vanish-2}.
\end{proof}

\begin{thm}\label{thm:QB-is-hopf}
  Let $\morphquad\in\genhom\cap\Hom(\D(G),\D(H))$ have $u$ a morphism of Hopf algebras.  Then $\morphquad\in\genhomh$ if and only if \[ \theta_{ag\inv}(x,x\inv)\gamma_x(ga\inv,ag\inv) = \theta_g(x,x\inv)\gamma_x(g,g\inv)\] for all $a\in A$ and $g,x\in G$.
\end{thm}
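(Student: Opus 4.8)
The plan is to reduce the quasi-Hopf condition to the single requirement that $\psi$ preserve antipodes, and then to compare the twisted and untwisted antipodes term by term. Since $\alpha=\alpha'=1$ the equality $\psi(\alpha)=\alpha'$ is automatic, and $\psi(\beta)=\beta'$ holds by \cref{lem:beta-pres} because $u$ is a morphism of Hopf algebras. Thus $\psi\in\genhomh$ if and only if $\psi\circ S_\omega = S_\eta\circ\psi$, where I write $S_\omega,S_\eta$ for the antipodes of the twisted doubles and $S_1,S_{1'}$ for those of $\D(G),\D(H)$. The key structural observation is that the quadruple $\morphquad$ is literally the same whether $\psi$ is regarded as a map of twisted or of untwisted doubles, since the underlying vector spaces and the elements $e_g\#1,\,\varepsilon\# x$ coincide; hence the untwisted theory of \citep{ABM,K14} applies to this very quadruple, giving that $v,r$ are morphisms of Hopf algebras and that $r$ satisfies $r(xy)=r(x)[v(x)\lact r(y)]$ (as $\psi$ is an algebra morphism of untwisted doubles).

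Comparing antipode formulas, I would write $S_\omega(e_g\# x)=\lambda_\omega(g,x)\,S_1(e_g\# x)$ with the scalar $\lambda_\omega(g,x)=\theta_{g\inv}(x,x\inv)\inv\gamma_x(g,g\inv)\inv$, and likewise $S_\eta(e_j\# s)=\lambda_\eta(j,s)\,S_{1'}(e_j\# s)$. Using $\psi\circ S_1=S_{1'}\circ\psi$ (valid since $\psi\in\Hom(\D(G),\D(H))$), I would rewrite $\psi\circ S_\omega(e_g\# x)=\lambda_\omega(g,x)\,S_{1'}(\psi(e_g\# x))$, so that both $\psi\circ S_\omega(e_g\# x)$ and $S_\eta\circ\psi(e_g\# x)$ are expanded against the same family $S_{1'}(e_j\# s)$. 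Because $S_{1'}$ is a bijection these vectors are linearly independent, so the antipode equation collapses to the scalar identities $\lambda_\omega(g,x)=\lambda_\eta(j,bv(x))$ for every nonzero term $e_j\# bv(x)$ occurring in $\psi(e_g\# x)$, the relevant index set being described by \cref{lem:u-simpler}.

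It then remains to simplify $\lambda_\eta(j,bv(x))$ into an $\omega$-quantity. First I would eliminate the dependence on $b\in B$ using the first and third parts of \cref{prop:thetaprime-bv-vanish}, which give $\theta'_{j\inv}(bv(x),(bv(x))\inv)=\theta'_{j\inv}(v(x),v(x\inv))$ and $\gamma'_{bv(x)}(j,j\inv)=\gamma'_{v(x)}(j,j\inv)$, using $(bv(x))\inv=v(x\inv)b\inv$. Next I would convert the primed phases back to phases of $\omega$: since $r(xy)=r(x)[v(x)\lact r(y)]$ holds, the corollary to \cref{thm:r-twist-alg} yields \cref{eq:theta-conversion}, whence $\theta'_{j\inv}(v(x),v(x\inv))=\theta_{u^*(j)\inv}(x,x\inv)$; and since $r$ is a morphism of coalgebras, \cref{thm:r-is-coalg} via \cref{eq:gamma-uv} gives $\gamma'_{v(x)}(j,j\inv)=\gamma_x(u^*(j),u^*(j)\inv)$. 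Combining, $\lambda_\eta(j,bv(x))=\lambda_\omega(u^*(j),x)$, so the scalar condition becomes $\lambda_\omega(g,x)=\lambda_\omega(u^*(j),x)$ for all $j$ with $u^*(j)\in gA$.

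Finally I would reindex by setting $h=u^*(j)=ga\inv$ with $a\in A$, so that $h\inv=ag\inv$ and $\lambda_\omega(g,x)=\lambda_\omega(ga\inv,x)$ unwinds, after clearing inverses, to
\[\theta_{ag\inv}(x,x\inv)\gamma_x(ga\inv,ag\inv)=\theta_{g\inv}(x,x\inv)\gamma_x(g,g\inv),\]
which is the identity in the statement (the assertion that $g\mapsto\theta_{g\inv}(x,x\inv)\gamma_x(g,g\inv)$ is constant on the cosets $gA$ meeting $\Img(u^*)$); the sufficiency direction is the same computation read backwards. The step I expect to be the main obstacle is marshaling the phase-conversion identities \cref{eq:theta-conversion,eq:gamma-uv}: one must check carefully that this single quadruple simultaneously meets the hypotheses of \cref{thm:r-twist-alg} and \cref{thm:r-is-coalg} (so that $r$ functions as both the algebra-type and coalgebra-type component inherited from the untwisted structure), and that the $b$-cancellation from \cref{prop:thetaprime-bv-vanish} is legitimate for the precise arguments that arise. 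Reconciling the derived coset-invariance with the universally quantified form over all $a\in A,\,g\in G$ is the remaining bookkeeping point.
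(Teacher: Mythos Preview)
Your proposal is correct and follows essentially the same route as the paper: reduce to antipode preservation via \cref{lem:beta-pres}, factor the twisted antipodes as scalar multiples of the untwisted ones, use $\psi\circ S_{\D(G)}=S_{\D(H)}\circ\psi$ to put both sides over the common family $S_{\D(H)}(e_j\# bv(x))$, eliminate the $b$-dependence via \cref{prop:thetaprime-bv-vanish}, and then convert the $\eta$-phases into $\omega$-phases using the compatibility identities for $r$. The two minor deviations are worth noting. First, for the $\theta'$-to-$\theta$ conversion the paper invokes \cref{thm:r-is-alg} directly (its hypotheses $u^*,v,r$ coalgebra and $\theta'_j(b,v(x))=1$ are all in hand via \cref{lem:untwist-cond}), whereas you go through the corollary to \cref{thm:r-twist-alg}; both routes land on the same identity. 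Second, for the reverse implication the paper does not read off the scalars term-by-term as you do, but instead passes through \cref{cor:swap-ab-decomp}, isolates the $b=1$ contribution, and applies the averaging \cref{lem:average} to force equality for \emph{every} $a\in A$; your linear-independence argument yields the identity only for those $a$ with $ga\inv\in\Img(u^*)$, which is exactly the ``bookkeeping point'' you flag at the end and which the paper's averaging step resolves.
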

\begin{proof}
By \cref{lem:beta-pres} we need only show that $S\circ \morphquad = \morphquad\circ S$.

Let $\psi=\morphquad\in\genhom\cap\Hom(D(G),D(H))$.  Note that by \citep[Theorem 2.1 and Corollary 2.3]{K14} the assumptions imply that $p,u,r,v$ are all morphisms of Hopf algebras.  Then we have
\begin{align}
  \psi(S(e_g\# x)) =&\ \theta_{g\inv}(x,x\inv)\inv \gamma_x(g,g\inv)\inv \psi(S_{\D(G)}(e_g\# x))\label{eq:psi-S},
\end{align}
and by \cref{lem:u-simpler}
\begin{align}
  S(\psi(e_g\# x)) =& S( \sum r(x) e_j \# p(a,b)b v(x) )\nonumber\\
    =& \sum\label{eq:S-psi}
            \theta'_{j\inv }(bv(x),(bv(x))\inv)\inv \gamma'_{bv(x)}(j,j\inv)\inv\\
             &\qquad S_{\D(H)}(r(x) e_j \# p(a,b)b v(x)\inv),\nonumber
\end{align}
where these last two sums are over all $j\in H$, $a\in A$, and $b\in B$ satisfying $u^*(j)=ga\inv$.  The statement that $\psi$ is a morphism of quasi-Hopf algebras is then equivalent to \eqref{eq:psi-S}=\eqref{eq:S-psi}.

Let us consider the final sum in \cref{eq:S-psi}.  By \cref{prop:thetaprime-bv-vanish}
\[ \theta'_{j\inv}(bv(x),(bv(x))\inv) = \theta'_{j\inv}(v(x),v(x)\inv),\]
so using $u^*(j)=ga\inv$ and \cref{thm:r-is-alg} we have
\[ \theta'_{j\inv}(bv(x),(bv(x))\inv) = \theta_{ag\inv}(x,x\inv).\]

Continuing to apply \cref{prop:thetaprime-bv-vanish}, we also have
\[\gamma'_{bv(x)}(j,j\inv) = \gamma'_{v(x)}(j,j\inv),\]
and so for $u^*(j)=ga\inv$ by \cref{thm:r-is-coalg} we get
\[\gamma'_{bv(x)}(j,j\inv) = \gamma_x(ga\inv,ag\inv).\]

Now if we have
\[ \theta_{ag\inv}(x,x\inv)\gamma_x(ga\inv,ag\inv) = \theta_g(x,x\inv)\gamma_x(g,g\inv)\]
for all $a\in A$ and $g,x\in G$, it then follows that $\psi\in\genhomh$ if and only if $\psi S_{\D(G)} = S_{\D(H)}\psi$, and this is guaranteed by the assumption $\psi\in\Hom(\D(G),\D(H))$.

On the other hand suppose \eqref{eq:psi-S}=\eqref{eq:S-psi}. By \cref{cor:swap-ab-decomp}, \cref{eq:psi-S} is equal to
\begin{align}\label{eq:psi-S2}
    \theta_{g\inv}(x,x\inv)\inv \gamma_x(g,g\inv)\inv \sum_{\mathclap{u^*(k)=x\inv g\inv a\inv x}}r(x\inv,k)e_k\# p(a,b)v(x\inv)b,
\end{align}
and we may rewrite \cref{eq:S-psi} as
\begin{align}\label{eq:S-psi2}
    \sum \theta_{ag\inv}(x,x\inv)\inv \gamma_x(ga\inv,ag\inv)\inv r(x\inv,k) e_k\# p(a\inv,b)v(x\inv)b.
\end{align}

Equality then means that for any fixed $k\in H$ and $x\in G$, we may consider the $b=1$ terms.  We obtain
\[ \frac{1}{|A|} \sum_{a\in A} \theta_{ag\inv}(x,x\inv)\inv \gamma_x(ga\inv,ag\inv)\inv = \theta_{g\inv}(x,x\inv)\inv \gamma_x(g,g\inv)\inv.\]
The desired equality now follows from \cref{lem:average}.

This completes the proof.
\end{proof}

\begin{cor}\label{cor:qb-is-qh}
  If $\morphquad\in\genhom\cap\Hom(\D(G),\D(H))$ has $u$ Hopf and $A\Img(u^*)=G$ then \[\morphquad\in\genhomh.\]
\end{cor}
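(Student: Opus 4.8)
The statement is a corollary of \cref{thm:QB-is-hopf}, so the plan is simply to verify that its hypothesis---the phase identity relating $\theta_{ag\inv}(x,x\inv)\gamma_x(ga\inv,ag\inv)$ to $\theta_{g\inv}(x,x\inv)\gamma_x(g,g\inv)$ for all $a\in A$ and $g,x\in G$---is forced to hold once $A\Img(u^*)=G$. The first step is to invoke \cref{thm:gen-omega-u}: since $u$ is a morphism of Hopf algebras and $A\Img(u^*)=G$, the cocycle $\omega$ lies in $Z^3(G/A,U(1))$, meaning that $\omega(g_1,g_2,g_3)$ depends only on the cosets $g_iA$. Because $A$ is normal in $G$ by \cref{lem:vp-comm}, we have $aA=Aa=A$ and $x\inv ax\in A$ for all $a\in A$, $x\in G$, so multiplying any argument of $\omega$ by an element of $A$ on either side---and even conjugating such a factor---leaves the relevant coset, hence the value of $\omega$, unchanged.

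The substance of the argument is then to propagate this coset-invariance through the multiplicative and comultiplicative phases defined in \cref{eq:theta-def,eq:gamma-def}. Substituting $ag\inv$ for the subscript and $(x,x\inv)$ for $(x,y)$ in \cref{eq:theta-def}, the conjugated argument $(xy)\inv(\cdot)(xy)$ collapses (as $xy=1$), and the three resulting $\omega$-factors have first/third/middle arguments lying in the cosets $g\inv A$, $g\inv A$, and $x\inv g\inv x\,A$ respectively; each is therefore unchanged upon deleting $a$, giving $\theta_{ag\inv}(x,x\inv)=\theta_{g\inv}(x,x\inv)$. An entirely parallel inspection of \cref{eq:gamma-def} for $\gamma_x(ga\inv,ag\inv)$ uses that $ga\inv$ and $ag\inv$ lie in $gA$ and $g\inv A$, and that the conjugated arguments $x\inv ga\inv x$ and $x\inv ag\inv x$ lie in $x\inv gx\,A$ and $x\inv g\inv x\,A$; comparing factor by factor yields $\gamma_x(ga\inv,ag\inv)=\gamma_x(g,g\inv)$.

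Multiplying these two equalities produces exactly the identity demanded by \cref{thm:QB-is-hopf}, whence $\morphquad\in\genhomh$. I do not expect a serious obstacle: the only delicate point is the coset bookkeeping inside the conjugated middle and third arguments of $\theta$ and $\gamma$, where one must repeatedly use normality of $A$ (so that $x\inv ax\in A$) to see that invariance survives the inner conjugation and not merely the outer multiplication. It is worth noting that, since each $\omega$-factor is individually invariant, the matching in fact holds termwise; in particular the corollary avoids the averaging over $A$ via \cref{lem:average} that appears in the converse direction of \cref{thm:QB-is-hopf}.
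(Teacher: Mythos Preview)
Your proof is correct and follows the same route as the paper: invoke \cref{thm:gen-omega-u} to get $\omega\in Z^3(G/A,U(1))$, note that the phases $\theta,\gamma$ then depend only on $A$-cosets in every argument (using normality of $A$), and apply \cref{thm:QB-is-hopf}. The paper compresses this into the single line ``the phases $\theta,\gamma$ are also well-defined on $G/A$,'' whereas you unpack the coset bookkeeping factor by factor, but the argument is identical.
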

\begin{proof}
 By \cref{thm:gen-omega-u} we have $\omega\in Z^3(G/A,U(1))$, from which it follows that the phases $\theta,\gamma$ are also well-defined on $G/A$.  We then get the result by applying the preceding theorem.
\end{proof}

\section{Rigid Isomorphisms}\label{sec:rigid-isoms}
We are now ready to establish the main results of this paper.
\begin{df}\label{df:rigid-isoms}
  We say a quasi-bialgebra isomorphism \[\morphquad\colon \D^\omega(G)\to\D^\eta(H)\] is rigid if $\morphquad\in\Hom(\D(G),\D(H))$. We denote the set of all such isomorphisms by $\genisomr$.  We also denote $\Isom_{\operatorname{rigid}}(\D^\omega(G),\D^\omega(G))$ by $\genautr$.
\end{df}
The definition of $\genisomr$ means that $\morphquad$ is also a Hopf algebra isomorphism $\D(G)\cong\D(H)$. It is easy to verify that $\genautr$ forms a subgroup of $\Aut(\D^\omega(G))$, where the composition can be computed in $\Aut(D(G))$.

\begin{cor} If $\genisomr\neq\emptyset$ then $G\cong H$.  Furthermore, if $\morphquad\in\genisomr$ then $\morphquad\in \genhomh$.
\end{cor}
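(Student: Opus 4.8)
The plan is to handle the two assertions separately, as they rest on different inputs, while exploiting the single structural observation underlying both: a rigid isomorphism $\psi=\morphquad$ is by definition a bijective element of $\genhom$ that simultaneously lies in $\Hom(\D(G),\D(H))$, and hence (as noted after \cref{df:rigid-isoms}) is an honest Hopf algebra isomorphism $\D(G)\cong\D(H)$.

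For the first assertion I would argue that the mere existence of such a Hopf algebra isomorphism of the \emph{untwisted} doubles already forces $G\cong H$. This is exactly what the classification of $\Hom(\D(G),\D(H))$ in \citep{K14,KS14,ABM} provides: the component maps and subgroups attached to a Hopf algebra isomorphism $\D(G)\cong\D(H)$ assemble into an isomorphism of the underlying groups. Thus the only new input is the reduction to the untwisted setting, which is immediate from the definition of rigidity. (One gets $|G|=|H|$ for free from $\dim\D^\omega(G)=|G|^2$, but the group isomorphism itself is the cited statement.)

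For the second assertion I would aim squarely at \cref{cor:qb-is-qh}, whose hypotheses are that $u$ is a morphism of Hopf algebras and that $A\Img(u^*)=G$; the entire task is to deduce these from bijectivity. First, bijectivity of $\psi$ yields via \cref{cor:inj-surj-cent} that $A\subseteq Z(G)$, $B\subseteq Z(H)$, and $p\comm v$. Since $\psi\in\Hom(\D(G),\D(H))$ and $B\subseteq Z(H)$, \cref{cor:B-is-central} shows $u$ is an algebra morphism; and $u$ is automatically a coalgebra morphism for any element of $\Hom(\D(G),\D(H))$ by \citep{K14,ABM} (as already invoked in the proof of \cref{lem:untwist-cond}). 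Hence $u$ is Hopf. With $u$ Hopf the dual $u^*\colon\kk H\to\kk G$ is a Hopf morphism, so by \cref{lem:coalg-to-group,lem:u-simpler} it is the linearization of a group homomorphism $H\to G$ and $\Img(u^*)$ is a genuine subgroup of $G$. Injectivity of $\psi$ then gives $\kk A\Img(u^*)=\kk G$ from \cref{thm:inj-surj}; as $A$ is a central (hence normal) subgroup and $\Img(u^*)$ is a subgroup, $A\Img(u^*)$ is a subgroup of $G$, and $\kk(A\Img(u^*))=\kk G$ forces the set-level equality $A\Img(u^*)=G$. Feeding both hypotheses into \cref{cor:qb-is-qh} yields $\psi\in\genhomh$.

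The main obstacle is the verification that $u$ is a morphism of Hopf algebras: this is the one place where several previously separate facts must be combined, namely centrality of $B$ coming from bijectivity, the algebra-morphism criterion of \cref{cor:B-is-central}, and the coalgebra-morphism property inherited from the untwisted theory. It is also precisely this fact that lets one upgrade the algebra-level identity $\kk A\Img(u^*)=\kk G$ to the group-level statement $A\Img(u^*)=G$ demanded by \cref{cor:qb-is-qh}; everything else is bookkeeping layered on top of results already in place.
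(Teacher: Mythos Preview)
Your proposal is correct and follows essentially the same route as the paper's proof: the paper cites \citep[Theorem 3.5]{K14} for the first claim and then invokes \citep[Corollary 3.3]{ABM} together with \cref{thm:inj-surj,cor:inj-surj-cent,cor:B-is-central,cor:qb-is-qh} for the second, which is precisely the chain you unpack. The only extra detail you supply is the explicit passage from $\kk A\Img(u^*)=\kk G$ to the group-level equality $A\Img(u^*)=G$ via the observation that $u$ Hopf makes $\Img(u^*)$ a subgroup; the paper leaves this implicit.
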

\begin{proof}
  The first statement follows from \citep[Theorem 3.5]{K14}.  The second follows from \citep[Corollary 3.3]{ABM}, \cref{thm:inj-surj,cor:inj-surj-cent,cor:B-is-central,cor:qb-is-qh}
\end{proof}
The last claim is worth restating in words: the rigid quasi-bialgebra isomorphisms are in fact quasi-Hopf algebra isomorphisms.  As such we do not need to introduce the (obvious) notion of  rigid quasi-Hopf algebra isomorphism. The first claim also says that we lose no generality by restricting focus to $\genisomrg$.

\begin{thm}\label{thm:rigid-isom-equiv}
  Let $\morphquad\in\genhomg$ be an isomorphism.  Then $\morphquad$ is rigid if and only if the following all hold.
  \begin{enumerate}
    \item $\omega\in Z^3(G/A,U(1))$.\label{item:omega-mod-A}
    \item $\gamma_g(u^*(x),u^*(y)) = \gamma'_{v(g)}(x,y)$ for all $g\in G$ and $x,y\in H$.\label{item:gamma-compat}
    \item $\theta_{u^*(x)}(g,h) = \theta'_x(v(g),v(h))$ for all $g,h\in G$ and $x\in H$.\label{item:theta-compat}
    \item $\theta'_x(y,z)=1$ for all $x\in H$ and $(y,z)\in B\times\Img(v)\cup \Img(v)\times B\cup B\times B$.\label{item:theta-B}
    \item $\gamma'_b(x,y)=1$ for all $b\in B$ and $x,y\in H$.\label{item:gamma-B}
  \end{enumerate}
\end{thm}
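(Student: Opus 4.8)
The plan is to prove both implications by matching conditions (1)--(5) against the structural theorems already established for the components $p,u,r,v$. Throughout I will use that, since $\morphquad$ is an isomorphism, \cref{thm:inj-surj,cor:inj-surj-cent} already supply $A\subseteq Z(G)$, $B\subseteq Z(H)$, $p\comm v$, and $\kk A\Img(u^*)=\kk G$; once $u$ is known to be Hopf this last equality upgrades to the group-level statement $A\Img(u^*)=G$. The guiding principle is that (1)--(5) are exactly calibrated so that $p,u,r,v$ are all morphisms of Hopf algebras and so that every multiplicative or comultiplicative phase occurring in the identities of \cref{sec:bialg-ids} collapses to $1$ on the relevant support.

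For the forward direction I would first invoke \citep{ABM,K14}: a rigid isomorphism lies in $\Hom(\D(G),\D(H))$, so $v,r$ are morphisms of Hopf algebras, $u$ is a coalgebra morphism, and \cref{eq:u-mult} holds; since $B\subseteq Z(H)$ by \cref{cor:inj-surj-cent}, \cref{cor:B-is-central} makes $u$ an algebra morphism, hence Hopf. With $u$ Hopf and $A\Img(u^*)=G$, \cref{thm:gen-omega-u} yields (1). The partial converse in \cref{prop:u-is-coalg} gives (5), while \cref{lem:untwist-cond} together with the partial converse in \cref{prop:u-is-alg} (using \cref{eq:u-mult}) gives the three triviality statements of (4). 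Finally, since $u^*,v,r$ are coalgebra morphisms, \cref{thm:r-is-coalg} returns (2), and since $r$ is also an algebra morphism and $\theta'_j(b,v(x))=1$ by (4), \cref{thm:r-is-alg} returns (3).

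For the converse I would reverse this bookkeeping to recover the components, then show the same linear map is a morphism $\D(G)\to\D(H)$. Here (1) forces $v$ Hopf by \cref{thm:v-is-bialg} and makes $\theta_g,\gamma_x$ depend only on cosets of $A$; the $B\times B$ part of (4) with $B\subseteq Z(H)$ forces $u$ an algebra morphism via \cref{prop:u-is-alg}, while (5) forces $u$ a coalgebra morphism via \cref{prop:u-is-coalg}, so $u$, hence $u^*$, is Hopf; then (2) and (3) give $r$ a coalgebra and an algebra morphism through \cref{thm:r-is-coalg,thm:r-is-alg}. To finish, I would compare, for the untwisted multiplication, $\psi(a\cdot_1 b)$ with $\psi(a)\cdot_1\psi(b)$ for $a=e_g\# x$, $b=e_{g'}\# x'$: the twisted identity $\psi(a\cdot_\omega b)=\psi(a)\cdot_\eta\psi(b)$ already holds, and the two products differ on the domain by $\theta_g(x,x')$ and on the codomain (whose image is supported on basis vectors $e_j\# bv(x)$ with $b\in B$) by $\theta'_j(bv(x),b'v(x'))$. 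Using that $B$ is closed under $\Img(v)$-conjugation by \cref{eq:vp-comm}, \cref{lem:theta-quot} (whose hypotheses are precisely (4), with $X=\Img(v)$) reduces the codomain phase to $\theta'_j(v(x),v(x'))$, which (3) converts to $\theta_{u^*(j)}(x,x')$, which (1) identifies with $\theta_g(x,x')$ since $u^*(j)\equiv g \pmod A$. The two discrepancies cancel, giving the untwisted algebra identity; the dual computation with $\gamma,\gamma'$ uses (5) and \cref{eq:struc-compat} to reduce $\gamma'_{bv(x)}$ to $\gamma'_{v(x)}$, then (2) and (1) to match it with the domain's $\gamma_x$. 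As $\psi$ is then both an algebra and a coalgebra morphism between the Hopf algebras $\D(G),\D(H)$, it is automatically a morphism of Hopf algebras, i.e.\ rigid.

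The main obstacle is exactly this final reduction in the converse: unlike the $v$- and $u$-projected identities, the full algebra identity \cref{eq:alg-inner} carries a phase $\theta_g(x,x')$ with $g\notin A$ in general, so no single one of (1)--(5) trivializes it. The crux is to verify that the chain $\theta'_j(bv(x),b'v(x'))\to\theta'_j(v(x),v(x'))\to\theta_{u^*(j)}(x,x')\to\theta_g(x,x')$ reproduces the domain phase exactly, together with its $\gamma$-analogue, so that the twisted identities degrade cleanly to the untwisted ones rather than merely to an $A$-averaged surrogate; \cref{lem:average} will be needed to clear the averaged terms that survive before the support is pinned down.
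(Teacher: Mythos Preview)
Your proposal is correct and follows essentially the same route as the paper: the forward direction assembles (1)--(5) from the structural results on $p,u,r,v$ exactly as the paper does, and your reverse direction is the same reduction of the twisted identities to their untwisted analogues via the chain $\theta'_j(bv(x),b'v(x'))\to\theta'_j(v(x),v(x'))\to\theta_{u^*(j)}(x,x')\to\theta_g(x,x')$ (and its $\gamma$-analogue), which is precisely what the paper carries out using \cref{prop:thetaprime-bv-vanish} and \cref{lem:u-simpler}. Two minor remarks: you are right to invoke \cref{lem:theta-quot} and \cref{eq:struc-compat} directly from (4)--(5) rather than cite \cref{prop:thetaprime-bv-vanish}, whose stated hypotheses presuppose rigidity; and your closing worry about needing \cref{lem:average} is unnecessary, since once $u$ is Hopf \cref{lem:u-simpler} pins the support to a single $t\in A$ with no averaging required.
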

\begin{proof}
  For the forward direction, combine all of \cref{lem:gen-u-omega-eta,thm:gen-omega-u,cor:inj-surj-cent,thm:u-is-bialg,thm:v-is-bialg,thm:r-is-coalg,thm:r-is-alg,lem:theta-quot,lem:untwist-cond} with \citep[Theorem 2.1 and Corollary 2.3]{K14}.

  For the reverse direction, we suppose that \cref{item:omega-mod-A,item:gamma-compat,item:theta-compat,item:theta-B,item:gamma-B} all hold.  Our goal is to show that all components are morphisms of Hopf algebras, and to show that the identities in \cref{sec:bialg-ids} yield the defining identities for $\End(\D(G))$ from \citep[Theorem 2.1 and Corollary 2.3]{K14}.

  From the assumption that $\morphquad$ is an isomorphism, by \cref{cor:inj-surj-cent} we conclude that $A,B\subseteq Z(G)$.  From \cref{item:omega-mod-A} applied to \cref{thm:v-is-bialg} we conclude that $v$ is a morphism of Hopf algebras.  From \cref{item:theta-B,item:gamma-B} applied to \cref{thm:u-is-bialg} we conclude that $u$ is a morphism of Hopf algebras.  Applying these facts along with \cref{item:gamma-compat,item:theta-compat} to \cref{thm:r-is-coalg,thm:r-is-alg}, we conclude that $r$ is a morphism of Hopf algebras.  So we now consider the identities of \cref{sec:bialg-ids}.

  We consider the algebra identities first.  The assumptions give that \cref{eq:alg-inner} is equal to
    \begin{align}\label{eq:alg-inner-red}
      \delta_{g,xg'x\inv} \theta_g(x,x') \sum_{t\in A} u(e_{gt\inv})r(xx')\# p(e_t)v(xx'),
    \end{align}
  while \cref{eq:alg-outer} is equal to
    \begin{align*}
      \sum_{\substack{t,t'\in A\\b,b'\in B}} u(e_{gt\inv}) [v(x)\lact u(e_{g't^{\prime\inv}})] r(xx')p(t,b)p(t',b')\theta_j'(bv(x),b'v(x')) e_j \# bb' v(xx').
    \end{align*}
  Applying \cref{prop:thetaprime-bv-vanish} this reduces to
    \begin{align}\label{eq:alg-outer-red}
      \sum \theta_j'(v(x),v(x'))u(e_{gt\inv}) [v(x)\lact u(e_{g't^{\inv}})] r(xx') e_j \# p(e_t) v(xx').
    \end{align}
  For any fixed $j\in H$ and $g\in G$ in this sum, for the term to be non-zero by \cref{lem:u-simpler} we must have that $g,t$ satisfy $gt\inv = u^*(j)$, and so $t\in A$ is uniquely determined.  By \cref{item:theta-compat,item:omega-mod-A} we have
  \[ \theta_j'(v(x),v(x')) = \theta_{gt\inv}(x,x') = \theta_g(x,x').\]
  Since this value is necessarily non-zero, the equality of \cref{eq:alg-inner,eq:alg-outer} reduces to
  \begin{align*} \delta_{g,xg'x\inv} &\sum_{t\in A} u(e_{gt\inv})r(xx')\# p(e_t)v(x)\\
  =& \sum_{t\in A} u(e_{gt\inv}) [v(x).u(e_{g't^{\inv}})] r(xx') \# p(e_t) v(xx'),\end{align*}
  which is precisely the relation needed for $\morphquad$ to also define an algebra endomorphism of $\D(G)$.

  Now we look at the coalgebra identities.  Under the assumptions and applying \cref{prop:thetaprime-bv-vanish} we find that \cref{eq:coalg-outer} simplifies to
  \begin{align}\label{eq:coalg-outer-red}
    \sum_{\substack{j,l\in H\\t\in A\\b\in B}} \gamma'_{v(x)}(jl\inv,l)&u(gt\inv,j)r(x,j)p(t,b) e_{jl\inv}\# bv(x) \ot e_l \# bv(x)\nonumber\\
    =& \ \sum_{\substack{j,l\in H\\t\in A\\b\in B}} \gamma_x(gt\inv,t) u(gt\inv,j) r(x,j)p(t,b) e_{jl\inv}\# bv(x) \ot e_l\# bv(x)\nonumber\\
    =& \ \Delta_{\D(G)}\left( u(e_{gt\inv})r(x)\# p(e_t)v(x)\right)
  \end{align}
  On the other hand, \cref{eq:coalg-inner} simplifies to
  \begin{align*}
    \sum u(e_{gt\inv l\inv}) r(x)\# p(e_l)v(x) \ot u(e_{tk\inv})r(x)\# p(e_k)v(x) = \psi\ot\psi\circ\Delta_{\D(G)}(e_g\# x).
  \end{align*}
  These give precisely the relation needed for $\morphquad$ to define a coalgebra endomorphism of $\D(G)$.

  Combined, we conclude that $\morphquad$ is a Hopf algebra endomorphism of $\D(G)$, and it is bijective by default, and thus $\morphquad\in\Aut(\D(G))$.  Therefore by definition $\morphquad\in\genisomrg$, as desired.

  This completes the proof.
\end{proof}

As a corollary to the theorem, we see that not only can we typically expect $\genautr$ to be non-trivial, but that it typically has a non-trivial subgroup which is independent of the choice of 3-cocycle.

\begin{cor}\label{cor:bicharacters}
  The map $\BCh{G}\to \genautr$ given by $r\mapsto (0,1,r,1)$ is an injective group homomorphism.  As a consequence, there is an injective group homomorphism
  \begin{align*}
    \BCh{G}\to\bigcap_{\mathrlap{\omega\in Z^3(G,U(1))}}\Aut(\D^\omega(G)).
  \end{align*}
\end{cor}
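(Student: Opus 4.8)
The plan is to make the quadruple $(0,1,r,1)$ completely explicit and then verify the axioms directly, leaning on the fact that $r\in\BCh{G}=\Hom(G,\widehat G)$ is a homomorphism into the abelian character group. First I would unpack the data: by \cref{lem:coalg-to-group} an element $r\in\Hom(G,\widehat G)$ is the same as a Hopf morphism $r\colon\kk G\to\du G$, necessarily biunital, with $r(x)=\sum_{h}r(x)(h)\,e_h$; the first component $p=0$ denotes the trivial Hopf morphism with $A=B=\{1\}$, so $p(e_k)=\delta_{k,1}$, while $u=v=1$ are the identity maps. Substituting these into \cref{eq:morphdef} and carrying out the single-term product in $\D^\omega(G)$, I expect the quadruple to yield the diagonal map
\[\psi_r(e_g\# x)=r(x)(g)\,e_g\# x,\]
where $r(x)(g)$ is the value of the character $r(x)$ at $g$. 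Pinning down this formula cleanly is the first step, since everything afterwards is a computation with it.

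Next I would check that $\psi_r$ is a quasi-bialgebra endomorphism of $\D^\omega(G)$ for every $\omega$, straight from the structure maps. Compatibility with the product reduces, after cancelling the common factor $\delta_{g,xg'x\inv}\theta_g(x,x')$, to the identity $r(x)(g)\,r(x')(x\inv gx)=r(xx')(g)$; this holds because $r$ is multiplicative in the first slot and because $r(x')$ is a character, hence a class function, giving $r(x')(x\inv gx)=r(x')(g)$. Compatibility with the comultiplication reduces to $r(x)(s)\,r(x)(t)=r(x)(st)$ for $st=g$, which is precisely multiplicativity of $r(x)$; the counit is preserved since $r(x)(1)=1$; and the coassociator is preserved because its support consists of terms $e_a\#1\otimes e_b\#1\otimes e_c\#1$ and $r(1)=\varepsilon$ is the trivial character, so each scalar $r(1)(\cdot)=1$. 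The identical computations with $\omega\equiv1$ show $\psi_r$ is a bialgebra, hence Hopf, endomorphism of $\D(G)$; since every scalar $r(x)(g)$ has modulus $1$ and is therefore nonzero, $\psi_r$ is a diagonal bijection and so lies in $\Aut(\D(G))$. By \cref{df:rigid-isoms} this already places $\psi_r\in\genautr$. (Alternatively, once $\psi_r$ is known to be a quasi-bialgebra automorphism, rigidity is immediate from \cref{thm:rigid-isom-equiv}: with $A=B=\{1\}$ and $u=v=\id$ all five of its conditions are vacuous or tautological.)

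It then remains to verify the two algebraic assertions. For the homomorphism property I would compute $\psi_{r_1}\circ\psi_{r_2}(e_g\#x)=r_2(x)(g)\,r_1(x)(g)\,e_g\#x=\psi_{r_1r_2}(e_g\#x)$, where $r_1r_2$ is the pointwise product in $\Hom(G,\widehat G)$, using that composition in $\genautr$ is computed in $\Aut(\D(G))$; injectivity follows because $\psi_r=\id$ forces $r(x)(g)=1$ for all $g,x$, i.e.\ $r$ is the trivial homomorphism. For the final consequence I observe that the formula for $\psi_r$ involves no $\omega$ and that the verification above was uniform in $\omega$, so the single assignment $r\mapsto\psi_r$ produces a rigid automorphism of $\D^\omega(G)$ simultaneously for every $\omega$; since every rigid isomorphism is genuinely a quasi-Hopf automorphism (as noted following \cref{df:rigid-isoms}), this injects $\BCh{G}$ into $\bigcap_{\omega}\Aut(\D^\omega(G))$.

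The only genuinely delicate point is the first step---establishing that $(0,1,r,1)$ really is the diagonal map $e_g\#x\mapsto r(x)(g)\,e_g\#x$---together with remembering to invoke the class-function property of $r(x')$ at exactly the place where conjugation by $x$ enters the product of $\D^\omega(G)$. After that, the remaining verifications are routine character manipulations that are, crucially, insensitive to $\omega$, which is what makes the intersection statement fall out for free.
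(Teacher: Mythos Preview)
Your proof is correct and complete. The key formula $\psi_r(e_g\# x)=r(x)(g)\,e_g\# x$ is right, and your verifications of the algebra, coalgebra, counit, and coassociator axioms all go through as stated; the class-function observation is exactly what is needed for the product, and bijectivity, the homomorphism property, and injectivity are immediate from the diagonal form.

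Your approach differs from the paper's. The paper's proof is a two-line appeal: it cites \citep[Proposition~5.4]{K14} for the fact that $r\mapsto(0,1,r,1)$ already defines an injective group homomorphism $\BCh{G}\hookrightarrow\Aut(\D(G))$, and then invokes \cref{thm:rigid-isom-equiv} with $p$ trivial and $u=v=\id$ (so $A=B=\{1\}$ and all five conditions there are vacuous) to conclude rigidity for every $\omega$. You instead derive the explicit diagonal formula and verify every axiom by hand. Your route is more self-contained---it does not rely on the external reference---and it makes the uniformity in $\omega$ visible at the level of the formula itself rather than as a consequence of a list of vanishing conditions. The paper's route is shorter because the untwisted case and the characterization of rigidity are already on record. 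You even note the paper's alternative in your parenthetical remark, so you are aware of both lines; either is fully adequate here.
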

\begin{proof}
  Apply \citep[Proposition 5.4]{K14} and the previous theorem to the special case where $\omega=\eta$, $p$ is trivial, and $u,v$ are identity maps.
\end{proof}
Recall that a group is said to be perfect if $G$ is equal to its derived subgroup $G'$, which is equivalent to $\widehat{G}\cong G/G'$ being trivial.  Since $\BCh{G}$ is equivalent to $\End(\widehat{G})$, which contains at least the trivial morphism and the identity morphism, we see that this subgroup is non-trivial if and only if $G$ is not perfect.

More generally, one may wonder if this inclusion is actually a bijection.  If $\morphquad$ is an element of the intersection which has $p$ non-trivial, then this would imply that there exists a non-trivial central subgroup $A$ of $G$ such that $G$ and $G/A$ have the same 3-cocycles: $Z^3(G,U(1))=Z^3(G/A,U(1))$.  On the other hand, if either $p$ or $r$ is trivial then it follows from \citep[Lemma 3.8]{K14} that $u,v$ are isomorphisms of Hopf algebras, and by \cref{lem:gen-u-omega-eta} we would conclude that $u^*$ acts trivially on $Z^3(G,U(1))$.  This leads to the following question.
\begin{question}\label{q:universal}
  Does there exist a finite group $G$ satisfying either of the following?
    \begin{enumerate}
      \item $G$ admits a non-trivial, proper, central subgroup $A$ such that \[Z^3(G/A,U(1))= Z^3(G,U(1))\] via the canonical projection $G\to G/A$.
      \item There exists $1\neq\alpha\in\Aut(G)$ such that $\alpha$ acts trivially on $Z^3(G,U(1))$.
    \end{enumerate}
\end{question}
If both are answered negatively---which seems probable to the author---then we are left to conclude that we indeed have a bijection, and that the intersection in question is non-trivial if and only if $G$ is perfect.  On the other hand, a positive answer to either implies there can be surprisingly many non-identity automorphisms which are defined independently of $\omega$.  The existence of any at all may already be a surprise.

Note that if a group satisfies the first condition, then $A$ is not contained in an abelian direct factor of $G$.  Subsequently $\alpha\in\Aut(G)$ acts trivially on $Z^3(G,U(1))$ whenever $\alpha(g)g\inv \in A$ for all $g\in G$.  Equivalently, the map $g\mapsto \alpha(g)g\inv$ is in $\Hom(G,A)$.  Conversely, given a non-trivial $\phi\in\Hom(G,A)$, then $g\mapsto \phi(g)g$ defines an element of $\Aut_c(G)$ which acts trivially on $Z^3(G/A,U(1))$.  As such we expect the first condition to likely be the more restrictive of the two.  Either one requires a complete description of all 3-cocycles, which is computationally intensive and apparently largely unconsidered in the literature.  After all, one is normally only interested in the cohomology class, so while there are well-known ways to compute representatives for every class there has been scant need to go beyond this.

Alternatively, one may view this as a consequence of the naive intersection being the wrong choice.  Since one is often interested in the representation categories $\Rep(\D^\omega(G))$, one should really consider some modification that will only depend on $H^3(G,U(1))$.  One such choice would be
\[ \bigcap_{[\gamma]\in H^3(G,U(1))} \bigcup_{\omega,\eta\in[\gamma]} \genisomr,\]
though one must be careful to note that this set is not obviously closed under composition.  This is the set of all $\psi\in\Aut(\D(G))$ such that for every $[\gamma]\in H^3(G,U(1))$ there are $\omega,\eta\in[\gamma]$ such that.  $\psi$ defines a rigid isomorphism $\D^\omega(G)\cong \D^\eta(H)$.  Perhaps more easily to say, it is the set of all element of $\Aut(\D(G))$ which induce tensor autoequivalences of $\Rep(\D^\omega(G))$ for all $[\omega]\in H^3(G,U(1))$, up to natural transformations.  Since $\Inn(G)$ acts trivially on (co)homology, this set contains $\Inn(G)\times\BCh{G}$, and the question of whether this is the entire intersection leads to the following.
\begin{question}\label{q:universal2}
Does there exist a finite group $G$ satisfying either of the following?
\begin{enumerate}
  \item $G$ admits a non-trivial, proper, central subgroup $A$ such that every (normalized) 3-cocycle of $G$ is cohomologous to a (normalized) 3-cocycle of $G/A$ (viewed as a 3-cocycle of $G$ via the canonical projection).  Equivalently, the canonical projection induces a surjection $H^3(G/A,U(1))\to H^3(G,U(1))$.
  \item There exists $1\neq \alpha\in\Out(G)$ such that $\alpha$ acts trivially on $H^3(G,U(1))$.
\end{enumerate}
\end{question}
As before, these conditions are related in the sense that if the first condition holds for same $A$, then if there exists a group homomorphism $\phi\colon G\to A$ such that $g\mapsto \phi(g)g$ is a non-trivial outer automorphism of $G$, then it necessarily acts trivially on $Z^3(G/A,U(1))$ and thus acts trivially on $H^3(G,U(1))$ by assumptions.

The second condition concerns the question of when those braided autoequivalences obtained from $\Aut(G)$ in the natural way, which are equivalent to the identity functor up to natural transformations, properly contains $\Inn(G)$.  On the other hand, the first condition concerns isomorphisms of the form $(p,1,0,1)$, and the tensor autoequivalences obtained from these need not be braided even when $\omega\equiv 1$, though there may exist braided equivalences which agree with them as tensor functors \citep{LenPri:Brauer}.

The second condition has been studied before \citep{JacMar:AutsTrivOnCohom,Coleman64,Hertweck01}, and the following result of \citet{Hertweck01} yields an example satisfying the second condition via the universal coefficient theorem and the divisibility of $U(1)$, though whether any group can satisfy the first is left open.

\begin{thm}[\citet{Hertweck01}]
  There exists a finite group $G$ with a non-trivial outer automorphism which is an inner automorphism of the integral group ring $\BZ G$.
\end{thm}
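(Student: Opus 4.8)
The plan is to recognize this as a counterexample to the \emph{normalizer problem} for integral group rings and to produce one by explicit construction. An inner automorphism of $\BZ G$ is conjugation by a unit $u\in U(\BZ G)$; it restricts to an automorphism of $G$ precisely when $u$ normalizes $G$, that is $u\inv G u = G$, and after rescaling by a central unit we may assume $u$ has augmentation $1$. Thus the assertion is equivalent to finding a finite group $G$ together with a normalizing unit $u$ whose induced automorphism $\alpha_u(g)=u\inv g u$ fails to be an inner automorphism of $G$. First I would record the standard constraints on such an $\alpha_u$: since conjugation by $u$ fixes the centre $Z(\BZ G)$ pointwise and the conjugacy-class sums form a $\BZ$-basis of that centre, $\alpha_u$ must fix every class sum, hence is \emph{class-preserving}, meaning $\alpha_u(g)$ is $G$-conjugate to $g$ for every $g$. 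One also checks that some power of $\alpha_u$ is already inner, so it is harmless to aim for an $\alpha_u$ of prime order modulo $\Inn(G)$.

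Granting this reformulation, the construction proceeds in two stages. First I would build an explicit finite group $G$---necessarily of large order, assembled from a $p$-group carrying a suitable coprime operator action---possessing a non-inner, class-preserving automorphism $\alpha$. The mere existence of non-inner class-preserving automorphisms is classical (this is Burnside's question, answered negatively), but here $\alpha$ must be chosen with enough additional rigidity to be realizable by a ring unit. Second, and this is the crux, I would show that $\alpha$ is genuinely induced by conjugation by a unit of $\BZ G$. The natural route is local-to-global: verify that $\alpha$ is realized by a unit of $\BZ_p G$ for every prime $p$, where semisimplicity and $p$-adic tools make class-preserving automorphisms far easier to realize, and then assemble these local units into a single global normalizing unit while controlling the obstruction with the relevant cohomological bookkeeping.

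The hard part will be exactly this last realization step: passing from a formal, class-preserving automorphism to an honest global unit demands delicate control of partial augmentations and of the $p$-modular representation theory of $G$, and it is here that Hertweck's machinery (local analysis in the style of Roggenkamp--Scott, together with a careful choice of the operator group acting on the $p$-part of $G$) is indispensable. Since the construction is long and the verification highly technical, I would not reproduce it; instead I treat the statement as an input, citing \citep{Hertweck01}, and use only its conclusion---the existence of a group $G$ and a non-inner automorphism realized inside $\BZ G$---for the application at hand.
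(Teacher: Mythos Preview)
Your proposal correctly identifies the content of the theorem and ultimately lands on the same ``proof'' the paper gives: namely none at all, beyond the citation. In the paper this theorem is quoted verbatim from \citet{Hertweck01} with no argument supplied; it is invoked only for its conclusion, exactly as you suggest in your final paragraph. So on the level of what is actually asserted and used, you and the paper agree.

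The extra material you provide---the reformulation via normalizing units, the observation that any such automorphism must be class-preserving, and the local-to-global sketch---is accurate background on Hertweck's construction and would be appropriate in an expository note, but it goes beyond what the paper itself does. If you are matching the paper, a single sentence deferring to \citet{Hertweck01} suffices.
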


Now observe that the last two conditions of \cref{thm:rigid-isom-equiv} are trivially satisfied if $\eta\in Z^3(G/B,U(1))$.  Since $A,B$ are necessarily central by \cref{cor:inj-surj-cent}, applying \cite[Lemma 4.4]{K14} and \cref{thm:gen-omega-u} this statement and the second condition of the theorem are trivially satisfied if $\omega\in Z^3(G/Z(G),U(1))$ and $\omega^{u^*}=\eta$.

\begin{thm}\label{thm:quot-isoms}
  Suppose $\omega\in Z^3(G/Z(G),U(1))$.  Then $\morphquad\in\Aut(\D(G))$ satisfies $\morphquad\in\genisomrg$ if and only if $\omega^{u^*}=\eta$
\end{thm}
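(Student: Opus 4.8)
The forward implication is the easy one. If $\morphquad\in\genisomrg$ then by definition $\morphquad\in\genhomg$, and since $\morphquad\in\Aut(\D(G))$ its component $u$ is a morphism of Hopf algebras; thus \cref{lem:gen-u-omega-eta} applies and gives $\eta=\omega^{u^*}$ at once. This direction does not use the hypothesis $\omega\in Z^3(G/Z(G),U(1))$.

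For the reverse implication I would verify the five conditions of \cref{thm:rigid-isom-equiv}, regarding $\morphquad$ throughout as a bijective element of $\genhom$ with trivial cocycles (legitimate, since the subgroups $A$, $B$ and $\Img(u^*)$ depend only on the quadruple). Since $\morphquad$ is bijective, \cref{cor:inj-surj-cent} gives $A,B\subseteq Z(G)$. Condition \ref{item:omega-mod-A} is then immediate: $\omega$ is inflated from $G/Z(G)$ and $A\subseteq Z(G)$, so $\omega\in Z^3(G/A,U(1))$. Conditions \ref{item:gamma-compat}, \ref{item:theta-B}, and \ref{item:gamma-B} have already been disposed of in the discussion preceding the theorem, where $\eta\in Z^3(G/B,U(1))$ was used for the last two and \citep[Lemma 4.4]{K14} together with \cref{thm:gen-omega-u} for condition \ref{item:gamma-compat}. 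So the one genuinely remaining point is condition \ref{item:theta-compat}.

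The mechanism I would use for condition \ref{item:theta-compat} is that $u^*v(g)$ and $g$ differ by a central element. Bijectivity and \cref{thm:inj-surj} give $A\,\Img(u^*)=G$, and since $A\subseteq Z(G)$ this forces $Z(G)\Img(u^*)=G$ and therefore $C_G(\Img(u^*))=Z(G)$. As $\morphquad\in\Aut(\D(G))$ satisfies \eqref{eq:uv-rel}, \eqref{eq:uv-cent} yields $u^*v(g)g\inv\in C_G(\Img(u^*))=Z(G)$ for all $g$. I would then expand $\theta'_x(v(g),v(h))$ using $\eta=\omega^{u^*}$ and the fact that $u^*$ is a group homomorphism; the conjugations occurring inside $\theta'$ collapse because $u^*v(g)g\inv$ is central and centralizes $\Img(u^*)$, leaving $\theta'_x(v(g),v(h))$ written through values of $\omega$ at $u^*(x)$, $u^*v(g)$, $u^*v(h)$ and their conjugates. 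Finally, since $\omega$ is inflated from $G/Z(G)$, each occurrence of $u^*v(g)$ may be replaced by $g$ and of $u^*v(h)$ by $h$ without changing the value, and the expression becomes exactly $\theta_{u^*(x)}(g,h)$, which is condition \ref{item:theta-compat}. The identical manipulation with $\gamma$ in place of $\theta$ reproves condition \ref{item:gamma-compat}.

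With all five conditions established, I would close the argument by invoking the reverse-direction computations in the proof of \cref{thm:rigid-isom-equiv}: under conditions \ref{item:omega-mod-A}--\ref{item:gamma-B} those computations show that the defining identities of \cref{sec:bialg-ids} for $\morphquad$ as a map $\D^\omega(G)\to\D^\eta(G)$ are equivalent to the identities defining $\Aut(\D(G))$, which hold by hypothesis. Hence $\morphquad\in\genhomg$ is an isomorphism, and being a Hopf automorphism of $\D(G)$ it is rigid, so $\morphquad\in\genisomrg$. The main obstacle is condition \ref{item:theta-compat}: one must ensure the conjugations inside the multiplicative phase genuinely cancel, which is precisely where the centrality of $u^*v(g)g\inv$—and hence the hypothesis $\omega\in Z^3(G/Z(G),U(1))$—is indispensable, for without it the phases would detect the central discrepancy and the identity could fail.
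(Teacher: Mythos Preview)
Your proposal is correct and follows essentially the same route as the paper: verify the five conditions of \cref{thm:rigid-isom-equiv} by using $A,B\subseteq Z(G)$, the inflation hypothesis on $\omega$, the relation $\eta=\omega^{u^*}$, and the key fact $u^*v(g)g\inv\in Z(G)$, then read the computations in the proof of \cref{thm:rigid-isom-equiv} as an equivalence of identities to conclude. The only cosmetic differences are that the paper cites \cref{thm:gen-omega-u} for the forward direction where you cite \cref{lem:gen-u-omega-eta} (yours is the more direct reference), and the paper obtains $u^*v(g)g\inv\in Z(G)$ by invoking \citep[Lemma 4.4]{K14} directly whereas you recover it from \cref{thm:inj-surj} and \eqref{eq:uv-cent}; your final paragraph is also more explicit than the paper about why the reverse-direction computations of \cref{thm:rigid-isom-equiv} may be read symmetrically.
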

\begin{proof}
  The forward direction is \cref{thm:gen-omega-u}.  For the reverse direction, we continue the preceding discussion and observe that the only conditions of \cref{thm:rigid-isom-equiv} not satisfied are $\gamma_g(u^*(x),u^*(y)) = \gamma'_{v(g)}(x,y)$ for all $g\in G$ and $x,y\in H$, and $\theta_{u^*(x)}(g,h) = \theta'_x(v(g),v(h))$ for all $g,h\in G$ and $x\in H$.  But $\omega^{u^*}=\eta$ implies $\gamma'_x(y,z) = \gamma_{u^*(x)}(u^*(y),u^*(z))$ and $\theta'_x(y,z) = \theta'_{u^*(x)}(u^*(y),u^*(z))$.  It follows that these identities are equivalent to $\gamma_g(u^*(x),u^*(y)) = \gamma_{u^*v(g)}(u^*(m),u^*(n))$ for all $g,m,n\in G$ and $\theta_{u^*(x)}(g,h) = \theta_{u^*(x)}(u^*v(g),u^*v(h))$ for all $g,h,x\in G$.  Now by \cite[Lemma 4.4]{K14} we have $u^*v(g)g\inv \in Z(G)$ for all $g\in G$.  Since $\omega\in Z^3(G/Z(G),U(1))$ it now follows that these two identities are trivially true.  This completes the proof.
\end{proof}

The results of \citep{K14,KS14} suffice to give a description of all possible combinations of components that can appear in elements of $\Aut(\D(G))$ for any $G$; see also \citep{LenPri:Monoidal} for an alternative approach when $G$ is not purely non-abelian.  In this sense one can, at least in principle, completely determine $\genisomrg$ for any given $G$ and cocycles $\omega,\eta$.  The preceding theorem shows that the description becomes much simpler when $\omega\in Z^3(G/Z(G),U(1))$.  The next section considers a variety of examples of rigid isomorphisms and automorphisms.  The section after that will be dedicated to completely describing $\Aut(\D^\omega(G))$ whenever $\omega\in Z^3(G/Z(G),U(1))$.  This will generalize the case of $\omega\equiv 1$, and applies to all 3-cocycles whenever $Z(G)=1$, as special cases.

\section{Examples of Isomorphisms}\label{sec:rigid-examples}
Most of this section is dedicated to exhibiting rigid isomorphisms of twisted group doubles.  The final example of the section yields a non-rigid isomorphism of quasi-Hopf algebras.

Our first example is a simple and fairly obvious one, and uses \cref{thm:rigid-isom-equiv}
\begin{example}\label{ex:c2c2}
    Let $G = \BZ_2\times \BZ_2=\cyc{a}\times\cyc{b}$.  Let $\omega$ be a non-trivial 3-cocycle on $\BZ_2$, and let $\omega\times 1$ and $1\times\omega$ be the obvious coordinate liftings to $G$.  Let $\tau$ be the automorphism of $G$ that exchanges the two factors, and let $\sigma$ be the automorphism defined by $\sigma((a,1))=(a,1)$, $\sigma((1,b))=(a,b)$.  Let $p\colon\du{G}\to\kk G$ have $A=\cyc{a}$ and $B=\cyc{b}$.  Then we have rigid isomorphisms
    \begin{align*}
      (p,\tau^*,r,\tau)\colon \D^{1\times\omega}(G)\to \D^{\omega\times1}(G),
    \end{align*}
    where $r$ is any bicharacter of $G$.  Replacing $p$ by $p^*$ (effectively swapping the definitions of $A$ and $B$), yields rigid isomorphisms $\D^{\omega\times 1}(G)\to\D^{1\times\omega}(G)$, instead.  We also have rigid isomorphisms
    \begin{align*}
        (p,\tau^*,r,\sigma\tau)\colon \D^{1\times\omega}(G)\to \D^{\omega\times1}(G)\\
        (p^*,\tau^*,r,\tau\sigma)\colon\D^{\omega\times1}(G)\to\D^{1\times\omega}(G),
    \end{align*}
    with $p$ and $r$ as before.
\end{example}

We can also consider a cyclic group, for which we also use \cref{thm:rigid-isom-equiv}.

\begin{example}\label{ex:c6}
    Let $G=\BZ_6=\cyc{c}$.  The unique subgroup of order $3$ yields the canonical projection $G\to\BZ_2$, and we let $\omega$ be the (unique) non-trivial 3-cocycle of $\BZ_2$, which we view as a 3-cocycle on $G$ via the projection.  Let $v$ be the automorphism of $G$ which acts as inversion on elements of order 3 and by the identity on elements of order 2.  We also let $p$ have $A=B$ be the subgroup of order 3, and finally let $r$ be any bicharacter of $G$.  Then $(p,v^*,r,1)$, $(p,1,r,v)$, and $(p,v^*,r,v)$ are rigid automorphisms of $\D^\omega(G)$.
\end{example}

The next example considers a non-abelian group and a pair of non-cohomologous cocycles.  In this case, \cref{thm:quot-isoms} is applicable.
\begin{example}\label{ex:dihedral}
  Let $G= D_8 = \cyc{a, b \ : \ a^4 = b^2 = 1, \ bab=a\inv}$ be the dihedral group of order $8$.  We have $Z(G)=\cyc{a^2}\cong \BZ_2$ and $G/Z(G)=\cyc{\overline{a}}\times\cyc{\overline{b}} \cong \BZ_2^2$.  We define a 3-cocycle on $G/Z(G)$ by
  \[ \omega(x,y,z) = \begin{cases} -1 & \{x,y,z\}\subseteq \{\overline{a},\overline{b}\}\\ 1 & \text{otherwise} \end{cases}.\]
  By the canonical projection $G\to G/Z(G)$ we may identify this as a 3-cocycle on $G$.  The simple check that $\omega$ satisfies the 3-cocycle relation is left to the reader.  We define $v\in\Aut(G)$ by $v(a)=a$, $v(b)=ba$.  Since $Z(G)$ is a characteristic subgroup, $v$ also defines an element of $\Aut(G/Z(G))$ given by $gZ(G)\mapsto v(g) Z(G)$; indeed, this is a non-trivial automorphism of $G/Z(G)$.  In both cases, $v$ has order 2.  With $\omega$ viewed as a 3-cocycle on $G$, we define the 3-cocycle $\eta$ by $\eta=\omega^v$.  The theorem then tells us that $(0,v^*,0,v)\in \genisomrg$, and so $\D^\omega(G)\cong \D^\eta(G)$.  Since the order of $G$ is relatively small, whether or not $\omega,\eta$ are cohomologous is a system of equations that can be checked by brute force with a computer algebra system.  Using Mathematica \citep{Mathematica10}, the author verified that these 3-cocycles are not cohomologous, as desired.  Finally, it is trivial to verify that this isomorphism sends \[R=\sum_{g\in G} \varepsilon\# g \ot e_g\# 1\] to itself, and whence it is an isomorphism of quasi-triangular quasi-Hopf algebras.
\end{example}
One consequence of this is that the isomorphism constructed lifts to a braided equivalence of the fusion categories $\Rep(\D^\omega(D_8))$ and $\Rep(\D^\eta(D_8))$.  This provides an alternative proof to one of the tensor equivalences of twisted group doubles of dimension 64 found by \citet{GMN}.  Indeed, the existence of braided tensor equivalences $\Rep(\D^\omega(G))\to\Rep(\D^\eta(H))$  was completely characterized by \citet{NaNi}.

We note that constructing examples of \cref{thm:quot-isoms} with $u^*$ an isomorphism, and such that $\eta=\omega^{u^*}$ is not cohomologous to $\omega$, requires two conditions to be satisfied.  First, since $\Inn(G)$ acts trivially on the (co)homology of $G$, to yield non-cohomologous 3-cocycles the isomorphism must be a non-trivial outer automorphism.  Secondly, the isomorphism must be non-trivial on $G/Z(G)$, for else $\omega=\eta$.  As noted in the example, every element of $\Aut(G)$ defines an element of $\Aut(G/Z(G))$ in the obvious way since $Z(G)$ is characteristic.  So by definitions an element of $\Aut(G)$ acts trivially on $G/Z(G)$ if and only if it is a central automorphism.  Thus $u^*$ must be nontrivial in the quotient group \[ \frac{\Aut(G)}{\Inn(G)\Aut_c(G)}.\]

We conclude this section with a non-rigid isomorphism.  Our starting point is to look for an example which is "very close" to being rigid.  Morphisms take their simplest looking form when $p$ is trivial.  For isomorphisms, this forces $u,v$ to be isomorphisms of Hopf algebras by \cref{thm:inj-surj,thm:u-is-bialg,thm:v-is-bialg}.  By \cref{lem:coalg-to-group} the component $r$ also takes a nice form when it is a morphism of coalgebras.  So our natural guess of where to find an easy to understand non-rigid isomorphism is to look at those of the form $(0,u,r,v)$ where $r$ is a coalgebra morphism, but not an algebra morphism.  This can be arranged by picking a 3-cocycle satisfying \cref{thm:r-is-coalg} but not \cref{thm:r-is-alg}.

\begin{example}\label{ex:non-rigid}
    Let $G=\BZ_4=\cyc{c}$, $u=\id$, and $v$ the inversion map.  For any integer $m$ we define $[m]_4$ to be the (unique) smallest non-negative remainder of $m$ upon division by $4$.

    Define $r\colon \kk G\to\du{G}$ by setting $r(1)$ to be the trivial linear character of $G$, and $r(x)$ to be the linear character of order 2 when $x\neq 1$.  It follows from \cref{lem:coalg-to-group} that $r$ is a unital morphism of coalgebras, but is not an algebra morphism.

    Consider $\omega\in Z^3(G,U(1))$ defined by
    \[ \omega(c^l,c^j,c^k) = \exp\Big(\frac{\pi i}{4}[l]_4([j]_4+[k]_4-[j+k]_4)\Big).\]
    That this is actually a normalized 3-cocycle is both well-known and easily verified.  Note that $\omega$ is symmetric in the final two arguments, from which it follows that
    \[ \theta_g(x,y) = \omega(g,x,y) = \gamma_g(x,y).\]
    Moreover, the term $([j]_4+[k]_4-[j+k]_4)$ in the definition of $\omega$ is always an even integer, so $\omega$ takes only the values $\pm 1$.

    We claim that $(0,u,r,v)$ is a quasi-Hopf algebra automorphism of $\D^\omega(G)$, which is then necessarily not rigid by definition.

    We can write
    \[ (0,u,r,v)(e_{g}\# x) = \begin{cases} e_g\#x& x=1 \mbox{ or } g^2=1\\ - e_g\# x & \text{else} \end{cases}. \]
    It immediately follows that $(0,u,r,v)$ is bijective, and sends the coassociator to itself.

    We next have that $(0,u,r,v)$ is an algebra morphism if and only if
    \[ r(xy,j) \omega(j,x,y) = r(x,j)r(y,j)\omega(j,x\inv,y\inv)\]
    for all $x,y,j\in G$.  This can be verified to hold by a direct case-by-case check.  Note, in particular, that \cref{eq:theta-uv} fails to hold here.

    Now for $(0,u,r,v)$ to be a coalgebra morphism we must have
    \[ \omega(x,y,z)r(x,y)r(x,z) = \omega(x\inv,y,z)r(x\inv,yz).\]
    By definition $r$ is a morphism of coalgebras, so $r(x,y)r(x,z)=r(x,yz)$, and also by definition $r(x)=r(x\inv)$.  So $(0,u,r,v)$ is a coalgebra morphism if and only if $\omega(x,y,z)=\omega(x\inv,y,z)$ for all $x,y,z\in G$, which is easily verified to hold.

    It follows that $(0,u,r,v)$ is an isomorphism of quasi-bialgebras.  By \cref{lem:beta-pres} it also preserves the $\beta$ elements.  So, finally, using the observed properties of $\omega$ we note that the antipode is preserved if and only if $r(x,y)=r(x\inv,y\inv)$ for all $x,y\in G$. Since $r$ is a coalgebra morphism and takes real values we have $r(x\inv,y\inv) = r(x\inv,y)$.  By definition, $r(x\inv)=r(x)$ for all $x\in G$, and so we conclude that $(0,u,r,v)$ is an isomorphism of quasi-Hopf algebras, as desired.
\end{example}

\section{Rigid Automorphisms}\label{sec:rigid-auts}
Throughout this section we fix $\omega\in Z^3(G/Z(G),U(1))$.

From the action of $\Aut(G)$ on $Z^3(G,U(1))$ we can form the stabilizer subgroups
\begin{align*}
\Aut(G)_\omega = & \ \{v\in \Aut(G) \ : \ \omega^v=\omega\};\\
\Aut_c(G)_\omega = & \ \{v\in\Aut_c(G)\ : \ \omega^v=\omega\}.
\end{align*}
Note that our assumptions on $\omega$ guarantee $\Aut_c(G)_\omega=\Aut_c(G)$.

We recall the following group from \citep[Definition 5.6]{K14}:
\[ \SpAutc(G) = \{ (w,v)\in \Aut(G)\times\Aut(G) \ : \ w\inv v\in\Aut_c(G)\},\]
with group structure inherited from $\Aut(G)\times\Aut(G)$.  This is identified as a subgroup of $\Aut(\D(G))$ via \[ (w,v)\mapsto (0,(w\inv)^*,0,v).\]
Under this identification, by \cref{thm:quot-isoms} $(w,v)\in\SpAutc(G)$ also satisfies $(w,v)\in\Aut(\D^\omega(G))$ if and only if $\omega=\omega^w$.  Now $\omega\in Z^3(G/Z(G),U(1))$ and $w\inv v\in\Aut_c(G)$ yields that $\omega^w=\omega$ if and only if $\omega^v=\omega$.  So we may form the stabilizer subgroup
\begin{align*}
    \SpAutc(G)_\omega =& \{ (w,v)\in\Aut(G)_\omega\times\Aut(G)_\omega \ : \ w\inv v\in\Aut_c(G)\}\\
    \cong& \Aut_c(G)_\omega\rtimes \Aut(G)_\omega\\
    =&\Aut_c(G)\rtimes\Aut(G)_\omega
\end{align*}
which we identify as a subgroup of $\Aut(\D^\omega(G))$ as before:
\[ (w,v)\mapsto (0,(w\inv)^*, 0, v).\]  For the indicated isomorphism we have subgroups \[K=\{(v,v)\in\SpAutc(G)_\omega\}\cong \Aut(G)_\omega\] and
\[L=\{(w,1)\in\SpAutc(G)_\omega\}\cong\Aut_c(G)_\omega\] which give an exact factorization $\SpAutc(G)_\omega = LK$.  $K$ acts on $L$ by the usual conjugation action, which gives the desired semidirect product.

Finally we recall the following subgroup of $\Aut(\D(G))$ from \citep[Definition 5.1]{K14}:
\[ \Lambda(G) = \{(p,1,0,1)\in\Aut(\D(G))\}.\]
From \cref{thm:quot-isoms} we see this is also a subgroup of $\Aut(\D^\omega(G))$.  We then have the following.

\begin{thm}\label{thm:auts-center-quot}
  Let $\omega\in Z^3(G/Z(G),U(1))$, and define $\Lambda(G)$ and $\SpAutc(G)_\omega$ as above.  Then $\Aut(\D^\omega(G))$ contains the subgroup
  \[ \Lambda(G) \left( \SpAutc(G)_\omega\ltimes\BCh{G}\right). \]
  The indicated product of subgroups is an exact factorization of this subgroup.  Moreover, this subgroup contains all elements $\morphquad\in\Aut(\D^\omega(G))$ with either $u$ or $v$ invertible.

  If $G$ is purely non-abelian then this subgroup is all of $\Aut(\D^\omega(G))$, and so gives an exact factorization of $\Aut(\D^\omega(G))$.  On the other hand, if $G$ has non-trivial abelian direct factors, then the (non-twisted) reflections of abelian direct factors, as in \citep[Proposition 4.2(ii)]{LenPri:Monoidal}, give coset representatives of this subgroup.
\end{thm}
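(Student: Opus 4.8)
The backbone of the argument is \cref{thm:quot-isoms}, which under the standing hypothesis $\omega\in Z^3(G/Z(G),U(1))$ identifies $\genautr$ with the stabilizer $\{\morphquad\in\Aut(\D(G)) : \omega^{u^*}=\omega\}$, together with the component-level exact factorization of $\Aut(\D(G))$ from \citep{K14,KS14}. I would begin with the pivotal observation that for \emph{every} quasi-bialgebra automorphism $\morphquad$ of $\D^\omega(G)$ the components $u$ and $v$ are automatically morphisms of Hopf algebras. Indeed \cref{cor:inj-surj-cent} forces $A,B\subseteq Z(G)$, and because $\omega$ factors through $G/Z(G)$ every one of $\theta,\theta',\gamma,\gamma'$ collapses to $1$ as soon as a central element is inserted into any slot; feeding this triviality into \cref{thm:v-is-bialg} and \cref{thm:u-is-bialg} yields that $v$ and $u$ are Hopf. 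The same centrality verifies the hypotheses of the lemma in \cref{sec:r-coalg} producing \cref{eq:uv-rel}, so \cref{eq:uv-rel} holds for all $x\in G$ and \cref{eq:uv-cent} gives $u^*v(g)g\inv\in C_G(\Img u^*)$ for all $g$.

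Next I would assemble the subgroup and its factorization. Each factor lies in $\Aut(\D^\omega(G))$: $\Lambda(G)$ and $\BCh{G}$ by the remarks following \cref{cor:bicharacters} and by \cref{thm:quot-isoms}, and $\SpAutc(G)_\omega$ by construction. To get the factorization I would intersect the exact factorization of $\Lambda(G)(\SpAutc(G)\ltimes\BCh{G})\subseteq\Aut(\D(G))$ from \citep{K14} with the stabilizer condition $\omega^{u^*}=\omega$. The point is that $\Lambda(G)$ and $\BCh{G}$ have trivial $u$, hence trivial $u^*$, so they stabilize $\omega$ unconditionally, whereas the $\SpAutc(G)$ factor carries $u$-component $(w\inv)^*$ and so contributes exactly the constraint $\omega^w=\omega$; the computation recorded before the theorem, using $w\inv v\in\Aut_c(G)$ and $\omega\in Z^3(G/Z(G),U(1))$, cuts this down precisely to $\SpAutc(G)_\omega\cong\Aut_c(G)\rtimes\Aut(G)_\omega$. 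Exactness of the product, and the semidirect action on $\BCh{G}$, are inherited verbatim from the $\omega\equiv1$ factorization, since all of these elements are rigid and so compose inside $\Aut(\D(G))$.

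To see that the subgroup absorbs every automorphism with $u$ or $v$ invertible, I would show such an automorphism is rigid and then invoke the classification. By the first paragraph $u,v$ are Hopf; when one is invertible the component classification of \citep{K14,KS14} makes the other invertible as well (alternatively one passes to $\morphquad\inv$), so $\Img(u^*)=G$ and therefore $C_G(\Img u^*)=Z(G)$, upgrading the relation above to $u^*v(g)g\inv\in Z(G)$. Since $\omega\in Z^3(G/Z(G),U(1))$ makes $\gamma_x(g,h)$ and $\theta_g(x,y)$ depend on $x$ and $g$ only modulo $Z(G)$, this combined with $\eta=\omega^{u^*}$ (\cref{lem:gen-u-omega-eta}) yields conditions (2) and (3) of \cref{thm:rigid-isom-equiv}; conditions (1), (4) and (5) hold from $A,B\subseteq Z(G)$ and $\omega\in Z^3(G/Z(G),U(1))$ exactly as in the discussion preceding \cref{thm:quot-isoms}. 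Hence $\morphquad$ is rigid, lands in $\Aut(\D(G))$, and the classification of \citep{K14} together with the stabilizer constraint places it in our subgroup.

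Finally I would settle the dichotomy. When $G$ is purely non-abelian, \citep{K14} shows every element of $\Aut(\D(G))$ already has $u,v$ invertible, so the previous step forces $\Aut(\D^\omega(G))$ to coincide with the subgroup and the factorization to be global. When $G$ has nontrivial abelian direct factors the remaining automorphisms are the reflections of those factors in the sense of \citep[Proposition 4.2(ii)]{LenPri:Monoidal}; these have non-invertible $u,v$, are typically non-rigid with \cref{ex:non-rigid} as the prototype, and I would verify they normalize the subgroup so as to serve as coset representatives. I expect the genuine obstacle to lie here: one must confirm that each such reflection is still a quasi-Hopf---not merely quasi-bialgebra---automorphism of the \emph{twisted} double $\D^\omega(G)$, and that modulo the subgroup they exhaust $\Aut(\D^\omega(G))$ with no further non-rigid automorphisms surviving. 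This requires controlling the $r$-component against $\omega$ beyond what the rigid analysis of \cref{sec:r-coalg,sec:r-alg} supplies, since precisely for the reflections the clean invertibility hypotheses that drove the earlier sections break down.
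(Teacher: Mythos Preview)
Your overall strategy matches the paper's: identify $\genautr$ inside $\Aut(\D(G))$ via \cref{thm:quot-isoms}, then intersect the known factorizations of $\Aut(\D(G))$ from \citep{K14,LenPri:Monoidal} with the stabilizer condition $\omega^{u^*}=\omega$. The first two paragraphs are fine. But there is a genuine gap in the last two.

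You correctly establish, for an \emph{arbitrary} $\morphquad\in\Aut(\D^\omega(G))$, that $u,v$ are Hopf, that \cref{eq:uv-rel} holds, and hence that $u^*v(g)g^{-1}\in C_G(\Img u^*)$. You then upgrade $C_G(\Img u^*)$ to $Z(G)$ only under the extra hypothesis that $u$ is invertible, and use this to prove rigidity only for automorphisms with $u$ or $v$ invertible. This creates a circularity in the purely non-abelian step: you invoke the result from \citep{K14} that every element of $\Aut(\D(G))$ has $u,v$ invertible, but you have not yet shown $\morphquad\in\Aut(\D(G))$; that is precisely the rigidity you are trying to prove. The missing observation is that no invertibility hypothesis is needed. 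By \cref{thm:inj-surj} and the fact that $u^*$ is a group homomorphism you already have $A\,\Img(u^*)=G$, and since $A\subseteq Z(G)$ this forces $C_G(\Img u^*)=C_G(A\,\Img(u^*))=Z(G)$ for \emph{every} automorphism. Feeding this into \cref{thm:rigid-isom-equiv} exactly as you do in paragraph three shows that every element of $\Aut(\D^\omega(G))$ is rigid; in other words $\Aut(\D^\omega(G))=\genautr$ whenever $\omega\in Z^3(G/Z(G),U(1))$.

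Once you have this, both dichotomy cases become immediate transfers from $\Aut(\D(G))$: for $G$ purely non-abelian apply \citep[Theorem 6.7]{K14}, and otherwise the coset decomposition \citep[Theorem 4.10]{LenPri:Monoidal} by reflections carries over verbatim after checking (as the paper notes is easy under the hypothesis on $\omega$) that reflections lie in $\Aut(\D^\omega(G))$. In particular your closing worry about ``non-rigid automorphisms surviving'' is a red herring: \cref{ex:non-rigid} uses an $\omega$ that does \emph{not} factor through $G/Z(G)$, and under the present hypothesis there simply are no non-rigid automorphisms to control.
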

\begin{proof}
  Apply the preceding theorem and discussion along with \citep[Lemma 6.4]{K14} and \citep[Theorem 6.7]{K14} for the purely non-abelian case.  In the case when $G$ is not purely non-abelian, it is easily seen that the assumption on $\omega$ guarantees that reflections are in $\Aut(\D^\omega(G))$, and thus the coset decomposition for $\Aut(\D(G))$ given in \citep[Theorem 4.10]{LenPri:Monoidal} carries over to the subgroup $\Aut(\D^\omega(G))$, as desired.
\end{proof}

\bibliographystyle{plainnat}
\bibliography{../references}
\end{document}